\newcommand*{\red}{\textcolor{red}}
\newcommand*{\blue}{\textcolor{blue}}
\newcommand{\s}[1]{\mathbf{#1}}
\newtheorem{theorem}{Theorem}[section]
\newtheorem{theorem*}[theorem]{*Theorem}
\newtheorem{prop}[theorem]{Proposition}
\newtheorem{lema}[theorem]{Lemma}
\newtheorem{claim}[theorem]{Claim}
\newtheorem{question}[theorem]{Question}
\newtheorem{cor}[theorem]{Corollary}
\newtheorem{prop*}[theorem]{*Proposition}
\newtheorem{lema*}[theorem]{*Lemma}
\newtheorem{cor*}[theorem]{*Corollary}
\theoremstyle{definition}
\newtheorem{definition}[theorem]{Definition}
\newtheorem{definition*}[theorem]{*Definition}
\newtheorem{defs*}[theorem]{*Definitions}
\newtheorem{remark}[theorem]{Remark}
\providecommand{\customgenericname}{}
\newcommand{\newcustomtheorem}[2]{%
  \newenvironment{#1}[1]
  {%
   \renewcommand\customgenericname{#2}%
   \renewcommand\theinnercustomgeneric{##1}%
   \innercustomgeneric
  }
  {\endinnercustomgeneric}
}
\title{An inverse of Furstenberg's correspondence principle and applications to van der Corput sets} 
\author[1]{
Saúl Rodríguez Martín}
\affil[1]{The Ohio State University\\rodriguezmartin.1@osu.edu}
\date{\vspace{-20pt}}
\begin{document}
\maketitle

\begin{abstract}
We obtain an inverse of Furstenberg's correspondence principle in the 
setting of countable cancellative, amenable semigroups. Besides being of intrinsic interest on its own, this result allows us to answer a variety of questions concerning sets of recurrence and van der Corput (vdC) sets, which were posed by Bergelson and Lesigne \cite{BL}, Bergelson and Ferré Moragues \cite{BF}, Kelly and Lê \cite{KL}, and Moreira \cite{Mor}. We also prove a spectral characterization of vdC sets and prove some of their basic properties in the context of countable amenable groups.

Several results in this article were independently found by Sohail Farhangi and Robin Tucker-Drob, see \cite{FT}.
\end{abstract}

%\setcounter{section}{-1}
%\tableofcontents

\section{Introduction}

In this article, we establish an inverse of Furstenberg’s correspondence principle in the framework of countable, discrete amenable semigroups. Beyond its intrinsic significance, this result enables us to answer a range of open questions posed by Bergelson and Lesigne \cite{BL}, Bergelson and Ferré Moragues \cite{BF}, Kelly and Lê \cite{KL}, and Moreira \cite{Mor}.

We first give some context for Furstenberg's correspondence principle.

\begin{definition}
\label{DefAmenableGroup}
Let $G$ be a countable group. A \textit{(left) F{\o}lner sequence} $F=(F_N)_{N\in\mathbb{N}}$ in $G$ is a sequence of finite sets $F_N\subseteq G$ such that, for all $g\in G$, $\lim_{N\to\infty}\frac{|gF_N\Delta F_N|}{|F_N|}=0$. We say $G$ is \textit{(left) amenable} if it has a F{\o}lner sequence.\footnote{Throughout this article we will use only left amenability and left F{\o}lner sequences, so we will omit the adjective `left'.}
\end{definition}

\begin{definition}
\label{DefUBD}
Let $E$ be a subset of a countable amenable group $G$. The \textit{upper density} of $E$ along a F{\o}lner sequence $F=(F_N)$ is defined by \begin{equation*}
\overline{d}_F(E):=\limsup_{N\to\infty}\frac{|E\cap F_N|}{|F_N|}.
\end{equation*}
If the $\limsup$ is actually a limit, we call it $d_F(E)$, the \textit{density} of $E$ along $(F_N)$. The upper Banach density of $E$, $d^*(E)$, is defined by
\begin{equation*}
d^*(E)=\sup\left\{\overline{d}_F(E);F\text{ F{\o}lner sequence in }G\right\}.
\end{equation*}
\end{definition}
If $A\subseteq\mathbb{N}=\{1,2,\dots\}$, we denote by $\overline{d}(A)$ the upper density of $A$ with respect to the F{\o}lner sequence $F_N=\{1,2,\dots,N\}$ in $\mathbb{Z}$.

Szemerédi's theorem on arithmetic progressions states that any set $A$ of natural numbers with $\overline{d}(A)>0$ contains arithmetic progressions of length $k$ for all $k\in\mathbb{N}$. Szemerédi proved this theorem in \cite{Sze} using combinatorial methods. In \cite{Fu77}, Furstenberg gave a new, ergodic proof of Szemerédi's theorem, see \Cref{ErgSze} below. Throughout this article, we say that $(X,\mathcal{B},\mu,(T_s)_{s\in S})$ is a \textit{measure preserving system}, m.p.s. for short, if $(X,\mathcal{B},\mu)$ is a probability space and $(T_s)_{s\in S}$ is an action of a semigroup $S$ on $X$ by measure preserving maps $T_s:X\to X$ (so $T_s\circ T_t=T_{st}$). Similarly, we say $(X,\mathcal{B},\mu,T)$ is a m.p.s. when $T:X\to X$ is a measure preserving map of the probability space $(X,\mathcal{B},\mu)$.

\begin{theorem}
\label{ErgSze}
Let $(X,\mathcal{B},\mu,T)$ be a m.p.s. and let $C\in\mathcal{B}$ satisfy $\mu(C)>0$. Then for all $k\in\mathbb{N}$ there is some $n\in\mathbb{N}$ such that 
\begin{equation*}
\mu(T^{-n}C\cap T^{-2n}C\cap\cdots\cap T^{-kn}C)>0.
\end{equation*}
\end{theorem}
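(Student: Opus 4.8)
The plan is to deduce \Cref{ErgSze} from Szemerédi's theorem (stated above) together with Birkhoff's pointwise ergodic theorem; this is, in miniature, exactly the ``inverse correspondence'' maneuver that the rest of the paper systematizes. I would argue by contradiction. Suppose that for some fixed $k\in\mathbb{N}$ the conclusion fails, i.e.\ $\mu(T^{-n}C\cap T^{-2n}C\cap\cdots\cap T^{-kn}C)=0$ for every $n\in\mathbb{N}$. Writing $B_n:=C\cap T^{-n}C\cap\cdots\cap T^{-(k-1)n}C$ and using that $T$ is measure preserving, we have $\mu(B_n)=\mu(T^{-n}B_n)=\mu(T^{-n}C\cap\cdots\cap T^{-kn}C)=0$, and likewise $\mu(T^{-a}B_n)=\mu(B_n)=0$ for every $a\ge 0$. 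Let $N_0:=\bigcup_{a\ge 0,\,n\ge 1}T^{-a}B_n$, a countable union of null sets, so $\mu(N_0)=0$.

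The point of $N_0$ is that for every $x\notin N_0$ the return-time set $R_x:=\{m\ge 1:T^m x\in C\}\subseteq\mathbb{N}$ contains no $k$-term arithmetic progression: if $\{a,a+n,\dots,a+(k-1)n\}\subseteq R_x$ with $n\ge 1$, then $T^a x\in C\cap T^{-n}C\cap\cdots\cap T^{-(k-1)n}C=B_n$, hence $x\in T^{-a}B_n\subseteq N_0$, a contradiction. On the other hand, Birkhoff's pointwise ergodic theorem gives a null set $N_1$ outside of which $\frac1N\sum_{m=1}^{N}\mathbf 1_C(T^m x)\to\mathbb{E}[\mathbf 1_C\mid\mathcal I](x)$, where $\mathcal I$ is the $\sigma$-algebra of $T$-invariant sets; note $\frac1N\sum_{m=1}^N\mathbf 1_C(T^m x)=\frac{|R_x\cap\{1,\dots,N\}|}{N}$. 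Since $\int\mathbb{E}[\mathbf 1_C\mid\mathcal I]\,d\mu=\mu(C)>0$, the set $P:=\{x:\mathbb{E}[\mathbf 1_C\mid\mathcal I](x)>0\}$ has positive measure, so we may choose $x\in P\setminus(N_0\cup N_1)$. For this $x$, $\overline d(R_x)=\lim_N\frac{|R_x\cap\{1,\dots,N\}|}{N}=\mathbb{E}[\mathbf 1_C\mid\mathcal I](x)>0$, so Szemerédi's theorem produces a $k$-term arithmetic progression inside $R_x$ — contradicting the previous sentence. Hence no such $k$ exists, which is \Cref{ErgSze}.

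There is no genuinely hard step here; the argument is essentially bookkeeping, and its only real content is being imported wholesale from Szemerédi's (combinatorial) theorem. The two places to be careful are (i) that the countable union $N_0$ of translated null sets is still null, which uses measure preservation of $T$, and (ii) that the exceptional set of the ergodic theorem can be dodged simultaneously with $N_0$ while remaining in the positive-measure set $P$, which is fine since $\mu(P)>0=\mu(N_0\cup N_1)$. I would also remark that the identical scheme works for an arbitrary countable amenable group action upon replacing Cesàro averages by averages along a F{\o}lner sequence and invoking the corresponding pointwise (or, if one only needs $\overline d>0$, mean) ergodic theorem — which is precisely the perspective from which the inverse of Furstenberg's correspondence principle is developed in the sequel.
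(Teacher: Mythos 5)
The paper does not actually prove \Cref{ErgSze}: it is quoted as a black box from Furstenberg's paper \cite{Fu77}, where it is established directly by ergodic-theoretic means precisely so that it can be combined with the correspondence principle to give a \emph{new} proof of Szemerédi's theorem. Your argument runs in the opposite direction: it assumes Szemerédi's combinatorial theorem and transfers it to the measure-theoretic setting via Birkhoff's theorem. As a proof of the displayed statement it is correct. The null set $N_0$ is handled properly (measure preservation gives $\mu(B_n)=\mu(T^{-n}B_n)=0$, and since only preimages $T^{-a}$ are used, invertibility of $T$ is not needed, which matters because the paper's notion of m.p.s.\ does not assume it); the identification of $k$-term progressions in $R_x$ with membership in $T^{-a}B_n$ is right; and the positivity of $\mathbb{E}[\mathbf 1_C\mid\mathcal I]$ on a set of positive measure lets you dodge the two null sets simultaneously. (As you note, the mean ergodic theorem would also suffice, since upper density only requires a.e.\ convergence along a subsequence.) What your route buys is a short, self-contained derivation that is, as you say, the inverse-correspondence idea of the paper in miniature. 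What it costs is logical content: it makes \Cref{ErgSze} a consequence of Szemerédi's theorem rather than an independent source of it, so it cannot be used in the role the paper assigns to it — deducing \Cref{NumSze}, and hence Szemerédi's theorem, from \Cref{ErgSze} via \Cref{FCP} — without circularity. That is not a flaw in your proof, since Szemerédi's theorem has an independent combinatorial proof \cite{Sze}, but the implication you establish is the reverse of the one the theorem is cited for, and it is not the argument of \cite{Fu77}.
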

The method that Furstenberg used to derive Szemerédi's theorem from \Cref{ErgSze} is nowadays called Furstenberg's correspondence principle. We state it in the setting of amenable groups:

\begin{theorem}[Furstenberg's correspondence principle, cf. {\cite[Theorem 1.8]{Be5}}]\label{FCP}
Let $G$ be a countable amenable group with a F{\o}lner sequence $F=(F_N)$. For any $A\subseteq G$ there is a m.p.s. $(X,\mathcal{B},\mu,(T_g)_{g\in G})$ and $B\in\mathcal{B}$ with $\mu(B)=\overline{d}_F(A)$ such that, for all $k\in\mathbb{N}$ and $h_1,\dots,h_k\in G$,
\begin{equation*}
\overline{d}_F(h_1A\cap\dots\cap h_kA)\geq\mu(T_{h_1}(B)\cap\dots\cap T_{h_k}(B)),
\end{equation*}
and for all $h_1,\dots,h_k$ such that $d_F(h_1A\cap\dots\cap h_kA)$ exists,
\begin{equation}
\label{FCPEq}
d_F(h_1A\cap\dots\cap h_kA)=\mu(T_{h_1}(B)\cap\dots\cap T_{h_k}(B)).
\end{equation}
\end{theorem}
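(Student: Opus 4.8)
The plan is to construct the measure preserving system directly from the combinatorial data of $A\subseteq G$ using the standard symbolic/Stone–Čech-type construction, and then exploit the amenability of $G$ together with a diagonal subsequence argument to get the density identity. Concretely, I would let $X=\{0,1\}^G$ with the product topology, which is a compact metrizable space since $G$ is countable, and let $G$ act on $X$ by the left shift $(T_g x)(h)=x(g^{-1}h)$ (or $x(hg)$, with conventions chosen so that $T_g\circ T_h=T_{gh}$). Let $\xi=\mathbf{1}_A\in X$ be the indicator function of $A$, viewed as a point of $X$, and let $B=\{x\in X: x(e)=1\}$, which is a clopen set. The key observation is that for any $h_1,\dots,h_k\in G$ and any $g\in G$, we have $T_g\xi\in T_{h_1}B\cap\dots\cap T_{h_k}B$ if and only if $g\in h_1^{-1}A\cap\dots\cap h_k^{-1}A$ — wait, one must be careful with the direction; with the right choice of shift, $g\in h_1 A\cap\dots\cap h_k A$. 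Hence the Cesàro averages $\frac{1}{|F_N|}\sum_{g\in F_N}\mathbf{1}_B\big(T_{h_1^{-1}}\cdots\big)(T_g\xi)$ are exactly $\frac{|F_N\cap (h_1A\cap\dots\cap h_kA)|}{|F_N|}$, whose $\limsup$ is $\overline d_F(h_1A\cap\dots\cap h_kA)$.

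The second step is to produce the invariant measure $\mu$. I would consider the sequence of probability measures $\mu_N=\frac{1}{|F_N|}\sum_{g\in F_N}\delta_{T_g\xi}$ on $X$. By weak-* compactness of the space of Borel probability measures on the compact metrizable space $X$, some subsequence $(\mu_{N_j})$ converges weak-* to a measure $\mu$; by the Følner property of $(F_N)$, this $\mu$ is $G$-invariant, so $(X,\mathcal{B},\mu,(T_g)_{g\in G})$ is a m.p.s. Applying the convergence to the continuous (indeed clopen-indicator) function $f=\mathbf{1}_{T_{h_1}B\cap\dots\cap T_{h_k}B}$ gives
\begin{equation*}
\mu(T_{h_1}B\cap\dots\cap T_{h_k}B)=\lim_{j\to\infty}\frac{|F_{N_j}\cap(h_1A\cap\dots\cap h_kA)|}{|F_{N_j}|}\le\overline d_F(h_1A\cap\dots\cap h_kA),
\end{equation*}
and taking $k=1$, $h_1=e$ yields $\mu(B)=\lim_j\frac{|F_{N_j}\cap A|}{|F_{N_j}|}$. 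To make this limit equal $\overline d_F(A)$ (not merely $\le$), I first pass to a subsequence of $(F_N)$ along which $\frac{|F_N\cap A|}{|F_N|}$ converges to $\overline d_F(A)$, and only then extract the weak-* convergent sub-subsequence; along that sub-subsequence the density of $A$ still converges to $\overline d_F(A)$, giving $\mu(B)=\overline d_F(A)$.

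For the final equality \eqref{FCPEq}, note that if $d_F(h_1A\cap\dots\cap h_kA)$ exists as a genuine limit along the full Følner sequence, then it also equals the limit along any subsequence, in particular along $(N_j)$; combined with the weak-* convergence computation above this forces $\mu(T_{h_1}B\cap\dots\cap T_{h_k}B)=d_F(h_1A\cap\dots\cap h_kA)$. The main technical obstacle — and the only place where real care is needed — is bookkeeping the order of the two subsequence extractions (first for $A$'s density, then for weak-* convergence) and, more importantly, the indexing conventions for the shift action so that $T_g\circ T_h=T_{gh}$ holds and the event $T_g\xi\in T_{h_1}B\cap\dots\cap T_{h_k}B$ translates cleanly into $g\in h_1A\cap\dots\cap h_kA$ rather than into some inverted or twisted set; getting a sign/direction wrong here is the easiest way to derive a false statement, so I would fix the conventions at the outset and verify the translation on a single generator before proceeding.
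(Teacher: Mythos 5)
Your proposal is correct and is essentially the same argument the paper relies on: the theorem is cited from the literature, but the paper's own proof of the analogous direction (the implication \ref{ThmFinAvsGen1}$\implies$\ref{ThmFinAvsGen2} of \Cref{ThmFinAvsGen}, and the converse half of \Cref{ThmFinAvs2.0}) uses exactly this symbolic system $D^G$ with the shift $R_g((z_a)_a)=(z_{ag})_a$, the empirical measures $\frac{1}{|F_N|}\sum_{g\in F_N}\delta_{R_g\mathbf{z}}$, and a weak-* limit made $G$-invariant by the F{\o}lner property. Your convention $(T_gx)(h)=x(hg)$ does give $T_g\xi\in T_{h_1}B\cap\dots\cap T_{h_k}B\iff g\in h_1A\cap\dots\cap h_kA$, and the order of the two subsequence extractions is handled correctly.
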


In particular, when $G=\mathbb{Z}$ and $F_N=\{1,\dots,N\}$, \Cref{FCP} and \Cref{ErgSze} imply the following result,
which in turn implies Szemerédi's theorem.

\begin{theorem}
\label{NumSze}
Let $A\subseteq\mathbb{N}$ satisfy $\overline{d}(A)>0$. Then for all $k\in\mathbb{N}$ there is some $n\in\mathbb{N}$ such that 
\begin{equation*}
\overline{d}((A-n)\cap(A-2n)\cap\cdots\cap(A-kn))>0.
\end{equation*}
\end{theorem}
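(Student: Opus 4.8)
The plan is to derive \Cref{NumSze} directly from \Cref{FCP} and \Cref{ErgSze}, exactly following Furstenberg's original strategy. First I would apply \Cref{FCP} with $G=\mathbb{Z}$ (written additively, so $gA$ becomes $A-n$ when $g=-n$; more precisely, take the group action $T_g$ corresponding to translation) and the F\o lner sequence $F_N=\{1,\dots,N\}$ to the set $A\subseteq\mathbb{N}\subseteq\mathbb{Z}$. This produces a measure preserving system $(X,\mathcal{B},\mu,T)$, where $T=T_1$ generates the $\mathbb{Z}$-action, together with a set $B\in\mathcal{B}$ satisfying $\mu(B)=\overline{d}(A)>0$.

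Next I would feed this system into \Cref{ErgSze}. Since $\mu(B)>0$, \Cref{ErgSze} guarantees that for every $k\in\mathbb{N}$ there exists $n\in\mathbb{N}$ with
\begin{equation*}
\mu(T^{-n}B\cap T^{-2n}B\cap\cdots\cap T^{-kn}B)>0.
\end{equation*}
Now I would translate this back to a density statement about $A$ using the inequality half of \Cref{FCP}: with $h_i = -in$ (so that $T_{h_i}B = T^{-in}B$ and $h_iA = A-in$), we get
\begin{equation*}
\overline{d}((A-n)\cap(A-2n)\cap\cdots\cap(A-kn))\geq\mu(T^{-n}B\cap\cdots\cap T^{-kn}B)>0,
\end{equation*}
which is precisely the desired conclusion.

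The only mildly delicate point is bookkeeping with the conventions: \Cref{FCP} is stated multiplicatively for a general group, while \Cref{NumSze} and \Cref{ErgSze} are phrased additively for $\mathbb{Z}$ with the shift $T$, so one must be careful that the group element $h$ acting as $T_h$ matches the translation $A \mapsto A - n$ in the right direction, and that $A$ being a subset of $\mathbb{N}$ rather than $\mathbb{Z}$ causes no issue (it does not, since $\overline{d}$ along $F_N=\{1,\dots,N\}$ is insensitive to finitely many elements and $A\subseteq\mathbb N$ is just a particular subset of $\mathbb Z$). I do not anticipate any real obstacle here, since both ingredients are quoted as known results; the ``proof'' is essentially the observation that \Cref{ErgSze} transfers through the correspondence principle. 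If one wanted \Cref{NumSze} to in turn yield Szemerédi's theorem, one would additionally note that $(A-n)\cap\cdots\cap(A-kn)\neq\emptyset$ produces an element $a$ with $a, a+n, \dots, a+kn\in A$, i.e.\ an arithmetic progression of length $k+1$ in $A$, but this last step is not needed for the statement as given.
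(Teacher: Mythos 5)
Your proposal is correct and is exactly the argument the paper intends: the paper does not write out a proof of \Cref{NumSze}, but simply asserts that it follows from combining \Cref{FCP} (with $G=\mathbb{Z}$, $F_N=\{1,\dots,N\}$) and \Cref{ErgSze}, which is precisely what you do. Your bookkeeping of conventions (taking $h_i=-in$ so that $h_iA=A-in$ and $T_{h_i}B=T^{-in}B$, and using the inequality half of \Cref{FCP}) is accurate and complete.
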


\Cref{FCPEq} naturally leads to the question of whether given a m.p.s. $(X,\mathcal{B},\mu,(T_g)_{g\in G})$, 
a set $B\in\mathcal{B}$ and a F{\o}lner sequence $F$ in $G$, there is some $A\subseteq G$ satisfying (\ref{FCPEq}) for all $k,h_1,\dots,h_k$.
The answer is yes:
\begin{theorem}[Inverse Furstenberg correspondence principle]
\label{IFC}
Let $G$ be a countably infinite amenable group with a F{\o}lner sequence $(F_N)$. For every m.p.s. $(X,\mathcal{B},\mu,(T_g)_{g\in G})$ and every $B\in\mathcal{B}$ there exists a subset $A\subseteq G$ such that for all $k\in\mathbb{N}$ and $h_1,\dots,h_k\in G$ we have
\begin{equation}\label{EqStrongFurstints}
d_F\left(h_1A\cap\dots\cap h_kA\right)
=
\mu\left(T_{h_1}B\cap\dots\cap T_{h_k}B\right).
\end{equation}
\end{theorem}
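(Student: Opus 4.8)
The plan is to pass to a symbolic model and then construct a single generic point for it. First I would replace the abstract system by the full shift: define $\psi\colon X\to\{0,1\}^G$ by $\psi(x)(g)=\mathbf 1_B(T_gx)$. This map is measurable, and since $T_gT_h=T_{gh}$ it intertwines $(T_g)_{g\in G}$ with the left action $R$ of $G$ on $\{0,1\}^G$ given by $(R_h\omega)(g)=\omega(gh)$; hence $\nu:=\psi_*\mu$ is an $R$-invariant Borel probability measure on $\{0,1\}^G$. Writing $[1]_w:=\{\omega:\omega(w)=1\}$, a direct computation gives $\nu\bigl([1]_{h_1^{-1}}\cap\dots\cap[1]_{h_k^{-1}}\bigr)=\mu\bigl(T_{h_1}B\cap\dots\cap T_{h_k}B\bigr)$ for every tuple $h_1,\dots,h_k$. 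On the combinatorial side, if $y_0\in\{0,1\}^G$ and $A:=\{g\in G:y_0(g)=1\}$, then $g\in h_1A\cap\dots\cap h_kA$ holds exactly when $R_gy_0\in[1]_{h_1^{-1}}\cap\dots\cap[1]_{h_k^{-1}}$, so that $d_F(h_1A\cap\dots\cap h_kA)$ equals $\lim_N\frac1{|F_N|}\sum_{g\in F_N}\mathbf 1_C(R_gy_0)$ with $C:=[1]_{h_1^{-1}}\cap\dots\cap[1]_{h_k^{-1}}$ a clopen set. Therefore the theorem reduces to producing one point $y_0\in\{0,1\}^G$ that is \emph{$(F_N)$-generic for $\nu$}, i.e.\ $\frac1{|F_N|}\sum_{g\in F_N}\delta_{R_gy_0}\to\nu$ in the weak$^*$ topology: since each $C$ is clopen, this makes the limit above exist and equal $\nu(C)=\mu(T_{h_1}B\cap\dots\cap T_{h_k}B)$.

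When $\nu$ is ergodic I would appeal to the Jewett–Krieger theorem for countable amenable groups, carried out so that the distinguished zero-coordinate partition of $\{0,1\}^G$ is respected: this produces a uniquely ergodic subshift $Z\subseteq\{0,1\}^G$ whose unique invariant measure is $\nu$ itself, with the support of $\nu$ contained in $Z$. Any point $z_0\in Z$ is then $(F_N)$-generic for $\nu$ along \emph{every} Følner sequence, by unique ergodicity applied to continuous functions, so $y_0:=z_0$ works. Equivalently, and without invoking Jewett–Krieger, one can build $y_0$ by hand: fix a nested family of Ornstein–Weiss quasitilings of $G$ — or the exact Følner tilings of Downarowicz–Huczek–Zhang — with error parameters tending to $0$, stamp on each tile a finite pattern whose internal window-frequencies approximate those of $\nu$, and note that every large $F_N$ is a union of tiles up to an $o(|F_N|)$ remainder, which forces all window-frequencies along $F_N$ to converge to $\nu$.

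For a general, not necessarily ergodic $\nu$, I would use its ergodic decomposition $\nu=\int\nu_\omega\,dP(\omega)$ and mix the ergodic pieces. A single $\nu$-typical point will not suffice here: the mean ergodic theorem gives only $L^2$-convergence of $\frac1{|F_N|}\sum_{g}f\circ R_g$ to the conditional expectation $\mathbb E_\nu[f\mid\mathcal I]$, and as $(F_N)$ need not be tempered there is no pointwise statement — indeed a $\nu$-random point fails outright already for $\nu=\tfrac12\delta_{\mathbf 0}+\tfrac12\delta_{\mathbf 1}$. Instead I would approximate $P$ by a finitely supported measure $\sum_jp_j\delta_{\omega_j}$, use the ergodic case to fix a generic point $z_j$ for each $\nu_{\omega_j}$, and assemble $y_0$ by stamping $z_j$ on a $p_j$-fraction of the tiles of a quasitiling of $G$; a diagonal argument — refining the approximation of $P$, shrinking the tiling error, and exhausting $G$ by windows — then yields $\frac1{|F_N|}\sum_{g\in F_N}\delta_{R_gy_0}\to\nu$.

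The main obstacle is precisely this mixing step: one must arrange that, along the \emph{given} Følner sequence — which may be neither tempered nor nested — the colored tiles carry the prescribed densities while all window-frequencies simultaneously converge to those of $\nu$. This is exactly what the amenable tiling machinery (Ornstein–Weiss, resp.\ Downarowicz–Huczek–Zhang, which also underlies the amenable Jewett–Krieger theorem) is designed to deliver. Granting it, the reduction in the first paragraph and the final unwinding of the generic-point property back to the original data $(X,\mathcal B,\mu,(T_g),B)$ are purely formal, and one reads off \eqref{EqStrongFurstints} for all $k$ and all $h_1,\dots,h_k\in G$.
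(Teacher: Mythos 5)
Your reduction to finding a single $(F_N)$-generic point for the pushforward measure $\nu=\psi_*\mu$ on $\{0,1\}^G$ is correct and matches the paper's setup, and your ``by hand'' construction (stamp finite patterns with approximately correct window-frequencies onto the tiles of a Downarowicz--Huczek--Zhang exact tiling adapted to the given F{\o}lner sequence) is in substance the paper's proof of \ref{ThmFinAvsGen3}$\implies$\ref{ThmFinAvsGen1} in \Cref{ThmFinAvsGen}. However, the Jewett--Krieger step is misstated: there is in general no uniquely ergodic subshift $Z\subseteq\{0,1\}^G$ containing $\mathrm{supp}(\nu)$ whose unique invariant measure is $\nu$ --- take $\nu$ to be an ergodic Bernoulli measure, whose support is the whole shift. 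What Jewett--Krieger provides is an abstractly isomorphic strictly ergodic system on a \emph{different} space; to transport genericity back to $\{0,1\}^G$ you must additionally arrange that the set corresponding to $[1]_e$ is clopen (or at least a $\nu$-continuity set) in the model, so that Birkhoff averages of the relevant indicators converge at every point. This can be arranged with the stronger versions of the theorem, but as written the claim is false; since you offer the tiling construction as an alternative, the gap is reparable rather than fatal.

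Where your route genuinely diverges from the paper is in how the finite patterns are produced. You pass through the ergodic decomposition, build a generic point for each of finitely many ergodic components, and mix them in prescribed proportions across tiles --- which forces you to confront unique ergodicity, non-tempered F{\o}lner sequences, and the failure of pointwise genericity for non-ergodic measures. The paper sidesteps all of this: since $\nu$ is a Borel probability measure on a compact metric space, it is a weak limit of \emph{arbitrary} finitely supported measures $\frac{1}{K}\sum_{k}\delta_{y_k}$ (no invariance or ergodicity required), and restricting the points $y_k$ to a finite window already yields finitely many patterns whose averaged window-frequencies approximate those of $\nu$; this is the finitistic criterion, condition \ref{ThmFinAvsGen3} of \Cref{ThmFinAvsGen}. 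The reassembly step --- \Cref{DHZThm5.2} together with the bookkeeping of \Cref{5y4trgfdvtrgfdv}, which produces a single congruent system of tiles covering $(1-o(1))|F_N|$ of every $F_N$ --- is then essentially identical to what you describe. The paper's version buys elementarity (no ergodic decomposition, no Jewett--Krieger) and extends painlessly to the polynomial correlations of \Cref{ThmFinAvs2.0}; yours buys a more conceptual picture of where the patterns come from, at the cost of the heavier machinery you invoke.
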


In \Cref{SecRrg} we prove that \Cref{IFC} actually holds for cancellative amenable semigroups (see \Cref{IFCSemigroups}). 
%For the proof of \Cref{IFCSemigroups}, we will need a result about extensions of measure preserving semigroup actions to groups, \Cref{ExtendingActionsToGroupsNonCont}, which is of interest on its own and is proved, in full detail\footnote{The usual construction we have found in the literature of the invertible extension $(Y,\mathcal{C},\nu,S)$ of a m.p.s. $(X,\mathcal{B},\mu,T)$ starts by defining $Y=\{(x_n)_{n\in\mathbb{Z}}\in X^{\mathbb{Z}};x_{n+1}=Tx_n\}$. But there are pathological cases in which the set $Y$ is empty, like the following one provided by the user fedja in MathOverflow. Let $X=[0,\infty)$, $\mathcal{B}$ be the family of countable and cocountable subsets of $\mathbb{R}$ (with countable sets having measure $0$), and let $T:X\to X;x\mapsto x+1$. In \Cref{SecSemigroups}, we prove the result for standard Borel spaces.}, in \Cref{SecSemigroups}. 

\begin{remark}
A special case of \Cref{IFC}, which deals with the case $G=\mathbb{Z}$ and $F_N=\{1,\dots,N\}$, was obtained by Fish and Skinner in \cite[Theorem 1.4]{FS}. \Cref{IFC} answers a question of Moreira \cite[Section 6]{Mor}, which was formulated for countable abelian groups.
Another special case of \Cref{IFC} is \cite[Theorem 5.1]{BF2}, where the authors assume that the action $(T_g)_{g\in G}$ is ergodic and obtain a variant of \Cref{EqStrongFurstints} by passing to a subsequence of $(F_N)$. 
Farhangi and Tucker-Drob have independently obtained (by a different method) a version of \Cref{IFC}, and its generalization, \Cref{ThmFinAvs2.0} (see \cite[Theorem 1.2]{FT}).
\end{remark}

\begin{remark}
\label{IFCUnions}
Due to the algebraic nature of Furstenberg's correspondence principle, \Cref{IFC} admits a more general version where some of the involved sets are replaced by their complements, or some intersections are replaced by unions (a version of Furstenberg's correspondence principle dealing with unions and complements was established in \cite[Theorem 2.3]{BF}, see also \cite[Theorem 2.3]{BBF}). See \Cref{76iuyjhre5rytghf,IFCU} for more details.
\end{remark}

Our \Cref{IFC} was motivated by some open questions in the theory of van der Corput (vdC) sets, which \Cref{IFC} allows us to resolve. The notion of vdC set was introduced by Kamae and Mendès France in \cite{KM}, in connection with the theory of uniform distribution of sequences in $\mathbb{T}=\mathbb{R}/\mathbb{Z}$. Recall that a sequence $(x_n)_{n\in\mathbb{N}}$ in $\mathbb{T}$ is \emph{uniformly distributed mod $1$} (u.d. mod $1$) if for any continuous function $f:\mathbb{T}\to\mathbb{C}$ we have
\begin{equation*}
\lim_{N\to\infty}\frac{1}{N}\sum_{n=1}^Nf(x_n)=\int_{\mathbb{T}}
fdm,
\end{equation*}
where $m$ is Lebesgue measure.
\begin{definition}[{\cite[Page 1]{KM}}]\label{defequivdCZ}
A set $H\subseteq\mathbb{N}:=\{1,2,\dots\}$ is a \textit{van der Corput set} (vdC set) if, for any sequence $(x_n)_{n\in\mathbb{N}}$ in $\mathbb{T}=\mathbb{R}/\mathbb{Z}$ such that $(x_{n+h}-x_n)_{n\in\mathbb{N}}$ is u.d. mod $1$ for all $h\in H$, the sequence $(x_n)_{n\in\mathbb{N}}$ is itself u.d. mod $1$.
\end{definition}

Both the notions of u.d. mod $1$ and of vdC set naturally extend to more general averaging schemes and indeed to amenable semigroups, as suggested in \cite[Section 4.2]{BL}.

\begin{definition}
\label{DefFudmod1}
Let $F=(F_N)$ be a F{\o}lner sequence in a countable amenable group $G$. We say that a sequence $(x_g)_{g\in G}$ in $\mathbb{T}$ is \textit{$F$-u.d. mod $1$} if for any continuous function $f:\mathbb{T}\to\mathbb{\mathbb{C}}$ we have
\begin{equation*}
\lim_{N\to\infty}\frac{1}{|F_N|}\sum_{g\in F_N}f(x_g)=\int_{\mathbb{T}}fdm.
\end{equation*}
\end{definition}

For general (not necessarily abelian) groups $G$, we will denote their identity element by $e_G$, or just $e$ if the group is clear from the context.

\begin{definition}[cf. {\cite[Page 44]{BL}}]\label{DefFvdC}
Let $F=(F_N)$ be a F{\o}lner sequence in a countable amenable group $G$. We say a subset $H$ of $G\setminus\{e\}$ is $F$-vdC if any sequence $(x_g)_{g\in G}$ in $\mathbb{T}$ such that $(x_{hg}-x_g)_{g\in G}$ is $F$-u.d. mod $1$ for all $h\in H$, is itself $F$-u.d. mod $1$.
\end{definition}
In \cite[Section 4.2]{BL} the authors posed the question whether, for any F{\o}lner sequence $F$ in $\mathbb{Z}$, a subset $H\subseteq\mathbb{N}$ is $F$-vdC if and only if it is a vdC set. By using an amplified version of \Cref{IFC} (\Cref{ThmFinAvsGen}) we show that the answer is yes by giving the following characterization of $F$-vdC sets which does not depend on the F{\o}lner sequence:

\begin{theorem}\label{5trfgdtrgfd}
Let $G$ be a countably infinite amenable group with a F{\o}lner sequence $F=(F_N)$. A set $H\subseteq G\setminus\{e\}$ is $F$-vdC in $G$ if and only if for any m.p.s. $(X,\mathcal{B},\mu,(T_g)_{g\in G})$ and for any function $f\in L^\infty(\mu)$,
\begin{equation*}
\int_{X}f(T_hx)\cdot\overline{f(x)}d\mu(x)=0\text{ for all }h\in H\textup{ implies }\int_{X}fd\mu=0.
\end{equation*} 
\end{theorem}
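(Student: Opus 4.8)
The plan is to prove both implications by exploiting the dictionary between sequences in $\mathbb{T}$ and measure preserving systems, with the correspondence $f \leftrightarrow (x_g)$ realized concretely via the fact that a sequence is $F$-u.d.\ mod $1$ if and only if all its nonzero Weyl exponential sums vanish (Weyl's criterion in the $F$-averaged form), together with \Cref{IFC} to go from dynamical data back to a concrete sequence.

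\paragraph{From the spectral condition to $F$-vdC.}
First I would prove the ``if'' direction. Suppose $H$ satisfies the stated dynamical/spectral condition, and let $(x_g)_{g \in G}$ be a sequence in $\mathbb{T}$ such that $(x_{hg} - x_g)_{g \in G}$ is $F$-u.d.\ mod $1$ for every $h \in H$; we must show $(x_g)$ is $F$-u.d.\ mod $1$. By Weyl's criterion it suffices to show, for every nonzero integer $m$, that $\frac{1}{|F_N|}\sum_{g \in F_N} e^{2\pi i m x_g} \to 0$. Fix such an $m$ and set $u_g := e^{2\pi i m x_g}$, a bounded sequence of unimodular complex numbers. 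The hypothesis on the differences says exactly that $\frac{1}{|F_N|}\sum_{g \in F_N} u_{hg}\overline{u_g} \to 0$ for each $h \in H$ (again Weyl's criterion, applied to the difference sequence). Now I would build a m.p.s.\ out of $(u_g)$: the orbit closure of $(u_{\cdot g})_{g}$ inside $(\overline{\mathbb{D}})^G$ under the shift, equipped with a weak-$*$ limit point of the empirical measures $\frac{1}{|F_N|}\sum_{g \in F_N}\delta_{\text{shift}^g(u)}$ along a subsequence of $(F_N)$ for which all the relevant averages converge (this is the standard Furstenberg construction; one passes to a subsequence $F_{N_k}$ along which $\frac{1}{|F_{N_k}|}\sum_{g\in F_{N_k}} \prod_j u_{h_j g}$ converges for all finite tuples, using a diagonal argument over the countable set $G$). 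Letting $f$ be the zeroth coordinate function on this system, one gets $\int f(T_h x)\overline{f(x)}\, d\mu = \lim_k \frac{1}{|F_{N_k}|}\sum_{g} u_{hg}\overline{u_g} = 0$ for all $h\in H$, so by hypothesis $\int f\, d\mu = 0$, i.e.\ $\lim_k \frac{1}{|F_{N_k}|}\sum_{g\in F_{N_k}} u_g = 0$. Since this holds for every subsequence along which the limit exists, the full limit is $0$, giving the claim.

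\paragraph{From $F$-vdC to the spectral condition.}
For the ``only if'' direction, suppose $H$ is $F$-vdC, and let $(X,\mathcal{B},\mu,(T_g))$ be a m.p.s.\ and $f \in L^\infty(\mu)$ with $\int f(T_h x)\overline{f(x)}\, d\mu = 0$ for all $h \in H$; we must show $\int f \, d\mu = 0$. Here is where \Cref{IFC} enters: I want to manufacture a sequence $(x_g)$ in $\mathbb{T}$ whose exponential sums mirror the correlations of $f$. The cleanest route is to reduce to the case where $f$ itself is (essentially) of the form $e^{2\pi i \phi}$ for a measurable $\phi : X \to \mathbb{T}$ — but in general $f$ need not be unimodular, so instead I would use the version of \Cref{IFC} that controls not just intersections of sets but finite-valued ``averages'' (the paper's \Cref{ThmFinAvsGen} / \Cref{ThmFinAvs2.0}, advertised in the introduction as the amplified form), which produces a sequence $(x_g)_{g\in G}$ in $\mathbb{T}$ together with the guarantee that for all tuples $h_1,\dots,h_k$ and all integers $m_1,\dots,m_k$, the $F$-average $\frac{1}{|F_N|}\sum_{g\in F_N} \prod_j e^{2\pi i m_j x_{h_j g}}$ converges to $\int \prod_j g_{m_j}(T_{h_j} x)\, d\mu$ for the appropriate functions $g_{m_j}$ built from $f$. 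Applying this with the right normalization, the differenced sequence $(x_{hg}-x_g)$ has all nonzero Weyl sums equal to the corresponding correlation $\int f(T_h x)\overline{f(x)}\,d\mu = 0$, hence is $F$-u.d.\ mod $1$ for each $h \in H$; since $H$ is $F$-vdC, $(x_g)$ is $F$-u.d.\ mod $1$, so its first Weyl sum vanishes, which translates back to $\int f \, d\mu = 0$. The main obstacle, and the place requiring genuine care, is exactly this step: arranging, via the amplified inverse correspondence principle, a single sequence $(x_g)$ in $\mathbb{T}$ that simultaneously encodes $\int f(T_h x)\overline{f(x)}\, d\mu$ for all $h$ (and not merely the $\{0,1\}$-valued indicator correlations that the bare \Cref{IFC} gives), and checking that the reduction from a general $f\in L^\infty$ to such a ``circle-valued'' sequence is legitimate — one may first normalize $\|f\|_\infty \le 1$ and approximate, or invoke the finite-valued version directly on the joint distribution of $f$ under the action.
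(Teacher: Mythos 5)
Your ``if'' direction is correct and is essentially the paper's route: fix $m\neq 0$, set $u_g=e^{2\pi i m x_g}$, use Weyl's criterion (\Cref{WeylCrit}) to translate the hypothesis on the difference sequences into vanishing correlations of $(u_g)$, run the forward Furstenberg construction along a subsequence to get a m.p.s.\ and an $f$ with $\int f(T_hx)\overline{f(x)}\,d\mu=0$ for all $h\in H$, and conclude that every subsequential limit of the averages of $(u_g)$ is $0$. This matches the paper's chain \ref{ThmvdC4}$\implies$\ref{ThmvdC2}$\implies$\ref{ThmvdC2'}$\implies$\ref{ThmvdC1} in \Cref{ThmvdC}, just compressed.

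The ``only if'' direction has a genuine gap, and it is exactly at the spot you flag as ``the main obstacle'' without resolving it. Suppose the dynamical condition fails: \Cref{ThmFinAvs2.0} with $D=\mathbb{D}$ hands you a $\mathbb{D}$-valued sequence $(z_g)$ with $\lim_N\frac{1}{|F_N|}\sum_{g\in F_N}z_{hg}\overline{z_g}=0$ for all $h\in H$ and $\lim_N\frac{1}{|F_N|}\sum_{g\in F_N}z_g\neq 0$. To contradict the definition of $F$-vdC you need a sequence $(x_g)$ in $\mathbb{T}$ such that each $(x_{hg}-x_g)_g$ is genuinely $F$-u.d.\ mod $1$, which by Weyl's criterion requires $\lim_N\frac{1}{|F_N|}\sum_{g\in F_N}u_{hg}^{\,l}\overline{u_g^{\,l}}=0$ for \emph{all} $l\neq 0$, where $u_g=e^{2\pi i x_g}$. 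Your claim that ``the differenced sequence has all nonzero Weyl sums equal to the corresponding correlation $\int f(T_hx)\overline{f(x)}\,d\mu=0$'' is false as stated: the hypothesis only kills the $l=1$ correlation, and a general $f\in L^\infty$ is not unimodular, so there are no canonical higher-power correlations to appeal to; neither normalizing $\|f\|_\infty\leq 1$ nor approximating addresses this. The paper's resolution is Ruzsa's randomization: replace each $z_{a,k}\in\mathbb{D}$ by an independent $\mathbb{S}^1$-valued random variable with density $z\mapsto 1+\mathrm{Re}(z\overline{z_{a,k}})$, so that $\mathbb{E}(\xi_{a,k})=z_{a,k}/2$ while $\mathbb{E}(\xi_{a,k}^n)=0$ for $|n|\geq 2$; the law of large numbers over many independent copies then produces unimodular data whose first correlations are $\tfrac14$ times the old ones (still small), whose higher-power correlations vanish, and whose average is $\tfrac12$ times the old one (still nonzero). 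Making this work forces one to pass through the finitistic characterization (\ref{ThmvdC3} of \Cref{ThmvdC}), since the averaging over independent copies happens at the level of finite blocks before reassembling a single sequence via \Cref{ThmFinAvs2.0}. Without this step (or an equivalent device) your argument does not close.
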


It is worth mentioning that the condition given in \Cref{5trfgdtrgfd} makes sense for any countable (not necessarily amenable) group\footnote{The definition makes sense for any (discrete) group, even if it is not countable. We will not be interested in uncountable discrete groups in this article, but many of the properties of vdC sets generalize to this setting.}. This leads to the following general definition:
\begin{definition}\label{DefGvdC}
Let $G$ be a countable group. We will say that $H\subseteq G\setminus\{e\}$ is \textit{vdC in $G$} if,
for every m.p.s. $(X,\mathcal{B},\mu,(T_g)_{g\in G})$ and every $f\in L^\infty(\mu)$,
\begin{equation*}
\int_{X}f(T_hx)\cdot\overline{f(x)}d\mu(x)=0\text{ for all }h\in H\text{ implies }\int_{X}fd\mu=0.
\end{equation*} 
\end{definition}

From \Cref{DefGvdC} it follows that any vdC set in a countable group $G$ is of (measurable) recurrence, in the sense that for every m.p.s. $(X,\mathcal{B},\mu,(T_g)_{g\in G})$ and for every $B\in\mathcal{B}$ such that $\mu(B)>0$, we have $\mu(B\cap T_hB)>0$ for some $h\in H$. Indeed, if instead of allowing any $f\in L^\infty(\mu)$ we restrict our attention to characteristic functions (or positive functions), \Cref{DefGvdC} becomes the definition of set of recurrence. This supports the idea implied by \cite[Section 3.2]{BL} that sets of recurrence are a `positive version' of vdC sets.

Taking definition \Cref{DefGvdC} as the starting point, we will prove in \Cref{SecProps}, with the help of \Cref{5trfgdtrgfd}, several properties of vdC sets. For example, we show that the family of vdC sets in a countable group $G$ has the partition regularity property, that is, if $H\subseteq G$ is a vdC set and $H=H_1\cup H_2$, then either $H_1$ or $H_2$ is a vdC set. 
We also prove that any vdC set in an amenable group contains two disjoint vdC sets. And finally, we study the behaviour of vdC sets in subgroups and under group homomorphisms and give some (non-)examples of vdC sets. 
Some of the results of \Cref{SecProps} are generalizations of results which were obtained for $\mathbb{Z}$ and $\mathbb{Z}^d$ in \cite{Ru} and \cite{BL}.

In \Cref{SpeCrit}, which will be presently formulated, we give a spectral characterization of vdC subsets of any countable abelian group $G$ 
(another, very elegant, proof of the result can be found in \cite{FT}). 
This provides a generalization of similar results obtained in \cite{KM,BL,Ru} for vdC subsets of $\mathbb{Z}$ or $\mathbb{Z}^d$ for $d\geq2$ (see e.g. \cite[Theorem 1.8.]{BL}). 
Spectral characterizations (in the setup of abelian groups) are useful both for proving properties of vdC sets and for finding (non-)examples of them. For example, Bourgain used in \cite{Bo} a version of \Cref{SpeCrit} for $G=\mathbb{Z}$ to construct a set of recurrence which is not a vdC set.

Given a discrete, countable abelian group $G$ we denote by $0$ (instead of $e$) the identity element of $G$ and by $\widehat{G}$ its Pontryagin dual (which is compact and metrizable, see \cite[Theorems 1.2.5, 2.2.6]{RuFA}), with identity $1_{\widehat{G}}$. For any Borel probability measure $\mu$ in $\widehat{G}$ we denote the Fourier coefficients of $\mu$ by $\widehat{\mu}(h)=\int_{\widehat{G}}\gamma(h)d\mu(\gamma),h\in G$.

\begin{theorem}[{cf. \cite[Theorem 1.8]{BL}}]\label{SpeCrit}
Let $G$ be a countable abelian group. A set $H\subseteq G\setminus\{0\}$ is a vdC set in $G$ iff any Borel probability measure $\mu$ in $\widehat{G}$ with $\widehat{\mu}(h)=0\;\forall h\in H$ satisfies $\mu(\{0\})=0$.
\end{theorem}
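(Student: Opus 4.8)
The plan is to prove the equivalence through the ergodic-theoretic description of vdC sets supplied by \Cref{5trfgdtrgfd} (equivalently, \Cref{DefGvdC}): $H$ is vdC in $G$ iff for every m.p.s.\ $(X,\mathcal B,\mu_X,(T_g)_{g\in G})$ and every $f\in L^\infty(\mu_X)$, the vanishing of $c_f(g):=\int_X f(T_gx)\overline{f(x)}\,d\mu_X(x)$ on $H$ forces $\int_X f\,d\mu_X=0$ (I write $\mu_X$ for the measure of the system, keeping $\mu$ free for measures on $\widehat G$). The harmonic-analytic bridge I will use is standard: since $G$ is abelian, $g\mapsto (f\mapsto f\circ T_g)$ is a unitary representation of $G$ on $L^2(\mu_X)$, so $c_f$ is positive definite on $G$ with $c_f(0)=\|f\|_2^2$; by the Bochner--Herglotz theorem for discrete abelian groups there is a unique finite positive Borel measure $\sigma_f$ on $\widehat G$ with $\widehat{\sigma_f}=c_f$, and $\sigma_f(\{1_{\widehat G}\})=\|Pf\|_2^2$, where $P$ is the orthogonal projection of $L^2(\mu_X)$ onto the subspace of $(T_g)$-invariant functions (by the mean ergodic theorem together with the fact that a nontrivial character of $G$ averages to $0$ along any F{\o}lner sequence, or directly from the spectral theorem). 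Since the constant function $1$ is invariant, $\int_X f\,d\mu_X=\langle f,1\rangle=\langle Pf,1\rangle$.

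With this in hand the implication ``spectral condition $\Rightarrow$ vdC'' is immediate: if $f\in L^\infty(\mu_X)$ satisfies $c_f(h)=0$ for all $h\in H$ and $f\ne 0$, then $\sigma_f/\|f\|_2^2$ is a Borel probability measure on $\widehat G$ all of whose Fourier coefficients on $H$ vanish, so the hypothesis gives $\sigma_f(\{1_{\widehat G}\})=0$, i.e.\ $Pf=0$, whence $\int_X f\,d\mu_X=\langle Pf,1\rangle=0$; the case $f=0$ is trivial.

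For the converse I will argue by contraposition, and the work is in choosing the right measure-preserving system. Suppose $\mu$ is a Borel probability measure on $\widehat G$ with $\widehat\mu(h)=0$ for all $h\in H$ and $a:=\mu(\{1_{\widehat G}\})>0$ (the quantity written $\mu(\{0\})$ in the statement). Let $\mathbb T=\mathbb R/\mathbb Z$ with Haar measure $m$, identified with $S^1\subseteq\mathbb C$ via $t\mapsto e^{2\pi it}$, and for $g\in G$, $\gamma\in\widehat G$ let $\ell_g(\gamma)\in\mathbb T$ be $\gamma(g)$ read through this identification. Because each $\gamma$ is a homomorphism, $\ell_{g_1g_2}=\ell_{g_1}+\ell_{g_2}$, so the skew-product maps $T_g(\gamma,t):=(\gamma,\,t+\ell_g(\gamma))$ form a measure-preserving action of $G$ on $(X,\mathcal B,\mu\times m)$ with $X:=\widehat G\times\mathbb T$, and this is a m.p.s. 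Define $f\in L^\infty(\mu\times m)$, $\|f\|_\infty\le 1+\sqrt a$, by
\[
f(\gamma,t):=\sqrt a\;+\;e^{2\pi it}\,\mathbf 1_{\{\gamma\ne 1_{\widehat G}\}}.
\]
Writing $f=\sqrt a\cdot 1+h$ with $h(\gamma,t)=e^{2\pi it}\mathbf 1_{\{\gamma\ne1_{\widehat G}\}}$, one has $(h\circ T_g)(\gamma,t)=e^{2\pi it}\gamma(g)\mathbf 1_{\{\gamma\ne1_{\widehat G}\}}$; integrating $\overline f\cdot(f\circ T_g)$ first in $t$ (using $\int_{\mathbb T}e^{2\pi it}\,dm=0$ and $\int_{\mathbb T}|e^{2\pi it}|^2\,dm=1$) kills all cross-terms, the constant part contributes $a$, and the remaining term is $\int_{\widehat G\setminus\{1_{\widehat G}\}}\gamma(g)\,d\mu(\gamma)=\widehat\mu(g)-a$, so
\[
c_f(g)=a+\bigl(\widehat\mu(g)-a\bigr)=\widehat\mu(g),
\]
hence $c_f(h)=0$ for every $h\in H$; meanwhile $\int_X f\,d(\mu\times m)=\sqrt a+\bigl(\mu(\widehat G\setminus\{1_{\widehat G}\})\bigr)\cdot\bigl(\int_{\mathbb T}e^{2\pi it}\,dm\bigr)=\sqrt a\ne 0$. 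Thus $H$ is not vdC, as required.

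I do not expect a genuine obstacle once this system is found: the first paragraph is classical harmonic analysis on the compact dual, and the last display is a routine computation. The creative step is noticing that the cocycle $\ell_g(\gamma)$ ``$=\log\gamma(g)$'' turns the inert pair $(\widehat G,\mu)$ into an honest $G$-system on which the single bounded function $e^{2\pi it}$ has spectral measure exactly $\mu$, and that excising the atom at $1_{\widehat G}$ through the factor $\mathbf 1_{\{\gamma\ne1_{\widehat G}\}}$ and reinserting it as the constant $\sqrt a$ produces correlations vanishing on all of $H$ while keeping the integral nonzero. (The apparent difficulty that an atomic measure on $\widehat G$ need not be the spectral measure of any \emph{bounded} function on a compact group rotation does not arise, because the eigenfunctions $\mathbf 1_{\{\gamma=\chi\}}e^{2\pi it}$ of this skew product have pairwise disjoint supports.)
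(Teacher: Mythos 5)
Your proof is correct, but it takes a genuinely different route from the paper in both directions. For ``spectral condition $\Rightarrow$ vdC'' the paper adapts the sequence-level argument of Bergelson--Lesigne: it passes through Ces\`aro averages, a van der Corput-type inequality $\limsup_N\frac{1}{|F_N|}\left|\sum_{g\in F_N}u_g\right|\leq\sqrt{\sigma(\{0\})}$ for a measure $\sigma$ representing the limiting correlations, and a weak-convergence/affinity lemma adapted from Coquet--Kamae--Mend\`es France. You instead work directly with \Cref{DefGvdC} at the level of the measure preserving system, via Bochner--Herglotz and the identity $\sigma_f(\{1_{\widehat{G}}\})=\|Pf\|_2^2$ coming from the mean ergodic theorem; this is shorter, self-contained, and bypasses the sequence machinery entirely (you do not even need \Cref{5trfgdtrgfd}, since \Cref{DefGvdC} is already the ergodic-theoretic definition). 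For ``vdC $\Rightarrow$ spectral condition'' the paper approximates $\mu$ weakly by finitely supported combinations of Dirac masses at characters and shows that the finitistic criterion (\Cref{ThmvdC}, condition 5) fails; you instead exhibit an explicit witness: the skew product $T_g(\gamma,t)=(\gamma,t+\ell_g(\gamma))$ on $\widehat{G}\times\mathbb{T}$ with the function $f(\gamma,t)=\sqrt{a}+e^{2\pi it}\mathbf{1}_{\{\gamma\neq 1_{\widehat{G}}\}}$, whose correlation sequence is exactly $\widehat{\mu}$ while $\int f=\sqrt{a}\neq 0$. I checked the computation of $c_f(g)=\widehat{\mu}(g)$ and the measure-preservation of the skew product; both are fine, and the trick of excising the atom and reinserting it as the constant $\sqrt{a}$ is exactly what makes the correlations match $\widehat{\mu}$ on all of $G$. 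What the paper's route buys is consistency with its finitistic toolkit (\Cref{ThmFinAvsGen}) and avoidance of any spectral-theoretic input; what your route buys is a conceptually transparent two-line proof of one direction and an explicit counterexample system in the other.
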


The structure of the article is as follows. 
In \Cref{SecIntro2} we use \Cref{IFC} (and a more general version of it) to answer some questions from \cite{BL,BF,Mor}. 
In \Cref{SecRrg} we first prove \Cref{ThmFinAvsGen}, an amplified version of \Cref{IFC}. 
We also obtain \Cref{IFCSemigroups}, a general version of \Cref{IFC} for cancellative amenable semigroups. 
In \Cref{SecCharact} we prove \Cref{ThmvdC}, an amplified version of \Cref{5trfgdtrgfd} which contains several characterizations of vdC sets. 
In \Cref{SecSpect} we establish a spectral characterization of vdC sets in countable abelian groups, \Cref{SpeCrit}. In \Cref{SecProps} we prove fundamental properties of vdC sets in amenable groups, such as for example partition regularity. 
The main result in \Cref{SecConv} is \Cref{564rtyfgdv}, which concerns the relationship between the set of possible Cesaro averages of sequences taking values in a compact set $D\subseteq\mathbb{C}$, and in the convex hull of $D$. We deduce several results from \Cref{564rtyfgdv}, including an affirmative answer to a question of Kelly and Lê.%Finally, in \Cref{SecSemigroups} we prove a general theorem extending measure preserving semigroup actions to group actions, which we use to prove \Cref{IFCSemigroups}.

\paragraph{Acknowledgements. } 

Thanks to Vitaly Bergelson for his guidance while writing this article, and for some interesting discussions and suggestions.

Several months before uploading this article, it came to our attention that Sohail Farhangi and Robin Tucker-Drob had been independently studying the topic of vdC sets. Their paper \cite{FT} contains a long list of characterizations of vdC sets, including the ones from \Cref{5trfgdtrgfd} and \Cref{SpeCrit}. 

We appreciate Farhangi’s input, suggestions, and quick review of the paper. He brought \cite[Theorem 5.2]{DHZ} to our attention, which allowed us to simplify the proof of \Cref{SpeCrit} and to state \Cref{ThmFinAvsGen} for all amenable groups instead of only monotileable ones. He also noticed how \Cref{564rtyfgdv} can be used to answer a question of Kelly and Lê.

We gratefully acknowledge support from the grants BSF 2020124 and NSF CCF AF 2310412.
\section{Some applications of the inverse correspondence principle}
\label{SecIntro2}

In this section we use \Cref{IFC} (and a more general result, \Cref{ThmFinAvs2.0}), to answer several questions from the literature. 
We first answer\footnote{S. Farhangi proved in his dissertation \cite{Fa} that every nice vdC set is a set of nice recurrence for the F{\o}lner sequence  $F_N = \{1, \dots, N\}$, thereby addressing Bergelson and Lesigne’s original question. We give a different proof and generalize this result to amenable groups.} a question of Bergelson and Lesigne in the general context of countable amenable groups, by proving that every nice vdC set is a set of nice recurrence. The second of these two notions was introduced in \cite{Be2} for subsets of $\mathbb{Z}$, although we use the slightly different definition given in \cite{BL} (we check that both are equivalent in \Cref{refwds54rtefdvhbdhtt}).

\begin{definition}[cf. {\cite[Definition 2.2]{Be2}}]\label{DefNiceRec}
Let $G$ be a group. A subset $H$ of $G$ is a \textit{set of nice recurrence} if for any m.p.s. $(X,\mathcal{B},\mu,(T_g)_{g\in G})$ and for any $B\in\mathcal{B}$,
\begin{equation*}
\label{3t4ouerifsfdjf}
\mu(B)^2\leq\limsup_{h\in H}\mu(B\cap T_hB).\footnote{Here we adopt the notation $\limsup_{h\in H}x_h
:=
\inf_{H_0\subseteq H\text{ finite}}\sup_{h\in H\setminus H_0}x_h$.}
\end{equation*}
\end{definition}
In order to motivate the definition of nice vdC sets, we first state a theorem of Ruzsa which characterizes vdC sets in terms of Cesaro averages:

\begin{theorem}[{cf. \cite[Theorem 1]{Ru}}]\label{defvcDZ}
A set $H\subseteq\mathbb{N}$ is a vdC set iff for any sequence $(z_n)_{n\in\mathbb{N}}$ of complex numbers with $|z_n|\leq1$ for all $n$,
\begin{equation*}
\quad\lim_{N\to\infty}\frac{1}{N}\sum_{n=1}^Nz_{h+n}\overline{z_n}=0\textup{ for all }h\in H\textup{ implies }
\lim_{N\to\infty}\frac{1}{N}\sum_{n=1}^Nz_n=0.
\end{equation*}
\end{theorem}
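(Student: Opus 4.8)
This is Ruzsa's characterisation, and the plan is to prove the two implications separately: the ``if'' direction is short and uses only Weyl's criterion, while the ``only if'' direction — a vdC set satisfies the Cesàro condition — carries all the weight. For the ``if'' direction I would assume $H$ satisfies the stated Cesàro condition, take a sequence $(x_n)$ in $\mathbb{T}$ with $(x_{n+h}-x_n)_n$ u.d. mod $1$ for every $h\in H$, fix $k\in\mathbb{Z}\setminus\{0\}$, and set $z_n:=e^{2\pi i kx_n}$; then $|z_n|=1$ and $z_{n+h}\overline{z_n}=e^{2\pi i k(x_{n+h}-x_n)}$, so Weyl's criterion applied to $(x_{n+h}-x_n)_n$ gives $\frac1N\sum_{n\le N}z_{n+h}\overline{z_n}\to 0$ for each $h\in H$. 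The hypothesis then yields $\frac1N\sum_{n\le N}e^{2\pi i kx_n}\to 0$, and since $k\neq 0$ was arbitrary Weyl's criterion shows $(x_n)$ is u.d. mod $1$; hence $H$ is a vdC set.

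For the ``only if'' direction I would argue by contradiction: assume $H$ is a vdC set, take $(z_n)$ with $|z_n|\le 1$ and $\frac1N\sum_{n\le N}z_{n+h}\overline{z_n}\to 0$ for all $h\in H$, and suppose $\frac1N\sum_{n\le N}z_n\not\to 0$. Passing to a subsequence $(N_j)$ I may assume $\frac1{N_j}\sum_{n\le N_j}z_n\to\beta\neq 0$ and, by a diagonal argument, that $\gamma(h):=\lim_j\frac1{N_j}\sum_{n\le N_j}z_{n+h}\overline{z_n}$ exists for every $h\in\mathbb{Z}$. A routine averaging computation shows $\gamma$ is positive-definite on $\mathbb{Z}$, so by Herglotz's theorem $\gamma=\widehat{\mu}$ for a finite positive Borel measure $\mu$ on $\mathbb{T}=\widehat{\mathbb{Z}}$ with $\widehat{\mu}(h)=\gamma(h)=0$ for $h\in H$ (hence for $h\in-H$, by conjugate symmetry). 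The classical van der Corput inequality then gives, for every $L\ge 1$,
\[
|\beta|^2\;\le\;\frac1L\sum_{|h|<L}\Bigl(1-\tfrac{|h|}{L}\Bigr)\gamma(h)\;=\;\frac1L\int_{\mathbb{T}}K_{L-1}\,d\mu ,
\]
where $K_{L-1}(\theta)=\tfrac1L\bigl(\sin(L\pi\theta)/\sin(\pi\theta)\bigr)^2$ is the Fejér kernel; since $\tfrac1L K_{L-1}\le 1$ everywhere and $\tfrac1L K_{L-1}\to\mathbf{1}_{\{0\}}$ pointwise, letting $L\to\infty$ (dominated convergence) gives $|\beta|^2\le\mu(\{0\})$.

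It then remains to force $\mu(\{0\})=0$, and this is the step that re-uses the vdC hypothesis. After replacing $\mu$ by $\mu+(1-\mu(\mathbb{T}))\,m$ — which changes neither the vanishing of $\widehat\mu$ on $H$ nor the atom at $0$ — I may take $\mu$ to be a probability measure on $\widehat{\mathbb{Z}}$, and then the forward implication of the spectral characterisation \Cref{SpeCrit} (``if $H$ is vdC, every probability measure on $\widehat{\mathbb{Z}}$ with $\widehat\mu|_H=0$ has $\mu(\{0\})=0$'') gives $\mu(\{0\})=0$, whence $\beta=0$, a contradiction. Equivalently, and without invoking \Cref{SpeCrit}, one would argue directly by building from such a $\mu$ a sequence in $\mathbb{T}$ whose $h$-differences are u.d. mod $1$ for every $h\in H$ but which is not itself u.d. mod $1$. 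I expect this to be the only genuine obstacle: it amounts to \emph{realising} an arbitrary finite positive measure on $\mathbb{T}$ as the correlation measure of a bounded sequence, which is precisely the non-trivial content behind \Cref{SpeCrit}; the remaining ingredients (Weyl's criterion, Herglotz's theorem, the van der Corput inequality, and the diagonal extraction) are all standard.
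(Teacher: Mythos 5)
Your ``if'' direction is correct and coincides with what the paper does in the general setting (it is the chain \ref{ThmvdC2}$\implies$\ref{ThmvdC2'}$\implies$\ref{ThmvdC1} of \Cref{ThmvdC} specialised to $G=\mathbb{Z}$, $F_N=\{1,\dots,N\}$): Weyl's criterion applied to $(e^{2\pi ikx_n})_n$ for each $k\neq0$ is all that is needed. The spectral set-up of your ``only if'' direction is also sound as far as it goes: the diagonal extraction, the positive-definiteness of $\gamma$, Herglotz, and the van der Corput/Fej\'er computation yielding $|\beta|^2\leq\mu(\{0\})$ are all correct.

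The gap is exactly the step you flag at the end, and invoking \Cref{SpeCrit} does not close it. First, \Cref{SpeCrit} is stated for ``vdC in $G$'' in the sense of \Cref{DefGvdC} (the measure-theoretic definition), whereas the $H$ in \Cref{defvcDZ} is assumed vdC in the sense of \Cref{defequivdCZ} (the uniform-distribution definition); the equivalence of these two notions is itself part of \Cref{ThmvdC} and cannot be assumed here. Second, even granting that identification, the paper's proof of the forward implication of \Cref{SpeCrit} runs through \Cref{ThmvdC}, whose equivalences subsume \Cref{defvcDZ}, so the appeal is circular within the paper's logical structure. What is genuinely required --- and what you correctly isolate as ``the only genuine obstacle'' without supplying it --- is the realisation step: from the measure $\mu$ with $\widehat{\mu}|_H=0$ and $\mu(\{0\})>0$ (or directly from the bounded sequence $(z_n)$) one must manufacture a sequence $(x_n)$ in $\mathbb{T}$ whose $h$-differences are u.d.\ mod $1$ for every $h\in H$ but which is not itself u.d.\ mod $1$. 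The paper handles precisely this point, in the general amenable setting, by Ruzsa's probabilistic trick in the proof of \ref{ThmvdC1}$\implies$\ref{ThmvdC3} of \Cref{ThmvdC}: each $z$ is replaced by independent $\mathbb{S}^1$-valued random variables with density $1+\mathrm{Re}(w\overline{z})$, whose first moment is $z/2$ and whose higher moments vanish, after which the strong law of large numbers and Weyl's criterion produce the required non-uniformly-distributed sequence. (Note the paper never proves \Cref{defvcDZ} in isolation; it cites Ruzsa and later derives the general \Cref{ThmvdC} via the finitistic criterion and the inverse correspondence principle \Cref{ThmFinAvs2.0}, rather than via Herglotz and the spectral measure.) Without this construction, or the Kamae--Mendès France argument it replaces, your proof establishes only the ``if'' direction.
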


\begin{definition}[cf. {\cite[Definition 10]{BL}}]\label{DefNicevdC}
Let $G$ be a countable amenable group with a F{\o}lner sequence $(F_N)$. A subset $H$ of $G\setminus\{e\}$ is \emph{nice $F$-vdC} if for any sequence $(z_g)_{g\in G}$ in $\mathbb{D}$, 
\begin{equation*}
\limsup_N\left|\frac{1}{|F_N|}\sum_{g\in F_N}z_g\right|^2
\leq
\limsup_{h\in H}
\limsup_N\left|\frac{1}{|F_N|}
\sum_{g\in F_N}z_{hg}\overline{z_g}\right|
\end{equation*}
\end{definition}

\Cref{defvcDZ} implies that in $\mathbb{Z}$ (or $\mathbb{N}$), nice vdC sets are vdC sets. This result is true for any amenable group, as implied by \Cref{5trfgdtrgfd} and \Cref{CharactNiceVdC} below. Note that \Cref{DefNicevdC} and \Cref{DefNiceRec} are expressed in different settings: \Cref{DefNicevdC} is about Cesaro averages of sequences of complex numbers, while \Cref{DefNiceRec} is about integrals. We now translate each of these definitions to the setting of the other one:

\begin{prop}\label{trefdvrefds}
Let $G$ be a countable amenable group with a F{\o}lner sequence $F=(F_N)$. Then a subset $H\subseteq G$ is a set of nice recurrence iff for any $E\subseteq G$ we have
\begin{equation*}
\overline{d_F}(E)^2\leq\limsup_{h\in H}\overline{d_F}(E\cap hE).
\end{equation*}
\end{prop}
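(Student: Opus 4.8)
The plan is to prove both implications by relating the measure-theoretic statement of \Cref{DefNiceRec} to the combinatorial density statement, using \Cref{IFC} in one direction and \Cref{FCP} in the other.

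For the ``only if'' direction, suppose $H$ is a set of nice recurrence, and let $E\subseteq G$ be arbitrary. We want $\overline{d_F}(E)^2\leq\limsup_{h\in H}\overline{d_F}(E\cap hE)$. First I would pass to a subsequence $(F_{N_k})$ of $(F_N)$ along which $\frac{|E\cap F_{N_k}|}{|F_{N_k}|}$ converges to $\overline{d_F}(E)$; by a diagonal argument one can further refine so that $d_{F'}(hE\cap E)$ exists for every $h\in H$ (using that $G$ is countable), where $F'$ is the refined subsequence. Now apply \Cref{FCP} to $E$ with the F{\o}lner sequence $F'$: this yields a m.p.s. $(X,\mathcal{B},\mu,(T_g))$ and $B\in\mathcal{B}$ with $\mu(B)=\overline{d}_{F'}(E)=\overline{d_F}(E)$ (since we chose $F'$ so the limit equals the original $\limsup$), and $\mu(B\cap T_hB)=d_{F'}(E\cap hE)$ for every $h\in H$. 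Applying the definition of nice recurrence to this system gives $\mu(B)^2\leq\limsup_{h\in H}\mu(B\cap T_hB)=\limsup_{h\in H}d_{F'}(E\cap hE)\leq\limsup_{h\in H}\overline{d_F}(E\cap hE)$, which is what we want.

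For the ``if'' direction, suppose the density inequality holds for all $E\subseteq G$; we must show $H$ is a set of nice recurrence. Let $(X,\mathcal{B},\mu,(T_g))$ be any m.p.s. and $B\in\mathcal{B}$. Apply \Cref{IFC} to obtain $A\subseteq G$ with $d_F(h_1A\cap\dots\cap h_kA)=\mu(T_{h_1}B\cap\dots\cap T_{h_k}B)$ for all choices of $k$ and $h_i$. In particular, taking $k=1$, $h_1=e$ gives $d_F(A)=\mu(B)$, so $\overline{d_F}(A)=\mu(B)$; and taking $k=2$, $h_1=e$, $h_2=h$ gives $d_F(A\cap hA)=\mu(B\cap T_hB)$, hence $\overline{d_F}(A\cap hA)=\mu(B\cap T_hB)$. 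Now the hypothesis applied to $E=A$ reads $\overline{d_F}(A)^2\leq\limsup_{h\in H}\overline{d_F}(A\cap hA)$, which translates directly to $\mu(B)^2\leq\limsup_{h\in H}\mu(B\cap T_hB)$.

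The main obstacle is the passage to a well-behaved subsequence in the first direction: \Cref{FCP} only asserts the equality \Cref{FCPEq} when the relevant density $d_F(h_1A\cap\dots\cap h_kA)$ actually exists, so one must carefully extract a single F{\o}lner subsequence along which the density of $E$ itself attains the $\limsup$ and simultaneously the densities $d(E\cap hE)$ exist for all $h\in H$. Since $H$ is countable this is a routine diagonalization, but one must be a little careful that refining the subsequence does not decrease the relevant $\limsup$ on the right-hand side — it only can decrease or preserve it, which is the correct direction for the inequality we are proving. A secondary minor point is checking that $\overline{d_F}$ of an intersection with the refined sequence is bounded above by $\overline{d_F}$ with the original sequence, which is immediate since a subsequence of a $\limsup$ is at most the original $\limsup$.
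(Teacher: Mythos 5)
Your proposal is correct and follows essentially the same route as the paper: both directions hinge on extracting a F{\o}lner subsequence along which $d_{F'}(E)$ attains $\overline{d_F}(E)$ and $d_{F'}(E\cap hE)$ exists for all $h\in H$, then invoking \Cref{FCP} for one implication and \Cref{IFC} for the other. The only cosmetic difference is that the paper phrases both implications contrapositively while you argue directly, which changes nothing of substance.
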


\begin{prop}\label{CharactNiceVdC}
Let $G$ be a countable amenable group with a F{\o}lner sequence $(F_N)$. Then a subset $H$ of $G\setminus\{e\}$ is nice $F$-vdC iff for any m.p.s. $(X,\mathcal{B},\mu,(T_g)_{g\in G})$ and any $f\in L^\infty(X,\mu)$ we have
\begin{equation*}
\left|\int_Xfd\mu\right|^2
\leq
\limsup_{h\in H}\left|\int_Xf(T_hx)\overline{f(x)}d\mu(x)\right|.
\end{equation*}
\end{prop}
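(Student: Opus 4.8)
The plan is to prove both implications by translating between sequences $(z_g)_{g\in G}$ in $\mathbb{D}$ and data $(X,\mathcal{B},\mu,(T_g),f)$ with $\|f\|_\infty\le 1$, using the inverse correspondence principle in its amplified form (\Cref{ThmFinAvsGen}, or already \Cref{IFC}) in one direction and a Furstenberg-type construction in the other. The key observation is that if $(z_g)$ takes values in $\mathbb{D}$ then both sides of the inequality in \Cref{DefNicevdC} are, up to replacing the $\limsup_N$ by a convergent subsequence, of the form appearing in \Cref{CharactNiceVdC}: the Cesàro averages $\frac{1}{|F_N|}\sum_{g\in F_N} z_{hg}\overline{z_g}$ are precisely the "correlation coefficients" that the correspondence principle matches with integrals $\int_X f(T_h x)\overline{f(x)}\,d\mu$.

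\emph{From the ergodic statement to nice $F$-vdC.} Assume the integral inequality holds for every m.p.s.\ and every $f\in L^\infty$, and let $(z_g)_{g\in G}$ be a sequence in $\mathbb{D}$. Pass to a subsequence $(F_{N_k})$ of the F{\o}lner sequence along which all the relevant averages converge: first $\frac{1}{|F_{N_k}|}\sum_{g\in F_{N_k}} z_g$ converges to some $c$ with $|c|^2$ equal to the left-hand $\limsup$, and (by a diagonal argument over the countably many $h\in G$) also $\frac{1}{|F_{N_k}|}\sum_{g\in F_{N_k}} z_{hg}\overline{z_g}$ converges for every $h$. Build from $(z_g)$ a m.p.s.\ on a shift space carrying a function $f$ whose correlations $\int f(T_h x)\overline{f(x)}\,d\mu$ equal these limits and with $\int f\,d\mu = c$; this is a standard Furstenberg correspondence construction (one realizes $(z_g)$ as a point in $\mathbb{D}^G$, takes the orbit closure under the $G$-shift, and uses a weak-* limit of the empirical measures along $(F_{N_k})$ to produce $\mu$, with $f$ the evaluation-at-$e$ coordinate). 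Applying the hypothesis to this system gives $|c|^2\le \limsup_{h\in H}|\int f(T_hx)\overline{f(x)}d\mu|\le \limsup_{h\in H}\limsup_N|\frac{1}{|F_N|}\sum_{g\in F_N} z_{hg}\overline{z_g}|$, which is exactly the nice $F$-vdC inequality.

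\emph{From nice $F$-vdC to the ergodic statement.} Conversely, assume $H$ is nice $F$-vdC and let $(X,\mathcal{B},\mu,(T_g)_{g\in G})$ be a m.p.s.\ and $f\in L^\infty$, which after scaling we may take with $\|f\|_\infty\le 1$. Here I invoke \Cref{ThmFinAvsGen} (the amplified \Cref{IFC}), which produces, for this system and the finitely — indeed here countably, handled by a diagonal/exhaustion argument — many relevant functions, a point whose orbit averages along $F$ realize the integrals $\int f(T_h x)\overline{f(x)}\,d\mu$ and $\int f\,d\mu$ as genuine limits of Cesàro averages of an associated $\mathbb{D}$-valued sequence $(z_g)$. (Concretely: represent $f$ via \Cref{IFC}-type data so that $z_g := $ the transported value of $f$ along the constructed point has $d_F$-averages of $z_{hg}\overline{z_g}$ equal to $\int f(T_hx)\overline{f(x)}d\mu$ and $d_F$-average of $z_g$ equal to $\int f d\mu$.) Then the nice $F$-vdC inequality applied to $(z_g)$ reads $|\int f d\mu|^2 \le \limsup_{h\in H}|\int f(T_hx)\overline{f(x)}d\mu|$, as desired. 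The main obstacle is making the correspondence handle \emph{all} the auxiliary sequences $(z_{hg}\overline{z_g})_{h\in H}$ simultaneously (not just finitely many $h$ at a time) so that the $\limsup_{h\in H}$ on the right is genuinely controlled; this is exactly what the amplified inverse correspondence principle \Cref{ThmFinAvsGen} is designed to supply, reducing the issue to a diagonalization over the countable group $G$.
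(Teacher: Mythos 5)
Your proposal is correct and follows essentially the same route as the paper: both directions are handled by the two halves of the amplified inverse correspondence principle (\Cref{ThmFinAvs2.0}/\Cref{ThmFinAvsGen}) — the shift-space/weak-* construction to pass from a $\mathbb{D}$-valued sequence (along a subsequence of $(F_N)$ realizing the relevant $\limsup$s) to a m.p.s., and the inverse direction to realize the integrals $\int f\,d\mu$ and $\int f(T_hx)\overline{f(x)}\,d\mu$ simultaneously for all $h$ as genuine Cesàro limits of a single sequence $(z_g)$. The paper phrases both implications contrapositively, but that is only a cosmetic difference.
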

\begin{remark}
\Cref{CharactNiceVdC} implies that the notion of nice $F$-vdC set is independent of the F{\o}lner sequence.
\end{remark}

\begin{proof}[Proof of \Cref{trefdvrefds}]\leavevmode
\begin{enumerate}
    \item[$\implies$] Suppose there is some set $E\subseteq G$ such that $\overline{d_F}(E)^2>\limsup_{h\in H}\overline{d_F}(E\cap hE)$. Then there is a F{\o}lner subsequence $F'$ of $F$ such that $d_{F'}(E)$ exists, $d_{F'}(E\cap hE)$ exists for all $h$ and $d_{F'}(E)^2>\limsup_{h\in H}d_{F'}(E\cap hE)$. But by \Cref{FCP} there is some m.p.s. $(X,\mathcal{B},\mu,(T_g)_{g\in G})$ and some $B\in\mathcal{B}$ such that $\mu(B)=d_{F'}(E)$ and $\mu(B\cap T_hB)=d_{F'}(E\cap hE)$ for all $h\in G$, thus $\mu(B)^2>\limsup_{h\in H}\mu(B\cap T_hB)$.

    \item[$\impliedby$] Suppose there is a m.p.s. $(X,\mathcal{B},\mu,(T_g)_{g\in G})$ and some $B\in\mathcal{B}$ such that $\mu(B)^2>\limsup_{h\in H}\mu(B\cap T_hB)$. By \Cref{IFC} there is some set $E\subseteq G$ such that $\mu(B)=d_{F}(E)$ and $\mu(B\cap T_hB)=d_{F}(E\cap hE)$ for all $h\in G$, thus $d_{F'}(E)^2>\limsup_{h\in H}d_{F'}(E\cap hE)$.\qedhere
\end{enumerate}
\end{proof}

The proof of \Cref{CharactNiceVdC} is completely analogous to that of \Cref{trefdvrefds}, except that we will need to use a result slightly more general than \Cref{IFC}. 
Note that the sets $A,B$ from \Cref{IFC} can be identified with their characteristic functions, that is, $\{0,1\}$-valued functions. \Cref{IFC} corresponds to the specific case of \Cref{ThmFinAvs2.0} corresponding to $D=\{0,1\}$ and functions $p$ of the form $p(z_1,\dots,z_n)=\prod_{i=1}^nz_i$.
\begin{theorem}\label{ThmFinAvs2.0}
Let $G$ be a countably infinite amenable group with a F{\o}lner sequence $(F_N)$ and let $D\subseteq\mathbb{C}$ be compact. Then for any m.p.s. $(X,\mathcal{B},\mu,(T_g)_{g\in G})$ and any measurable function $f:X\to D$ there is a sequence $(z_g)_{g\in G}$ of complex numbers in $D$ such that, for all $j\in\mathbb{N}$, $h_1,\dots,h_j\in G$ and all continuous functions $p:D^j\to\mathbb{C}$,
\begin{equation}\label{r4fdfrdedfdfsdsf}
\lim_N\frac{1}{|F_N|}\sum_{g\in F_N}p(z_{h_1g},\dots,z_{h_jg})=\int_Xp(f(T_{h_1}x),\dots,f(T_{h_j}x))d\mu.
\end{equation}
Conversely, given a sequence $(z_g)_{g\in G}$ in $D$ there is a m.p.s. $(X,\mathcal{B},\mu,(T_g)_{g\in G})$ and a measurable function $f:X\to D$ such that, for all $j\in\mathbb{N},h_1,\dots,h_j\in G$ and $p:D^j\to\mathbb{C}$ continuous, \Cref{r4fdfrdedfdfsdsf} holds if the left hand side limit exists.
\end{theorem}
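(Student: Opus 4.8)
The plan is to prove \Cref{ThmFinAvs2.0} in two directions, with the forward direction (building a sequence from a m.p.s.) being the substantial one and the converse being a fairly standard compactness/weak-* argument.

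\textbf{The converse direction.} Given $(z_g)_{g\in G}$ with values in the compact set $D$, I would build the m.p.s. as a subshift. Let $X = D^G$ with the product topology (compact metrizable) and let $G$ act by the shift $(T_h\omega)(g) = \omega(gh)$ — one must check the action convention $T_s\circ T_t = T_{st}$ is respected with this formula or an appropriate variant. Define $f:X\to D$ by $f(\omega)=\omega(e)$, so that $f(T_h\omega) = \omega(h)$, and consider the point $\omega_0 = (z_g)_{g\in G}\in X$; then $f(T_h T_g\omega_0) = \omega_0(hg) = z_{hg}$ along the orbit. The empirical measures $\mu_N = \frac{1}{|F_N|}\sum_{g\in F_N}\delta_{T_g\omega_0}$ are probability measures on the compact space $X$; by weak-* compactness pass to a subsequence $\mu_{N_k}\to\mu$, and then $\mu$ is $G$-invariant because $(F_N)$ is a F\o lner sequence (the standard averaging argument). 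For any $j$, $h_1,\dots,h_j$ and continuous $p:D^j\to\mathbb{C}$, the function $\omega\mapsto p(f(T_{h_1}\omega),\dots,f(T_{h_j}\omega))$ is continuous on $X$, so $\int p\,d\mu_{N_k}\to\int p\,d\mu$; but $\int p\,d\mu_N$ is exactly the left-hand side average in \Cref{r4fdfrdedfdfsdsf}, so when the full limit exists it must equal $\int_X p(f(T_{h_1}x),\dots,f(T_{h_j}x))\,d\mu$.

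\textbf{The forward direction.} This is where the inverse correspondence philosophy does real work, and I expect it to be the main obstacle. The idea is to discretize $f$ and reduce to the already-available \Cref{IFC}. Since $D$ is compact, for each $m$ choose a finite partition of $D$ into Borel pieces of diameter $<1/m$ and pick representatives, giving a simple function $f_m:X\to D$ with $\|f-f_m\|_\infty<1/m$ taking finitely many values $d^{(m)}_1,\dots,d^{(m)}_{r_m}$. Writing $B^{(m)}_i = f_m^{-1}(d^{(m)}_i)$, a partition of $X$, I would like to produce a sequence $(z^{(m)}_g)$ in $\{d^{(m)}_1,\dots,d^{(m)}_{r_m}\}$ that matches \emph{all} the joint distributions of the indicators $1_{B^{(m)}_i}$ under the action — i.e. a finite-alphabet version of \Cref{r4fdfrdedfdfsdsf}. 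The cleanest route is to invoke the union/complement-enhanced version of the inverse principle alluded to in \Cref{IFCUnions} (or simply to run the proof of \Cref{IFC} with a finite measurable partition in place of a single set $B$), which yields a function $G\to\{1,\dots,r_m\}$, equivalently $z^{(m)}:G\to D$, with $d_F\big(\bigcap_{t=1}^j h_t^{-1} \cdot (z^{(m)})^{-1}(d^{(m)}_{i_t})\big) = \mu\big(\bigcap_{t=1}^j T_{h_t} B^{(m)}_{i_t}\big)$ for every choice of indices; by linearity this gives \Cref{r4fdfrdedfdfsdsf} for $f_m$ and \emph{all} continuous $p$ (every continuous $p$ on the finite set of attained tuples is automatically a finite linear combination of indicator tuples). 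Finally, diagonalize in $m$: since $\|f-f_m\|_\infty\to0$, the integrals $\int p(f_m(T_{h_1}x),\dots)\,d\mu$ converge to $\int p(f(T_{h_1}x),\dots)\,d\mu$ uniformly on bounded sets of $p$ (using uniform continuity of $p$ on $D^j$), and one selects a subsequence $(z^{(m_k)})$ and then patches them together along a rapidly growing subsequence of $(F_N)$ so that the resulting single sequence $(z_g)$ realizes the limit in \Cref{r4fdfrdedfdfsdsf} exactly. This patching step — arranging one sequence whose averages along $(F_N)$ converge to the right values for the countably many data $(j,h_1,\dots,h_j,p)$ simultaneously — is the delicate point; it is handled by the usual enumeration-and-diagonalization device, choosing on longer and longer initial F\o lner windows the approximant $z^{(m_k)}$ with $m_k$ large enough that the first $k$ constraints are met to within $1/k$, exactly as in the proof of \Cref{IFC}.

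One subtlety worth flagging: to even state that ``$d_F$ exists'' for the sets produced we rely on the construction in \Cref{IFC} producing genuine densities (not just upper densities), which it does; and the reduction from ``finitely many attained tuples'' to ``all continuous $p$'' uses that a continuous function on a finite set is arbitrary, so no approximation is lost there — the only approximation is $f_m\to f$, which is why the diagonalization over $m$ is needed. I would organize the write-up so that the forward direction first treats the case where $f$ itself is simple (a direct corollary of the partition-valued form of \Cref{IFC}), and then handles general $f$ by the $f_m$-approximation plus diagonalization.
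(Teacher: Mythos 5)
Your converse direction is correct and is essentially the paper's argument: the shift on $D^G$, the point $\omega_0=(z_g)$, weak-* limits of the empirical measures $\frac{1}{|F_N|}\sum_{g\in F_N}\delta_{T_g\omega_0}$, and F{\o}lner-invariance of the limit. No issues there.

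The forward direction, however, has a genuine gap, and it sits exactly where you flagged ``the delicate point.'' First, there is a circularity problem: in this paper \Cref{IFC} (and its union/complement and partition-valued variants, \Cref{76iuyjhre5rytghf,IFCU}) are \emph{corollaries} of \Cref{ThmFinAvs2.0}, obtained by specializing $D=\{0,1\}$ and $p(z_1,\dots,z_n)=\prod_i z_i$. So ``invoke the partition-enhanced inverse principle'' or ``run the proof of \Cref{IFC} with a partition in place of $B$'' is not available as an input --- the partition-valued statement for a simple function $f_m$ \emph{is} \Cref{ThmFinAvs2.0} for simple $f$, and that is where all the difficulty lives. You would need to prove it from scratch. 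Second, your mechanism for doing so --- producing approximants $z^{(m)}$ and ``patching them together along a rapidly growing subsequence of $(F_N)$,'' choosing approximants ``on longer and longer initial F{\o}lner windows'' --- only makes sense when the F{\o}lner sets are nested, exhaustive, and concatenate with $O(1)$ boundary, i.e.\ essentially for $G=\mathbb{Z}$ with $F_N=\{1,\dots,N\}$ (this is the Fish--Skinner case). For a general F{\o}lner sequence in a general amenable group the $F_N$ need not be nested or even pairwise intersecting, so ``initial windows'' is undefined; and even after partitioning $G$ into regions carrying different approximants, the shifted averages $\frac{1}{|F_N|}\sum_{g\in F_N}p(z_{h_1g},\dots,z_{h_jg})$ mix approximants for every $g$ whose $S$-neighborhood ($S=\{h_1,\dots,h_j\}$) straddles a region boundary, so one must make the \emph{regions} approximately $S$-invariant and arrange that each $F_N$ is almost exactly covered by whole regions in controlled proportions. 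This is precisely what the paper's machinery supplies: the finitistic criterion (\ref{ThmFinAvsGen3} of \Cref{ThmFinAvsGen}), the Downarowicz--Huczek--Zhang tiling theorem (\Cref{DHZThm5.2}), the congruent-tilings covering lemma (\Cref{5y4trgfdvtrgfdv}), and the averaging estimate \Cref{AvgsSubsets'}. None of this is replaceable by ``the usual enumeration-and-diagonalization device,'' so as written the construction of the single sequence $(z_g)$ realizing the exact limits along the given $(F_N)$ is not established.
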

\Cref{ThmFinAvs2.0} pertains to sequences with values in a compact subset of $\mathbb{C}$, but in some cases one may adapt it to unbounded sequences, see \cite[Theorem 3.3]{FT}.

\begin{proof}[Proof of \Cref{CharactNiceVdC}]\leavevmode
\begin{enumerate}
    \item[$\implies$]
Suppose for contradiction that there is a m.p.s. $(X,\mathcal{B},\mu,(T_g)_{g\in G})$ and some measurable function $f:X\to\mathbb{D}$ such that 
\begin{equation*}
\left|\int_Xfd\mu\right|^2
>
\limsup_{h\in H}\left|\int_Xf(T_hx)\overline{f(x)}d\mu(x)\right|.
\end{equation*}
By \Cref{ThmFinAvs2.0} there is some sequence $(z_g)_{g\in G}$ such that
\begin{align*}
\lim_N\frac{1}{|F_N|}\sum_{g\in F_N}z_g&=\int_Xf(x)d\mu\\
\lim_N\frac{1}{|F_N|}\sum_{g\in F_N}z_{hg}\overline{z_g}&=\int_Xf(T_{h}x)\overline{f(x)}d\mu\textup{ for all }h\in H,
\end{align*}
so $H$ is not a nice $F$-vdC set.

    \item[$\impliedby$] If $H$ is not nice $F$-vdC then for an adequate sequence $(z_g)_{g\in G}$ in $\mathbb{D}$ and some subsequence $F'=(F_N')_{N\in\mathbb{N}}$ of $F$ we have 
\begin{equation*}
\lim_N\left|\frac{1}{|F_N'|}\sum_{g\in F_N'}z_g\right|^2
>
\limsup_{h\in H}\lim_N\left|\frac{1}{|F_N'|}\sum_{g\in F_N'}z_{hg}\overline{z_g}\right|.
\end{equation*}
Applying again \Cref{ThmFinAvs2.0}, we obtain a m.p.s. $(X,\mathcal{B},\mu,(T_g)_{g\in G})$ and some measurable function $f:X\to\mathbb{D}$ such that \begin{equation*}
\left|\int_Xfd\mu\right|^2
>
\limsup_{h\in H}\left|\int_Xf(T_hx)\overline{f(x)}d\mu(x)\right|.\qedhere
\end{equation*}
\end{enumerate}
\end{proof}

In \cite[Question 8]{BL} it is asked whether there is any implication between the notions `nice vdC set' and `set of nice recurrence'. It was also proved in \cite{BL} that if $H$ is a nice vdC set, then $H$ satisfies the following, weak version of nice recurrence: for any m.p.s. $(X,\mathcal{B},\mu,(T_g)_{g\in G})$ and for any $B\in\mathcal{B}$, we have $
\mu(B)^4\leq\limsup_{h\in H}\mu(B\cap T_hB)$. 
Both \Cref{CharactNiceVdC} and \Cref{trefdvrefds} imply that nice vdC sets are sets of nice recurrence in any amenable group.

We now recall a combinatorial version of sets of nice recurrence:

\begin{definition}
Let $G$ be a countable amenable group with a F{\o}lner sequence $F=(F_N)$. A set $H\subseteq G$ is nicely $F$-intersective if for all $E\subseteq G$ and $\varepsilon>0$ there is some $g\in R$ such that 
\begin{equation*}
\overline{d}_F(E\cap gE)>\overline{d}_F(E)^2-\varepsilon.
\end{equation*}
\end{definition}

In his blog post \cite[Section 6]{Mor}, Moreira asked whether, for a given F{\o}lner sequence $F$, every subset of the natural numbers is of nice recurrence iff it is nicely $F$-intersective. Fish and Skinner very recently established this result in \cite[Theorem 1.3]{FS} for the F{\o}lner sequence $F_N = \{1, \dots, N\}$ but left open the question of whether it holds for all F{\o}lner sequences in $\mathbb{N}$. We generalize the result to F{\o}lner sequences in amenable groups:
\begin{prop}\label{NiceRecAndInt}
For any F{\o}lner sequence $F=(F_N)$ in a countable amenable group $G$, a set $H\subseteq G\setminus\{e\}$ is of nice recurrence iff it is nicely $F$-intersective.
\end{prop}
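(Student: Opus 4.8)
The plan is to mirror the structure of the proof of \Cref{trefdvrefds}: translate each side of the equivalence through the appropriate correspondence principle, using \Cref{FCP} for one direction and \Cref{IFC} for the other. First I would observe that the combinatorial hypothesis ``$H$ is nicely $F$-intersective'' can be rephrased, after unwinding the $\varepsilon$-quantifier, as the statement that $\limsup_{h\in H}\overline{d}_F(E\cap hE)\ge\overline{d}_F(E)^2$ for every $E\subseteq G$ (here the $\limsup$ over $H$ is the one defined in the footnote to \Cref{DefNiceRec}). Indeed, ``for all $\varepsilon>0$ there is $h\in H$ with $\overline{d}_F(E\cap hE)>\overline{d}_F(E)^2-\varepsilon$'' is almost this; the subtlety is that nice intersectivity as literally stated only asks for \emph{one} good $h$, not for cofinitely-avoidable bad sets, but since it must hold for every $\varepsilon$, and since $H$ being finite would make both notions trivial/false in the same way, the two formulations coincide. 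Once this reformulation is in hand, \Cref{NiceRecAndInt} becomes precisely \Cref{trefdvrefds} applied to the same set $H$, and the proof is complete.

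Concretely, I would argue as follows. For the direction ``nice recurrence $\implies$ nicely $F$-intersective'': assume $H$ is of nice recurrence, so by \Cref{trefdvrefds} we have $\overline{d}_F(E)^2\le\limsup_{h\in H}\overline{d}_F(E\cap hE)$ for every $E\subseteq G$. Fix $E$ and $\varepsilon>0$; by definition of $\limsup_{h\in H}$ as an infimum over finite $H_0\subseteq H$ of $\sup_{h\in H\setminus H_0}$, the quantity $\sup_{h\in H\setminus H_0}\overline{d}_F(E\cap hE)$ is at least $\overline{d}_F(E)^2$ for every finite $H_0$, so in particular (taking $H_0=\emptyset$) there is $h\in H$ with $\overline{d}_F(E\cap hE)>\overline{d}_F(E)^2-\varepsilon$, which is exactly nice $F$-intersectivity. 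For the converse direction ``nicely $F$-intersective $\implies$ nice recurrence'': assume nice intersectivity and fix any $E\subseteq G$; I must show $\overline{d}_F(E)^2\le\limsup_{h\in H}\overline{d}_F(E\cap hE)$, and then invoke \Cref{trefdvrefds}. Suppose not; then there is a finite $H_0\subseteq H$ and $\delta>0$ with $\overline{d}_F(E\cap hE)\le\overline{d}_F(E)^2-\delta$ for all $h\in H\setminus H_0$. Applying nice intersectivity with $\varepsilon=\delta/2$ produces an $h\in H$ with $\overline{d}_F(E\cap hE)>\overline{d}_F(E)^2-\delta/2$, and such an $h$ must lie in the finite exceptional set $H_0$. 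This is not yet a contradiction by itself, so the cleaner route is: apply nice intersectivity not to $E$ but to a ``perturbed'' set that kills the influence of the finitely many $h\in H_0$.

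The cleanest way to handle the finite exceptional set is to note that it suffices to prove the reformulated inequality, and for that one can use a standard argument: if $\limsup_{h\in H}\overline{d}_F(E\cap hE)<\overline{d}_F(E)^2$, pick a F{\o}lner subsequence $F'$ along which $d_{F'}(E)$ and all $d_{F'}(E\cap hE)$ ($h\in H$) exist and $d_{F'}(E)=\overline{d}_F(E)$, pass through \Cref{FCP} to a m.p.s. $(X,\mathcal B,\mu,(T_g))$ with $B\in\mathcal B$, $\mu(B)=d_{F'}(E)$, $\mu(B\cap T_hB)=d_{F'}(E\cap hE)$ — so $\limsup_{h\in H}\mu(B\cap T_hB)<\mu(B)^2$, contradicting that $H$ is of nice recurrence. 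Wait — that would be circular in the converse direction, so instead I run the contrapositive purely combinatorially: nice $F$-intersectivity for \emph{all} $E$ is equivalent to the $\limsup$ inequality for all $E$, because given a bad $E$ realizing $\limsup_{h\in H}\overline{d}_F(E\cap hE)\le\overline{d}_F(E)^2-\delta$ outside a finite $H_0=\{h_1,\dots,h_m\}$, one passes to $E'=E\setminus\bigl(\bigcup_{i=1}^m h_i E\cup \bigcup_{i=1}^m h_i^{-1}E\bigr)$... actually the truly robust fix is simply to note that since $H$ is infinite (a finite $H$ is never of nice recurrence and never nicely $F$-intersective, as one checks directly with $E$ a suitable set of small positive density), we may always find good $h$ outside any prescribed finite set, provided we first prove the inequality holds ``robustly.'' I expect this bookkeeping around the finite exceptional set $H_0$ — reconciling the single-$h$ formulation of nice intersectivity with the $\limsup_{h\in H}$ formulation of nice recurrence — to be the only genuinely fiddly point; everything else is a direct transcription of \Cref{trefdvrefds}. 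The main conceptual obstacle (the inverse correspondence) has already been surmounted in \Cref{IFC} and packaged in \Cref{trefdvrefds}, so this proposition is essentially a corollary modulo the elementary observation identifying the two phrasings of ``niceness.''
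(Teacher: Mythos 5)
Your overall plan (reduce to \Cref{trefdvrefds} by identifying nice $F$-intersectivity with the inequality $\overline{d}_F(E)^2\le\limsup_{h\in H}\overline{d}_F(E\cap hE)$ for all $E$) is sound in spirit, and the direction ``nice recurrence $\implies$ nicely $F$-intersective'' goes through exactly as you say. But the point you repeatedly defer as ``bookkeeping'' --- passing from the single-$h$ formulation of nice intersectivity to the $\limsup_{h\in H}$ formulation, i.e.\ showing that good $h$ can be found outside any prescribed finite set --- is precisely the nontrivial content of the converse direction, and none of your attempted fixes closes it. The perturbation $E'=E\setminus\bigl(\bigcup_i h_iE\cup\bigcup_i h_i^{-1}E\bigr)$ can destroy almost all of $E$ (if $E$ is nearly $h_i$-invariant then $h_iE$ covers most of $E$), so it does not preserve the density while killing the exceptional translates; and your final ``robust fix'' (``provided we first prove the inequality holds robustly'') assumes what is to be proved. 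So as written the proposal has a genuine gap in the direction ``nicely $F$-intersective $\implies$ nice recurrence.''

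The paper closes exactly this gap with \Cref{refwds54rtefdvhbdhtt}, which says that nice recurrence (a $\limsup_{h\in H}$ statement) is equivalent to the single-$h$ statement: for every m.p.s., every $B$ and every $\varepsilon>0$ there exists \emph{some} $h\in H$ with $\mu(B\cap T_hB)>\mu(B)^2-\varepsilon$. The proof of that lemma is not bookkeeping: given a system with finitely many exceptional elements $h_1,\dots,h_j$ for which $\mu(B\cap T_{h_i}B)$ might be large, one takes the product with a Bernoulli shift on $\{0,\dots,j\}^G$ and a cylinder set $C$ chosen so that $\nu(C\cap S_{h_i}C)=0$ for $i=1,\dots,j$ while $\nu(C\cap S_gC)=\nu(C)^2$ otherwise; in the product system \emph{every} $h\in H$ then witnesses a small intersection. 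This is the measure-theoretic analogue of the independence your combinatorial perturbation would need but cannot supply directly at the level of upper densities. With that lemma in hand, the argument is: not nice recurrence $\Rightarrow$ (lemma) a single system with $\mu(B\cap T_hB)<\mu(B)^2-\varepsilon$ for \emph{all} $h\in H$ $\Rightarrow$ (\Cref{IFC}) a set $E$ with $d_F(E\cap hE)<d_F(E)^2-\varepsilon$ for all $h\in H$ $\Rightarrow$ not nicely $F$-intersective; the other direction uses \Cref{FCP} as you describe. To repair your write-up you should either import \Cref{refwds54rtefdvhbdhtt} explicitly or reproduce the Bernoulli product construction; the elementary identification of the two phrasings of ``niceness'' that you assert in your first paragraph is false as stated without it.
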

\Cref{NiceRecAndInt} can be proved using Furstenberg's correspondence principle, once we check the following characterization of sets of nice recurrence (a general version of \cite[Proposition 3.8]{BL}):

\begin{lema}
\label{refwds54rtefdvhbdhtt}
Let $G$ be a countable group. A set $H\subseteq G\setminus\{e\}$ is of nice recurrence iff for any m.p.s. $(X,\mathcal{B},\mu,(T_g)_{g\in G})$, $B\in\mathcal{B}$ and $\varepsilon>0$ there exists $h\in H$ such that $\mu(B\cap T_hB)>\mu(B)^2-\varepsilon$.
\end{lema}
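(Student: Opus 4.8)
The statement to prove is \Cref{refwds54rtefdvhbdhtt}: a set $H\subseteq G\setminus\{e\}$ is of nice recurrence (in the sense of \Cref{DefNiceRec}, with the $\limsup_{h\in H}$) iff for every m.p.s. and every $B\in\mathcal B$ and $\varepsilon>0$ there is $h\in H$ with $\mu(B\cap T_hB)>\mu(B)^2-\varepsilon$. This is essentially an unwinding of the definition of $\limsup_{h\in H}$ as $\inf_{H_0\text{ finite}}\sup_{h\in H\setminus H_0}$, but the subtlety is that nice recurrence as stated only guarantees, for each fixed $(X,\mu,B)$, that the $\limsup$ over $H$ of $\mu(B\cap T_hB)$ is $\geq\mu(B)^2$; this does \emph{not} immediately give us, for every finite $H_0$, an $h\in H\setminus H_0$ beating $\mu(B)^2-\varepsilon$, unless we can arrange that excising $H_0$ doesn't matter. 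So the real content is: one must be able to throw away any finite set of "bad" shifts. The plan is to prove both implications, with the forward direction being the one requiring a trick.

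\medskip\noindent\textbf{($\Leftarrow$) direction.} This is immediate: if for every m.p.s., every $B$, and every $\varepsilon>0$ there is \emph{some} $h\in H$ with $\mu(B\cap T_hB)>\mu(B)^2-\varepsilon$, I still need to produce such $h$ outside any prescribed finite $H_0$ to conclude $\limsup_{h\in H}\mu(B\cap T_hB)\geq\mu(B)^2$. The fix is to apply the hypothesis not to $(X,\mu)$ itself but to a product system that "hides" the finitely many shifts in $H_0$. Concretely, given a finite $H_0\subseteq H$, I would pass to a system in which $T_h$ for $h\in H_0$ acts in a way that forces $\mu(B\cap T_hB)$ to be small (e.g.\ take a suitable product with a system on which $H_0$ acts "freely/mixingly" on a set of the same measure), apply the hypothesis there to get an $h\notin H_0$, and pull back. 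Actually the cleanest route: note the hypothesis, applied to arbitrary systems, already yields that for each $\varepsilon$ the set $\{h\in H:\mu(B\cap T_hB)>\mu(B)^2-\varepsilon\}$ must be infinite — because if it were finite, equal to some $H_0$, one builds a system where those particular group elements no longer help (a direct product with a Bernoulli or a rotation system where $B$ is replaced by $B\times B'$ with $B'$ chosen so $T_h B'$ is nearly disjoint from $B'$ for $h\in H_0$ but $T_eB'=B'$), contradicting the hypothesis on that new system. Infinitude of the set for every $\varepsilon$ is exactly $\limsup_{h\in H}\mu(B\cap T_hB)\geq\mu(B)^2$.

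\medskip\noindent\textbf{($\Rightarrow$) direction.} Assume $H$ is of nice recurrence. Fix a m.p.s., $B\in\mathcal B$, $\varepsilon>0$. By definition $\limsup_{h\in H}\mu(B\cap T_hB)=\inf_{H_0}\sup_{h\in H\setminus H_0}\mu(B\cap T_hB)\geq\mu(B)^2$, so in particular taking $H_0=\emptyset$, $\sup_{h\in H}\mu(B\cap T_hB)\geq\mu(B)^2>\mu(B)^2-\varepsilon$, hence there exists $h\in H$ with $\mu(B\cap T_hB)>\mu(B)^2-\varepsilon$. That's all that's needed — this direction is genuinely trivial from the definitions. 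So in fact the entire content of the lemma is the $(\Leftarrow)$ direction, i.e.\ upgrading "there exists $h\in H$" to "$\limsup_{h\in H}\geq\mu(B)^2$".

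\medskip\noindent\textbf{Main obstacle and chosen execution.} The one step that needs care is the product-system construction in $(\Leftarrow)$: given finite $H_0\subseteq H$ and $\delta>0$, produce a m.p.s. $(Y,\mathcal C,\nu,(S_g)_{g\in G})$ and $C\in\mathcal C$ with $\nu(C)=\mu(B)$ such that $\nu(C\cap S_hC)<\delta$ for all $h\in H_0$, yet $\nu(C\cap S_gC)$ for $g\notin H_0$ is controlled by (close to, or at least bounded above in a way compatible with) $\mu(B\cap T_gB)$. A clean way: let $(Z,\mathcal D,\lambda,(R_g))$ be a m.p.s. where $G$ acts by an action for which, for each of the finitely many $h\in H_0$, there is a set $D\in\mathcal D$ with $\lambda(D)$ close to $1$ and $\lambda(D\cap R_hD)$ small — such actions exist because $G$ is infinite (e.g.\ take a Bernoulli action $\{0,1\}^G$ and $D$ depending on coordinates so as to separate the finitely many translates by $H_0$); then set $Y=X\times Z$, $\nu=\mu\times\lambda$, $S_g=T_g\times R_g$, and $C=B\times D$ — rescaling $D$ slightly so $\nu(C)=\mu(B)$, or alternatively applying the hypothesis with $B\times D$ and absorbing the $\lambda(D)$-factor into $\varepsilon$. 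Then $\nu(C\cap S_hC)=\mu(B\cap T_hB)\lambda(D\cap R_hD)$ is small for $h\in H_0$, so the $h$ delivered by the hypothesis lies outside $H_0$, while $\nu(C\cap S_gC)\leq\mu(B\cap T_gB)$ generally. Carefully choosing $D$ and bookkeeping the $\varepsilon$'s is the only real work; everything else is formal. I expect to present $(\Rightarrow)$ in two lines and devote the bulk of the proof to this construction.
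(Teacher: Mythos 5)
Your overall architecture is the same as the paper's: the forward direction is immediate from the definition of $\limsup_{h\in H}$, and the substance is to upgrade ``there exists $h\in H$'' to ``there exists $h\in H\setminus H_0$ for every finite $H_0$'' by passing to a product with an auxiliary system that kills the finitely many bad shifts. However, the auxiliary construction you sketch has a genuine gap. First, the set $D$ you ask for cannot exist: if $\lambda(D)$ is close to $1$ then $\lambda(D\cap R_hD)\geq 2\lambda(D)-1$ is also close to $1$, so you cannot simultaneously have $\lambda(D\cap R_hD)$ small. Second, and more seriously, the bound you record for the good shifts, $\nu(C\cap S_gC)\leq\mu(B\cap T_gB)$, is too weak to close the argument. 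Applying the hypothesis to $(Y,\nu)$ and $C=B\times D$ gives an $h$ with $\nu(C\cap S_hC)>\nu(C)^2-\varepsilon'=\mu(B)^2\lambda(D)^2-\varepsilon'$, and since $\nu(C\cap S_hC)=\mu(B\cap T_hB)\,\lambda(D\cap R_hD)$, to recover $\mu(B\cap T_hB)>\mu(B)^2-\varepsilon$ you must not lose the factor $\lambda(D)^2$; what is actually needed is $\lambda(D\cap R_gD)\leq\lambda(D)^2$ for \emph{every} $g\neq e$ (with the value $0$, or nearly so, at the elements of $H_0$), i.e.\ the auxiliary system must be self-independent under every nonidentity shift, not merely decorrelated on $H_0$.

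This is exactly what the paper's construction supplies: it takes the uniform Bernoulli shift on the alphabet $\{0,1,\dots,j\}$ and the cylinder $C=\{x_e=0,\ x_{h_i}=i\text{ for }i=1,\dots,j\}$, for which any overlap between the coordinate supports of $C$ and $S_gC$ with $g\neq e$ forces inconsistent symbols, so $\nu(C\cap S_gC)\in\{0,\nu(C)^2\}$ for all $g\neq e$, with the value $0$ attained at $g=h_1,\dots,h_j$. Your suggestion of a two-symbol cylinder in $\{0,1\}^G$ ``separating the translates by $H_0$'' does not suffice: a coincidence $h_ig=h_{i'}$ with $i\neq i'$ can produce a \emph{consistent} overlap and hence $\lambda(D\cap R_gD)=2\lambda(D)^2>\lambda(D)^2$, breaking the comparison at that $g$; the distinct labels $i$ are what rule this out. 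Finally, note that the paper runs the argument contrapositively (if $H$ is not of nice recurrence, the weak condition fails for $B\times C$ in the product system, with $\varepsilon'=\varepsilon\,\nu(C)^2$), which makes the bookkeeping you anticipate essentially one line.
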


\begin{proof}
The forward implication is clear. So suppose $H\subseteq G\setminus\{e\}$ is not of nice recurrence. That means that there is a m.p.s. $(X,\mathcal{B},\mu,(T_g)_{g\in G})$, $B\in\mathcal{B}$ and $\varepsilon>0$ such that for all $h\in H$ except finitely many elements $h_1,\dots,h_j$ we have $\mu(B\cap T_hB)<\mu(B)^2-\varepsilon$.

Now consider the uniform Bernoulli shift $(Y,\mathcal{C},\nu,(S_g)_{g\in G})$, where $Y$ is the product $\{0,1,2,\dots,j\}^G$ and $S_h((x_g)_{g\in G})=(S_{gh})_{g\in G}$. Letting $e$ be the identity in $G$, the set $C:=\{(x_g)\in Y;x_e=0\textup{ and }x_{h_i}=i\textup{ 
 for }i=1,\dots,j\}\in\mathcal{C}$ satisfies that $\nu(C\cap S_gC)\in\{0,\mu(C)^2\}$ for all $g\in G\setminus\{e\}$, and in particular $\nu(C\cap S_{h_i}C)=0$ for $i=1,\dots,j$. Thus, in the product m.p.s. $(X\times Y,\mathcal{B}\times\mathcal{C},\mu\times\nu,(T_g\times S_g)_{g\in G})$, we have $(\mu\times\nu)\,((B\times C)\cap(T_h\times S_h)(B\cap C))<\nu(C)^2(\mu(B)^2-\varepsilon)$ for all $h\in H$, concluding the proof.
\end{proof}

\begin{proof}[Proof of \Cref{NiceRecAndInt}]
If $H\subseteq G\setminus\{e\}$ is not of nice recurrence, by \Cref{refwds54rtefdvhbdhtt} there is some m.p.s. $(X,\mathcal{B},\mu,(T_g)_{g\in G})$, $B\in\mathcal{B}$ and $\varepsilon>0$ such that, for all $h\in H$, $\mu(B\cap T_hB)<\mu(B)^2-\varepsilon$. Thus, by \Cref{IFC} there is some subset $E$ of $G$ such that $d_F(E)=\mu(B)$ and for all $h\in H$, $d_F(E\cap hE)=\mu(B\cap T_hB)<d_F(E)^2-\varepsilon$. So $H$ is not a nicely intersective set. The other implication can be proved similarly, using the usual Furstenberg correspondence principle instead of the inverse one.
\end{proof}

Bergelson and Moragues asked in \cite{BF} (after Remark 3.6) whether for all countable amenable groups $G$ and all F{\o}lner sequences $F$ in $G$ there is a set $E\subseteq G$ such that $\overline{d}_F(E)>0$ but for all finite $A\subseteq G$, $\overline{d}_F\left(\cup_{g\in A}g^{-1}E\right)<\frac{3}{4}$. Well, if we apply a version of \Cref{IFC} with unions instead of intersections (\Cref{IFCU}) to a m.p.s. with $T_g=\textup{Id}_X$ for all $g$ and $\mu(B)=\frac{1}{2}$, we obtain the following:
\begin{prop}\label{fwesfddsfdfs}
Let $G$ be a countably infinite amenable group with a F{\o}lner sequence $F$. Then there is a set $E\subseteq G$ such that, for all finite $\varnothing\neq A\subseteq G$,
\begin{equation*}
d_F(E)=d_F\left(\cup_{g\in A}gE\right)=\frac{1}{2}.\qed 
\end{equation*}
\end{prop}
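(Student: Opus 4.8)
The plan is to deduce \Cref{fwesfddsfdfs} from the generalized inverse correspondence principle \Cref{ThmFinAvs2.0} (equivalently, from the union-form \Cref{IFCU} announced in \Cref{IFCUnions}) applied to a completely trivial dynamical system. I would take $D=\{0,1\}$, let $X=\{0,1\}$ with $\mu$ giving mass $\tfrac12$ to each point, set $T_g=\mathrm{Id}_X$ for every $g\in G$, and put $f=\mathbf 1_B$ with $B=\{1\}$, so that $f\colon X\to D$ is measurable and $\mu(B)=\tfrac12$. \Cref{ThmFinAvs2.0} then produces a sequence $(z_g)_{g\in G}$ taking values in $\{0,1\}$; I define $E:=\{g\in G:z_g=1\}$, so that $z_g=\mathbf 1_E(g)$.

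Next, fix a finite nonempty $A=\{h_1,\dots,h_j\}\subseteq G$ and apply the conclusion of \Cref{ThmFinAvs2.0} to the group elements $h_1^{-1},\dots,h_j^{-1}$ and to the polynomial (hence continuous) function $p\colon D^j\to\mathbb C$, $p(w_1,\dots,w_j)=1-\prod_{i=1}^j(1-w_i)$. On the combinatorial side, since $z_{h_i^{-1}g}=\mathbf 1_E(h_i^{-1}g)=\mathbf 1_{h_iE}(g)$, we get $p(z_{h_1^{-1}g},\dots,z_{h_j^{-1}g})=1-\mathbf 1_{(\bigcup_i h_iE)^c}(g)=\mathbf 1_{\bigcup_{g'\in A}g'E}(g)$, so the left-hand side of \Cref{r4fdfrdedfdfsdsf} is exactly $\tfrac{1}{|F_N|}\,|F_N\cap\bigcup_{g'\in A}g'E|$, whose limit is $d_F(\bigcup_{g'\in A}g'E)$. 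On the dynamical side, because $T_g=\mathrm{Id}$ and $f$ is $\{0,1\}$-valued, $(1-f(x))^j=1-f(x)$, hence $p(f(x),\dots,f(x))=f(x)$ and $\int_X p\,d\mu=\int_X f\,d\mu=\mu(B)=\tfrac12$. This yields $d_F\big(\bigcup_{g\in A}gE\big)=\tfrac12$ for every finite nonempty $A$; specializing to $A=\{e\}$ (that is $j=1$, $h_1=e$) gives $d_F(E)=\tfrac12$ as well.

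Since \Cref{ThmFinAvs2.0} furnishes a single sequence $(z_g)$ valid simultaneously for all $j$, all tuples of group elements, and all continuous $p$, the set $E$ constructed above works for every finite nonempty $A$ at once, completing the proof. The argument is essentially bookkeeping, so I do not expect a real obstacle; the only point needing a little care is the left-translation convention — matching $\bigcup_{g\in A}gE$ with the averages in \Cref{r4fdfrdedfdfsdsf} forces one to feed in the elements $h_i^{-1}$ rather than $h_i$ — together with noting that choosing the trivial system is consistent with the hypothesis that $G$ be countably infinite (only $G$, not $X$, must be infinite). If one prefers to invoke the union form \Cref{IFCU} directly, the same trivial system with $\mu(B)=\tfrac12$ gives the statement at once, since $\bigcup_{g\in A}T_gB=B$ for every finite $A$.
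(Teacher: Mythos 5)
Your proposal is correct and is essentially the paper's own argument: the paper obtains \Cref{fwesfddsfdfs} by applying the union version \Cref{IFCU} to the identity action with $\mu(B)=\tfrac12$, which is exactly your closing remark, and your longer route through \Cref{ThmFinAvs2.0} with $p(w_1,\dots,w_j)=1-\prod_i(1-w_i)$ is just the unwinding of how \Cref{IFCU} is derived from \Cref{76iuyjhre5rytghf}. The inversion $h_i\mapsto h_i^{-1}$ needed to match $\bigcup_{g\in A}gE$ with the averages of $z_{h_ig}$ is handled correctly.
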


\section{Correspondence between Cesaro and integral averages}\label{SecRrg}

The main objective of this section is proving \Cref{ThmFinAvs2.0}. It will be deduced from \Cref{ThmFinAvsGen}, a more technical version of \Cref{ThmFinAvs2.0} which also includes a finitistic criterion for the existence of sequences with given Cesaro averages. At the end of the section 
we prove \Cref{76iuyjhre5rytghf}, a general converse of the Furstenberg correspondence principle, and \Cref{IFCSemigroups}, a version of \Cref{IFC} for semigroups.

\begin{prop}\label{ThmFinAvsGen}
Let $G$ be a countably infinite amenable group with a F{\o}lner sequence $(F_N)$, and let $D\subseteq\mathbb{C}$ be compact. For each $l\in\mathbb{N}$ let $j_l\in\mathbb{N}$, $h_{l,1},\dots,h_{l,j_l}\in G$ and let $p_l:D^{j_l}\to\mathbb{C}$ be continuous. Finally, let $\gamma:\mathbb{N}\to\mathbb{C}$ be a sequence of complex numbers. The following are equivalent:
\begin{enumerate}
    \item \label{ThmFinAvsGen1} There exists a sequence $(z_g)_{g\in G}$ of elements of $D$ such that, for all $l\in\mathbb{N}$,
    \begin{equation}
    \label{54trygfdvc}
    \lim_{N\to\infty}
    \frac{1}{|F_N|}\sum_{g\in F_N}p_l(z_{h_{l,1}g},\dots,z_{h_{l,j_l}g})=\gamma(l).
    \end{equation}

    \item \label{ThmFinAvsGen2} There exists a m.p.s. $(X,\mathcal{B},\mu,(T_g)_{g\in G})$ and a measurable function $f:X\to D$ such that, for all $l\in\mathbb{N}$,
    \begin{equation}
    \label{rwedsioll}\int_Xp_l(f(T_{h_{l,1}}x),\dots,f(T_{h_{l,j_l}}x)d\mu(x)=\gamma(l).
    \end{equation}

    \item (Finitistic criterion)\label{ThmFinAvsGen3} For all $A\subseteq G$ finite and for all $L\in\mathbb{N},\delta>0$ there exist some $K\in\mathbb{N}$ and sequences $(z_{g,k})_{g\in G}$ in $D$, for $k=1,\dots,K$, such that for all $l=1,\dots,L$ we have
    \begin{equation}\label{ThmFinAvsGenEq3}
    \left|\gamma(l)-\frac{1}{K|A|}\sum_{k=1}^{K}\sum_{g\in A}p_l(z_{h_{l,1}g,k},\dots,z_{h_{l,j_l}g,k})\right|<\delta.
    \end{equation}
\end{enumerate}
\end{prop}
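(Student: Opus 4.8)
The plan is to establish the cycle of implications $\eqref{ThmFinAvsGen1}\Rightarrow\eqref{ThmFinAvsGen2}\Rightarrow\eqref{ThmFinAvsGen3}\Rightarrow\eqref{ThmFinAvsGen1}$. The first implication is the "easy" Furstenberg-type direction: given a sequence $(z_g)_{g\in G}$ satisfying \eqref{54trygfdvc}, I would build the measure preserving system from it. Concretely, embed $D^G$ with the shift action $(S_g w)_h = w_{hg}$, let $f\colon D^G\to D$ be the coordinate projection $w\mapsto w_e$, and let $\mu$ be a weak-$*$ limit point of the empirical measures $\frac{1}{|F_N|}\sum_{g\in F_N}\delta_{S_g z}$ along a subsequence of $(F_N)$ along which \emph{all} the averages in \eqref{54trygfdvc} converge (they already converge by hypothesis, so no subsequence is even needed — but one must be slightly careful that $\mu$ is shift-invariant, which follows from the Følner property). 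Evaluating $\int p_l(f(S_{h_{l,1}}w),\dots)\,d\mu(w)$ against this limit measure reproduces exactly the left side of \eqref{54trygfdvc}, giving \eqref{rwedsioll}. Since $D$ is compact, $D^G$ is compact metrizable and the functions $w\mapsto p_l(w_{h_{l,1}},\dots,w_{h_{l,j_l}})$ are continuous, so the weak-$*$ convergence argument is clean.

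For $\eqref{ThmFinAvsGen2}\Rightarrow\eqref{ThmFinAvsGen3}$, I would discretize the system. Fix finite $A\subseteq G$, $L\in\mathbb N$, $\delta>0$. The idea is to sample: for a "random" point $x\in X$, the tuple $(z_{g})_{g\in G}$ defined by $z_g := f(T_g x)$ has the property that $\frac{1}{|A|}\sum_{g\in A}p_l(z_{h_{l,1}g},\dots,z_{h_{l,j_l}g})$ has expectation (over $x\sim\mu$, using $T_g$-invariance) exactly equal to $\gamma(l)$ for each $l\le L$. Taking $K$ independent samples $x_1,\dots,x_K$ and the corresponding sequences $(z_{g,k})_{g\in G}=(f(T_g x_k))_{g\in G}$, the average over $k$ of these quantities concentrates around $\gamma(l)$ by the law of large numbers (the summands are bounded since $D$ is compact and $p_l$ continuous on a compact set), so for $K$ large enough there is a choice of $x_1,\dots,x_K$ making \eqref{ThmFinAvsGenEq3} hold simultaneously for all $l=1,\dots,L$. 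This is essentially a first-moment/union-bound argument over finitely many indices $l$.

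The main obstacle is $\eqref{ThmFinAvsGen3}\Rightarrow\eqref{ThmFinAvsGen1}$, where one must patch the finitistic data into a single global sequence whose Cesàro averages along $(F_N)$ converge to $\gamma(l)$ for \emph{all} $l$ simultaneously. The plan is a tiling/exhaustion argument: enumerate $G=\{g_1,g_2,\dots\}$, and along a rapidly growing subsequence $N_1<N_2<\cdots$ of indices, use the near-invariance of the large Følner sets $F_{N_i}$ to decompose (a large portion of) $F_{N_i}$ into left-translates of a fixed small finite set $A_i$ — here is exactly where I would invoke the tiling result \cite[Theorem 5.2]{DHZ} mentioned in the acknowledgements, which provides, for any $\varepsilon$ and any finite set, an $\varepsilon$-almost tiling of (a Følner-large part of the group / of $F_N$ for large $N$) by translates of finitely many tiles; this is what lets the argument work for all amenable $G$ rather than only monotileable ones. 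On each tile-translate $cA_i$ one places a translated copy of the appropriate finitistic sequence $(z_{g,k})$ from \eqref{ThmFinAvsGenEq3}, distributing the $K$ different sequences among the $\approx K$-many tile copies in equal proportion, so that the average of $p_l$ over $cA_i$-blocks is within $\delta_i$ of $\gamma(l)$ for all $l\le L_i$. Choosing $\delta_i\to 0$, $L_i\to\infty$, and $N_i$ growing fast enough that the leftover (untiled) part of $F_{N_i}$ and the "boundary effects" from the functions $p_l$ (whose arguments $z_{h_{l,m}g}$ may reach outside the tile containing $g$) are negligible, a standard bookkeeping argument shows $\frac{1}{|F_{N_i}|}\sum_{g\in F_{N_i}}p_l(z_{h_{l,1}g},\dots,z_{h_{l,j_l}g})\to\gamma(l)$ for every $l$; and one fills in the values of $(z_g)_g$ on the remaining, density-zero part of $G$ arbitrarily in $D$. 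The delicate points to get right are: (i) controlling the boundary terms where a point $g$ lies near the edge of its tile so that $h_{l,m}g$ escapes the tile — handled by taking the tiles $A_i$ large relative to the finitely many translation elements $h_{l,m}$ relevant at stage $i$, again using the Følner property of $A_i$; and (ii) ensuring consistency of the construction across stages, i.e.\ that once a value $z_g$ is assigned at stage $i$ it is not overwritten at stage $i+1$ — arranged by working only on a fresh region of $G$ at each stage and exhausting $G$ in the limit.
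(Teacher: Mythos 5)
Your overall strategy coincides with the paper's: the same cycle \ref{ThmFinAvsGen1}$\Rightarrow$\ref{ThmFinAvsGen2}$\Rightarrow$\ref{ThmFinAvsGen3}$\Rightarrow$\ref{ThmFinAvsGen1}, with the shift system on $D^G$ and weak-$*$ limits of empirical measures for the first implication, a sampling argument for the second (the paper approximates the pushforward measure $\Phi_*\mu$ on $D^G$ weakly by finitely supported measures, which is the deterministic counterpart of your law-of-large-numbers sampling), and a tiling argument via \cite[Theorem 5.2]{DHZ} for the third. The first two implications are fine as you describe them.

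There is, however, one concrete gap in your plan for \ref{ThmFinAvsGen3}$\Rightarrow$\ref{ThmFinAvsGen1}: your bookkeeping only yields $\frac{1}{|F_{N_i}|}\sum_{g\in F_{N_i}}p_l(\dots)\to\gamma(l)$ along a rapidly growing \emph{subsequence} $(F_{N_i})$, whereas \ref{ThmFinAvsGen1} requires convergence along the entire F{\o}lner sequence. Since a F{\o}lner sequence need not be nested or increasing, saying nothing about $F_N$ for $N_i<N<N_{i+1}$ leaves those averages uncontrolled; ``filling in the rest arbitrarily'' is harmless only if the unassigned part has small relative measure in \emph{every} $F_N$, which a stage-by-stage construction keyed to a subsequence does not guarantee. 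The paper resolves this by exploiting that the DHZ tilings come as a \emph{congruent} sequence (each tile of $\mathcal{T}_{k+1}$ is a union of tiles of $\mathcal{T}_k$): this permits assembling one global partition $\mathcal{P}$ of $G$ out of tiles of increasing level (\Cref{5y4trgfdvtrgfdv}), with the $k$-th stage region chosen to swallow $\bigcup_{N\le N_k}F_N$, so that for \emph{every} $N$ the union of $\mathcal{P}$-tiles contained in $F_N$ occupies a fraction of $F_N$ tending to $1$. Your ``fresh region at each stage'' device needs exactly this congruence (otherwise tiles are cut by the stage boundaries) and needs the regions chosen relative to all the $F_N$, not just the chosen subsequence. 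A second, smaller omission: to distribute the $K$ finitistic sequences in exactly equal proportion over the tiles of a given shape, the number of such tiles must be made a multiple of $K$; the paper arranges this by discarding a few tiles and requiring the ambient set to be large compared to $\sum_{S}|S|K_S$, and your sketch should record the analogous adjustment.
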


\begin{remark}
For the equivalence \ref{ThmFinAvsGen1}$\iff$\ref{ThmFinAvsGen3} to hold $D$ need not be compact, and $p_l:D^{j_l}\to\mathbb{C}$ can be any bounded function (not necessarily continuous). Indeed, all we will use in the proof of \ref{ThmFinAvsGen3}$\implies$\ref{ThmFinAvsGen1} is that the functions $p_l$ are bounded, and it is not hard to show \ref{ThmFinAvsGen1}$\implies$\ref{ThmFinAvsGen3} if the functions $p_l$ are bounded: suppose that $(z_g)$ satisfies \ref{ThmFinAvsGen1} for some F{\o}lner sequence $(F_N)$. Then one can check that given $L\in\mathbb{N},\delta>0$, for big enough $N\in\mathbb{N}$, the constant $K=|F_N|$ and the sequences $(z_{g,k})_g$ given by $z_{g,k}=z_{gk}$, for $k\in F_N$, satisfy \Cref{ThmFinAvsGen3}.
\end{remark}

\begin{proof}[Proof of \Cref{ThmFinAvs2.0} from \Cref{ThmFinAvsGen}]
Let $G,(F_N),(X,\mathcal{B},\mu,(T_g)_{g\in G}),$ $D$ and $f:X\to D$ be as in \Cref{ThmFinAvs2.0}.

Now, for each $k\in\mathbb{N}$, the set $C(D^k):=\{p:D^k\to\mathbb{C};p\text{ continuous}\}$ is separable in the supremum norm. So we can consider for each $l\in\mathbb{N}$ elements $h_{l,1},\dots,h_{l,j_l}$ and functions $p_l\in C(D^{j_l})$ such that for any $h_1,\dots,h_j\in G$, for any $p\in C(D^j)$ continuous and for any $\varepsilon>0$ there exists $l\in\mathbb{N}$ such that $j_l=j$, $(h_{l,1},\dots,h_{l,j_l})=(h_1,\dots,h_j)$ and $\|p-p_l\|_\infty<\varepsilon$.

If we now apply \Cref{ThmFinAvsGen} to the sequence 
\begin{equation*}
\gamma(l)=\int_Xp_l(f(T_{h_{l,1}}x),\dots,f(T_{h_{l,j_l}}x)d\mu(x),
\end{equation*}
we obtain a sequence $(z_g)_{g\in G}$ of elements of $D$ such that, for all $l\in\mathbb{N}$,
\begin{equation}\label{65ythrfg45trdfg}
\lim_{N\to\infty}\frac{1}{|F_N|}\sum_{g\in F_N}p_l(z_{h_{l,1}g},\dots,z_{h_{l,j_l}g})=\int_Xp_l(f(T_{h_{l,1}}x),\dots,f(T_{h_{l,j_l}}x)d\mu(x).
\end{equation}
This implies that \Cref{r4fdfrdedfdfsdsf} holds for any $h_1,\dots,h_j\in G$ and $p\in C(D^k)$, so we are done proving the first implication.

The converse implication of \Cref{ThmFinAvs2.0} follows from standard
proofs of the the Furstenberg correspondence principle, see e.g. \cite[Pages 922-923]{BF}. We give a short proof using \Cref{ThmFinAvsGen}.
Given a sequence $(z_g)_{g\in G}$ in $D$, first choose a F{\o}lner subsequence $(F_N')_{n\in\mathbb{N}}$ such that the following limit exists for all $l$.
\begin{equation*}
\gamma(l)=\lim_{N\to\infty}\frac{1}{|F_N'|}\sum_{g\in F_N'}p_l(z_{h_{l,1}g},\dots,z_{h_{l,j_l}g}).
\end{equation*}
So, by \Cref{ThmFinAvsGen} there is a m.p.s. $(X,\mathcal{B},\mu,(T_g)_{g\in G})$ and a measurable function $f:X\to D$ such that, for all $l\in\mathbb{N}$, \Cref{rwedsioll} holds. In particular, \Cref{r4fdfrdedfdfsdsf} holds if the left hand side limit exists (as the limit along the sequence $(F_N)$ is the same as along the sequence $(F_N')$), as we wanted.
\end{proof}

We will now prove \Cref{ThmFinAvsGen}. We will show \ref{ThmFinAvsGen1}$\implies$\ref{ThmFinAvsGen2}$\implies$\ref{ThmFinAvsGen3}$\implies$\ref{ThmFinAvsGen1}. The complicated implication is \ref{ThmFinAvsGen3}$\implies$\ref{ThmFinAvsGen1}. During the rest of the proof we fix an amenable group $G$, a F{\o}lner sequence $(F_N)$ and a compact set $D\subseteq\mathbb{C}$.

\begin{proof}[Proof of \ref{ThmFinAvsGen1}$\implies$\ref{ThmFinAvsGen2}]
Let $\s{z}=(z_a)_{a\in G}$ be as in \ref{ThmFinAvsGen1}. Consider the (compact, metrizable) product space $D^G$ of sequences $(z_g)_{g\in G}$ in $D$, with the Borel $\sigma$-algebra. We have an action $(R_g)_{g\in G}$ of $G$ on $D^G$ by $R_g((z_a)_{a\in G})=(z_{ag})_{a\in G}$.

For each $N$ let $\nu_N$ be the average of Dirac measures $\frac{1}{|F_N|}\sum_{g\in F_N}\delta_{R_g\s{z}}$. Let $\nu$ be the weak limit of some subsequence $(\nu_{N_m})_m$; then $\nu$ is invariant by $R_h$ for all $h\in G$, because $\lim_N\frac{|F_N\Delta hF_N|}{|F_N|}=0$ and for all $N\in\mathbb{N}$,
\begin{equation*}
\nu_N-(R_h)_*\nu_N
=
\frac{1}{|F_N|}\left(\sum_{g\in F_N\setminus hF_N}\delta_{R_{g}\s{z}}-\sum_{g\in hF_N\setminus F_N}\delta_{R_{g}\s{z}}\right).
\end{equation*}
Finally, we define $f:D^G\to\mathbb{C}$ by $f((z_a)_{a\in G})=z_{e}$ (where $e\in G$ is the identity). Then for all $l\in\mathbb{N}$ we have
\begin{align*} &\int_{D^G}p_l(f(R_{h_{l,1}}x),\dots,f(R_{h_{l,j_l}}x))d\nu(x)\\
=&\lim_m
\int_{D^G}p_l(f(R_{h_{l,1}}x),\dots,f(R_{h_{l,j_l}}x))d\nu_{N_m}(x)\\
=&\lim_m\frac{1}{|F_{N_m}|}\sum_{g\in F_{N_m}}
p_l(f(R_{h_{l,1}}(R_g\s{z})),\dots,f(R_{h_{l,j_l}}(R_g(\s{z})))\\
=&\lim_m\frac{1}{|F_{N_m}|}\sum_{g\in F_{N_m}}p_l(z_{h_{l,1}g},\dots,z_{h_{l,j_l}g})=\gamma(l).\qedhere
\end{align*}
\end{proof}

\begin{proof}[Proof of \ref{ThmFinAvsGen2}$\implies$\ref{ThmFinAvsGen3}]
We let $(X,\mathcal{B},\mu,(T_g)_{g\in G})$ and $f$ be as in \ref{ThmFinAvsGen2}, and we define the m.p.s. $(D^G,\mathcal{B}(D^G),(R_g)_{g\in G})$ as in the proof of \ref{ThmFinAvsGen1}$\implies$\ref{ThmFinAvsGen2}. 

Consider the function $\Phi:X\to D^G$ given by $\Phi(x)=(f(T_ax))_{a\in G}$ and let $\nu$ be the measure $\Phi_*\mu$ in $D^G$. As a Borel probability measure on a compact metric space, $\nu$ can be weakly approximated by a sequence of finitely supported measures $(\nu_N)_{n\in\mathbb{N}}$, which we may assume are of the form
\begin{equation*}
\nu_N=\frac{1}{|K_N|}\sum_{k\in K_N}\delta_{(z_{a,k})_{a\in G}}.
\end{equation*}
For some finite index sets $K_N$ and some sequences $(z_{a,k})_{a\in G}$, for $k\in K_N$. Now, the fact that $\nu_N\to\nu$ weakly means that for all $l\in\mathbb{N}$ and $A\subseteq G$ finite we have
\begin{align*}
&\lim_N\frac{1}{|K_N|\cdot|A|}\sum_{k\in K_N}\sum_{g\in A}p_l(z_{h_{l,1}g,k},\dots,z_{h_{l,j_l}g,k})\\
=&
\lim_N\frac{1}{|A|}\sum_{g\in A}\int_{D^G}
p_l(z_{h_{l,1}g},\dots,z_{h_{l,j_l}g})d\nu_N((z_a))\\
=&
\frac{1}{|A|}\sum_{g\in A}\int_{D^G}
p_l(z_{h_{l,1}g},\dots,z_{h_{l,j_l}g})d\nu((z_a))\\
=&
\frac{1}{|A|}\sum_{g\in A}\int_{X}
p_l(f(T_{h_{l,1}g}x),\dots,f(T_{h_{l,j_l}g}x))
d\mu(x)\\
=&\frac{1}{|A|}\sum_{g\in A}\int_{X}
p_l(f(T_{h_{l,1}}x),\dots,f(T_{h_{l,j_l}}x))d\mu(x)=\frac{1}{|A|}\sum_{g\in A}\gamma(l)=\gamma(l).
\end{align*}
So for any $L\in\mathbb{N}$ and $\delta>0$, \Cref{ThmFinAvsGenEq3} will be satisfied for big enough $N$.
\end{proof}

In order to prove \ref{ThmFinAvsGen3}$\implies$\ref{ThmFinAvsGen1}, we will first need some lemmas about averages of sequences of complex numbers.

\begin{prop}\label{AvgsSymDif}
Let $M>0$ and $\delta\in(0,1)$. For any finite sets $E'\subseteq E\subseteq G$ such that $\frac{|E\setminus E'|}{|E|}<\delta$ and any complex numbers $(z_g)_{g\in G}$ with $|z_g|\leq M\;\forall g\in G$, we have
\begin{equation*}
\left|\frac{1}{|E|}\sum_{g\in E}z_g-\frac{1}{|E'|}\sum_{g\in E'}z_g\right|<2M\delta.
\end{equation*}
\end{prop}

\begin{proof}
The left hand side above is equal to 
\begin{align*}
\left|\frac{|E'|-|E|}{|E|\cdot|E'|}\sum_{g\in E'}z_g+\frac{1}{|E|}\sum_{g\in E\setminus E'}z_g
\right|
&\leq\frac{|E|-|E'|}{|E|\cdot|E'|}\cdot M|E'|
+\frac{1}{|E|}\cdot M|E\setminus E'|\\
&< M\delta+M\delta.\qedhere
\end{align*}
\end{proof}

\begin{comment}
\begin{lema}\label{AvgsSubsets}\red{Borrar si solo lo uso pa la siguiente}
Let $G$ be a group, $\delta,M>0$ and $c_1,\dots,c_K$ elements of $G$. Let $T_1,\dots,T_K,A$ be  finite subsets of $G$ and suppose that the sets $T_1c_1,\dots,T_Kc_K$ are pairwise disjoint, contained in $A$ and $\frac{\sum_k|T_k|}{|A|}>1-\frac{\delta}{M}$. Finally, for $k=1,\dots,K$ let $(z_{g,k})_{g\in G}$ be a sequence of complex numbers with $|z_{g,k}|<M$ for all $g,k$. Then, for any sequence $(z_g)_{g\in G}$ such that $|z_g|<M$ for all $g$ and 
\begin{equation*}
z_g=z_{gc_{k}^{-1},k}\text{ for all }g\in T_kc_k,
\end{equation*}
we have
\begin{equation}\label{4yrtgfd4refd}
\left|\frac{1}{|A|}\sum_{g\in A}z_g
-
\sum_{k=1}^K\frac{|T_k|}{\sum_{i=1}^K|T_i|}\cdot\left(\frac{1}{|T_k|}\sum_{g\in T_k}z_{g,k}\right)
\right|\leq2\delta
\end{equation}
\end{lema}
\end{comment}

\begin{definition}
Let $S,T$ be subsets of a group $G$. We denote 
\begin{equation*}
\partial_ST:=\{g\in G;Sg\cap T\neq\varnothing\text{ and }Sg\cap 
(G\setminus T)\neq\varnothing\}
=
\left(\cup_{s\in S}s^{-1}T\right)\setminus\left(\cap_{s\in S}s^{-1}T\right).
\end{equation*}
\end{definition}

\begin{remark}\label{trtrefd}
Note that if $(F_N)$ is a F{\o}lner sequence in $G$ and $S$ is finite, then $\lim_N\frac{|\partial_SF_N|}{|F_N|}=0$, because $\lim_N\frac{|s^{-1}F_N\Delta F_N|}{|F_N|}=0$ for all $s\in G$. Also note that if $c\in G$, then $\partial_S(Tc)=(\partial_ST)c$. Finally, if $e\in S$, then for all $g\in T\setminus\partial_ST$ we have $Sg\subseteq T$.
\end{remark}

The following lemma is a crucial part of the proof of \Cref{ThmFinAvsGen}. 
Intuitively, it says that if you have a finite set of bounded sequences $f_i:T_i\to\mathbb{C}$ defined in finite subsets $T_1,\dots,T_k$ of a group $G$, you can reassemble them into a sequence $f:A\to\mathbb{C}$ defined in a bigger finite set $A\subseteq G$ which is a union of right translates of the sets $T_i$, 
and the average value of $f$ will approximately be the weighted average of the average values of the $f_i$.

\begin{lema}\label{AvgsSubsets'}
Let $G$ be a group, $\delta,M>0,K\in\mathbb{N}$ and $c_1,\dots,c_K\in G$. Let $T_1,\dots,T_K,A$ be finite subsets of $G$ such that $T_1c_1,\dots,T_Kc_K$ are pairwise disjoint, contained in $A$ and $\frac{\sum_{k=1}^K|T_k|}{|A|}>1-\frac{\delta}{M}$. For $k=1,\dots,K$ let $(z_{g,k})_{g\in G}$ be sequences of complex numbers in $D$, and consider a sequence $(z_g)_{g\in G}$ such that 
\begin{equation*}
z_g=z_{gc_{k}^{-1},k}\text{ for all }g\in T_kc_k,
\end{equation*}
Finally, let $S=\{h_1,\dots,h_j\}\subseteq G$ be finite with $e\in S$ and let $p:D^j\to\mathbb{C}$ satisfy $\|p\|_\infty\leq M$. If we have $\frac{|\partial_ST_k|}{|T_k|}\leq\frac{\delta}{M}$ for all $k$, then
\begin{equation*}\label{43rewfsdrewsd'}
\left|\frac{1}{|A|}\sum_{g\in A}
p(z_{h_1g},\dots,z_{h_jg})
-
\sum_{k=1}^K\frac{|T_k|}{\sum_{i=1}^K|T_i|}\left(\frac{1}{|T_k|}\sum_{g\in T_k}p(z_{h_1g,k},\dots,z_{h_jg,k})\right)
\right|\leq4\delta.
\end{equation*}
\end{lema}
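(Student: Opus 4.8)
The plan is to decompose the average over $A$ according to the disjoint translates $T_kc_k$, comparing $p(z_{h_1g},\dots,z_{h_jg})$ with the shifted sequence $p(z_{h_1gc_k^{-1},k},\dots,z_{h_jgc_k^{-1},k})$ term by term. The key observation is that if $g \in (T_kc_k)$ but $g \notin (\partial_S T_k)c_k$, then by \Cref{trtrefd} (using $e\in S$ and $\partial_S(T_kc_k)=(\partial_S T_k)c_k$) we have $Sg \subseteq T_kc_k$, so $h_i g \in T_k c_k$ for every $i$, and hence $z_{h_ig} = z_{h_i g c_k^{-1},k}$ by the defining property of $(z_g)$. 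Therefore on the ``good'' set $A' := \bigsqcup_{k=1}^K \big((T_k\setminus\partial_S T_k)c_k\big)$ the function $g\mapsto p(z_{h_1g},\dots,z_{h_jg})$ agrees with the reassembled values coming from the $(z_{g,k})$, while on $A\setminus A'$ it is merely bounded by $M$.

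First I would estimate the size of $A\setminus A'$. Since $|A\setminus\bigsqcup_k T_kc_k| < \frac{\delta}{M}|A|$ by hypothesis, and $\sum_k |\partial_S T_k| \le \frac{\delta}{M}\sum_k|T_k| \le \frac{\delta}{M}|A|$, we get $\frac{|A\setminus A'|}{|A|} < \frac{2\delta}{M}$. Then I would apply \Cref{AvgsSymDif} (with the bound $|p(\cdot)|\le M$) to replace $\frac{1}{|A|}\sum_{g\in A} p(z_{h_1g},\dots,z_{h_jg})$ by $\frac{1}{|A'|}\sum_{g\in A'}p(z_{h_1g},\dots,z_{h_jg})$ at a cost of at most $2M\cdot\frac{2\delta}{M} = 4\delta$. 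Wait — I should be more careful with constants, since the final bound is $4\delta$; I would instead keep track of the pieces separately: the symmetric-difference replacement over $A$ versus $\bigsqcup_k T_k c_k$ costs at most $2M\cdot\frac{\delta}{M}=2\delta$, and similarly removing the boundaries $\partial_S T_k$ within each $T_k$ (again via \Cref{AvgsSymDif}, or directly) costs at most $2\delta$ more when combined, so in total the left-hand side of the display is within $4\delta$ of the corresponding average over $A'$ of the reassembled sequence.

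Next, on $A'$ the average splits exactly as a weighted sum: $\frac{1}{|A'|}\sum_{g\in A'}p(z_{h_1g},\dots,z_{h_jg}) = \sum_{k=1}^K \frac{|T_k\setminus\partial_S T_k|}{\sum_i|T_i\setminus\partial_S T_i|}\cdot\Big(\frac{1}{|T_k\setminus\partial_S T_k|}\sum_{g\in T_k\setminus\partial_S T_k}p(z_{h_1g,k},\dots,z_{h_jg,k})\Big)$, using the identification $z_{h_ig}=z_{h_igc_k^{-1},k}$ on $(T_k\setminus\partial_S T_k)c_k$ and re-indexing $g\mapsto gc_k$. Finally I would compare this with the target weighted average over the full sets $T_k$: for each $k$, \Cref{AvgsSymDif} gives $\big|\frac{1}{|T_k|}\sum_{g\in T_k}p(z_{h_1g,k},\dots,z_{h_jg,k}) - \frac{1}{|T_k\setminus\partial_S T_k|}\sum_{g\in T_k\setminus\partial_S T_k}p(z_{h_1g,k},\dots,z_{h_jg,k})\big| < 2M\cdot\frac{\delta}{M}=2\delta$, and the weights $\frac{|T_k|}{\sum_i|T_i|}$ versus $\frac{|T_k\setminus\partial_S T_k|}{\sum_i|T_i\setminus\partial_S T_i|}$ differ in $\ell^1$ by $O(\delta/M)$; since the quantities being weighted are bounded by $M$, this contributes another $O(\delta)$. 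Bookkeeping all these pieces so that the sum of the error terms is exactly $\le 4\delta$ is the main obstacle — there is no conceptual difficulty, but one has to choose how to group the four sources of error (symmetric difference in $A$, boundary removal in each $T_k$, weight perturbation, and the $M$-vs-$\|p\|_\infty$ bound) so the constants land on the nose. If $4\delta$ turns out to be slightly too tight with the crudest grouping, I would instead prove the cleaner intermediate bound and absorb the constant, or note that replacing $\delta$ by $\delta/2$ in the hypotheses is harmless for the application in \Cref{ThmFinAvsGen}.
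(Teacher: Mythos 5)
Your approach is the paper's: identify $z_{h_ig}$ with $z_{h_igc_k^{-1},k}$ off the $S$-boundary of each tile, and use \Cref{AvgsSymDif} to pass from $\bigcup_kT_kc_k$ to $A$. The constant-chasing obstacle you flag is real for your particular grouping: passing through the trimmed set $A'=\bigsqcup_k(T_k\setminus\partial_ST_k)c_k$ forces you to pay separately for boundary removal and for perturbing the weights, which lands you at roughly $6\delta$ rather than $4\delta$. The paper avoids this with one reorganization: for each $k$ it compares $\frac{1}{|T_k|}\sum_{g\in T_k}p(z_{h_1g,k},\dots,z_{h_jg,k})$ directly with $\frac{1}{|T_k|}\sum_{g\in T_kc_k}p(z_{h_1g},\dots,z_{h_jg})$ --- two sums over index sets of the \emph{same} cardinality $|T_k|=|T_kc_k|$ and with the same normalization, whose terms agree (after reindexing $g\mapsto gc_k$) except on $T_kc_k\cap(\partial_ST_k)c_k$, so the difference is at most $\frac{|\partial_ST_k|}{|T_k|}\cdot 2M\le 2\delta$. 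This single estimate subsumes both the boundary trimming and the weight mismatch, because the weighted average of the second family of sums is \emph{exactly} the average of $p(z_{h_1g},\dots,z_{h_jg})$ over $\bigcup_iT_ic_i$; one application of \Cref{AvgsSymDif} with $E=A$, $E'=\bigcup_iT_ic_i$ then costs the remaining $2\delta$. Your fallback (absorbing the constant, or running the hypotheses with $\delta/2$) would also be harmless for the application in \Cref{ThmFinAvsGen}, but the full-tile comparison gets $4\delta$ on the nose.
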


\begin{proof}
Considering sequences of complex numbers as functions $f:G\to\mathbb{C};a\mapsto f(a)$, which we write during this proof as $(f(a))_a$, we will denote
\begin{equation*}
p(f):=p(f(h_1),\dots,f(h_j)).
\end{equation*}

Note that, given $g\in G$, we will have $p((z_{a gc_k^{-1},k})_a)=p((z_{ag})_a)$ whenever $z_{agc_k^{-1},k}=z_{ag}$ for all $a\in S$, which happens when $ag\in T_kc_k$ for all $a\in S$, that is, when $g\in T_kc_k\setminus\partial_ST_kc_k$. Thus, for each $k=1,\dots,K$ we have
\begin{align*}
&\left|
\frac{1}{|T_k|}\sum_{g\in T_k}
p((z_{ag,k})_a)
-\frac{1}{|T_k|}\sum_{g\in T_kc_k}
p((z_{ag})_a)
\right|
\\=&
\left|
\frac{1}{|T_k|}\sum_{g\in T_kc_k}
p((z_{agc_k^{-1},k})_a)
-\frac{1}{|T_k|}\sum_{g\in T_kc_k}
p((z_{ag})_a)
\right|
\\
=&\left|
\frac{1}{|T_k|}\sum_{g\in T_kc_k\cap\partial_ST_kc_k}\left(p((z_{agc_k^{-1},k})_a)-p((z_{ag})_a
)\right)
\right|
\leq2\delta,
\end{align*}
the last inequality being because $\frac{|T_kc_k\cap \partial_ST_kc_k|}{|T_k|}<\frac{\delta}{M}$ and the summands have norm $\leq2M$. Taking weighted averages over $k=1,\dots,K$, we obtain
\begin{multline*}
\left|
\sum_{k=1}^K\frac{|T_k|}{\sum_{i=1}^K|T_i|}\cdot\left(\frac{1}{|T_k|}\sum_{g\in T_k}p((z_{ag,k})_a)\right)\right.
\\\left.-
\sum_{k=1}^K\frac{|T_k|}{\sum_{i=1}^K|T_i|}\cdot\left(\frac{1}{|T_k|}\sum_{g\in T_kc_k}p((z_{ag})_a)\right)
\right|\leq2\delta.
\end{multline*}
But applying \Cref{AvgsSymDif} with $E=A$, $E'=\cup_iT_ic_i$ and the sequence $g\mapsto p((z_{ag})_a)$ we also have
\begin{equation*}
\left|\frac{1}{|A|}\sum_{g\in A}p((z_{ag})_a)
-
\sum_{k=1}^K\frac{|T_k|}{\sum_{i=1}^K|T_i|}\cdot\left(\frac{1}{|T_k|}\sum_{g\in T_kc_k}p((z_{ag})_a)\right)
\right|\leq2\delta.
\end{equation*}
So by the triangle inequality, we are done.
\end{proof}

The only non elementary fact that we will need in the proof of \ref{ThmFinAvsGen3}$\implies$\ref{ThmFinAvsGen1} is \cite[Theorem 5.2]{DHZ}; in order to state it we first recall some definitions:
\begin{definition}[cf. {\cite[Defs 3.1,3.2]{DHZ}}]
A \textit{tiling} $\mathcal{T}$ of a group $G$ consists of two objects:
\begin{itemize}
    \item A finite family $\mathcal{S}(\mathcal{T})$ (the \textit{shapes}) of finite subsets of $G$ containing the identity $e$.
    \item A finite collection $C(\mathcal{T})=\{C(S);S\in \mathcal{S}(\mathcal{T})\}$ of subsets of $G$, the \textit{center sets}, such that the family of right translates of form $Sc$ with $S\in\mathcal{S}(\mathcal{T})$ and $c\in C(S)$ (such sets $Sc$ are the \textit{tiles} of $\mathcal{T}$)
    form a partition of $G$.
\end{itemize}

\end{definition}

\begin{definition}[cf. {\cite[Page 17]{DHZ}}]
We say a sequence of tilings $(\mathcal{T}_k)_{k\in\mathbb{N}}$ of a group $G$ is \textit{congruent} if every tile of $\mathcal{T}_{k+1}$ equals a union of tiles of $\mathcal{T}_k$.
\end{definition}

\begin{definition}
If $A$ and $B$ are finite subsets of a group $G$, we say that $A$ is $(B,\varepsilon)$-invariant if $\frac{|\partial_BA|}{|A|}<\varepsilon$.\footnote{This is not the definition of $(B,\varepsilon)$-invariant used in \cite{DHZ}, but it will be more convenient for our purposes.}
\end{definition}

\begin{comment}
\begin{lema}[{\cite[Lemma 2.2]{DHZ}}]\label{DHZ2.2}
A sequence $(F_N)$ of finite subsets of $G$ is a F{\o}lner sequence iff it is eventually $(K,\varepsilon)$-invariant for every finite $K\subseteq G$ and every $\varepsilon>0$.
\end{lema}
\end{comment}

We will need the following weak version of \cite[Theorem 5.2]{DHZ}:
\begin{theorem}[cf. {\cite[Theorem 5.2]{DHZ}}]\label{DHZThm5.2}
For any countable amenable group $G$ there exists a congruent sequence of tilings $(\mathcal{T}_k)_{k\in\mathbb{N}}$ such that, for every $A\subseteq G$ finite and $\delta>0$, all the tiles of $\mathcal{T}_k$ are $(A,\delta)$-invariant for big enough $k$.
\end{theorem}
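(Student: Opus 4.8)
The plan is to obtain \Cref{DHZThm5.2} as a short bookkeeping consequence of the full result \cite[Theorem 5.2]{DHZ}, which produces a congruent sequence of tilings whose \emph{shapes}, as $k\to\infty$, become arbitrarily invariant (equivalently, form a F{\o}lner sequence). The only work is to pass from the invariance of shapes to the invariance of tiles, and to reconcile the notion of $(B,\varepsilon)$-invariance fixed above with the one used in \cite{DHZ}.

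First I would reduce the claim about tiles to a claim about shapes. Each tile of $\mathcal{T}_k$ is a right translate $Sc$ with $S\in\mathcal{S}(\mathcal{T}_k)$ and $c\in C(S)$, and by \Cref{trtrefd} we have $\partial_A(Sc)=(\partial_AS)c$ while $|Sc|=|S|$; hence $Sc$ is $(A,\delta)$-invariant precisely when $S$ is. Since $\mathcal{S}(\mathcal{T}_k)$ is finite, it therefore suffices to prove: for every finite $A\subseteq G$ and every $\delta>0$, every shape of $\mathcal{T}_k$ is $(A,\delta)$-invariant once $k$ is large enough.

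Next comes the comparison of invariance notions. For a finite set $S\subseteq G$ one has
\[
\partial_AS=\Big(\bigcup_{a\in A}a^{-1}S\Big)\setminus\Big(\bigcap_{a\in A}a^{-1}S\Big)=\bigcup_{a,a'\in A}\big(a^{-1}S\setminus a'^{-1}S\big),
\]
and $|a^{-1}S\setminus a'^{-1}S|=|S\setminus a(a')^{-1}S|=\tfrac12|S\,\Delta\,a(a')^{-1}S|$, so
\[
|\partial_AS|\ \le\ |A|^{2}\max_{b\in AA^{-1}}|S\,\Delta\,bS|.
\]
Thus, given $A$ and $\delta$, it is enough that the shapes of $\mathcal{T}_k$ be eventually $(AA^{-1},\delta/|A|^2)$-invariant in the two-sided sense $|bS\,\Delta\,S|<\varepsilon|S|$, which is one of the standard equivalent formulations of the shapes forming a F{\o}lner sequence — exactly the conclusion that \cite[Theorem 5.2]{DHZ} provides for an appropriate congruent sequence of tilings. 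Fixing such a sequence $(\mathcal{T}_k)$ once and for all finishes the proof.

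The only point requiring care is keeping the left/right and one-sided/two-sided conventions straight when quoting \cite{DHZ}; this is routine, and no idea beyond \cite[Theorem 5.2]{DHZ} is involved.
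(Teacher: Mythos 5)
Your proposal is correct and matches the paper's treatment: the paper states this result as a weak version of \cite[Theorem 5.2]{DHZ} and offers no proof beyond the citation, which is exactly the black-box reliance you describe. Your reduction from tiles to shapes via $\partial_A(Sc)=(\partial_AS)c$ and the bound $|\partial_AS|\le|A|^2\max_{b\in AA^{-1}}|S\,\Delta\,bS|$ correctly supply the routine translation between invariance notions that the paper leaves implicit (cf.\ its footnote acknowledging that its definition of $(B,\varepsilon)$-invariance differs from the one in \cite{DHZ}).
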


The following notation will be useful.
\begin{definition}\label{DefBdTiling}
For any tiling of a group $G$ and any finite set $B\subseteq G$, we let $\partial_{\mathcal{T}}B$ denote the union of all tiles of $\mathcal{T}$ which intersect both $B$ and $G\setminus B$.
\end{definition}

\begin{remark}
Let $G$ be a countable amenable group with a tiling $\mathcal{T}$ and a F{\o}lner sequence $(F_N)$. As $\partial_{\mathcal{T}}F_N\subseteq\cup_{S\in\mathcal{S}(\mathcal{T})}\partial_SF_N$, \Cref{trtrefd} implies that 
\begin{equation*}
\lim_{N\to\infty}\frac{|\partial_{\mathcal{T}}F_N|}{|F_N|}=0.
\end{equation*}
\end{remark}

\begin{lema}\label{5y4trgfdvtrgfdv}
Let $G$ be a countably infinite amenable group with a F{\o}lner sequence $(F_N)$, and let $(\mathcal{T}_k)_{k\in\mathbb{N}}$ be a congruent sequence of tilings of $G$. Then there is a partition $\mathcal{P}$ of $G$ into tiles of the tilings $\mathcal{T}_k$ such that
\begin{itemize}
    \item For each $k\in\mathbb{N}$, $\mathcal{P}$ contains only finitely many tiles of $\mathcal{T}_k$.
    \item If for each $N\in\mathbb{N}$ we let $A_N=\bigcup\{T\in\mathcal{P};T\subseteq F_N\}\subseteq F_N$, then we have
    \begin{equation*}
    \lim_{N\to\infty}\frac{|A_N|}{|F_N|}=1.
    \end{equation*}
\end{itemize}
\end{lema}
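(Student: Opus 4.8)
The plan is to build the partition $\mathcal{P}$ by a greedy exhaustion argument along the congruent sequence of tilings, processing the F{\o}lner sets $F_N$ one at a time and passing to finer tilings as $N$ grows. First I would fix, for each $N$, an index $k(N)$ with the property that all tiles of $\mathcal{T}_{k(N)}$ are $(F_N,\varepsilon_N)$-invariant for some sequence $\varepsilon_N\to 0$ (this is exactly \Cref{DHZThm5.2}, applied with $A=F_N$); since the tiling sequence is congruent, I can also assume $k(1)<k(2)<\cdots$. The construction of $\mathcal{P}$ will proceed in stages: at stage $N$ I look at the tiles of $\mathcal{T}_{k(N)}$ that lie entirely inside $F_N$ and have not yet been covered by tiles chosen at earlier stages, and I add all of them to $\mathcal{P}$. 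Because the sequence of tilings is congruent, any tile of $\mathcal{T}_{k(N')}$ with $N'<N$ that was put into $\mathcal{P}$ is contained in a single tile of $\mathcal{T}_{k(N)}$; so, at stage $N$, the already-covered region is a union of tiles of $\mathcal{T}_{k(N)}$, and the ``remaining'' tiles of $\mathcal{T}_{k(N)}$ inside $F_N$ are well-defined and disjoint from everything chosen so far. After doing this for all $N$, I take the remaining (uncovered) part of $G$ and partition it arbitrarily into tiles of the coarsest available tiling $\mathcal{T}_1$ (which is possible because the tiles of $\mathcal{T}_1$ partition $G$, and the uncovered set is a union of such tiles — again by congruence, since every set that got covered was a union of $\mathcal{T}_1$-tiles). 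This gives a partition $\mathcal{P}$ of all of $G$ into tiles of the $\mathcal{T}_k$'s.

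Next I would verify the two bullet points. For the first, fix $k$; a tile $T$ of $\mathcal{T}_k$ ends up in $\mathcal{P}$ only if either (a) $T$ itself was chosen at some stage $N$ with $k(N)=k$, in which case $T\subseteq F_N$, or (b) $T$ belongs to the final ``leftover'' partition with $k=1$. In case (a), $T\subseteq F_N$ forces $T$ to lie in one of the finitely many $F_N$ with $k(N)=k$ (there are finitely many since $k(N)$ is strictly increasing), and each such $F_N$ is finite, hence contains finitely many tiles; in case (b), I need a little more care, but the leftover set will turn out to have finite intersection with each $F_N$ — indeed I will show $G\setminus\bigcup_N F_N$-type arguments or rather that each point of $G$ lies in some $F_N$ so gets covered at stage $N$ at the latest if it is not covered earlier; so the leftover partition is genuinely only needed for points never captured, and one should argue this set is empty or at least meets each $\mathcal{T}_1$-tile-count finitely. (In fact, since $(F_N)$ is a F{\o}lner sequence in a countably infinite group one may assume $\bigcup_N F_N=G$ after passing to $F_N'=F_1\cup\cdots\cup F_N$, which doesn't change densities; then every point gets covered, the leftover is empty, and the first bullet is immediate.) For the second bullet, I would estimate $|F_N\setminus A_N|$. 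A point $g\in F_N$ fails to be in $A_N$ only if the tile of $\mathcal{T}_{k(N)}$ containing $g$ (or, if $g$ was covered earlier, an even smaller tile) sticks out of $F_N$; in either case $g$ lies in a tile of $\mathcal{T}_{k(N)}$ that meets both $F_N$ and $G\setminus F_N$, i.e. $g\in\partial_{\mathcal{T}_{k(N)}}F_N$ in the notation of \Cref{DefBdTiling}. Hence $|F_N\setminus A_N|\le|\partial_{\mathcal{T}_{k(N)}}F_N|$, and by the Remark following \Cref{DefBdTiling} (together with $\partial_{\mathcal{T}}F_N\subseteq\bigcup_{S\in\mathcal{S}(\mathcal{T})}\partial_S F_N$ and \Cref{trtrefd}), this is $o(|F_N|)$ provided the shapes of $\mathcal{T}_{k(N)}$ stay in a controlled finite collection — which is the crux.

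The main obstacle is precisely this last point: the tilings $\mathcal{T}_k$ may have larger and larger shapes as $k\to\infty$, so $|\partial_{\mathcal{T}_{k(N)}}F_N|/|F_N|$ need not go to $0$ just because each tiling individually has $|\partial_{\mathcal{T}_k}F_N|/|F_N|\to 0$ as $N\to\infty$ for $k$ fixed. To handle this I would not use a fixed $k(N)$ blindly from \Cref{DHZThm5.2} but instead choose $k(N)$ growing slowly enough: having fixed the tilings $\mathcal{T}_1,\mathcal{T}_2,\dots$ in advance, for each $k$ the quantity $|\partial_{\mathcal{T}_k}F_N|/|F_N|\to 0$ as $N\to\infty$, so I can pick an increasing sequence $N_1<N_2<\cdots$ such that $|\partial_{\mathcal{T}_k}F_N|/|F_N|<1/k$ for all $N\ge N_k$, and then set $k(N)=k$ for $N_k\le N<N_{k+1}$. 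With this choice $k(N)\to\infty$, $k(N)$ is nondecreasing (I can also arrange strict increase by skipping repeats), and $|\partial_{\mathcal{T}_{k(N)}}F_N|/|F_N|<1/k(N)\to 0$, which is exactly what the density computation needs. The $(F_N,\varepsilon)$-invariance language of \Cref{DHZThm5.2} is a convenient repackaging of the same fact, but the clean way is to quote the Remark after \Cref{DefBdTiling} directly for each fixed $k$ and then diagonalize in $N$ as just described; everything else is bookkeeping about congruence ensuring the stagewise choices remain compatible.
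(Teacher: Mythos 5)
Your core idea is the same as the paper's: choose the tiling level $k(N)$ to grow slowly, via a diagonalization $N_1<N_2<\cdots$ guaranteeing $|\partial_{\mathcal{T}_{k(N)}}F_N|/|F_N|\to 0$, and then bound $|F_N\setminus A_N|$ by a tiling boundary. You correctly identify this as the crux. But the partition you actually build is different from the paper's, and your greedy construction has two genuine gaps. First, the step ``the already-covered region is a union of tiles of $\mathcal{T}_{k(N)}$'' does not follow from congruence: congruence says each tile of $\mathcal{T}_{k+1}$ is a union of tiles of $\mathcal{T}_k$, not the reverse, so the region covered at earlier stages (a union of tiles of \emph{finer} tilings) can cut a tile of $\mathcal{T}_{k(N)}$ in half. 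Such a partially covered tile can neither be added (it would overlap $\mathcal{P}$) nor skipped (you lose its uncovered part from $A_N$, and your estimate $|F_N\setminus A_N|\le|\partial_{\mathcal{T}_{k(N)}}F_N|$ no longer covers those points, since that tile may lie entirely inside $F_N$). Second, the claim that assuming $\bigcup_N F_N=G$ makes the leftover empty is unjustified: a point $g$ is captured at stage $N$ only if its $\mathcal{T}_{k(N)}$-tile lies inside $F_N$, and since these tiles grow with $N$ they may stick out of $F_N$ for every $N$; with your choice of $N_k$ (driven only by the boundary condition) nothing prevents this, so the leftover can be infinite, and partitioning it into $\mathcal{T}_1$-tiles then puts infinitely many $\mathcal{T}_1$-tiles into $\mathcal{P}$, violating the first bullet. (Also, the reduction to $F_N'=F_1\cup\cdots\cup F_N$ is not harmless: such unions need not form a F{\o}lner sequence, and a density-one statement for the $F_N'$ does not transfer back to the original $F_N$. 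The paper instead \emph{interleaves} extra F{\o}lner sets into the sequence, which does preserve the conclusion along the original subsequence.)

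Both gaps are what the paper's construction is designed to avoid. Instead of a greedy sweep, it sets $D_k$ equal to the union of all tiles of $\mathcal{T}_{k+1}$ (one level \emph{up}) meeting $F_1\cup\cdots\cup F_{N_k}$; then $D_{k-1}\subseteq D_k$, each $D_k\setminus D_{k-1}$ is exactly a union of $\mathcal{T}_k$-tiles (both sets being unions of $\mathcal{T}_k$-tiles by congruence, used in the correct direction), and $\bigcup_k D_k=G$ by construction, so every point is assigned to exactly one annulus and there is no leftover. The density estimate then runs as you intend, except that one bounds $F_N\setminus A_N$ by $\bigcup_{j\le k}\partial_{\mathcal{T}_j}F_N$ (tiles of $\mathcal{P}$ meeting $F_N$ can have any level $j\le k$), which still has the required decay by the choice of $N_k$. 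If you replace your greedy sweep by this nested-annuli construction, the rest of your argument goes through.
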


\begin{proof}
We can assume that $\cup_NF_N=G$, adding some F{\o}lner sets to the sequence if necessary. In the following, for each finite set $B\subseteq G$ and $k\in\mathbb{N}$ we will denote $\partial_kB:=\cup_{j=1}^k\partial_{\mathcal{T}_j}B$, so that for all $k\in\mathbb{N}$,
\begin{equation*}
\lim_{N\to\infty}\frac{|\partial_kF_N|}{|F_N|}=0.
\end{equation*}
Now for each $k\in\mathbb{N}$ let $N_k$ be a big enough number that $\frac{|\partial_{k+1}F_N|}{|F_N|}\leq\frac{1}{k+1}$ for all $N\geq N_k$.

Let $D_0=\varnothing$ and for each $k\in\mathbb{N}$ let $D_k\subseteq G$ be the union of all tiles of $\mathcal{T}_{k+1}$ intersecting some element of $\bigcup_{N=1}^{N_k}F_N$. Thus, $D_k\subseteq D_{k+1}$ for all $k$, $G=\cup_kD_k$ and $D_{k}\setminus D_{k-1}$ is a union of tiles of $\mathcal{T}_k$. We define $\mathcal{P}$ to be the partition of $G$ formed by all tiles of $\mathcal{T}_k$ contained in $D_{k}\setminus D_{k-1}$, for all $k\in\mathbb{N}$.

Now, fix $N$ and let $k$ be the smallest natural number such that $N\leq N_k$ (note that $k\to\infty$ when $N\to\infty$). 
Note that all tiles $T\in\mathcal{P}$ intersecting $F_N$ must be in $\mathcal{T}_j$ for some $j\leq k$. Thus, the set $A_N$ of all tiles of $\mathcal{P}$ which are contained in $F_N$ must contain $|F_N\setminus\partial_kF_N|$. But we have $N>N_{k-1}$, so $\frac{|\partial_kF_N|}{|F_N|}\leq\frac{1}{k}$. So $\frac{|A_N|}{|F_N|}\geq1-\frac{1}{k}$, and we are done.\qedhere
\end{proof}

\begin{proof}[Proof of \Cref{ThmFinAvsGen3}$\implies$\Cref{ThmFinAvsGen1}]
Let $S_l=\{h_{l,1},\dots,h_{l,j_l}\}$ and $M_l\geq\|p_l\|_\infty$ for all $l\in\mathbb{N}$ (we may assume $e\in S_l$ and $M_{l+1}\geq M_l$ for all $l$). We prove first that \ref{ThmFinAvsGen3} implies the following:
\begin{enumerate}[label=\arabic*'.]
\setcounter{enumi}{2}
    \item \label{ThmFinAvsGen3'}Let $\delta>0$ and $L\in\mathbb{N}$. Then for any sufficiently left-invariant\footnote{By this we mean that
    there is a finite set $A\subseteq G$ and some $\varepsilon>0$ such that the property stated below is satisfied for all $(A,\varepsilon)$-invariant sets.} subset $B$ of $G$ there exists a sequence $(w_g)_{g\in G}$ in $D$ such that, for all $l=1,\dots,L$,
    \begin{equation*}
    \left|\gamma(l)-\frac{1}{|B|}\sum_{g\in B}
    p_l(w_{h_{l,1}g},\dots,w_{h_{l,j_l}g})\right|<\delta.
    \end{equation*}
\end{enumerate}

In order to prove \ref{ThmFinAvsGen3'} from \ref{ThmFinAvsGen3}, fix $\delta,L$ and consider a tiling $\mathcal{T}$ of $G$ such that $\frac{|\partial_{S_l}S|}{|S|}<\frac{\delta}{10M_L}$ for all $l=1,\dots,L$ and $S\in\mathcal{S}(\mathcal{T})$. For each $S\in\mathcal{S}(\mathcal{T})$ there is by \ref{ThmFinAvsGen3} some $K_S\in\mathbb{N}$ and sequences $(z_{S,g,k})_{g\in G}$ in $D$ ($k=1,\dots,K_S$) satisfying 
\begin{equation}\label{453terfdv}
\left|\gamma(l)-\frac{1}{K_S|S|}\sum_{k=1}^{K_S}\sum_{g\in S}p_l(z_{S,h_{l,1}g,k},\dots,z_{S,h_{l,j_l}g,k})\right|<\frac{\delta}{5}\text{ for }l=1,\dots,L.
\end{equation}
Then any finite subset $B$ of $G$ such that $\frac{|\partial_TB|}{|B|}<\frac{\delta}{10M_L}$ and \begin{equation}\label{sizeB}
|B|\geq\frac{10M_L}{\delta}\sum_{S\in\mathcal{S}(\mathcal{T})}|S|K_S
\end{equation} will satisfy \ref{ThmFinAvsGen3'}; to see why, first note that the union $B_0$ of all 
tiles of $\mathcal{T}$ contained in $B$ satisfies $\frac{|B_0|}{|B|}>1-\frac{\delta}{10M_L}$; now obtain a set $B_1\subseteq B_0$ by removing finitely many tiles from $B_0$ in such a way that, for each $S\in\mathcal{S}(\mathcal{T})$, the number of tiles of shape $S$ contained in $B_1$ is a multiple of $K_S$. Note that, due to \Cref{sizeB}, this can be done in such a way that $\frac{|B_1|}{|B|}\geq1-\frac{\delta}{5M_L}$.

So letting $\mathcal{P}$ be the set of tiles of $\mathcal{T}$ contained in $B_1$, we can define a function $f:\mathcal{P}\to\mathbb{N}$ that associates for each translate $Sc$ some value in $\{1,2,\dots,K_S\}$, and such that for each $S$, the number of right-translates of $S$ having value $k$ is the same for all $k=1,2,\dots,K_S$. Finally, define a sequence $(w_g)_{g\in G}$ by
\begin{equation*}
w_g=z_{S,gc^{-1},f(Sc)}\text{ if }g\in Sc\text{ and }Sc\text{ is a tile of }\mathcal{T}\text{ contained in }B_1.
\end{equation*}
The rest of values of $w_g$ are not important, we can let them be some fixed value of $D$.
Then, by \Cref{AvgsSubsets'} applied to the set $B$ and the tiles of $\mathcal{T}$ contained in $B_1$, we obtain for all $l=1,\dots,L$ that
\begin{multline*}
\left|\frac{1}{|B|}\sum_{g\in B}
p(w_{h_{l,1}g},\dots,w_{h_{l,j_l}})\right.
\\\left.-
\sum_{Sc\in\mathcal{P}}\frac{|Sc|}{|B_1|}\cdot\left(\frac{1}{|S|}\sum_{g\in S}
p(z_{S,h_{l,1}g,f(Sc)},\dots,z_{S,h_{l,j_l}g,f(Sc)})\right)
\right|\leq4\left(\frac{\delta}{5}\right).
\end{multline*}
But as the sequences $(z_{S,g,k})_{g\in G}$ in $D$ ($k=1,\dots,K_S$) satisfy \Cref{453terfdv}, we have for all $l=1,\dots,L$ that
\begin{equation*}
\left|\gamma(l)
-
\sum_{Sc\in\mathcal{P}}\frac{|Sc|}{|B_1|}\cdot\left(\frac{1}{|S|}\sum_{g\in S}
p(z_{S,h_{l,1}g,f(Sc)},\dots,z_{S,h_{l,j_l}g,f(Sc)})\right)
\right|\leq\frac{\delta}{5},
\end{equation*}
so we are done proving \ref{ThmFinAvsGen3'} by the triangle inequality.

Now we use \ref{ThmFinAvsGen3'} to prove \ref{ThmFinAvsGen1}. By \ref{ThmFinAvsGen3'} and \Cref{DHZThm5.2}, there is a congruent sequence of tilings $(\mathcal{T}_L)_{L\in\mathbb{N}}$ of $G$ (we may also assume $\mathcal{S}(\mathcal{T}_L)\cap \mathcal{S}(\mathcal{T}_{L'})=\varnothing$ if $L\neq L'$) such that
\begin{enumerate}
    \item For all $S\in\mathcal{S}(\mathcal{T}_L)$ and for $l=1,\dots,L$ we have $\frac{|\partial_{S_l}S|}{|S|}<\frac{1}{L}$.
    \item For each $S\in\mathcal{S}(\mathcal{T}_L)$ there is a sequence $(w_{S,g})_{g\in G}$ such that for all $l=1,\dots,L$,
    \begin{equation}\label{ayay2}
    \left|\gamma(l)-\frac{1}{|S|}\sum_{g\in S}p_l(w_{S,h_{l,1}g},\dots,w_{S,h_{l,j_l}g})\right|<\frac{1}{L}.
    \end{equation}
\end{enumerate}
Now, letting $(F_N)$ be the F{\o}lner sequence of \Cref{ThmFinAvsGen1}, we let $\mathcal{P}$ and $(A_N)_{n\in\mathbb{N}}$ be as in \Cref{5y4trgfdvtrgfdv}, with $\mathcal{P}_L\subseteq\mathcal{P}$ ($L=1,2,\dots$) being finite sets of tiles of $\mathcal{T}_L$ such that $\mathcal{P}=\sqcup_L\mathcal{P}_L$. We define the sequence $(z_g)_{g\in G}$ by
\begin{equation*}
z_g=w_{S,gc^{-1}}\text{ if }g\in Sc,\text{ where }S\in\mathcal{S}(\mathcal{T}_L)\text{ for some $L$ and }Sc\in\mathcal{P}_L.
\end{equation*}
All that is left is proving that $(z_g)_g$ satisfies \ref{ThmFinAvsGen1} for all $l\in\mathbb{N}$. So let $\varepsilon>0$ and $l\in\mathbb{N}$. 
Note that for big enough $N$ we have $\frac{|A_N|}{|F_N|}>1-\frac{\varepsilon}{10M_l}$. 
Fix some natural number $L>\frac{10}{\varepsilon}$ (also suppose $L\geq l$). 
Letting $A_N'$ be obtained from $A_N$ by removing the (finitely many) tiles which are in $\mathcal{P}_i$ for some $i=1,\dots,L$, then for big enough $N$ we have $\frac{|A_N'|}{|F_N|}>1-\frac{\varepsilon}{5M_l}$. 
Thus, applying \Cref{AvgsSubsets'} to the set $F_N$ and to all the tiles contained in $A_N'$, and for each $S\in\mathcal{S}(\mathcal{T}_L)$ letting $n_S$ be the number of tiles of shape $S$ in $\mathcal{P}_L$ which are contained in $A_N'$, we obtain
\begin{multline}\label{6yrtgdfy6trgfd'}
\left|\frac{1}{|F_N|}\sum_{g\in F_N}
p_l(z_{h_{l,1}g},\dots,z_{h_{l,j_l}g})\right.
\\-\left.
\sum_{L\in\mathbb{N}}\sum_{S\in\mathcal{S}(\mathcal{T}_L)}\frac{n_S|S|}{|A_N'|}\cdot\left(\frac{1}{|S|}\sum_{g\in S}
p_l(w_{S,h_{l,1}g,L},\dots,w_{S,h_{l,j_l}g,L})\right)
\right|\leq4\frac{\varepsilon}{5}.
\end{multline}
However, the double sum in \Cref{6yrtgdfy6trgfd'} is at distance $\leq\frac{\varepsilon}{10}$ of $\gamma(l)$ 
(this follows from taking an affine combination of  \Cref{ayay2} applied to the tiles $S$ of $A_N'$, with constant $\frac{1}{L}<\frac{\varepsilon}{10}$). So by the triangle inequality, for big enough $N$ we have
\begin{equation*}
\left|\gamma(l)-\frac{1}{|F_N|}\sum_{g\in F_N}
p_l(z_{h_{l,1}g},\dots,z_{h_{l,j_l}g})
\right|<\varepsilon.
\end{equation*}
As $\varepsilon$ is arbitrary, we are done.
\end{proof}

\begin{comment}
\begin{prop}
Let $G$ be a countable amenable group with a F{\o}lner sequence $(F_N)$, and let $\phi:G\to\mathbb{C}$ be a positive definite sequence. There exists a finite measure preserving system $(X,\mathcal{B},\mu,T)$ and a $f\in L^\infty(X,\mathcal{B},\mu,T)$ such that $\varphi(g)=\langle T_gf,f\rangle$ for all $g\in G$.
\end{prop}

\begin{proof}
We will use the finitistic criterion. Firstly, let $(U_g)_{g\in G}$ be a unitary action of $G$ on a separable Hilbert space $H$ such that for some vector $\xi\in H$ we have $\phi(g)=\langle U_g\xi,\xi\rangle$.

Now let $(e_k)_{k\in\mathbb{N}}$ be an orthonormal basis of $H$. We define the sequences of complex numbers $(z_{g,k})_{g\in G}$ by 
\begin{equation}
U_g=\sum_kz_{g,k}e_k.
\end{equation}
In particular, we have $\phi_g$
\end{proof}
\end{comment}

As promised in the introduction, we now prove a converse to Furstenberg's correspondence principle.
\Cref{76iuyjhre5rytghf} is a slight generalization of \Cref{IFC} in which we also allow intersections with complements of the sets $g_iA$. In the following, for any $A\subseteq G$ and $B\subseteq X$, we denote $A^1=A,A^0=G\setminus A$, $B^1=B$ and $B^0=X\setminus B$.

\begin{theorem}\label{76iuyjhre5rytghf}
Let $G$ be a countably infinite amenable group with a F{\o}lner sequence $(F_N)$. For every m.p.s. $(X,\mathcal{B},\mu,(T_g)_{g\in G})$ and every $B\in\mathcal{B}$ there exists a subset $A\subseteq G$ such that, for all $n\in\mathbb{N}$, $\varepsilon_1,\dots,\varepsilon_n\in\{0,1\}$ and $g_1,\dots,g_n\in G$ we have
\begin{equation}\label{EqStrongFurst}
d_F\left(\bigcap_{i=1}^n(g_iA)^{\varepsilon_i}\right)
=
\mu\left(\bigcap_{i=1}^k(T_{g_i}B)^{\varepsilon_i}\right).
\end{equation}
Reciprocally, for any subset $A\subseteq G$ there is a m.p.s. $(X,\mathcal{B},\mu,T)$ and $B\in\mathcal{B}$ satisfying \Cref{EqStrongFurst} for all $n,\varepsilon_1,\dots,\varepsilon_n,g_1,\dots,g_n$ as above such that the density in \Cref{EqStrongFurst} exists.
\end{theorem}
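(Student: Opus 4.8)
The plan is to obtain \Cref{76iuyjhre5rytghf} as an immediate specialization of \Cref{ThmFinAvs2.0} to the compact set $D=\{0,1\}$, with the test functions $p$ taken to be indicators of single points of $\{0,1\}^n$ (which are automatically continuous, since $\{0,1\}^n$ is finite). For the direct statement, given a m.p.s. $(X,\mathcal{B},\mu,(T_g)_{g\in G})$ and $B\in\mathcal{B}$, apply \Cref{ThmFinAvs2.0} with $D=\{0,1\}$ to the measurable function $f=\mathbf{1}_B\colon X\to\{0,1\}$. This yields a $\{0,1\}$-valued sequence $(z_g)_{g\in G}$, which we record as $z_g=\mathbf{1}_A(g)$ for the set $A:=\{g\in G:z_g=1\}$. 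The conjunctions in \Cref{EqStrongFurst} are exactly the pullbacks, along the coordinate-shift maps, of these point-indicators, so the equality \Cref{EqStrongFurst} should fall out of \Cref{r4fdfrdedfdfsdsf}.

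Concretely, fix $n\in\mathbb{N}$, $\varepsilon_1,\dots,\varepsilon_n\in\{0,1\}$ and $g_1,\dots,g_n\in G$, and let
\[
p_\varepsilon(w_1,\dots,w_n):=\prod_{i=1}^n\bigl(\varepsilon_i w_i+(1-\varepsilon_i)(1-w_i)\bigr),
\]
which on $\{0,1\}^n$ equals $1$ at the point $(\varepsilon_1,\dots,\varepsilon_n)$ and $0$ elsewhere. Applying \Cref{r4fdfrdedfdfsdsf} with $p=p_\varepsilon$ and with the tuple $(h_1,\dots,h_n)=(g_1^{-1},\dots,g_n^{-1})$, one checks that
\[
p_\varepsilon\bigl(z_{g_1^{-1}g},\dots,z_{g_n^{-1}g}\bigr)=\mathbf{1}_{\bigcap_{i=1}^n(g_iA)^{\varepsilon_i}}(g),
\]
using that left translation is a bijection (so $(g_iA)^{\varepsilon_i}=g_iA^{\varepsilon_i}$) and that $\mathbf{1}_A(g_i^{-1}g)=1\iff g\in g_iA$. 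Hence the left side of \Cref{r4fdfrdedfdfsdsf} is $\lim_N\frac{|F_N\cap\bigcap_i(g_iA)^{\varepsilon_i}|}{|F_N|}=d_F\bigl(\bigcap_i(g_iA)^{\varepsilon_i}\bigr)$, a genuine limit since \Cref{ThmFinAvs2.0} provides one. On the right side, because $(T_g)_{g\in G}$ is a group action we have $T_{g_i^{-1}}^{-1}=T_{g_i}$, so $\mathbf{1}_B(T_{g_i^{-1}}x)=\mathbf{1}_{T_{g_i}B}(x)$, whence $p_\varepsilon\bigl(\mathbf{1}_B(T_{g_1^{-1}}x),\dots,\mathbf{1}_B(T_{g_n^{-1}}x)\bigr)=\mathbf{1}_{\bigcap_i(T_{g_i}B)^{\varepsilon_i}}(x)$ and the integral equals $\mu\bigl(\bigcap_i(T_{g_i}B)^{\varepsilon_i}\bigr)$. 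This is precisely \Cref{EqStrongFurst} (with $n$ in place of the mistyped $k$); taking $n=1$, $\varepsilon_1=1$, $g_1=e$ gives $d_F(A)=\mu(B)$, and the all-$\varepsilon_i=1$ case recovers \Cref{IFC}.

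For the reciprocal statement, given $A\subseteq G$ we invoke the converse half of \Cref{ThmFinAvs2.0} with $D=\{0,1\}$ applied to $z_g=\mathbf{1}_A(g)$, obtaining a m.p.s. $(X,\mathcal{B},\mu,(T_g)_{g\in G})$ and a measurable $f\colon X\to\{0,1\}$; put $B:=f^{-1}(\{1\})\in\mathcal{B}$. For any $n,\varepsilon_i,g_i$ as above, the Cesàro average $\frac1{|F_N|}\sum_{g\in F_N}p_\varepsilon(z_{g_1^{-1}g},\dots,z_{g_n^{-1}g})$ equals $\frac{|F_N\cap\bigcap_i(g_iA)^{\varepsilon_i}|}{|F_N|}$, so its limit exists exactly when $d_F\bigl(\bigcap_i(g_iA)^{\varepsilon_i}\bigr)$ does; in that case \Cref{r4fdfrdedfdfsdsf} yields the equality with $\mu\bigl(\bigcap_i(T_{g_i}B)^{\varepsilon_i}\bigr)$ via the same identities.

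There is no real obstacle: \Cref{ThmFinAvs2.0} does all of the work, and what remains is bookkeeping — selecting the finite family of cylinder test functions $p_\varepsilon$, noting that every function on the finite set $\{0,1\}^n$ is continuous, and tracking the passage between the left-translation convention $g_iA$ of the statement and the coordinate-shift convention of \Cref{ThmFinAvs2.0}, which is handled simply by replacing each $g_i$ with $g_i^{-1}$ and using $T_{g_i^{-1}}^{-1}=T_{g_i}$.
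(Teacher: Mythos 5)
Your proposal is correct and follows essentially the same route as the paper: both apply \Cref{ThmFinAvs2.0} with $D=\{0,1\}$, $f=\chi_B$, the shifted tuple $(g_1^{-1},\dots,g_n^{-1})$, and the product test functions $\prod_i p_{\varepsilon_i}$ with $p_1(x)=x$, $p_0(x)=1-x$ (your $p_\varepsilon$ is exactly this product), then read off \Cref{EqStrongFurst} from \Cref{r4fdfrdedfdfsdsf}; the converse is the converse half of \Cref{ThmFinAvs2.0}. Your write-up just makes explicit the bookkeeping identities ($T_{g_i^{-1}}^{-1}=T_{g_i}$, $\mathbf{1}_A(g_i^{-1}g)=\mathbf{1}_{g_iA}(g)$) that the paper leaves implicit.
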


\begin{proof}
Let $p_0,p_1:\{0,1\}\to\mathbb{C}$ be given by $p_0(x)=1-x$ and $p_1(x)=x$.
Note that for all $n,\varepsilon_1,\dots,\varepsilon_n,g_1,\dots,g_n$ we have
\begin{gather*}
\mu\left(\bigcap_{i=1}^k(T_{g_i}B)^{\varepsilon_i}\right)=
\int_X
\prod_{i=1}^kp_{\varepsilon_i}\left(\chi_B\left(T_{g_i^{-1}}x\right)\right)
d\mu.
\end{gather*}
So by \Cref{ThmFinAvs2.0} applied to the polynomials of the form $p(x_1,\dots,x_n)=\prod_{i=1}^kp_{\varepsilon_i}(x_i)$ (for all $n,\varepsilon_i,g_i$) and with $D=\{0,1\}$, there exists some characteristic function $\chi_A:G\to\{0,1\}$ such that for all $n,(\varepsilon_i)_{i=1}^n$ and $(g_i)_{i=1}^n$ we have
\begin{gather*}
\mu\left(\bigcap_{i=1}^k(T_{g_i}B)^{\varepsilon_i}\right)
=\lim_{N\to\infty}\frac{1}{|F_N|}
\sum_{g\in F_N}
\prod_{i=1}^kp_{\varepsilon_i}\left(\chi_A(g_i^{-1}g)\right)
=d_F\left(\bigcap_{i=1}^k(g_iA)^{\varepsilon_i}\right).
\end{gather*}
The other implication is proved similarly, applying \Cref{ThmFinAvs2.0} in the other direction.
\end{proof}

\Cref{76iuyjhre5rytghf} is a statement about densities of any sets in the algebra $\mathcal{A}\subseteq\mathcal{P}(G)$ generated by the family $\{gA;g\in G\}$. For example, using that for any $g_1,\dots,g_n\in G$ the set $\cup_{i=1}^ng_iA$ is the union for all $\varepsilon_1,\dots,\varepsilon_n$ not all equal to $1$ of $\cap_{i=1}^n(g_iA)^{\varepsilon_i}$, we obtain a version of Furstenberg's correspondence principle with unions:

\begin{theorem}[Inverse Furstenberg correspondence principle with unions]
\label{IFCU}
Let $G$ be a countably infinite amenable group with a F{\o}lner sequence $(F_N)$. For every m.p.s. $(X,\mathcal{B},\mu,(T_g)_{g\in G})$ and every $B\in\mathcal{B}$ there exists a subset $A\subseteq G$ such that for all $k\in\mathbb{N}$ and $h_1,\dots,h_k\in G$ we have
\begin{equation*}
d_F\left(h_1A\cup\dots\cup h_kA\right)
=
\mu\left(T_{h_1}B\cup\dots\cup T_{h_k}B\right).\qed
\end{equation*}
\end{theorem}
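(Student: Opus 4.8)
The plan is to obtain \Cref{IFCU} as an immediate corollary of \Cref{76iuyjhre5rytghf}, which already controls the density of every set in the Boolean algebra generated by $\{gA:g\in G\}$. First I would apply \Cref{76iuyjhre5rytghf} to the given m.p.s. $(X,\mathcal{B},\mu,(T_g)_{g\in G})$ and the set $B\in\mathcal{B}$, producing a subset $A\subseteq G$ such that, for all $n\in\mathbb{N}$, all $\varepsilon_1,\dots,\varepsilon_n\in\{0,1\}$ and all $g_1,\dots,g_n\in G$,
\[
d_F\left(\bigcap_{i=1}^n(g_iA)^{\varepsilon_i}\right)=\mu\left(\bigcap_{i=1}^n(T_{g_i}B)^{\varepsilon_i}\right);
\]
in particular each of these densities exists (it is a genuine limit, not merely a $\limsup$).

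Next I would fix $k\in\mathbb{N}$ and $h_1,\dots,h_k\in G$ and decompose the union into atoms. Since an element $g\in G$ lies outside $\bigcup_{i=1}^k h_iA$ precisely when $g\in\bigcap_{i=1}^k(h_iA)^0$, we have the finite disjoint decomposition
\[
\bigcup_{i=1}^k h_iA=\bigsqcup_{\varepsilon\in\{0,1\}^k\setminus\{(0,\dots,0)\}}\ \bigcap_{i=1}^k(h_iA)^{\varepsilon_i},
\]
and, identically, $\bigcup_{i=1}^k T_{h_i}B=\bigsqcup_{\varepsilon\neq(0,\dots,0)}\bigcap_{i=1}^k(T_{h_i}B)^{\varepsilon_i}$ inside $X$. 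Then I would invoke finite additivity: if $E_1,\dots,E_m\subseteq G$ are pairwise disjoint and each $E_j$ has a density along $F$, then $\bigsqcup_j E_j$ has a density equal to $\sum_j d_F(E_j)$, because $\bigl|\bigl(\bigsqcup_j E_j\bigr)\cap F_N\bigr|=\sum_j|E_j\cap F_N|$ and a finite sum of convergent sequences converges to the sum of the limits. Applying this to the atoms above, together with the (trivial) finite additivity of $\mu$ on the corresponding disjoint sets in $X$, yields
\[
d_F\left(\bigcup_{i=1}^k h_iA\right)=\sum_{\varepsilon\neq(0,\dots,0)}d_F\left(\bigcap_{i=1}^k(h_iA)^{\varepsilon_i}\right)=\sum_{\varepsilon\neq(0,\dots,0)}\mu\left(\bigcap_{i=1}^k(T_{h_i}B)^{\varepsilon_i}\right)=\mu\left(\bigcup_{i=1}^k T_{h_i}B\right),
\]
which is exactly the asserted identity.

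There is essentially no obstacle here; the only points requiring care are that \Cref{76iuyjhre5rytghf} guarantees the \emph{existence} of the density of each atom (so that the additivity step is legitimate and we obtain an equality of densities rather than of upper densities), and that the atom decomposition is genuinely into pairwise disjoint pieces. If one also wants the reciprocal statement (every $A\subseteq G$ arises from some m.p.s.), it follows in the same way from the reciprocal direction of \Cref{76iuyjhre5rytghf} via the identical atom decomposition.
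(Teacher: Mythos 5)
Your proof is correct and is essentially the paper's own argument: the paper derives \Cref{IFCU} from \Cref{76iuyjhre5rytghf} by exactly this atom decomposition of $\cup_i h_iA$ into the cells $\cap_i(h_iA)^{\varepsilon_i}$ with $\varepsilon\neq(0,\dots,0)$, together with finite additivity of $d_F$ on disjoint sets whose densities exist. (You even silently correct a slip in the paper's remark, which says ``not all equal to $1$'' where it should say ``not all equal to $0$''.)
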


\begin{remark}
It is possible to give a version of \Cref{IFC} that involves translates of several sets $B_1,\dots,B_l\subseteq X$, as in \cite[Definition A.3]{BF2}, or even countably many sets $(B_k)_{k\in\mathbb{N}}$. We will not do so as it falls outside the scope of this article and it would involve proving a modified version of \Cref{ThmFinAvsGen}.
\end{remark}

We include a version of \Cref{IFC} for semigroups. 

\begin{definition}
\label{DefSemigroupProps}
Let $(S,\cdot)$ be a semigroup. For $A\subseteq S$ and $s\in S$, we let $s^{-1}A=\{x\in S;sx\in A\}$. We say $S$ is (left) amenable\footnote{This definition is known to be equivalent to \Cref{DefAmenableGroup} for countable groups.} if there is a finitely additive probability measure $\mu:\mathcal{P}(S)\to[0,1]$ such that $\mu(s^{-1}A)=\mu(A)$ for all $s\in S,A\subseteq S$. We say that $S$ is cancellative when for all $a,b,c\in S$, $ab=ac$ implies $b=c$ and $ba=ca$ implies $b=c$. 
A F{\o}lner sequence in $S$ is a sequence $(F_N)_{N\in\mathbb{N}}$ of finite subsets of $S$ such that, for all $s\in S$, $\lim_{N\to\infty}\frac{|F_N\Delta sF_N|}{|F_N|}=0$.
\end{definition}

For the rest of the section we fix a countable, (left) amenable, (two-sided) cancellative semigroup $S$. In particular, $S$ is \textit{left-reversible}, that is, $aS\cap bS\neq\varnothing$ for all $a,b\in S$ (in fact, $\mu(aS\cap bS)=1$ for any left-invariant mean $\mu$ in $S$). An argument dating back to \cite{Ore} implies that $S$ can be imbedded into a group $G$, the \textit{group of right quotients} of $S$, such that $G=\{st^{-1};s,t\in S\}$, see \cite[Theorem 1.23]{Cli}. 

We will use the fact that any $S$-m.p.s. can be extended to a $G$-m.p.s. Extensions of measure-theoretic and topological semigroup actions to groups have quite recently been studied in \cite{FJB,Ec,BBD1,BBD2}.

\begin{definition}
We say a measurable space $(X,\mathcal{B})$ is \textit{standard Borel} if there is a metric $d$ on $X$ such that $(X,d)$ is a complete, separable metric space with Borel $\sigma$-algebra $\mathcal{B}$. We say a m.p.s. $(X,\mathcal{B},\mu,(T_s)_{s\in S})$ is standard Borel if $(X,\mathcal{B})$ is standard Borel. 
\end{definition}

\begin{theorem}[See {\cite[Theorem 2.7.7]{Ec}} or {\cite[Theorem 2.9]{FJB}}]
\label{ExtendingActionsToGroupsNonCont} 
For any standard Borel m.p.s. $(X,\mathcal{B},\mu,(T_s)_{s\in S})$ there is a m.p.s. $(Y,\mathcal{C},\nu,(S_g)_{g\in G})$ and a measure preserving map $\pi:Y\to X$ with $\pi\circ S_s=T_s\circ\pi$ for all $s\in S$.
\end{theorem}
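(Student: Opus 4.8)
The plan is to realise $Y$ as the space of \emph{$S$-orbit sections} of $X$, carrying a natural $G$-action by translations --- the ``natural extension'' construction adapted to semigroup actions. Throughout I identify $S$ with its image in $G=SS^{-1}=S^{-1}S$ (recall from the paragraph above that $S$ embeds in its group of right quotients), and I use crucially that $(X,\mathcal{B})$ is standard Borel. First I would set
\[
Y:=\bigl\{\,y\in X^G \;:\; y(g)=T_s(y(gs))\ \text{ for all }g\in G,\ s\in S\,\bigr\},
\]
a subset of the countable product $X^G$. Since $X$ is standard Borel the diagonal of $X\times X$ is Borel, so each relation $y(g)=T_s(y(gs))$ cuts out a Borel set; hence $Y$ is a Borel, so standard Borel, subspace of $X^G$. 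I would then define the $G$-action by $(S_hy)(g):=y(h^{-1}g)$ --- a coordinate relabelling of $X^G$, which clearly maps $Y$ into $Y$ and gives a left action $S_h\circ S_k=S_{hk}$ by Borel automorphisms --- and $\pi\colon Y\to X$ by $\pi(y):=y(e)$. The equivariance is then immediate: for $s\in S$,
\[
\pi(S_sy)=y(s^{-1})=T_s(y(s^{-1}s))=T_s(y(e))=T_s(\pi(y)),
\]
the second equality being the defining relation of $Y$ evaluated at $g=s^{-1}$.

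The content of the proof is to produce a $G$-invariant probability measure $\nu$ on $Y$ with $\pi_*\nu=\mu$. Order $G$ by $g\preceq g'\iff g'\in gS$; this is directed, since given $g_1,g_2$ one may write $g_2^{-1}g_1=uv^{-1}$ with $u,v\in S$ and then $h:=g_1t$ lies in $g_1S\cap g_2S$ for any $t\in vS$. The coordinate maps $\pi_g\colon Y\to X$, $\pi_g(y)=y(g)$, satisfy $\pi_g=T_s\circ\pi_{gs}$, which exhibits $Y$ as the projective limit $\varprojlim_{g\in G}(X,\mu)$ along the measure-preserving bonding maps $T_s$. Since $G$ is countable and directed it admits a cofinal chain $h_1\preceq h_2\preceq\cdots$, say $h_{n+1}=h_ns_n$ with $s_n\in S$; then $y\mapsto(y(h_n))_{n\in\mathbb{N}}$ identifies $Y$ with the \emph{sequential} projective limit of $(X,\mu)\xleftarrow{T_{s_1}}(X,\mu)\xleftarrow{T_{s_2}}(X,\mu)\leftarrow\cdots$. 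A sequential projective limit of standard Borel probability spaces along measure-preserving maps carries a unique probability measure $\nu$ all of whose one-dimensional marginals equal $\mu$ --- this is the classical natural-extension construction, justified by the Kolmogorov extension theorem for standard Borel spaces. Transporting $\nu$ to $Y$, each coordinate projection $\pi_g$ pushes $\nu$ to $\mu$ (every $g$ lies below some $h_n$ and the bonding maps preserve $\mu$); in particular $\pi_*\nu=\mu$, the case $g=e$.

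To finish I would check that $\nu$ is $G$-invariant. Fix $h\in G$ and a finite $F\subseteq G$; since $\nu$ is determined by its finite-dimensional marginals, it suffices to show that $(y(g))_{g\in F}$ and $(y(h^{-1}g))_{g\in F}$ have the same $\nu$-law. Choose $g^{*}$ with $g^{*}\succeq g$ for every $g\in F\cup h^{-1}F$, and for $g\in F$ write $g^{*}=g\,t_g$; then $h^{-1}g^{*}=(h^{-1}g)\,t_g$, so $y(g)=T_{t_g}(y(g^{*}))$ and $y(h^{-1}g)=T_{t_g}(y(h^{-1}g^{*}))$. Hence both tuples are the image, under the single Borel map $x\mapsto(T_{t_g}x)_{g\in F}$, of $\pi_{g^{*}}(y)$ and of $\pi_{h^{-1}g^{*}}(y)$ respectively; since both of these coordinates have law $\mu$, the two tuples have the same law. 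Therefore $(S_h)_*\nu=\nu$, and $(Y,\mathcal{C},\nu,(S_g)_{g\in G})$ together with $\pi$ is as required.

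The two delicate points are the existence of the projective-limit measure $\nu$ --- which is exactly where the standard Borel hypothesis is used, via the reduction to a cofinal chain --- and the invariance computation of the last paragraph; the algebraic identities (the action axioms, $S_h(Y)\subseteq Y$, and $\pi\circ S_s=T_s\circ\pi$) are routine verifications. I expect the main obstacle to be keeping the non-commutativity of $S$ straight: one must consistently use right translates $gs$ (so that $G=SS^{-1}$ acts by left translation on the orbit sections), and the directedness of $(G,\preceq)$ and the choice of a common dominating index $g^{*}$ are what make the construction go through despite the maps $T_s$ being non-invertible.
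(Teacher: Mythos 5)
Your proposal is correct, but note that the paper does not prove this statement at all: it is imported as a black box from the cited references (\cite[Theorem 2.7.7]{Ec}, \cite[Theorem 2.9]{FJB}), so there is no internal proof to compare against. What you have written is a complete, self-contained proof along the standard ``natural extension'' lines, which is essentially the construction those references carry out: realise $Y$ as the projective limit of copies of $(X,\mu)$ over $G$ ordered by right divisibility $g\preceq gs$, use two-sided cancellativity so that the bonding element $s$ with $g'=gs$ is unique and the system is consistent, use left reversibility (via $G=SS^{-1}$) for directedness, reduce to a cofinal chain, and invoke the Kolmogorov extension theorem --- which is exactly where the standard Borel hypothesis is needed, as you correctly isolate. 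The invariance argument via a common dominating index $g^{*}$ is sound: both tuples $(y(g))_{g\in F}$ and $(y(h^{-1}g))_{g\in F}$ are the image of a single coordinate of law $\mu$ under the same Borel map $x\mapsto(T_{t_g}x)_{g\in F}$, and cylinder sets form a generating $\pi$-system. Two cosmetic points: the relation $g\preceq g'\iff g'\in gS$ is reflexive only if $e\in S$, so you should pass to $S\cup\{e\}$ (with $T_e=\mathrm{id}$) as the paper itself does elsewhere; and you only need $g^{*}$ to dominate $F$, since $h^{-1}g^{*}$ then automatically dominates $h^{-1}F$ with the same elements $t_g$, and every coordinate map pushes $\nu$ to $\mu$ regardless. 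Neither affects correctness.
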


\begin{theorem}
\label{IFCSemigroups}
Let $S$ be a countably infinite, amenable, cancellative semigroup with a F{\o}lner sequence $(F_N)$\footnote{Any amenable, (two-sided) cancellative semigroup $S$ has a F{\o}lner sequence: let $G$ be the group of right fractions of $S$. Then from any F{\o}lner sequence $(F_N)$ in $G$ and any elements $c_N\in\cap_{g\in F_N}g^{-1}S$ we obtain a F{\o}lner sequence $(F_Nc_N)$ in $S$.}. For every m.p.s. $(X,\mathcal{B},\mu,(T_s)_{s\in S})$ and every $B\in\mathcal{B}$ there exists a subset $A\subseteq S$ such that for all $k\in\mathbb{N}$ and $h_1,\dots,h_k\in S$ we have
\begin{equation}
\label{IFCSemigroupsEq}
d_F\left(h_1^{-1}A\cap\dots\cap h_k^{-1}A\right)
=
\mu\left(T_{h_1}^{-1}B\cap\dots\cap T_{h_k}^{-1}B\right).
\end{equation}
\end{theorem}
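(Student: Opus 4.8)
The plan is to reduce \Cref{IFCSemigroups} to the already-established group version of the inverse correspondence principle, using the embedding of $S$ into its group of right quotients $G$ and the extension of measure-preserving $S$-actions to $G$-actions (\Cref{ExtendingActionsToGroupsNonCont}). The left-hand side of \Cref{IFCSemigroupsEq} involves $h^{-1}A$ for $h\in S$, which is exactly $g^{-1}A$ with $g=h\in G$; so once we produce a set $\tilde A\subseteq G$ behaving correctly for $G$ and restrict it to $S$, the semigroup-density statement should follow, provided the restriction does not distort densities along $(F_N)$. Since $F_N\subseteq S$ and we only ever intersect with subsets of $S$ when computing $d_F$, replacing $\tilde A$ by $\tilde A\cap S$ (equivalently, working with $A=\tilde A\cap S$) changes nothing in the relevant intersections with $F_N$, because for $h_1,\dots,h_k\in S$ one has $F_N\cap h_1^{-1}\tilde A\cap\dots\cap h_k^{-1}\tilde A = F_N\cap h_1^{-1}A\cap\dots\cap h_k^{-1}A$ once we note $h_i^{-1}\tilde A\cap F_N = h_i^{-1}(\tilde A)\cap F_N$ and membership $g\in h_i^{-1}\tilde A$ for $g\in F_N\subseteq S$ forces $h_ig\in S$, so it is equivalent to $g\in h_i^{-1}(\tilde A\cap S)$.

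First I would set up the ambient group: let $G=\{st^{-1}:s,t\in S\}$ be the group of right quotients, which exists because $S$ is countable, cancellative and left-reversible (as noted in the excerpt, following Ore--Clifford), and observe that $G$ is countable and amenable (amenability passes to the overgroup here since $S$ is co-amenable in $G$, or simply because a F\o lner sequence in $S$ is a F\o lner sequence in $G$ after noting the left-translation action of $G$ on itself restricted appropriately — in fact the footnote's construction shows F\o lner sequences transfer both ways). Then, given the $S$-m.p.s. $(X,\mathcal B,\mu,(T_s)_{s\in S})$ and $B\in\mathcal B$, I would first pass to a standard Borel model: every probability m.p.s.\ with a countable acting semigroup is measurably isomorphic to one on a standard Borel space (e.g.\ realize $X$ via the countably generated sub-$\sigma$-algebra generated by the orbit of $B$ under $(T_s)$ and a countable separating family, then take a standard Borel realization), so that \Cref{ExtendingActionsToGroupsNonCont} applies and yields a $G$-m.p.s.\ $(Y,\mathcal C,\nu,(S_g)_{g\in G})$ with a measure-preserving $\pi:Y\to X$ intertwining $S_s$ and $T_s$. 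Set $\tilde B=\pi^{-1}(B)\in\mathcal C$; then $\nu(S_{h_1}^{-1}\tilde B\cap\dots\cap S_{h_k}^{-1}\tilde B)=\mu(T_{h_1}^{-1}B\cap\dots\cap T_{h_k}^{-1}B)$ for all $h_i\in S$, because $S_{h_i}^{-1}\pi^{-1}(B)=\pi^{-1}(T_{h_i}^{-1}B)$ and $\pi$ is measure preserving.

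Next I would apply the group inverse correspondence principle — either \Cref{IFC} or, more conveniently, \Cref{76iuyjhre5rytghf} — to the $G$-m.p.s.\ $(Y,\mathcal C,\nu,(S_g)_{g\in G})$, the set $\tilde B$, and the F\o lner sequence $(F_N)$ viewed inside $G$. This produces $\tilde A\subseteq G$ with $d_F(g_1\tilde A\cap\dots\cap g_k\tilde A)=\nu(S_{g_1}\tilde B\cap\dots\cap S_{g_k}\tilde B)$ for all $g_i\in G$. Rewriting with inverses: for $h_i\in S\subseteq G$, $d_F(h_1^{-1}\tilde A\cap\dots\cap h_k^{-1}\tilde A)=\nu(S_{h_1}^{-1}\tilde B\cap\dots\cap S_{h_k}^{-1}\tilde B)$ (this is just \Cref{76iuyjhre5rytghf} applied with $g_i=h_i^{-1}$, or the version of \Cref{IFC} after relabeling; note $F_N\subseteq S$ so the F\o lner averages are over elements of $S$). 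Finally, put $A=\tilde A\cap S$. For $g\in F_N\subseteq S$ and $h_i\in S$, the condition $h_ig\in\tilde A$ is equivalent to $h_ig\in A$ since $h_ig\in S$; hence $F_N\cap\bigcap_i h_i^{-1}\tilde A=F_N\cap\bigcap_i h_i^{-1}A$, and therefore $d_F(h_1^{-1}A\cap\dots\cap h_k^{-1}A)=d_F(h_1^{-1}\tilde A\cap\dots\cap h_k^{-1}\tilde A)=\mu(T_{h_1}^{-1}B\cap\dots\cap T_{h_k}^{-1}B)$, which is \Cref{IFCSemigroupsEq}.

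The main obstacle I anticipate is the reduction to a standard Borel model so that \Cref{ExtendingActionsToGroupsNonCont} can be invoked: one must ensure the chosen standard Borel realization carries a genuine measure-preserving $S$-action (not merely an action up to null sets) and a distinguished set corresponding to $B$. This is handled by the familiar device of restricting to the $(T_s)$-invariant, countably generated $\sigma$-algebra generated by $\{T_s^{-1}B:s\in S\}$ together with a countable generating family, and invoking the standard fact (point realization of countably generated probability algebras, e.g.\ via the Stone space or a measure isomorphism to a subset of $[0,1]^{\mathbb N}$ with the shift-compatible structure) that such a system has a standard Borel model on which $S$ acts by everywhere-defined measure-preserving maps; alternatively one can cite that \Cref{ExtendingActionsToGroupsNonCont} and the group results are all stated for standard Borel systems and every countably-generated probability m.p.s.\ is isomorphic to such. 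Everything else is bookkeeping: the identity $S_s^{-1}\pi^{-1}(B)=\pi^{-1}(T_s^{-1}B)$, the fact that $(F_N)\subseteq S$ makes the two density computations literally identical after intersecting with $S$, and the transfer of the F\o lner property between $S$ and $G$ (recorded in the footnote of the statement).
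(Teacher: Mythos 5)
Your proposal is correct and follows essentially the same route as the paper: embed $S$ in its group of right quotients, extend the action to a $G$-system via \Cref{ExtendingActionsToGroupsNonCont}, apply the group-level inverse correspondence principle, and note that $(F_N)\subseteq S$ makes the densities agree after restricting to $S$. The one step you flag as delicate — producing a standard Borel model carrying a genuine everywhere-defined action — is handled in the paper by the concrete device of pushing $\mu$ forward under $x\mapsto(\chi_B(T_sx))_{s\in S}$ to the shift system on $\{0,1\}^S$, which is exactly the kind of realization you describe and closes the gap.
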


\begin{remark}
We only generalize \Cref{IFC} to semigroup actions, but the same argument can be used to generalize \Cref{ThmFinAvs2.0}.
\end{remark}

\begin{proof}
Let $G$ be the group of right quotients of $S$. Note that as $S$ generates $G$ and 
$(F_N)$ is a F{\o}lner sequence in $S$, $(F_N)$ is also a F{\o}lner sequence in $G$.

We first suppose that  $(X,\mathcal{B})$ is standard Borel, so by \Cref{ExtendingActionsToGroupsNonCont} there is a m.p.s. $(Y,\mathcal{C},\nu,(S_g)_{g\in G})$ and a measure preserving map $\pi:Y\to X$ such that $\pi\circ S_s=T_s\circ\pi$ for all $s\in S$. Letting $D=\pi^{-1}(B)$, we have 
\begin{equation*}
\nu\left(S_{h_1}^{-1}D\cap\dots\cap S_{h_k}^{-1}D\right)
=
\mu\left(T_{h_1}^{-1}B\cap\dots\cap T_{h_k}^{-1}B\right)
\end{equation*}
for all $h_1,\dots,h_k\in S$. We finish the proof be applying \Cref{IFC} to obtain a set $A\subseteq G$ such that for all $k\in\mathbb{N}$ and $h_1,\dots,h_k\in G$ we have
\begin{equation*}
d_F\left(h_1A\cap\dots\cap h_kA\right)
=
\nu(S_{h_1}D\cap\cdots\cap S_{h_k}D)
=
\mu\left(T_{h_1}B\cap\dots\cap T_{h_k}B\right).
\end{equation*}
We now tackle the general case, where $(X,\mathcal{B},\mu)$ is an arbitrary measure space. Our construction is similar to the ones in \cite[Chapter 5, Section 3]{Par}. 
We may assume that $S$ contains the identity $e\in G$ (if not, we work with $S\cup\{e\}$). 
Consider the compact metric space $M=\{0,1\}^S$ with the product topology, and let $\mathcal{B}_M$ be the Borel $\sigma$-algebra in $M$, which is generated by the sets $M_h=\{(x_s)\in M;x_h=1\}$, $h\in S$. 
The map $\Phi:X\to M;x\mapsto(\chi_B(T_sx))_{s\in S}$ is measurable, and if we consider the continuous, measurable maps $U_h:M\to M;(x_s)_{s\in S}\mapsto (x_{sh})_{s\in S}$, then we have $\Phi\circ T_h=U_h\circ\Phi$ for all $h\in S$. 
Thus, the pushforward probability measure $\nu=\Phi_*\mu$ in $(M,\mathcal{B}_M)$ is invariant by $U_h$ for all $h\in S$. 
Letting $C=\{(x_s)\in M;x_e=1\}$, for all $h\in S$ we have $\Phi^{-1}(U_s^{-1}C)=T_s^{-1}B$. So for all $h_1,\dots,h_k\in S$,
\begin{equation*}
\nu\left(U_{h_1}^{-1}C\cap\cdots\cap U_{h_k}^{-1}C\right)
=
\mu\left(T_{h_1}^{-1}B\cap\dots\cap T_{h_k}^{-1}B\right).
\end{equation*}
As $(M,\mathcal{B}_M)$ is standard Borel, we are done.
\end{proof}

\section{Characterizations of vdC sets in amenable groups}
\label{SecCharact}
In this section we prove \Cref{ThmvdC} below, our main characterization theorem for vdC sets in countable amenable groups. \Cref{ThmvdC} gives a characterization of $F$-vdC sets analogous to \Cref{defvcDZ} but for any F{\o}lner sequence. In particular, it implies that the notion of $F$-vdC set is independent of the F{\o}lner sequence, thus answering the question in \cite[Section 4.2]{BL} of whether $F$-vdC implies vdC. See \Cref{rewf9ds0ol} for yet another characterization of $F$-vdC sets.

\begin{theorem}\label{ThmvdC}
Let $G$ be a countably infinite amenable group with a F{\o}lner sequence $(F_N)$. For a set $H\subseteq G\setminus\{e\}$, the following are equivalent:
\begin{enumerate}
    \item \label{ThmvdC1}$H$ is an $F$-vdC set.
    
    \item \label{ThmvdC2} For all sequences $(z_g)_{g\in G}$ of complex numbers in the unit disk $\mathbb{D}$,
    \begin{equation*}
    \lim_{N\to\infty}\frac{1}{|F_N|}
    \sum_{g\in F_N}z_{hg}\overline{z_g}=0\text{ for all }h\in H\textup{ implies }
    \lim_{N\to\infty}\frac{1}{|F_N|}\sum_{g\in F_N}z_{g}=0.
    \end{equation*}

    \item \label{ThmvdC2'} For all sequences $(z_g)_{g\in G}$ of complex numbers in $\mathbb{S}^1$,
    \begin{equation*}\label{EqRrg1'}
    \lim_{N\to\infty}\frac{1}{|F_N|}
    \sum_{g\in F_N}z_{hg}\overline{z_g}=0\text{ for all }h\in H\textup{ implies }
    \lim_{N\to\infty}\frac{1}{|F_N|}\sum_{g\in F_N}z_{g}=0.
    \end{equation*}

    \item\label{ThmvdC4} $H$ is a vdC set in $G$: for all m.p.s. $(X,\mathcal{A},\mu,(T_g)_{g\in G})$ and $f\in L^\infty(\mu)$,
\begin{equation}\label{eriojgdfklsmcvxdf}
\int_{X}f(T_hx)\cdot\overline{f(x)}d\mu(x)=0\text{ for all }h\in H\text{ implies }\int_{X}fd\mu=0.
\end{equation}

    \item\label{ThmvdC3}\emph{(Finitistic characterization)} For every $\varepsilon>0$ there is $\delta>0$ and finite sets $A\subseteq G,H_0\subseteq H$ such that for any $K\in\mathbb{N}$ and sequences $(z_{a,k})_{a\in G}$ in $\mathbb{D}$, for $k=1,\dots,K$, such that 
    \begin{equation*}
    \left|
    \frac{1}{K|A|}\sum_{k=1}^K\sum_{a\in A}z_{ha,k}\overline{z_{a,k}}
    \right|<\delta\text{ for all }h\in H_0,
    \end{equation*}
    we have
    \begin{equation*}
    \left|
    \frac{1}{K|A|}\sum_{k=1}^K\sum_{a\in A}z_{a,k}
    \right|<\varepsilon.
    \end{equation*}
\end{enumerate}
\end{theorem}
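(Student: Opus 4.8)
The plan is to prove the cycle $\ref{ThmvdC4}\Rightarrow\ref{ThmvdC2}\Rightarrow\ref{ThmvdC2'}\Rightarrow\ref{ThmvdC1}\Rightarrow\ref{ThmvdC4}$, together with $\ref{ThmvdC2}\Leftrightarrow\ref{ThmvdC3}$, which gives all five equivalences. Write $\mathbb{D}$ for the closed unit disk, $e(t):=e^{2\pi i t}$, and for a probability measure $\rho$ on $\mathbb{S}^1$ put $\widehat{\rho}(k):=\int_{\mathbb{S}^1}\zeta^{k}\,d\rho(\zeta)$. The equivalence $\ref{ThmvdC2}\Leftrightarrow\ref{ThmvdC4}$ is \Cref{ThmFinAvs2.0} specialised to $D=\mathbb{D}$ and to the functions $p(w)=w$ and $p(w_1,w_2)=w_1\overline{w_2}$: for $\ref{ThmvdC2}\Rightarrow\ref{ThmvdC4}$, given $f\in L^\infty(\mu)$ with $0<\|f\|_\infty\le1$ (rescale; the case $f=0$ a.e.\ being trivial), redefined on a null set so that $f\colon X\to\mathbb{D}$, the forward direction gives $(z_g)$ in $\mathbb{D}$ with $\frac1{|F_N|}\sum_{g\in F_N}z_{hg}\overline{z_g}\to\int f(T_hx)\overline{f(x)}\,d\mu$ and $\frac1{|F_N|}\sum_{g\in F_N}z_g\to\int f\,d\mu$, so \ref{ThmvdC2} forces $\int f=0$; for $\ref{ThmvdC4}\Rightarrow\ref{ThmvdC2}$, if $(z_g)$ in $\mathbb{D}$ has vanishing correlations along $H$ but $\frac1{|F_N|}\sum z_g\not\to0$, pass to a F{\o}lner subsequence along which this average converges to some $c\neq0$ and apply the reverse direction of \Cref{ThmFinAvs2.0} along that subsequence to obtain an m.p.s.\ and $f\colon X\to\mathbb{D}$ with $\int f(T_hx)\overline{f(x)}\,d\mu=0$ for $h\in H$ and $\int f=c$, contradicting \ref{ThmvdC4}. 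The implication $\ref{ThmvdC2}\Rightarrow\ref{ThmvdC2'}$ is the inclusion $\mathbb{S}^1\subseteq\mathbb{D}$, and $\ref{ThmvdC2'}\Rightarrow\ref{ThmvdC1}$ is Weyl's criterion: if $(x_{hg}-x_g)_g$ is $F$-u.d.\ for all $h\in H$, then for each $k\neq0$ the sequence $z_g:=e(kx_g)\in\mathbb{S}^1$ has $\frac1{|F_N|}\sum z_{hg}\overline{z_g}=\frac1{|F_N|}\sum e\!\big(k(x_{hg}-x_g)\big)\to0$ for $h\in H$, so $\frac1{|F_N|}\sum e(kx_g)\to0$ by \ref{ThmvdC2'}; as $k$ was arbitrary, $(x_g)$ is $F$-u.d. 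Finally $\ref{ThmvdC2}\Leftrightarrow\ref{ThmvdC3}$ is the passage between an infinite average and its finitary approximations: from $(z_g)$ in $\mathbb{D}$ witnessing the failure of \ref{ThmvdC2} (after a F{\o}lner subsequence, $\frac1{|F_N|}\sum z_g\to c\neq0$) one tests \ref{ThmvdC3} with $K=|F_N|$ and $z_{a,k}:=z_{ak}$, using that $\frac1{|A|}\sum_{a\in A}\frac1{|F_N|}\sum_{g\in aF_N}(\cdot)$ differs from $\frac1{|F_N|}\sum_{g\in F_N}(\cdot)$ by $o(1)$ since $(F_N)$ is a F{\o}lner sequence; the converse uses the same estimate in reverse (or one may quote the finitistic criterion $\ref{ThmFinAvsGen3}$ of \Cref{ThmFinAvsGen}).

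The heart of the matter is $\ref{ThmvdC1}\Rightarrow\ref{ThmvdC4}$, which I would prove by contraposition. Suppose an m.p.s.\ $(X,\mathcal{A},\mu,(T_g))$ and a measurable $f\colon X\to\mathbb{D}$ satisfy $\int f(T_hx)\overline{f(x)}\,d\mu=0$ for all $h\in H$ but $\int f=c\neq0$. Replacing $f$ by $f/2$, we may assume $|f|\le\tfrac12$ pointwise, which keeps the correlations zero and $\int f\neq0$ still. For $z$ with $|z|\le\tfrac12$, let $\beta_z$ be the probability measure on $\mathbb{S}^1$ of density $\theta\mapsto1+2\,\mathrm{Re}(\overline{z}e^{i\theta})$ with respect to normalised arclength; this density is nonnegative exactly because $|z|\le\tfrac12$, and one checks $\widehat{\beta_z}(0)=1$, $\widehat{\beta_z}(1)=z$, $\widehat{\beta_z}(-1)=\overline z$ and $\widehat{\beta_z}(k)=0$ for $|k|\ge2$; in particular the barycentre of $\beta_z$ is $z$. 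Choose a Borel map $\Lambda\colon\{z:|z|\le\tfrac12\}\times[0,1]\to\mathbb{S}^1$ with $\Lambda(z,\cdot)_{*}\mathrm{Leb}=\beta_z$ for all $z$ (the inverse-CDF construction works and is even jointly continuous here). Now consider the product system $Y=X\times[0,1]^{G}$ with $\nu=\mu\times\mathrm{Leb}^{\otimes G}$ and $(S_g)$ acting by $T_g$ on the first factor and by the shift on $[0,1]^G$, and define $\psi\colon Y\to\mathbb{S}^1$ by $\psi\big(x,(t_a)_{a\in G}\big):=\Lambda(f(x),t_e)$. Then $\int_Y\psi\,d\nu=\int_X\big(\int_0^1\Lambda(f(x),s)\,ds\big)d\mu=\int_X f\,d\mu=c\neq0$, and since $t_h$ and $t_e$ are independent for $h\neq e$,
\begin{equation*}
\int_Y\psi(S_hy)^{k}\,\overline{\psi(y)^{k}}\,d\nu=\int_X\widehat{\beta_{f(T_hx)}}(k)\,\overline{\widehat{\beta_{f(x)}}(k)}\,d\mu(x),
\end{equation*}
which for $k=\pm1$ equals $\int f(T_hx)\overline{f(x)}\,d\mu$ or its conjugate, hence $0$ for $h\in H$, and for $|k|\ge2$ equals $0$ outright. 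So $\psi$ is not $\nu$-u.d., while every power of $\psi$ is orthogonal to its $S_h$-translate for all $h\in H$. Feeding $(Y,\nu,(S_g),\psi)$ into the forward direction of \Cref{ThmFinAvs2.0} with $D=\mathbb{S}^1$, applied to the functions $p(w)=w^k$ and $p(w_1,w_2)=w_1^{k}\overline{w_2^{k}}$ ($k\in\mathbb{Z}\setminus\{0\}$), produces a sequence $(w_g)_{g\in G}$ in $\mathbb{S}^1$ with $\frac1{|F_N|}\sum_{g\in F_N}w_{hg}^{k}\overline{w_g^{k}}\to0$ for all $h\in H$ and $k\neq0$, and $\frac1{|F_N|}\sum_{g\in F_N}w_g\to c\neq0$. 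Writing $w_g=e(x_g)$, the sequences $(x_{hg}-x_g)_g$ with $h\in H$ are $F$-u.d.\ (all their Weyl averages vanish) while $(x_g)_g$ is not; hence $H$ is not $F$-vdC.

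The step I expect to be the main obstacle is precisely $\ref{ThmvdC1}\Rightarrow\ref{ThmvdC4}$, because of a mismatch in strength: being $F$-vdC controls \emph{every} Fourier mode of the difference sequences $(x_{hg}-x_g)$, whereas the hypothesis $\int f(T_hx)\overline{f(x)}\,d\mu=0$ is a single bilinear constraint, so one cannot just take arguments of $f$. The naive two-point lift $z=\tfrac12(u+v)$ with $u,v\in\mathbb{S}^1$ repairs the first moment but leaves the higher correlations $\int(f\circ T_h)^{k}\overline{f^{k}}\,d\mu$ unconstrained; shrinking first to $|f|\le\tfrac12$ is what turns $\beta_z$ into an honest probability measure, collapsing all modes $|k|\ge2$ simultaneously, while the mutual independence of the coordinates of $[0,1]^G$ is what keeps the $k=1$ correlation (hence the vanishing along $H$) and the mean intact under the $S_h$.
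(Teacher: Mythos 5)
Your proposal is correct, and it reorganizes the proof in a way worth comparing with the paper's. The outer implications coincide: $\ref{ThmvdC2}\Rightarrow\ref{ThmvdC2'}$ is trivial, $\ref{ThmvdC2'}\Rightarrow\ref{ThmvdC1}$ is Weyl's criterion, and $\ref{ThmvdC4}\Rightarrow\ref{ThmvdC2}$ is the converse direction of \Cref{ThmFinAvs2.0} after passing to a subsequence, exactly as in the paper. The genuine difference is how you return from \ref{ThmvdC1} to \ref{ThmvdC4}. The paper proves $\ref{ThmvdC1}\Rightarrow\ref{ThmvdC3}\Rightarrow\ref{ThmvdC4}$: it takes finite families violating \ref{ThmvdC3}, replaces each entry $z_{a,k}$ by an independent unimodular random variable with density $1+\mathrm{Re}(z\overline{z_{a,k}})$ (mean $z_{a,k}/2$, vanishing higher moments), and uses the strong law of large numbers to manufacture finite unimodular data to feed into \Cref{ThmFinAvsGen}, \ref{ThmFinAvsGen3}$\Rightarrow$\ref{ThmFinAvsGen1}. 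You prove $\ref{ThmvdC1}\Rightarrow\ref{ThmvdC4}$ directly by performing the same randomization at the level of the measure-preserving system: the skew product $X\times[0,1]^G$ with the i.i.d.\ shift makes the randomization exact, so the law of large numbers and the finitary bookkeeping disappear, and one application of the forward direction of \Cref{ThmFinAvs2.0} with $D=\mathbb{S}^1$ and the monomials $w_1^{k}\overline{w_2^{k}}$, followed by Weyl's criterion, finishes. The underlying device is the same (Ruzsa's density $1+2\,\mathrm{Re}(\overline{z}\zeta)$, whence the need to shrink to $|f|\le\tfrac12$ first, and the independence of $t_h$ and $t_e$ for $h\neq e$, which is exactly where $H\subseteq G\setminus\{e\}$ enters), but your packaging is cleaner; note that it still relies on the hard tiling direction of \Cref{ThmFinAvsGen}, only routed through \Cref{ThmFinAvs2.0} rather than invoked by hand. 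Your moment computations for $\beta_z$ and for the correlations of $\psi$ are correct.

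The one thin spot is the direction $\neg\ref{ThmvdC3}\Rightarrow\neg\ref{ThmvdC2}$, which you dispatch as ``the same estimate in reverse (or quote \ref{ThmFinAvsGen3})''. The F{\o}lner estimate only handles $\neg\ref{ThmvdC2}\Rightarrow\neg\ref{ThmvdC3}$; the reverse genuinely needs \Cref{ThmFinAvsGen}, \ref{ThmFinAvsGen3}$\Rightarrow$\ref{ThmFinAvsGen1}, and to invoke it one must first normalize the witnesses of $\neg\ref{ThmvdC3}$ so that the mean functional has a single fixed target $\gamma$ --- the hypothesis only gives a mean of absolute value at least $\varepsilon$, varying with $A$, $H_0$, $\delta$. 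The paper's fix (rotate each family by a unimodular constant and pad with zero sequences so the mean is pinned at $\varepsilon$ up to $\delta$) is two lines, so this is an omission rather than a gap in the approach, but it should be written out.
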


\begin{remark}
It is an interesting question whether, if we change `$f\in L^\infty(\mu)$' by `$f\in L^2(\mu)$' in \Cref{DefGvdC}, the resulting definition of vdC set in $G$ is equivalent. A similar question is posed in 
\cite[Question 3.7]{FT}, where they call the sets defined by the $L^2$ definition `sets of operatorial recurrence'. 
\end{remark}

\begin{comment}
\begin{remark}
It can be deduced from \Cref{ThmFinAvs2.0} (applied to Cesaro averages of the functions $(z_g)_g$ and $(z_{hg}\overline{z_g})_g$ for all $h\in H$, and with $D=\mathbb{D}$) that in \Cref{ThmvdC2} we can change the condition $\limsup_{N\to\infty}\left|\frac{1}{|F_N|}\sum_{g\in F_N}z_{g}\right|>0$ by the stronger $\lim_{N\to\infty}\frac{1}{|F_N|}\sum_{g\in F_N}z_{g}=\lambda$, for some $\lambda\neq0$.
\end{remark}
\end{comment}

The relationship between equidistribution and Cesaro averages is explained by \Cref{WeylCrit} below, which was introduced by Weyl in \cite{We}. 
\begin{definition}
Let $G$ be a countable amenable group with a F{\o}lner sequence $(F_N)$. We say that a sequence $(z_g)_{g\in G}$, with $z_g\in\mathbb{S}^1$ for all $g\in G$, is \textit{$F$-u.d. in $\mathbb{S}^1$} if for every continuous function $f:\mathbb{S}^1\to\mathbb{C}$ we have
\begin{equation*}
\lim_{N\to\infty}\frac{1}{|F_N|}\sum_{g\in F_N}f(z_g)=\int_{\mathbb{S}^1}fdm,
\end{equation*}
where $m$ is the uniform probability measure in $\mathbb{S}^1$. 
\end{definition}
That is, a sequence $(x_g)_{g\in G}$ in $\mathbb{T}$ is $F$-u.d. mod $1$ (as in \Cref{DefFudmod1}) iff $(e^{2\pi ix_g})_{g\in G}$ is $F$-u.d. in $\mathbb{S}^1$.

\begin{prop}[Weyl's criterion for uniform distribution]\label{WeylCrit} Let $G$ be a countable amenable group with a F{\o}lner sequence $(F_N)$. A sequence $(z_g)_{g\in G}$ in $\mathbb{S}^1$ is $F$-u.d. in $\mathbb{S}^1$ iff for all $l\in\mathbb{Z}\setminus\{0\}$ (or equivalently, for all $l\in\mathbb{N}$) we have
\begin{equation*}
\lim_N\frac{1}{|F_N|}\sum_{g\in F_N}z_g^l=0.
\end{equation*}
\end{prop}

For a proof of \Cref{WeylCrit} see e.g. \cite[Theorem 2.1]{KN} (the proof works for any F{\o}lner sequence).

\begin{proof}[Proof of \Cref{ThmvdC}]We prove \ref{ThmvdC2}$\implies$\ref{ThmvdC2'}$\implies$\ref{ThmvdC1}$\implies$\ref{ThmvdC3}$\implies$\ref{ThmvdC4}$\implies$\ref{ThmvdC2} 
\begin{enumerate}
    \item[\ref{ThmvdC2}$\implies$\ref{ThmvdC2'}] Obvious.
    \item[\ref{ThmvdC2'}$\implies$\ref{ThmvdC1}]
    If $H$ is not $F$-vdC then there is a sequence of complex numbers $(z_g)_{g\in G}$ in $\mathbb{S}^1$ which is not $F$-u.d. in $\mathbb{S}^1$ but such that $(z_{hg}\overline{z_g})_g$ is $F$-u.d. in $\mathbb{S}^1$ for all $h\in H$. By Weyl's criterion, that means that for all $h\in H$ we have
    \begin{equation*}
\lim_N\frac{1}{|F_N|}\sum_{g\in F_N}z_{hg}^l\overline{z_g^l}=0\text{ for all }l\in\mathbb{Z}\setminus\{0\},
\end{equation*}
but there is some $l_0\in\mathbb{Z}\setminus\{0\}$ such that
\begin{equation*}
\limsup_N\left|\frac{1}{|F_N|}\sum_{g\in F_N}z_g^{l_0}\right|>0.
\end{equation*}
The sequence $(z_g^{l_0})_{g\in G}$ contradicts \ref{ThmvdC2'}, so we are done.
\end{enumerate}

\item[\ref{ThmvdC4}$\implies$\ref{ThmvdC2}]
Suppose a sequence $(z_g)_{g\in G}$ does not satisfy \ref{ThmvdC2}. Taking a F{\o}lner subsequence $(F_N')_{n\in\mathbb{N}}$ if necessary, we can assume that we have
\begin{gather*}
    \lim_{N\to\infty}\frac{1}{|F_N'|}\sum_{g\in F_N'}z_g=\lambda\neq0\\
    \lim_{N\to\infty}
    \frac{1}{|F_N'|}
    \sum_{g\in F_N'}z_{hg}\overline{z_g}=0\text{ for all }h\in H.
\end{gather*}
This contradicts \ref{ThmvdC4} by \Cref{ThmFinAvs2.0} applied with $D=\mathbb{D}$ to the Cesaro averages of $(z_g)_{g\in G}$ and $(z_{hg}\overline{z_g})_{g\in G}$.

\item[\ref{ThmvdC3}$\implies$\ref{ThmvdC4}]
The contrapositive of this implication follows from \Cref{ThmFinAvs2.0} applied to Cesaro averages of the functions $(z_g)_{g\in G}$ and $(z_{hg}\overline{z_g})_{g\in G}$, with $D=\mathbb{D}$. Indeed, if $H\subseteq G$ does not satisfy \ref{ThmvdC4}, then there is a m.p.s. $(X,\mathcal{A},\mu,(T_g)_{g\in G})$ and $f\in L^\infty(\mu)$ such that, for some $\lambda\neq0$
\begin{equation*}
\int_{X}f(T_hx)\cdot\overline{f(x)}d\mu(x)=0\text{ for all }h\in H\text{ but }\int_{X}fd\mu=\lambda.
\end{equation*} 
So by \Cref{ThmFinAvs2.0}\Cref{ThmFinAvsGen2}$\implies$\Cref{ThmFinAvsGen3}, for any finite sets $A\subseteq G,H_0\subseteq H$ there is  $K\in\mathbb{N}$ and sequences $(z_{a,k})_{a\in G}$ in $\mathbb{D}$, for $k=1,\dots,K$, such that 
    \begin{equation*}
    \left|
    \frac{1}{K|A|}\sum_{k=1}^K\sum_{a\in A}z_{ha,k}\overline{z_{a,k}}
    \right|<\delta\text{ for all }h\in H_0,
    \end{equation*}
    but
    \begin{equation*}
    \left|
    \frac{1}{K|A|}\sum_{k=1}^K\sum_{a\in A}z_{a,k}-\lambda
    \right|<\delta,
    \end{equation*}
    contradicting \ref{ThmvdC4}.

\item[\ref{ThmvdC1}$\implies$\ref{ThmvdC3}]
Suppose that \ref{ThmvdC3} does not hold for some $\varepsilon>0$. So for any finite sets $A\subseteq G$ and $H_0\subseteq H$ and for any $\delta>0$ there exist $K\in\mathbb{N}$ and sequences $(z_{a,k})_{a\in G}$ in $\mathbb{D}$, for $k=1,\dots,K$, such that 
    \begin{equation*}
    \left|
    \frac{1}{K|A|}\sum_{k=1}^K\sum_{a\in A}z_{ha,k}\overline{z_{a,k}}
    \right|<\delta\text{ for all }h\in H_0,
    \end{equation*}
    but (we can assume that the following average is a positive real number)
    \begin{equation*}
    \frac{1}{K|A|}\sum_{k=1}^K\sum_{a\in A}z_{a,k}>\varepsilon.
    \end{equation*}

In fact, we can assume the even stronger 
\begin{equation*}
\left|\frac{1}{K|A|}\sum_{k=1}^K\sum_{a\in A}z_{a,k}-\varepsilon\right|
<\delta;
\end{equation*}
This can be achieved by adding some sequences $(z_{a,k})_{a\in G}$ ($k=K+1,\dots,K+K'$) with $z_{a,k}=0$ for all $a\in G$.\footnote{It may also be necessary to change $K$ by a multiple of $K$, by repeating each sequence $(z_{a,k})_a$ ($k=1,\dots,K$) several times} We will now prove the following:

\begin{enumerate}[label=$\neg$\arabic*']
\setcounter{enumi}{4}
\item \label{3''}
There is $\varepsilon>0$ such that, for any finite sets $A\subseteq G$ and $H_0\subseteq H$ and for any $L\in\mathbb{N}$ and $\delta>0$ there exist $J\in\mathbb{N}$ and sequences $(w_{a,j})_{a\in G}$ in $\mathbb{S}^1$, for $j=1,\dots,J$, such that 
    \begin{equation}\label{45trgdfvcythfgcv45trgdf}
    \left|
    \frac{1}{J|A|}\sum_{j=1}^J\sum_{a\in A}w_{ha,j}^l\overline{w_{a,j}^l}
    \right|<\delta\text{ for all }h\in H_0,l=1,\dots,L.
    \end{equation}
    but 
    \begin{equation}\label{lkjefwsdclksdfl.3}
    \left|\frac{\varepsilon}{2}-
    \frac{1}{J|A|}\sum_{j=1}^J\sum_{a\in A}w_{a,j}
    \right|<\delta.
    \end{equation}
\end{enumerate}
First we note that \ref{3''} implies $\neg$\ref{ThmvdC1}, due to \Cref{ThmFinAvs2.0} applied to the Cesaro averages of $(z_g)_g$ and $(z_{hg}^l\overline{z_g^l})_g$, for $h\in H$ and $l\in\mathbb{Z}$, and Weyl's criterion. Let us now prove \ref{3''} using a probabilistic trick from \cite[Section 6]{Ru}

Let $(z_{a,k})_{a\in G}$, $k=1,\dots,K$, be as above, and let $\delta_1>0$. We will consider a family of independent random variables $(\xi_{a,k})_{a\in G,k=1,\dots,K}$ supported in $\mathbb{S}^1$, with $\xi_{a,k}$ having density function $d_{a,k}:\mathbb{S}^1\to[0,1];\;z\mapsto 1+\text{Re}(z\overline{z_{a,k}})$. Then,
\begin{equation}\label{45tereyyry65612}
\mathbb{E}(\xi_{a,k})=\frac{z_{a,k}}{2}\text{ and }\mathbb{E}(\xi_{a,k}^n)=0\text{ if }n=\pm2,\pm3,\dots.
\end{equation}
\Cref{45tereyyry65612} can be proved when $z_{a,k}=1$ integrating, as we have $\int_0^1e^{2\pi ix}(1+\cos(2\pi x))=\frac{1}{2}$ and for $n=\pm2,\pm3,\dots$, $\int_0^1e^{2\pi inx}(1+\cos(2\pi x))=0$. For other values of $z_{a,k}$ one can change variables to $w=z\overline{z_{a,k}}$. So we have 
\begin{equation}\label{45tereyyry65612'}
\mathbb{E}(\xi_{ha,k}\overline{\xi_{a,k}})=\frac{z_{ha,k}\overline{z_{a,k}}}{4}\text{ and }\mathbb{E}(\xi_{ha,k}^l\overline{\xi_{a,k}^l})=0\text{ if }l=\pm2,\pm3,\dots.
\end{equation}
Now, for each $m\in\mathbb{N}$ we define a sequence $(z_{a,k,m})_{a\in G}$ by choosing, independently for all $k=1,\dots,K,a\in G$ and $m\in\mathbb{N}$, a complex number $z_{a,k,m}\in\mathbb{S}^1$ according to the distribution of $\xi_{a,k}$. Then the strong law of large numbers implies that with probability $1$ we will have, for all $k=1,\dots,K,l\in\mathbb{Z},h\in H$ and $a\in G$,
\begin{equation}\label{CLTapp}
\lim_{M\to\infty}
\left|
\mathbb{E}\left(\xi_{ha,k}^l\overline{\xi_{a,k}^l}\right)-\frac{1}{M}\sum_{m=1}^Mz_{ha,k,m}^l\overline{z_{a,k,m}^l}
\right|=0.
\end{equation}
So we can fix a family $(z_{a,k,m})_{a\in G;k=1,\dots,K;m\in\mathbb{N}}$ such that \Cref{CLTapp} holds for all $k,l,h,a$ as above. Then, taking averages over all $a\in A$ and $k=1,\dots,K$, we obtain that for all $h\in H_0$ and $l\in\mathbb{Z}$
\begin{equation*}
\lim_{M\to\infty}
\left|
\frac{1}{K|A|}\sum_{k=1}^K\sum_{a\in A}\mathbb{E}\left(\xi_{ha,k}^l\overline{\xi_{a,k}^l}\right)-\frac{1}{MK|A|}\sum_{k=1}^K\sum_{a\in A}\sum_{m=1}^Mz_{ha,k,m}^l\overline{z_{a,k,m}^l}
\right|=0.
\end{equation*}
Notice that, due to \Cref{45tereyyry65612'}, for $l>1$ the expression in the LHS is $0$, and for $l=1$ it is $\frac{1}{K|A|}\sum_{k=1}^K\sum_{a\in A}\frac{z_{ha,k}\overline{z_{a,k}}}{4}$, which has norm $<\frac{\delta}{4}$. 
So taking some big enough value $M$, taking the sequences $(w_{a,j})_{a\in G}$ to be the sequences $(z_{a,k,m})_{a\in G}$, for $k=1,\dots,K$ and $m=1,\dots,M$, \Cref{45trgdfvcythfgcv45trgdf} will be satisfied with $J=KM$ for all $h\in H_0$ and $l=1,\dots,L$. We can check similarly that, for a big enough value of $M$, \Cref{lkjefwsdclksdfl.3} will be satisfied by the sequences $(z_{a,k,m})_{a\in G}$, for $k=1,\dots,K$ and $m=1,\dots,M$, so we are done.
\end{proof}

We will need a finitistic criterion for the notion of vdC sets in order to prove a property of vdC sets (\Cref{rtdfgt5ergdfvc}). Letting $\mathbb{D}:=\{z\in\mathbb{C};|z|\leq1\}$, we have
\begin{prop}\label{trefds5rtefdsc}
Let $G$ be a countably infinite group, let $H\subseteq G$. Then $H$ is a vdC set in $G$ if and only if for any $\varepsilon>0$  there exists $\delta>0$ and a finite subset $H_0$ of $H$ such that, for any m.p.s. $(X,\mathcal{B},\mu,(T_g)_{g\in G})$ and any measurable $f:X\to\mathbb{D}$ we have
\begin{equation*}\left|\int_Xf(T_hx)\overline{f(x)}d\mu(x)\right|<\delta\;\forall h\in H_0\text{ implies }\left|\int_Xfd\mu\right|<\varepsilon.
\end{equation*}
\end{prop}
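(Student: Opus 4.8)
The plan is to deduce \Cref{trefds5rtefdsc} from the finitistic characterization of vdC sets that we already have in hand, namely \Cref{ThmvdC}\ref{ThmvdC3} (using that $G$ is amenable, we may fix a F{\o}lner sequence $(F_N)$, so that $H$ being vdC in $G$ is the same as $H$ being $F$-vdC, by the equivalence \ref{ThmvdC4}$\iff$\ref{ThmvdC3}), together with the two directions of \Cref{ThmFinAvs2.0} with $D=\mathbb{D}$ applied to the Cesaro averages of the families $(z_g)_g$ and $(z_{hg}\overline{z_g})_g$. Roughly, \Cref{ThmvdC3} is the ``combinatorial'' finitistic statement about finitely many sequences $(z_{a,k})_{a\in G}$ averaged over $k\le K$ and $a\in A$, while \Cref{trefds5rtefdsc} is the ``ergodic'' finitistic statement about integrals $\int_X f(T_hx)\overline{f(x)}\,d\mu$; \Cref{ThmFinAvs2.0} is exactly the bridge that translates averages of the first kind into integrals of the second kind and back.

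First I would prove the ``only if'' direction. Assume $H$ is vdC in $G$ but the finitistic conclusion fails for some $\varepsilon>0$: then for every $\delta>0$ and every finite $H_0\subseteq H$ there is a m.p.s.\ $(X,\mathcal{B},\mu,(T_g))$ and measurable $f:X\to\mathbb{D}$ with $|\int_X f(T_hx)\overline{f(x)}\,d\mu|<\delta$ for all $h\in H_0$ but $|\int_X f\,d\mu|\ge\varepsilon$. Apply \Cref{ThmFinAvs2.0} (direction \ref{ThmFinAvsGen2}$\implies$\ref{ThmFinAvsGen3} of \Cref{ThmFinAvsGen}, with $D=\mathbb{D}$) to the continuous functions $p(z)=z$ and $p(z_1,z_2)=z_1\overline{z_2}$: for any finite $A\subseteq G$ this produces $K\in\mathbb{N}$ and sequences $(z_{a,k})_{a\in G}$ in $\mathbb{D}$, $k=1,\dots,K$, whose combinatorial averages approximate the corresponding integrals to within, say, $\delta$. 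Then $|\frac{1}{K|A|}\sum_k\sum_{a\in A}z_{ha,k}\overline{z_{a,k}}|<2\delta$ for $h\in H_0$ while $|\frac{1}{K|A|}\sum_k\sum_{a\in A}z_{a,k}|>\varepsilon-\delta$; since $\delta,A,H_0$ are arbitrary this is precisely the negation of \Cref{ThmvdC}\ref{ThmvdC3} for the parameter $\varepsilon/2$ (say), contradicting that $H$ is $F$-vdC.

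For the ``if'' direction, suppose the stated finitistic property holds for every $\varepsilon>0$ and I want to verify \Cref{ThmvdC}\ref{ThmvdC3} (equivalently, directly check \Cref{DefGvdC}). Suppose $H$ were not vdC: by \Cref{ThmvdC4}$\implies$\Cref{ThmvdC2} there is a sequence $(z_g)_{g\in G}$ in $\mathbb{D}$ and a F{\o}lner subsequence $(F_N')$ with $\frac{1}{|F_N'|}\sum_{g\in F_N'}z_{hg}\overline{z_g}\to 0$ for all $h\in H$ but $\frac{1}{|F_N'|}\sum_{g\in F_N'}z_g\to\lambda$ for some $\lambda\neq 0$; apply the forward direction of \Cref{ThmFinAvs2.0} to obtain a m.p.s.\ and $f:X\to\mathbb{D}$ with $\int_X f(T_hx)\overline{f(x)}\,d\mu=0$ for all $h\in H$ and $\int_X f\,d\mu=\lambda$. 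Now for $\varepsilon=|\lambda|/2$ choose the corresponding $\delta$ and finite $H_0\subseteq H$ from the hypothesis; taking $K=1$ and the single sequence $(z_{a,1})_a=(f(T_ax))_{a}$... — actually the cleanest route is to use \Cref{ThmFinAvs2.0} again in the \ref{ThmFinAvsGen2}$\implies$\ref{ThmFinAvsGen3} direction on this $(X,\mathcal{B},\mu,f)$ to produce finitely many $(z_{a,k})_a$ realizing $|\frac{1}{K|A|}\sum\sum z_{ha,k}\overline{z_{a,k}}|<\delta$ for $h\in H_0$ yet $|\frac{1}{K|A|}\sum\sum z_{a,k}|>|\lambda|-\delta>\varepsilon$, contradicting the finitistic hypothesis — but it is simpler still to just note that the hypothesis applied to $(X,\mathcal{B},\mu,f)$ directly forces $|\int_X f\,d\mu|<\varepsilon=|\lambda|/2$, a contradiction. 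So $H$ is vdC.

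The main obstacle is bookkeeping rather than conceptual: one must be careful that the ``for all finite $H_0$, for all $\delta$'' quantifier structure in \Cref{ThmvdC3} matches the ``there exist $\delta$ and finite $H_0$'' structure in \Cref{trefds5rtefdsc} (these are dual, which is why the negations line up cleanly), and that \Cref{ThmFinAvs2.0} is being invoked with the right finite tuples of functions $p$ and group elements (here only the two fixed polynomials $z$ and $z_1\overline{z_2}$, and the element pairs $(e,h)$ for $h\in H_0$, are needed, so no separability subtlety arises). One should also record that although \Cref{trefds5rtefdsc} is stated for a general countably infinite group $G$, its proof genuinely uses amenability of $G$ via \Cref{ThmvdC}/\Cref{ThmFinAvs2.0}; if $G$ is not amenable one would instead need the abstract argument from \Cref{SecCharact} phrased without a F{\o}lner sequence, which I would either cite or sketch separately.
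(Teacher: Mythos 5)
Your argument is correct as far as it goes, but it only proves the proposition for countably infinite \emph{amenable} groups, whereas the statement (and its later use in \Cref{rtdfgt5ergdfvc}, which is asserted for arbitrary countable groups) requires it for every countably infinite group $G$. The easy direction (the finitistic criterion implies that $H$ is vdC) is indeed immediate from \Cref{DefGvdC} for any $G$, as you observe at the end. But the substantive direction --- $H$ vdC implies the finitistic criterion --- is exactly where your route through \ref{ThmvdC3} of \Cref{ThmvdC} and \Cref{ThmFinAvs2.0} invokes a F{\o}lner sequence, and you explicitly defer the non-amenable case to an ``abstract argument \dots which I would either cite or sketch separately.'' There is nothing to cite: that abstract argument \emph{is} the content of the proposition, and it is precisely the part your proposal omits.

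Concretely, the missing idea is a compactness/limiting argument that assembles a sequence of approximate counterexamples into an exact one. The paper argues by contraposition: if the criterion fails for some $\varepsilon>0$, one gets systems $(X_n,\mathcal{B}_n,\mu_n,(T_{n,g})_{g\in G})$ and $f_n:X_n\to\mathbb{D}$ with $\bigl|\int_{X_n}f_n(T_{n,h}x)\overline{f_n(x)}\,d\mu_n\bigr|<\tfrac1n$ for all $h\in H_n$ (an exhaustion of $H$ by finite sets) yet $\bigl|\int_{X_n}f_n\,d\mu_n\bigr|>\varepsilon$. It then forms the disjoint union $X=\bigsqcup_nX_n$, takes the unital $C^*$-subalgebra of $L^\infty(X)$ generated by the translates $T_gf$, applies the Gelfand representation to obtain a compact metrizable $G$-space $Y$, and uses a Banach limit of the functionals $\varphi\mapsto\int_{X_n}\varphi\,d\mu_n$ to produce an invariant probability measure $\nu$ on $Y$ and $f_\infty\in C(Y)$ with $\int_Yf_\infty(S_hy)\overline{f_\infty(y)}\,d\nu=0$ for \emph{all} $h\in H$ while $\bigl|\int_Yf_\infty\,d\nu\bigr|\ge\varepsilon$, contradicting that $H$ is vdC. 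No amenability enters. Your reduction to \ref{ThmvdC3} of \Cref{ThmvdC} is a legitimate alternative proof in the amenable case (the quantifier bookkeeping you do there is fine, and the trivial converse is handled correctly), but as written the proposal does not prove the stated proposition.
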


\begin{proof}
A natural proof of this fact uses Loeb measures. In order to avoid using non-standard analysis, we will adapt the proof of \cite[Lemma 6.4]{Fo}.

Let $H_n$ be an increasing sequence of finite sets with $H=\cup_nH_n$. Suppose we have $\varepsilon>0$ and a sequence of m.p.s.'s $(X_n,\mathcal{B}_n,\mu_n,(T_{n,g})_{g\in G})$ and measurable functions $f_n:X\to\mathbb{D}$ such that 
\begin{equation*}
\left|\int_{X_n}f_n(T_{n,h}x)\overline{f_n(x)}d\mu_n(x)\right|<\frac{1}{n}\;\forall h\in H_n,\text{ but }\left|\int_{X_n}f_n(x)d\mu_n(x)\right|>\varepsilon.
\end{equation*}
We will prove that $H$ is not a vdC set by constructing a m.p.s. $(Y,\mathcal{C},\nu,(S_g)_{g\in G})$ and some measurable $f_\infty:Y\to\mathbb{D}$ such that 
\begin{equation}\label{GelfandTrick}
\left|\int_{Y}f_\infty(S_{h}y)\overline{f_\infty(y)}d\nu(y)\right|=0\;\forall h\in H,\text{ but }\left|\int_{Y}f_\infty(y)d\nu(y)\right|\geq\varepsilon.
\end{equation}
To do it, first consider the (infinite) measure space $(X,\mathcal{B},\mu,(T_g)_{g\in G})$ with $X$ being the disjoint union $\sqcup_{n\in\mathbb{N}}X_n$, $\mathcal{B}:=\left\{\sqcup_nB_n;B_n\in\mathcal{B}_n\;\forall n\right\}$, $\mu(\sqcup_nB_n):=\sum_n\mu_n(B_n)$ and $T_gx=T_{n,g}x$ if $x\in X_n$. And let $f:X\to\mathbb{D}$ be given by $f|_{X_n}=f_n$.

Let $K$ be the smallest sub-$C^*$-algebra of $L^\infty(X)$ containing $1$ and $T_gf$ for all $g\in G$. Then by the Gelfand representation theorem there is a $C^*$-algebra isomorphism $\Phi:K\to C(Y)$, where $Y$ is the spectrum of $K$, a compact metrizable space whose elements are non-zero $*$-homomorphisms $y:K\to\mathbb{C}$. The isometry $\Phi$ is given by $\Phi(\varphi)(y)=y(\varphi)$.

We let $\mathcal{C}$ be the Borel $\sigma$-algebra of $Y$. Now consider a Banach limit $L:l^\infty(\mathbb{N})\to\mathbb{C};(a_n)\mapsto L-\lim_na_n$ and define a norm $1$ positive  functional $F:K\to\mathbb{C}$ by 
\begin{equation*}
F(\varphi)=L-\lim_n\int_{X_n}\varphi(x)d\mu_n(x).
\end{equation*}
This induces a probability measure $\nu$ on $Y$ by $\int_Y\Phi(\varphi)d\nu=F(\varphi)$. Also, the action of $G$ on $K$ induces an action $(S_g)_{g\in G}$ of $G$ on $Y$ by homeomorphisms by $(S_gy)(\varphi)=y(\varphi\circ T_g)$. $S_g$ is $\nu$-preserving for all $g\in G$, as for any $\varphi\in K$ we have
\begin{multline*}
\int_{Y}\Phi(\varphi)(S_gy)d\nu(y)=
\int_{Y}\Phi(\varphi\circ T_g)(y)d\nu(y)\\=F(\varphi\circ T_g)=F(\varphi)=\int_{Y}\Phi(\varphi)(y)d\nu(y).
\end{multline*}
We now let $f_\infty:=\Phi(f)\in C(Y)$ and we check \Cref{GelfandTrick}: for all $h\in H$,
\begin{multline*}
\int_Y\Phi(f)(S_hy)\overline{\Phi(f)(y)}d\nu(y)
=
\int_Y\Phi(f\circ T_h)(y)\overline{\Phi(f)(y)}d\nu(y)
\\=
F((f\circ T_h)\cdot\overline{f})
=
L-\lim_n\int_{X_n}f(T_{n,h}x)\overline{f_n(x)}d\mu_n(x)=0.
\end{multline*}
\begin{multline*}
\left|\int_{Y}\Phi(f)(y)d\nu(y)\right|
=
\left|F(f)\right|
=
\left|
L-\lim_n\int_{X_n}
f(x)d\mu_n(x)
\right|
\\=
L-\lim_n
\left|
\int_{X_n}
f(x)d\mu_n(x)
\right|
\geq\varepsilon.
\end{multline*}
\end{proof}

\section{Spectral Characterization of vdC sets in abelian groups}\label{SecSpect}

In this section we prove a spectral criterion for vdC sets in countable abelian groups (\Cref{SpeCrit}), which is a direct generalization of the spectral criterion obtained in \cite[Theorem 1]{Ru}; we state it in a fashion similar to \cite[Theorem 8]{BL}. 
\Cref{SpeCrit} implies that the notion of \emph{vdC set in $\mathbb{Z}^d$} defined in \cite{BL} is equivalent to our notion of vdC set, even if it is not defined in terms of a F{\o}lner sequence (their Definition 2 uses instead the F{\o}lner net of rectangles $R_{M,N}=[0,M]\times[0,N]$, $M,N\in\mathbb{N}$, ordered by $R_{M,N}\geq R_{M',N'}$ if $M\geq M'$ and $N\geq N'$). 
Also see \cite[Theorem 4.3]{FT} for a different proof of \Cref{SpeCrit}, shorter than the one included here.

\begin{customthm}{\ref{SpeCrit}}
Let $G$ be a countable abelian group. A set $H\subseteq G\setminus\{0\}$ is a vdC set in $G$ iff any Borel probability measure $\mu$ in $\widehat{G}$ with $\widehat{\mu}(h)=0\;\forall h\in H$ satisfies $\mu\left(\left\{1_{\widehat{G}}\right\}\right)=0$.
\end{customthm}
As in \cite[Theorem 1.8]{BL} it follows from \Cref{SpeCrit} that, if $H$ is a vdC set in $G$ and a Borel probability measure $\mu$ in $\widehat{G}$ satisfies $\widehat{\mu}(h)=0$ for all $h\in H$, then $\mu(\{\gamma\})=0$ for all $\gamma\in\widehat{G}$.

\begin{proof}
\begin{enumerate}    \item[$\implies$] 
    Suppose there is a probability measure $\mu$ in $\widehat{G}$ such that $\widehat{\mu}(h)=0$ for all $h\in H$ but $\mu\left(\left\{1_{\widehat{G}}\right\}\right)=\lambda>0$. 

    Consider a sequence of finitely supported measures $(\mu_N)_{n\in\mathbb{N}}$ which converge weakly to $\mu$; we can suppose $\mu_N\left(\left\{1_{\widehat{G}}\right\}\right)\to\lambda$. Specifically, $\mu_N$ will be an average of Dirac measures
\begin{equation*}
\mu_N:=\frac{1}{u_N}\sum_{i=1}^{u_N}\delta_{x_{N,i}},
\end{equation*}
for some natural numbers $(u_N)_{n\in\mathbb{N}}\to\infty$ and $x_{N,1},\dots,x_{N,u_N}\in \widehat{G}$, so that $x_{N,i}=1_{\widehat{G}}\in \widehat{G}$ iff $i\leq u_N\lambda$. The fact that $\mu_N\to\mu$ weakly implies that for all $h\in H$ (seeing $h$ as a map $h:\widehat{G}\to\mathbb{S}^1$) we have 
\begin{equation}\label{ejiudgrjotlrxujiltdfi}
\lim_N\frac{1}{u_N}\sum_{i=1}^{u_N}x_{N,i}(h)=\lim_N\int_{\widehat{G}}hd\mu_N=\int_{\widehat{G}}hd\mu=\widehat{\mu}(h)=0.
\end{equation}
We will prove that, letting $\varepsilon<\lambda$, \Cref{ThmvdC3} of \Cref{ThmvdC} is not satisfied. So let $A\subseteq G$ and $H_0\subseteq H$ be finite and let $(F_N)$ be a F{\o}lner sequence  in $G$. For each $N$ we consider the sequences $(x_{N,i}(ag))_{a\in A}$ for all $i=1,\dots,u_N$ and $g\in F_N$. It will be enough to prove that
\begin{equation}\label{Eq68orsth}
\lim_{N\to\infty}\frac{1}{u_N|F_N|\cdot|A|}\sum_{a\in A,g\in F_N,i=1,\dots,u_N}x_{N,i}(ag)=\lambda,
\end{equation}
and for all $h\in H_0$,
\begin{equation}\label{ytrhfg54terdfg}
\lim_{N\to\infty}\frac{1}{u_N|F_N|\cdot|A|}\sum_{a\in A,g\in F_N,i=1,\dots,u_N}x_{N,i}(hag)\overline{x_{N,i}(ag)}=0.
\end{equation}
\Cref{ytrhfg54terdfg} is a direct consequence of \Cref{ejiudgrjotlrxujiltdfi} and the fact that $x_{N,i}(hag)\overline{x_{N,i}(ag)}=x_{N,i}(h)$. To prove \Cref{Eq68orsth} first note that, as $(F_N)$ is a F{\o}lner sequence, \Cref{Eq68orsth} is equivalent to
\begin{equation}\label{657ythfgdertryhgfj}
\lim_{N\to\infty}\frac{1}{u_N|F_N|}\sum_{g\in F_N,i=1,\dots,u_N}x_{N,i}(g)=\lambda.
\end{equation}
Now, let $\delta>0$ and consider a neighborhood $U$ of $1_{\widehat{G}}$ with $\mu\left(\overline{U}\right)<\lambda+\delta$. 
After a reordering, we can assume that for big enough $N$ the points $x_{N,i}$ are in $\widehat{G}\setminus U$ for all $i\geq u_N(\lambda+2\delta)$.
\begin{claim}\label{5trgdfvc}
There exists $M\in\mathbb{N}$ such that, for all $x\in \widehat{G}\setminus U$ and for all $N\geq M$, $\left|\frac{1}{|F_N|}\sum_{g\in F_N}x(g)\right|<\delta$.
\end{claim}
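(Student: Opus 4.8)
The plan is to use a compactness argument on $\widehat{G}\setminus U$ together with Weyl's criterion (Proposition \ref{WeylCrit}) to produce a single $M$ working uniformly. First I would observe that a character $x\in\widehat{G}$, when restricted to the subgroup generated by the elements appearing in the Følner sets, behaves like a point of a compact group; more precisely, for each fixed $x\in\widehat{G}$ with $x\neq 1_{\widehat{G}}$, the sequence $(x(g))_{g\in G}$ is a nontrivial character of $G$, hence by Weyl's criterion (applied to the sequence $g\mapsto x(g)\in\mathbb{S}^1$, noting that its powers are $g\mapsto x(g)^l = (x^l)(g)$ which are nontrivial characters for $l\neq 0$ precisely when $x$ has infinite order — and when $x$ has finite order one argues directly) the Cesàro averages $\frac{1}{|F_N|}\sum_{g\in F_N}x(g)$ tend to $0$ as $N\to\infty$. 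The subtlety with torsion elements $x$ can be sidestepped: since $x\neq 1_{\widehat G}$ we have $\widehat\mu$-type reasoning is not needed here; instead, for any fixed $x\neq 1_{\widehat G}$ pick $g_0\in G$ with $x(g_0)\neq 1$, and use the Følner property $\frac{|g_0 F_N\triangle F_N|}{|F_N|}\to 0$ to get $(x(g_0)-1)\cdot\frac{1}{|F_N|}\sum_{g\in F_N}x(g)\to 0$, hence the average tends to $0$. So pointwise on $\widehat G\setminus\{1_{\widehat G}\}$ the averages go to $0$.

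The main obstacle is upgrading this pointwise convergence to uniformity over the non-compact-looking set $\widehat{G}\setminus U$ — except that $\widehat{G}$ \emph{is} compact (it is the Pontryagin dual of a discrete group), so $\widehat{G}\setminus U$ is a \emph{closed}, hence compact, subset. This is what makes the uniform statement plausible. The key step is then the following equicontinuity observation: the functions $\varphi_N:\widehat{G}\to\mathbb{C}$, $\varphi_N(x):=\frac{1}{|F_N|}\sum_{g\in F_N}x(g)$, are all $1$-Lipschitz-like in the sense that $|\varphi_N(x)-\varphi_N(x')|\leq \frac{1}{|F_N|}\sum_{g\in F_N}|x(g)-x'(g)|$, and each evaluation $x\mapsto x(g)$ is continuous, so $\{\varphi_N\}$ is a uniformly bounded, equicontinuous family on the compact space $\widehat G$ (equicontinuity: for a finite generating-type set of $g$'s one controls $|x(g)-x'(g)|$; but more cleanly, each $\varphi_N$ is continuous and the family is uniformly bounded by $1$). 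Since $\varphi_N\to 0$ pointwise on the compact set $\widehat G\setminus U$ and — by a standard Arzelà–Ascoli / Dini-type argument using equicontinuity — a pointwise-convergent equicontinuous sequence on a compact set converges uniformly, we get $\sup_{x\in\widehat G\setminus U}|\varphi_N(x)|\to 0$. Hence there is $M$ with $|\varphi_N(x)|<\delta$ for all $N\geq M$ and all $x\in\widehat G\setminus U$, which is exactly the claim.

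Alternatively, and perhaps more robustly, I would prove the uniform bound by contradiction: if it failed, there would be sequences $N_j\to\infty$ and $x_{N_j}\in\widehat G\setminus U$ with $|\varphi_{N_j}(x_{N_j})|\geq\delta$; by compactness of $\widehat G\setminus U$ pass to a subsequence with $x_{N_j}\to x_\infty\in\widehat G\setminus U$; then using equicontinuity of the $\varphi_N$'s (or just splitting $\varphi_{N_j}(x_{N_j}) = \varphi_{N_j}(x_\infty) + (\varphi_{N_j}(x_{N_j})-\varphi_{N_j}(x_\infty))$, controlling the first term by pointwise convergence and the second by continuity of the finitely many maps $x\mapsto x(g)$, $g\in F_{N_j}$ — here one must be slightly careful since $F_{N_j}$ grows, which is precisely where genuine equicontinuity rather than mere continuity is needed) one derives $|\varphi_{N_j}(x_{N_j})|\to 0$, a contradiction. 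Either way the compactness of $\widehat G$ is the crucial structural input, and the pointwise-to-uniform passage via equicontinuity is the step requiring the most care; I expect a line or two verifying equicontinuity of $\{\varphi_N\}$ (e.g.\ noting $|x(g)-x'(g)| = |1 - x'(g)\overline{x(g)}| = |1 - (x'\overline x)(g)|$ is small when $x'\overline x$ is near $1_{\widehat G}$, uniformly in $g$, by definition of the topology on $\widehat G$ as a subspace of $\mathbb{S}^{1}$-valued functions... wait, that last point fails in general) — so in the write-up I would lean on the contradiction-plus-compactness route and invoke Weyl's criterion for the pointwise statement, handling the torsion case via the Følner displacement trick above.
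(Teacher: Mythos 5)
Your pointwise statement is fine, and the F{\o}lner displacement trick you describe for it (multiplying the average by $x(g_0)-1$ and bounding by $\frac{|g_0F_N\Delta F_N|}{|F_N|}$) is exactly the computation the paper uses. The gap is in the passage from pointwise to uniform. Both of your routes --- Arzel\`a--Ascoli/Dini and the compactness-plus-contradiction argument --- hinge on equicontinuity of the family $\varphi_N(x)=\frac{1}{|F_N|}\sum_{g\in F_N}x(g)$, and that equicontinuity is not available: the topology on $\widehat G$ is that of pointwise convergence on $G$ (it is generated by the evaluation maps $x\mapsto x(g)$), so two characters can be close in $\widehat G$ while differing substantially on the large sets $F_N$, and your bound $|\varphi_N(x)-\varphi_N(x')|\le\frac{1}{|F_N|}\sum_{g\in F_N}|x(g)-x'(g)|$ cannot be made small uniformly in $N$. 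Concretely, for $G=\mathbb Z$ and $F_N=\{1,\dots,N\}$, the characters $\theta_N=1/(2N)\to 0$ satisfy $\varphi_N(\theta_N)\to 2i/\pi$ while $\varphi_N(0)=1$, so the mechanism ``$x'$ close to $x$ in $\widehat G$ $\Rightarrow$ $\varphi_N(x')$ close to $\varphi_N(x)$ for all $N$'' is genuinely false; restricting to $\widehat G\setminus U$ does not restore it except by already knowing the uniform convergence you are trying to prove. You notice the problem yourself at the end (``that last point fails in general''), but the fallback you adopt still needs to control $\varphi_{N_j}(x_{N_j})-\varphi_{N_j}(x_\infty)$ with $|F_{N_j}|\to\infty$, which is the same equicontinuity statement, so the gap remains.

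The repair --- and this is what the paper does --- is to get uniformity directly from the displacement bound rather than by upgrading pointwise convergence. Since $U$ is a neighborhood of $1_{\widehat G}$ and the topology on $\widehat G$ is generated by the evaluations, there are $g_1,\dots,g_k\in G$ and $\varepsilon>0$ such that every $x\in\widehat G\setminus U$ satisfies $|x(g_i)-1|>\varepsilon$ for some $i$. For each $i$ the displacement computation gives, for all $x\in\widehat G$ and all $N$,
\[
|x(g_i)-1|\cdot\left|\frac{1}{|F_N|}\sum_{g\in F_N}x(g)\right|\le\frac{|F_N\Delta g_iF_N|}{|F_N|},
\]
and the right-hand side is independent of $x$ and tends to $0$. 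Choosing $M$ so that it is $<\delta\varepsilon$ for all $N\ge M$ and all $i\le k$, and then for each $x\in\widehat G\setminus U$ dividing by the appropriate $|x(g_i)-1|>\varepsilon$, gives the claim. The only structural input is the finiteness of $\{g_1,\dots,g_k\}$ (coming from the definition of the topology on $\widehat G$, or from compactness of $\widehat G\setminus U$); no equicontinuity is needed.
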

\Cref{5trgdfvc} implies \Cref{657ythfgdertryhgfj}, because $\delta$ is arbitrary and by \Cref{5trgdfvc} we have that for all $i\leq u_N\lambda$, the average $\frac{1}{|F_N|}\sum_{g\in F_N}x_{N,i}(g)$ is exactly $1$ (as $x_{N,i}=1$), and for all $i\geq u_N(\lambda+2\delta)$ and big enough $N$, $\frac{1}{|F_N|}\sum_{g\in F_N}x_{N,i}(g)$ has norm at most $\delta$.

To prove \Cref{5trgdfvc} let $\varepsilon>0$ and $g_1,\dots,g_k\in G$ be such that for all $x\in\widehat{G}\setminus U$ we have $|x(g_i)-1|>\varepsilon$ for some $i\in\{1,\dots,k\}$. Now note that for all $N\in\mathbb{N}$, $x\in\widehat{G}\setminus U$ and $i=1,\dots,k$, 
\begin{align*}
|x(g_i)-1|\cdot\left|\frac{1}{|F_N|}\sum_{g\in F_N}x(g)\right|
&=
\frac{1}{|F_N|}\left|\sum_{g\in F_N}x(g_ig)-\sum_{g\in F_N}x(g)\right|\\
&\leq
\frac{1}{|F_N|}\sum_{g\in F_N\Delta g_iF_N}\left|x(g)\right|.
\end{align*}
As $\lim_N\frac{|F_N\Delta g_iF_N|}{|F_N|}=0$ for all $i$, there is some $M$ such that for all $N\geq M$ and for all $i$, the right hand side is $<\delta\varepsilon$  for all $x\in\widehat{G}\setminus U$. Thus, $M$ satisfies \Cref{5trgdfvc}.

\item[$\impliedby$]
The proof of \cite[Theorem 1.8, S2$\implies $S1]{BL} can be adapted to any countable abelian group; instead of \cite[Lemma 1.9]{BL} one needs to prove a statement of the form
\begin{lema}
Let $(u_g)_{g\in G}$ be a sequence of complex numbers in $\mathbb{D}$ and let $(F_N)$ be a F{\o}lner sequence such that, for all $h\in G$, the value
\begin{equation*}
\gamma(h)=\lim_N\frac{1}{|F_N|}\sum_{g\in F_N}u_{hg}\overline{u_g}
\end{equation*} is defined. Then there is a positive measure $\sigma$ on $\widehat{G}$ such that $\widehat{\sigma}(h)=\gamma(h)$ for all $h$, and \begin{equation*}
\limsup_N\frac{1}{|F_N|}\left|\sum_{g\in F_N}u_g\right|\leq\sqrt{\sigma(\{0\})}.
\end{equation*}
\end{lema}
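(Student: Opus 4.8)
The plan is to recognise $\gamma$ as a positive-definite function on $G$, feed it into Bochner's theorem to produce $\sigma$, and then squeeze the averages $\frac1{|F_N|}\sum_{g\in F_N}u_g$ between a Cauchy--Schwarz estimate and a mean-ergodic-type averaging over the Følner sequence itself. \emph{Step 1 (positive-definiteness and $\sigma$).} For $h_1,\dots,h_n\in G$ and $c_1,\dots,c_n\in\mathbb C$, the substitution $g\mapsto h_jg$ in the defining limit (legitimate since $(F_N)$ is Følner and $G$ abelian) gives $\gamma(h_j^{-1}h_i)=\lim_N\frac1{|F_N|}\sum_{g\in F_N}u_{h_ig}\overline{u_{h_jg}}$, whence
\[
\sum_{i,j}c_i\overline{c_j}\,\gamma(h_j^{-1}h_i)=\lim_N\frac1{|F_N|}\sum_{g\in F_N}\Bigl|\sum_i c_i u_{h_ig}\Bigr|^2\ge 0 .
\]
Also $|\gamma|\le1$ and $\gamma(e)=\lim_N\frac1{|F_N|}\sum_{g\in F_N}|u_g|^2\le1$. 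Since $G$ is discrete abelian, $\widehat G$ is compact metrizable, and by Bochner's theorem (see \cite{RuFA}) there is a finite positive Borel measure $\sigma$ on $\widehat G$ with $\widehat\sigma(h)=\int_{\widehat G}\chi(h)\,d\sigma(\chi)=\gamma(h)$ for all $h\in G$ (and $\sigma(\widehat G)=\gamma(e)\le1$); uniqueness of $\sigma$ holds but is not needed.

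\emph{Step 2 (the key inequality).} I claim that for every finite $K\subseteq G$,
\[
\limsup_N\frac1{|F_N|}\Bigl|\sum_{g\in F_N}u_g\Bigr|\ \le\ \Bigl(\int_{\widehat G}\Bigl|\tfrac1{|K|}\sum_{t\in K}\chi(t)\Bigr|^2 d\sigma(\chi)\Bigr)^{1/2}.
\]
For fixed $K$, the Følner property yields $\frac1{|F_N|}\sum_{g\in F_N}\frac1{|K|}\sum_{t\in K}u_{tg}=\frac1{|F_N|}\sum_{g\in F_N}u_g+o(1)$ as $N\to\infty$, because $\frac1{|F_N|}\sum_{g\in F_N}u_{tg}=\frac1{|F_N|}\sum_{g\in tF_N}u_g$ differs from $\frac1{|F_N|}\sum_{g\in F_N}u_g$ by at most $|tF_N\,\Delta\,F_N|/|F_N|\to0$ for each of the finitely many $t\in K$. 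Hence, by the triangle inequality and Cauchy--Schwarz,
\[
\frac1{|F_N|}\Bigl|\sum_{g\in F_N}u_g\Bigr|\le \Bigl(\frac1{|F_N|}\sum_{g\in F_N}\Bigl|\tfrac1{|K|}\sum_{t\in K}u_{tg}\Bigr|^2\Bigr)^{1/2}+o(1).
\]
Expanding the square and applying the identity from Step 1 to each pair $t,t'\in K$, namely $\frac1{|F_N|}\sum_{g\in F_N}u_{tg}\overline{u_{t'g}}\to\gamma(t'^{-1}t)$, the bracket converges along $N$ to $\frac1{|K|^2}\sum_{t,t'\in K}\gamma(t'^{-1}t)=\int_{\widehat G}\bigl|\tfrac1{|K|}\sum_{t\in K}\chi(t)\bigr|^2 d\sigma(\chi)$, the last equality using the Bochner representation of $\gamma$ and $\chi(t'^{-1}t)=\overline{\chi(t')}\chi(t)$. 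Taking $\limsup_N$ proves the claim.

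\emph{Step 3 (run $K$ through the Følner sequence).} Apply the claim with $K=F_M$ and let $M\to\infty$. For $\chi=1_{\widehat G}$ the integrand is $1$; for $\chi\ne1_{\widehat G}$ pick $g_0\in G$ with $\chi(g_0)\ne1$, so $|\chi(g_0)-1|\cdot\bigl|\frac1{|F_M|}\sum_{t\in F_M}\chi(t)\bigr|\le |g_0F_M\,\Delta\,F_M|/|F_M|\to0$, i.e.\ the integrand tends to $0$ (the same elementary computation as in \Cref{5trgdfvc}). The integrand is bounded by $1$ and $\sigma$ is finite, so dominated convergence gives $\int_{\widehat G}\bigl|\frac1{|F_M|}\sum_{t\in F_M}\chi(t)\bigr|^2 d\sigma(\chi)\to\sigma(\{1_{\widehat G}\})$. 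Hence $\limsup_N\frac1{|F_N|}\bigl|\sum_{g\in F_N}u_g\bigr|\le\sqrt{\sigma(\{1_{\widehat G}\})}$, which is the asserted bound (with $0$ denoting the trivial character $1_{\widehat G}$).

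\emph{Main obstacle.} The real content is Step 2: only one Følner sequence is available along which the correlations $\gamma(h)$ are known to converge, and one must interpose the auxiliary averaging over $K$, passing to the limit only after all the Følner error terms have been made simultaneously small. Bochner's theorem and the vanishing of Følner averages of nontrivial characters are standard inputs. A shorter but less self-contained alternative to Steps 1--3 would be to pass to the GNS unitary representation attached to $\gamma$, identify $\sigma$ with the spectral measure of the cyclic vector and $\sigma(\{1_{\widehat G}\})$ with the squared norm of its projection onto the $G$-invariant vectors, and invoke the mean ergodic theorem along $(F_N)$; the route above avoids that machinery.
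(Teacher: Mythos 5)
Your proof is correct, and it takes a genuinely different route from the paper's. The paper follows Bergelson--Lesigne's Lemma 1.9: it realizes $\sigma$ as a weak limit of the absolutely continuous measures with densities $g_N(x)=\frac{1}{|F_N|}\bigl|\sum_{g\in F_N}u_g x(g)\bigr|^2$ against Haar measure on $\widehat G$, notes that the Fej\'er-type densities $h_N$ give measures converging weakly to $\delta_{1_{\widehat G}}$, bounds $\frac{1}{|F_N|}\bigl|\sum_{g\in F_N}u_g\bigr|$ by the Hellinger affinity $\rho(g_N\,dx,h_N\,dx)$, and then invokes the upper semicontinuity of affinity under weak convergence (the generalization of Cocke--Keane--Moran's Theorem 2, which the paper must separately justify for arbitrary countable abelian $\widehat G$ via Radon--Nikodym derivatives and partitions of unity). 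You instead obtain $\sigma$ abstractly from positive-definiteness and Bochner's theorem, and replace the affinity machinery by the elementary van der Corput-type second-moment argument: average over a finite window $K$, apply Cauchy--Schwarz, identify the limit of the expanded square with $\int_{\widehat G}\bigl|\frac{1}{|K|}\sum_{t\in K}\chi(t)\bigr|^2d\sigma$, and let $K=F_M$ run through the F{\o}lner sequence so that the integrand collapses to the indicator of the trivial character. Your version is more self-contained (its only nonelementary input is Bochner, which the paper already cites from Rudin), while the paper's version buys a concrete realization of $\sigma$ as a limit of spectral densities and reuses an existing lemma almost verbatim; both yield exactly the asserted bound $\limsup_N\frac{1}{|F_N|}\bigl|\sum_{g\in F_N}u_g\bigr|\leq\sqrt{\sigma(\{1_{\widehat G}\})}$. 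All the individual steps you use (the change of variables $g\mapsto h_jg$ up to F{\o}lner error, the dominated convergence in Step 3, and the vanishing of F{\o}lner averages of nontrivial characters, which indeed mirrors \Cref{5trgdfvc}) check out.
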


And the lemma can also be proved similarly to \cite[Lemma 1.9]{BL}, changing the functions $g_N,h_N$ from \cite{BL} by
\begin{equation*}
g_N(x)=\frac{1}{|F_N|}\left|\sum_{g\in F_N}z_gx(g)\right|^2\text{ and }
h_N(x)=\frac{1}{|F_N|}\left|\sum_{g\in F_N}g(x)\right|^2,
\end{equation*}
and one also needs to check that \cite[Theorem 2]{CKM} works for any countable abelian group:
\begin{lema}[Cf. {\cite[Theorem 2]{CKM}}]\label{4erfdvcx45rtegdfv}
Let $G$ be a countable abelian group, let $\widehat{G}$ be its dual. Let $(\mu_n)_{n\in\mathbb{N}},(\nu_n)_{n\in\mathbb{N}},\mu,\nu$ be Borel probability measures in $\widehat{G}$ such that $\mu_n\to\mu$ weakly, $\nu_n\to\nu$ weakly. Then
\begin{equation*}
\rho(\mu,\nu)\geq\limsup_n\rho(\mu_n,\nu_n),
\end{equation*}
where $\rho(\mu,\nu)$, the \textit{affinity} between $\mu$ and $\nu$, is given by, for any measure $m$ such that $\mu,\nu$ are absolutely continuous with respect to $m$,
\begin{equation*}
\rho(\mu,\nu)=\int_{\widehat{G}}\left(\frac{d\mu}{dm}\right)^{\frac{1}{2}}\left(\frac{d\nu}{dm}\right)^{\frac{1}{2}}dm.
\end{equation*}
\end{lema}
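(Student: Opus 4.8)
This lemma is precisely the assertion that the affinity $\rho$ is upper semicontinuous for weak convergence of Borel probability measures on $\widehat G$, and the plan is to write $\rho$ as a pointwise infimum of functionals that are individually weakly \emph{continuous}, whence the upper semicontinuity comes for free. The only feature of $\widehat G$ that enters is that it is compact and metrizable (true because $G$ is countable abelian, see \cite[Theorems 1.2.5, 2.2.6]{RuFA}); in this sense the argument is just the one of \cite[Theorem 2]{CKM} carried out with the ambient compact abelian group replaced by an arbitrary compact metric space.

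Concretely, I would use the variational formula
\begin{equation*}
\rho(\mu,\nu)=\inf\Big\{\tfrac12\int_{\widehat G}\phi\,d\mu+\tfrac12\int_{\widehat G}\phi^{-1}\,d\nu\ :\ \phi\in C(\widehat G),\ \phi>0\Big\}.
\end{equation*}
For the bound $\rho(\mu,\nu)\le\frac12\int\phi\,d\mu+\frac12\int\phi^{-1}\,d\nu$, fix a dominating measure $m$ (e.g.\ $m=\mu+\nu$) with densities $f=d\mu/dm$, $g=d\nu/dm$, and integrate the pointwise AM--GM inequality $(fg)^{1/2}\le\frac12(\phi f+\phi^{-1}g)$, valid for any measurable $\phi>0$. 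For the reverse bound one plugs in $\phi_\eta=\min\{\eta^{-1},\max\{\eta,(g/f)^{1/2}\}\}$ (with the usual conventions on $\{f=0\}$ and $\{g=0\}$), checks that $\frac12\int(\phi_\eta f+\phi_\eta^{-1}g)\,dm\to\rho(\mu,\nu)$ as $\eta\downarrow0$, and then replaces $\phi_\eta$ by a continuous function agreeing with it off a set of arbitrarily small $m$-measure (Lusin's theorem, Tietze extension, and composition with the retraction $t\mapsto\min\{\eta^{-1},\max\{\eta,t\}\}$). A fully equivalent route is the classical partition formula $\rho(\mu,\nu)=\inf_{\mathcal P}\sum_{A\in\mathcal P}(\mu(A)\nu(A))^{1/2}$ over finite Borel partitions $\mathcal P$ of $\widehat G$ — the inequality ``$\le$'' being Cauchy--Schwarz in each $L^2(m|_A)$, the reverse a martingale-convergence argument along any refining sequence of partitions of vanishing mesh — combined with the Portmanteau theorem applied to partitions whose atoms have $(\mu+\nu)$-null boundary (such partitions exist in a compact metric space and are cofinal).

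Granting the variational formula, the conclusion is immediate: for each fixed $\phi\in C(\widehat G)$ with $\phi>0$, both $\phi$ and $\phi^{-1}$ are continuous and bounded on the compact space $\widehat G$, so $(\mu,\nu)\mapsto\frac12\int\phi\,d\mu+\frac12\int\phi^{-1}\,d\nu$ is continuous for the topology of weak convergence; being the pointwise infimum of this family of weakly continuous functionals, $\rho$ is weakly upper semicontinuous, i.e.\ $\mu_n\to\mu$ and $\nu_n\to\nu$ weakly force $\limsup_n\rho(\mu_n,\nu_n)\le\rho(\mu,\nu)$. I do not expect a genuine obstacle here: the only thing to be careful about is that the proof of the variational (or partition) formula uses nothing beyond the facts that $\widehat G$ is a compact metric space and $\mu,\nu$ are finite Borel measures on it, so that no property special to $\widehat{\mathbb Z^d}=\mathbb T^d$ from \cite{CKM} is secretly being invoked.
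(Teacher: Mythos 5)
Your argument is correct, but it is packaged differently from the paper's. The paper does not actually reprove the lemma: it simply asserts that the proof of \cite[Theorem 2]{CKM} goes through verbatim once one checks that its two ingredients --- Radon--Nikodym derivatives and countable partitions of unity by continuous functions --- are available for Borel probability measures on $\widehat{G}$, which holds because such measures are (outer regular) Radon measures on a compact metric space. You instead give a self-contained proof via the variational formula $\rho(\mu,\nu)=\inf_{\phi}\bigl\{\tfrac12\int\phi\,d\mu+\tfrac12\int\phi^{-1}\,d\nu\bigr\}$ over positive continuous $\phi$, exhibiting $\rho$ as an infimum of weakly continuous functionals and reading off upper semicontinuity. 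At bottom the two routes share the same mechanism: the partition-of-unity identity $\rho(\mu,\nu)=\inf\sum_j(\int f_j\,d\mu)^{1/2}(\int f_j\,d\nu)^{1/2}$ used in \cite{CKM} is likewise an infimum of weakly continuous functionals, with Cauchy--Schwarz playing the role your AM--GM plays. What your version buys is a clean, fully self-contained write-up that makes explicit that only compactness and metrizability of $\widehat{G}$ (for Lusin/Tietze) are needed; what the paper's version buys is brevity, at the cost of asking the reader to re-inspect the argument of \cite{CKM}. Your handling of the delicate points is right: the truncation $\phi_\eta$ deals with $\{f=0\}$ and $\{g=0\}$, and the Lusin/Tietze replacement introduces an error controlled by $\eta^{-1}m(E)$ since the densities with respect to $m=\mu+\nu$ are bounded by $1$.
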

The same proof of \cite[Theorem 2]{CKM} is valid; the proof uses the existence of Radon-Nikodym derivatives and a countable partition of unity $f_j:T\to\mathbb{R}$, for $j\in\mathbb{Z}$. These partitions of unity always exist for outer regular Radon measures (see \cite[Theorem 3.14]{RuRC}), so they exist for any Borel probability measure in $\widehat{G}$.\qedhere
\end{enumerate}
\end{proof}

\section{Properties of vdC sets}\label{SecProps}
In \cite{BL} and \cite{Ru}, several properties of the family of vdC subsets of $\mathbb{Z}^d$ were proved. In this section we check that many of these properties hold for vdC sets in any countable group. Some of the statements about vdC sets follow from statements about sets of recurrence and the fact that any vdC set is a set of recurrence. 
Other properties can be proved in the same way as their analogs for sets of recurrence, even if they are not directly implied by them.

\begin{remark}
All the properties below are also satisfied for sets of operatorial recurrence, as defined in \cite{FT} (see \cite[Theorem 4.4]{FT}), with the proofs of the properties being essentially the same as for vdC sets.
\end{remark}

\begin{prop}[Partition regularity of vdC sets, cf. {\cite[Cor. 1]{Ru}}]\label{RamseyGvdC}
Let $G$ be a countably infinite group and let $H,H_1,H_2\subseteq G\setminus\{e\}$ satisfy $H=H_1\cup H_2$. If $H$ is a vdC set in $G$, then either $H_1$ or $H_2$ are vdC sets in $G$.
\end{prop}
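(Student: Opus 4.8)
The plan is to prove the contrapositive via the spectral-free finitistic characterization, but actually the cleanest route uses the measure-theoretic definition together with a product construction, mirroring the classical proof for $\mathbb{Z}$. Suppose neither $H_1$ nor $H_2$ is a vdC set in $G$. Then for $i=1,2$ there is a m.p.s. $(X_i,\mathcal{B}_i,\mu_i,(T^{(i)}_g)_{g\in G})$ and $f_i\in L^\infty(\mu_i)$ with $\int_{X_i} f_i\,d\mu_i\neq 0$ but $\int_{X_i} f_i(T^{(i)}_h x)\overline{f_i(x)}\,d\mu_i=0$ for all $h\in H_i$. By normalizing we may assume $\|f_i\|_\infty\le 1$ and, multiplying by a scalar, that $\int_{X_i} f_i\,d\mu_i = c$ for a common real $c>0$.

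The key step is to combine the two systems into a single one that ``witnesses'' failure on all of $H=H_1\cup H_2$ simultaneously. Form the product system $(X_1\times X_2, \mathcal{B}_1\otimes\mathcal{B}_2, \mu_1\times\mu_2, (T^{(1)}_g\times T^{(2)}_g)_{g\in G})$ and consider $F(x_1,x_2) = f_1(x_1)f_2(x_2)$. Then $\int F\,d(\mu_1\times\mu_2) = c^2 \neq 0$, and for $h\in G$,
\begin{equation*}
\int F(T^{(1)}_h x_1, T^{(2)}_h x_2)\overline{F(x_1,x_2)}\,d(\mu_1\times\mu_2)
=
\left(\int f_1(T^{(1)}_h x_1)\overline{f_1(x_1)}\,d\mu_1\right)\left(\int f_2(T^{(2)}_h x_2)\overline{f_2(x_2)}\,d\mu_2\right).
\end{equation*}
If $h\in H_1$ the first factor vanishes; if $h\in H_2$ the second factor vanishes. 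Since $H = H_1\cup H_2$, the product vanishes for every $h\in H$. By \Cref{DefGvdC} this means $H$ is not a vdC set in $G$, completing the contrapositive.

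**The main subtlety** is the preliminary normalization: the definition of vdC set quantifies over $f\in L^\infty(\mu)$, and multiplying $f_i$ by a suitable complex scalar $\lambda_i$ with $|\lambda_i|\le 1/\|f_i\|_\infty$ makes $\int f_i\,d\mu_i$ equal to a prescribed positive real $c$ while preserving the vanishing correlations (the correlation $\int f_i(T_h x)\overline{f_i(x)}\,d\mu$ scales by $|\lambda_i|^2$, hence stays zero). One must also check $F\in L^\infty(\mu_1\times\mu_2)$, which is immediate since $\|F\|_\infty\le\|f_1\|_\infty\|f_2\|_\infty$. No appeal to amenability or to \Cref{5trfgdtrgfd} is needed here — this argument works verbatim for any countably infinite group, which is consistent with the hypotheses of the proposition. (Alternatively one could phrase the same idea through the finitistic characterization \Cref{trefds5rtefdsc}, taking products of the finite data, but the measure-theoretic formulation is shorter.)
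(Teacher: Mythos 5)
Your proof is correct and is essentially the paper's own argument: the paper also assumes neither $H_1$ nor $H_2$ is vdC, takes the product system with $f(x_1,x_2)=f_1(x_1)f_2(x_2)$, and applies Fubini to get vanishing correlations on all of $H_1\cup H_2$ with nonzero mean. Your preliminary normalization is harmless but unnecessary, since the product of two nonzero complex numbers is already nonzero.
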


\begin{proof}
Suppose that $H_1,H_2$ are not vdC sets in $G$. Then for $i=1,2$ there are measure preserving systems $(X_i,\mathcal{B}_i,\mu_i,(T_i^g)_{g\in G})$, and functions $f_i\in L^\infty(\mu_i)$ such that
\begin{equation*}
\int_{X_i}f_i(T_i^hx)\cdot\overline{f_i(x)}d\mu_i(x)=0\text{ for all }h\in H_i,
\end{equation*}
but 
\begin{equation*}
\int_{X_i}f_i(x)d\mu_i(x)\neq0.
\end{equation*}
But then, considering the function $f:X_1\times X_2\to\mathbb{C};f(x_1,x_2)=f_1(x_1)\cdot f_2(x_2)$, we have by Fubini's theorem
\begin{equation*}
\int_{X_1\times X_2}f_1(T_1^hx_1)f_2(T_2^hx_2)\overline{f_1(x_1)}\overline{f_2(x_2)}d(\mu_1\times\mu_2)(x_1,x_2)=0\text{ for all }h\in H_1\cup H_2,
\end{equation*}
but 
\begin{equation*}
\int_{X_1\times X_2}f_1(x_1)f_2(x_2)d(\mu_1\times\mu_2)(x_1,x_2)\neq0,
\end{equation*}
so $H_1\cup H_2$ is not a vdC set in $G$.
\end{proof}

We will need the following in order to prove \Cref{rtdfgt5ergdfvc}.
\begin{prop}\label{finGvdC}
Let $G$ be a countably infinite group, and let $H\subseteq G\setminus\{e\}$ be finite. Then $H$ is not a set of recurrence in $G$, so it is not vdC.
\end{prop}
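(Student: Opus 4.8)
The plan is to exhibit, for any finite set $H\subseteq G\setminus\{e\}$, a measure preserving system $(X,\mathcal{B},\mu,(T_g)_{g\in G})$ and a set $B\in\mathcal{B}$ with $\mu(B)>0$ such that $\mu(B\cap T_hB)=0$ for all $h\in H$. Once this is done, $H$ fails the definition of set of recurrence, and since vdC sets are sets of recurrence (as observed after \Cref{DefGvdC}), $H$ cannot be vdC either. The natural candidate is a suitable Bernoulli-type system: since $G$ is countably infinite, it has a free action (or at least an action with enough "spreading"), and we want to colour $G$ so that no colour class $B$ meets any of its $H$-translates.

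First I would reduce to a purely combinatorial colouring statement. It suffices to find a finite colouring $c:G\to\{0,1,\dots,r-1\}$ such that for some colour $i$ the class $c^{-1}(i)$ is nonempty and, for every $h\in H$, $h\cdot c^{-1}(i)\cap c^{-1}(i)=\varnothing$ — equivalently, no two elements $g,g'$ of the same colour satisfy $g' = hg$ for some $h\in H$, i.e. $g'g^{-1}\notin H$ whenever $c(g)=c(g')$; actually it is cleaner to phrase it as: $c(g)\ne c(hg)$ for all $g\in G$, $h\in H$. Given such a $c$, take $X=\{0,\dots,r-1\}^G$ with the shift action $T_g((x_a)_{a\in G}) = (x_{ag})_{a\in G}$, let $\mu$ be any shift-invariant measure supported on the orbit closure of $c$ (or, more simply, put the product of uniform measures and instead use the point-mass description via Furstenberg correspondence applied to $c$ itself), and let $B=\{x: x_e = i\}$. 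Then $\mu(B)>0$ while $T_hB\cap B$ corresponds to configurations with $x_e=x_h=i$, which never occurs in the orbit of $c$; choosing $\mu$ appropriately gives $\mu(B\cap T_hB)=0$.

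The combinatorial core is therefore: any countably infinite group $G$ admits, for each finite $H\subseteq G\setminus\{e\}$, a finite colouring $c$ with $c(g)\ne c(hg)$ for all $g\in G$ and $h\in H$. I would prove this by a greedy/compactness argument. Consider the Cayley graph on vertex set $G$ with $g\sim g'$ iff $g' \in Hg \cup H^{-1}g$, i.e. $g'g^{-1}\in H\cup H^{-1}$; this graph has maximum degree at most $2|H|$, hence chromatic number at most $2|H|+1$ (by greedy colouring on any countable enumeration, or by the de Bruijn–Erdős theorem together with finite greedy colouring). A proper colouring of this graph is exactly a colouring with $c(g)\ne c(g')$ whenever $g'g^{-1}\in H\cup H^{-1}$, which is more than we need. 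Some colour class $B_0=c^{-1}(i)$ is infinite (in fact all classes can be taken infinite, or at least one is), and we set $B = B_0$.

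Finally I would assemble the pieces via \Cref{FCP} (Furstenberg's correspondence principle) applied directly to the set $B_0\subseteq G$: this yields a m.p.s. $(X,\mathcal{B},\mu,(T_g)_{g\in G})$ and $B\in\mathcal{B}$ with $\mu(B)=\overline d_F(B_0)>0$ (choosing $F$ so the density is positive, which is possible since $B_0$ is one of finitely many translation-classes covering $G$, so $d^*(B_0)\ge \tfrac{1}{2|H|+1}>0$) and $\mu(B\cap T_hB)\le \overline d_F(B_0\cap hB_0)=0$ for $h\in H$, because $B_0\cap hB_0=\varnothing$ by construction. Hence $H$ is not a set of recurrence, so not vdC. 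The main obstacle — and it is a mild one — is bookkeeping the degree bound on the Cayley-type graph and ensuring the chosen colour class has positive upper Banach density; both are handled by the pigeonhole observation that the $2|H|+1$ colour classes partition $G$.
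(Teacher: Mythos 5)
There is a genuine gap: the proposition is stated for an \emph{arbitrary} countably infinite group $G$, but your assembly step invokes Furstenberg's correspondence principle (\Cref{FCP}), which is only available for amenable groups --- indeed the very notions of F{\o}lner sequence and upper (Banach) density that you use to get $\mu(B)\geq\frac{1}{2|H|+1}>0$ have no meaning without amenability. The alternative you sketch, taking an invariant measure supported on the orbit closure of the colouring $c$ inside $\{0,\dots,r-1\}^G$, has the same problem: for non-amenable $G$ a subshift need not carry any invariant probability measure, so that measure may simply not exist. Your other parenthetical suggestion --- ``put the product of uniform measures'' --- does not work either: under the Bernoulli measure the cylinder $B=\{x:x_e=i\}$ satisfies $\mu(B\cap T_h^{-1}B)=1/r^2>0$ by independence of distinct coordinates, since the product measure does not see the colouring $c$ at all. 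So as written the argument only proves the proposition for amenable $G$, whereas the paper needs it (e.g.\ in the proof of \Cref{rtdfgt5ergdfvc}) for general countable groups.

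The fix, which is what the paper does, is to stay inside the Bernoulli system but choose a smarter set: in the $(\frac12,\frac12)$-Bernoulli scheme on $\{0,1\}^G$ with the shift action, take $A=\{x:x_e=0\}$ and $B=A\setminus\bigcup_{h\in H}T_{h^{-1}}A$. Since $e\notin H$ and $H$ is finite, $B$ is the cylinder $\{x:x_e=0,\ x_h=1\ \forall h\in H\}$ and has measure $2^{-(|H|+1)}>0$, while $B\cap T_{h^{-1}}B\subseteq B\cap T_{h^{-1}}A=\varnothing$ for every $h\in H$. The Bernoulli measure is shift-invariant for \emph{every} countable group, so no amenability is needed; the ``colouring'' is encoded locally in the cylinder defining $B$ rather than globally in a partition of $G$. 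Your greedy-colouring lemma is correct and would be a fine route in the amenable case, but it is both more work and less general here.
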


\begin{proof}
Consider the $\left(\frac{1}{2},\frac{1}{2}\right)$-Bernoulli scheme in $\{0,1\}^G$ with the action $(T_g)_{g\in G}$ of $G$ given by $T_g((x_a)_{a\in G})=(x_{ag})_{a\in G}$. Let $A=\{(x_a);x_e=0\}\subseteq\{0,1\}^G$ and let $B=A\setminus\cup_{h\in H}T_{h^{-1}}A$. As $H$ is finite, $B$ has positive measure, but clearly $\mu(B\cap T_{h^{-1}}B)=0$ for all $h\in H$, so we are done.
\end{proof}

The following generalizes {\cite[Cor. 3]{Ru}}, and has a similar proof.
\begin{prop}\label{rtdfgt5ergdfvc}
Let $G$ be a countably infinite group and let $H\subseteq G\setminus\{e\}$ be a vdC set in $G$. Then we can find infinitely many disjoint vdC subsets of $H$.
\end{prop}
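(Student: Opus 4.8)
The plan is to mimic the argument for \cite[Cor.~3]{Ru}, using the partition regularity established in \Cref{RamseyGvdC} together with the finitistic criterion for vdC sets, \Cref{trefds5rtefdsc}. The idea is that a vdC set cannot be ``too small'': by \Cref{finGvdC} no finite set is vdC, and by \Cref{trefds5rtefdsc} vdC-ness of $H$ is witnessed, for each $\varepsilon>0$, by a finite subset $H_0\subseteq H$. The goal is to extract from $H$ infinitely many pairwise disjoint subsets, each still vdC.

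First I would show that $H$ contains two disjoint vdC subsets. Partition $H$ into two pieces $H=H_1\sqcup H_2$; by \Cref{RamseyGvdC} at least one of them, say $H_1$, is vdC. Now I claim one can arrange for \emph{both} pieces of a suitable bipartition to be infinite — indeed, if $H_1$ were finite then by \Cref{finGvdC} it would not be vdC, so in fact $H_1$ is infinite; but partition regularity applied repeatedly does not immediately give two disjoint vdC subsets, so the right move is the following. Enumerate $H=\{h_1,h_2,\dots\}$ and, using \Cref{trefds5rtefdsc}, for each $n\in\mathbb{N}$ choose a finite ``witness set'' $H_0^{(n)}\subseteq H$ and $\delta_n>0$ corresponding to $\varepsilon=1/n$; then $H':=\bigcup_n H_0^{(n)}$ is a \emph{countable} union of finite sets, hence we may split $H'=H'_{\mathrm{odd}}\sqcup H'_{\mathrm{even}}$ in a way that is compatible with the witnesses — but this does not preserve vdC-ness directly. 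So instead I would argue as follows: the key observation is that if $H$ is vdC and $F\subseteq H$ is finite, then $H\setminus F$ is still vdC. This holds because a finite modification cannot destroy vdC-ness: given a counterexample m.p.s.\ and $f$ for $H\setminus F$, one takes a product with a Bernoulli system (exactly as in the proof of \Cref{refwds54rtefdvhbdhtt} or \Cref{finGvdC}) to kill the finitely many correlations at the elements of $F$ while keeping $\int f\,d\mu\neq 0$; thus $H\setminus F$ is not vdC, contradiction. (Alternatively: $H=F\cup(H\setminus F)$, $F$ is not vdC by \Cref{finGvdC}, so by \Cref{RamseyGvdC} $H\setminus F$ is vdC.) This last remark is the cleanest route, and it is the real engine.

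Given that $H\setminus F$ is vdC for every finite $F$, I would build the disjoint subsets greedily. Using \Cref{trefds5rtefdsc}, pick a finite $H_1\subseteq H$ that is ``half a witness'': more precisely, for $\varepsilon=1$ there is a finite $H_0\subseteq H$ and $\delta>0$ witnessing vdC-ness of $H$; but we want $H_1$ itself to be vdC, which a single finite witness set is not. So the greedy step must instead be: since $H$ is vdC and infinite, and since $H_1\cup H_2\cup\cdots\cup H_k$ being a finite union leaves $H\setminus(\text{anything finite so far constructed})$... — here the correct inductive claim is that one can write $H=\bigsqcup_{k=1}^\infty H_k$ with \emph{each} $H_k$ vdC. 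Suppose we have disjoint vdC sets $H_1,\dots,H_{k}$ with $H\setminus(H_1\cup\cdots\cup H_k)$ still vdC (true for $k=0$). Partition $H\setminus(H_1\cup\cdots\cup H_k)=H_{k+1}\sqcup H_{k+1}'$; by \Cref{RamseyGvdC} one of them is vdC. If $H_{k+1}$ is vdC we need $H_{k+1}'$ to still be vdC — but it need not be. This is the genuine obstacle.

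\textbf{The main obstacle.} The difficulty is precisely that partition regularity gives ``one of the two pieces is vdC'' but not ``each piece we want can be taken vdC while the remainder stays vdC''. The resolution (following Ruzsa) is to iterate the two-disjoint-subsets statement rather than to do a one-shot countable partition: once we know every vdC set splits into two disjoint vdC subsets $H=A_1\sqcup B_1$ with $A_1,B_1$ vdC, apply the same to $B_1$ to get $B_1=A_2\sqcup B_2$ with $A_2,B_2$ vdC, and so on; then $A_1,A_2,A_3,\dots$ are pairwise disjoint vdC subsets of $H$. So the whole proof reduces to the two-subsets case, which in turn reduces to: \emph{a vdC set $H$ contains a vdC subset $H_1$ with $H\setminus H_1$ also vdC}. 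To prove this, I would use the finitistic criterion \Cref{trefds5rtefdsc}: enumerate a sequence of finite witness sets and ``error thresholds'' that together certify vdC-ness of $H$, and distribute the elements of $H$ into $H_1$ and $H\setminus H_1$ so that \emph{both} receive infinitely much of each witness set — concretely, split each finite witness set roughly in half and interleave, arranging by a diagonal argument that $H_1$ contains, for every $\varepsilon$, a finite set certifying the $\varepsilon$-criterion, and likewise for $H\setminus H_1$. Carrying out this bookkeeping is the technical heart; everything else is the product-with-Bernoulli trick and a routine induction. I expect the bookkeeping to be where most of the care is needed, and I would model it closely on \cite[Section~6 and Cor.~3]{Ru}.
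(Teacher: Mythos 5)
Your reduction to the two-disjoint-subsets statement (and the iteration $H\supseteq A_1\sqcup B_1$, $B_1\supseteq A_2\sqcup B_2$, \dots) matches how the paper justifies its opening line, and your observation that $H\setminus F$ remains vdC for finite $F$ (via \Cref{finGvdC} and \Cref{RamseyGvdC}) is the same engine the paper uses. The gap is in the step you yourself flag as the technical heart: ``split each finite witness set roughly in half and interleave'' does not work, because half of a witness set for a given $\varepsilon$ need not certify anything --- the finitistic criterion \Cref{trefds5rtefdsc} only guarantees the implication for the \emph{whole} finite set $H_0$, and there is no monotonicity that lets you pass to a subset of $H_0$ while retaining any $\delta$ for the same $\varepsilon$. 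So the interleaving/diagonal argument as described has no mechanism to ensure that each of the two pieces contains, for every $\varepsilon$, a full witness set.

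The fix is to combine the two tools you already have: choose the witness sets \emph{disjointly from the outset} rather than trying to split them afterwards. Recursively, having chosen pairwise disjoint finite sets $H_1,\dots,H_{n-1}\subseteq H$, note that $H\setminus\bigcup_{i<n}H_i$ is still vdC (your finite-removal remark), so \Cref{trefds5rtefdsc} applied to \emph{this} set produces a finite witness set $H_n\subseteq H\setminus\bigcup_{i<n}H_i$ and a $\delta_n>0$ certifying the criterion for $\varepsilon=1/n$. Then $H'=\bigcup_nH_{2n-1}$ and $H''=\bigcup_nH_{2n}$ each contain witness sets for arbitrarily small $\varepsilon$, hence are vdC by the ``if'' direction of \Cref{trefds5rtefdsc}; grouping the $H_n$ into infinitely many infinite subfamilies gives infinitely many disjoint vdC subsets directly, with no need for the two-set iteration at all. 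This is precisely the paper's proof.
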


\begin{proof}[Proof of \Cref{rtdfgt5ergdfvc}]
It will be enough to prove that there are two disjoint vdC subsets $H',H''$ of $H$.

We will define a disjoint sequence $H_1,H_2,\dots$ of finite subsets of $H$ by recursion. Suppose $H_1,\dots,H_{n-1}$ are given; notice that $\bigcup_{i=1}^{n-1}H_i$ is finite, so it is not a vdC set. So by \Cref{RamseyGvdC}, $H\setminus\bigcup_{i=1}^{n-1}H_i$ is a vdC set. Then by \Cref{trefds5rtefdsc} we can then define $H_n$ to be a finite subset of $H\setminus\bigcup_{i=1}^{n-1}H_i$ such that for some constant $\delta_n>0$ and for any m.p.s. $(X,\mathcal{B},\mu,(T_g)_{g\in G})$, $f\in L^\infty(X,\mu)$ we have
\begin{equation*}\left|\int_Xf(T_hx)\overline{f(x)}d\mu(x)\right|<\delta_n\;\forall h\in H_n\text{ implies }\left|\int_Xfd\mu\right|<\frac{1}{n}.
\end{equation*}
Now let $H'=\bigcup_{n\in\mathbb{N}}H_{2n-1}$ and $H''=\bigcup_{n\in\mathbb{N}}H_{2n}$. It is then clear that both $H'$ and $H''$ satisfy the definition of vdC set in $G$, so we are done.
\end{proof}

The following generalizes \cite[Corollary 1.15.2]{BL} to countable amenable groups.
\begin{theorem}\label{y6thgft5rgfd}
Let $S$ be a subgroup of a countable amenable group $G$, let $H\subseteq S\setminus\{e\}$. Then $H$ is a vdC set in $S$ iff it is a vdC set in $G$.
\end{theorem}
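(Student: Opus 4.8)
The plan is to use the characterization of vdC sets in terms of measure preserving systems (\Cref{DefGvdC}, which is the content of \Cref{ThmvdC}\ref{ThmvdC4}), so that we never need to worry about Følner sequences directly. The direction that should be easy is: if $H$ is a vdC set in $G$, then it is a vdC set in $S$. Indeed, given an $S$-m.p.s.\ $(X,\mathcal{B},\mu,(T_s)_{s\in S})$ and $f\in L^\infty(\mu)$ with $\int_X f(T_hx)\overline{f(x)}\,d\mu=0$ for all $h\in H$, the standard trick is to induce the action up to $G$: form the co-induced (or induced) $G$-m.p.s.\ $(Y,\mathcal{C},\nu,(\widetilde T_g)_{g\in G})$, where $Y$ carries a copy of $X$ on each left coset of $S$, together with a natural $f$-lift $\widetilde f\in L^\infty(\nu)$ such that the correlations $\int_Y \widetilde f(\widetilde T_h y)\overline{\widetilde f(y)}\,d\nu$ for $h\in S$ agree (up to the normalizing factor $[G:S]^{-1}$ if the index is finite, or via a Følner-weighted average when it is infinite) with the corresponding correlations of $f$, and $\int_Y\widetilde f\,d\nu$ is a nonzero multiple of $\int_X f\,d\mu$. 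Since $H\subseteq S$, vanishing of the $G$-correlations at every $h\in H$ forces $\int_Y\widetilde f\,d\nu=0$, hence $\int_X f\,d\mu=0$. One has to be a little careful when $[G:S]=\infty$: rather than co-induction one can instead observe directly that an $S$-m.p.s.\ is in particular data to which one can apply the inverse correspondence principle inside $S$ (using a Følner sequence of $S$, which exists since subgroups of amenable groups are amenable), producing $E\subseteq S$; then $E$ is also a subset of $G$, and $d_{F}$-densities computed along a Følner sequence of $S$ viewed inside $G$ relate the $S$-picture to the $G$-picture. Either route works; I would present the induced-action argument as the cleaner one.

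For the converse — if $H$ is vdC in $S$, then $H$ is vdC in $G$ — the plan is to restrict. Given a $G$-m.p.s.\ $(X,\mathcal{B},\mu,(T_g)_{g\in G})$ and $f\in L^\infty(\mu)$ with $\int_X f(T_hx)\overline{f(x)}\,d\mu=0$ for all $h\in H$, simply forget all the $T_g$ with $g\notin S$: this leaves an $S$-m.p.s.\ $(X,\mathcal{B},\mu,(T_s)_{s\in S})$ on the same space with the same $f$, and the hypothesis says exactly that the $S$-correlations vanish on $H$. Since $H$ is vdC in $S$, we conclude $\int_X f\,d\mu=0$. This direction is essentially immediate once we are working with the measure-theoretic definition, and is where the amenability hypothesis is not even needed.

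The main obstacle, and the only place requiring genuine care, is the construction of the induced/co-induced $G$-system in the first direction, together with verifying that the lift $\widetilde f$ has exactly the claimed correlations and nonzero integral. When $[G:S]<\infty$ this is the classical induced action on $X^{G/S}$ and the computation is a short Fubini argument. When $[G:S]=\infty$ one should instead bypass induction entirely: apply \Cref{IFC} (the inverse correspondence principle) to the $S$-m.p.s.\ and a Følner sequence $F^S$ of $S$ to get $A\subseteq S$ with $d_{F^S}(h_1A\cap\dots\cap h_kA)=\mu(T_{h_1}B\cap\dots\cap T_{h_k}B)$ for $h_i\in S$; then $A\subseteq G$, and one checks that a Følner sequence of $S$ is, after translating by suitable elements if necessary, also Følner in $G$, so applying (the combinatorial reformulation of) vdC-ness of $H$ in $G$ via \Cref{ThmvdC} to this $A$ yields the contradiction. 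Since the problem is symmetric in spirit to \Cref{NiceRecAndInt} and the applications in \Cref{SecIntro2}, I expect the write-up to be short; the contrapositive formulation — "$H$ not vdC in $S$ $\iff$ $H$ not vdC in $G$" — will likely be the most economical, reducing both directions to producing, from a bad system on one side, a bad system on the other.
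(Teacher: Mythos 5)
Your restriction argument for the direction ``$H$ vdC in $S$ $\Rightarrow$ $H$ vdC in $G$'' is correct and is exactly the paper's argument: a $G$-system restricts to an $S$-system on the same space with the same $f$, and since $H\subseteq S$ the hypothesis transfers verbatim (and indeed amenability plays no role there).

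The other direction, however, contains a genuine gap, located precisely where you lean on it for the infinite-index case. The claim that a F{\o}lner sequence of $S$ is, ``after translating by suitable elements if necessary, also F{\o}lner in $G$'' is false for every proper subgroup: if $F\subseteq S$ is finite and $h\in G\setminus S$, then $hF\subseteq hS$ is disjoint from $F\subseteq S$, so $|hF\Delta F|/|F|=2$; right translation leaves this ratio unchanged, and left translation by $c$ merely replaces $h$ by $c^{-1}hc$, which still lies outside $S$ for some $h$ unless $S=G$. Consequently the plan of producing $E\subseteq S$ via \Cref{IFC} inside $S$ and then reading off $G$-densities does not get off the ground: a F{\o}lner set of $G$ meets each coset $Sg$ in an essentially arbitrary finite set, over which the Cesaro averages of a single fixed sequence on $S$ are uncontrolled. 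This is exactly the difficulty the paper's proof is organized around: it negates the finitistic characterization \ref{ThmvdC3} of \Cref{ThmvdC} for $S$, decomposes a finite $A\subseteq G$ into the pieces $A\cap Sg_i$, translates each piece back into $S$, obtains a \emph{separate} witnessing family of finitely many sequences for each piece, normalizes phases and multiplicities, and glues them (extended by $0$ off $\bigcup_i Sg_i$) into a witnessing family on $G$. Your co-induction idea could in fact bypass all of this in arbitrary index, but only if set up as the co-induced system, i.e.\ a \emph{product} of copies of $X$ indexed by the cosets of $S$ with product measure, for which the identity coordinate is $\mu$-distributed and hence $\int_Y\widetilde f\,d\nu=\int_Xf\,d\mu$ and the correlations at every $h\in S$ are preserved exactly, with no normalizing factor. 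What you actually describe --- ``a copy of $X$ on each left coset,'' with a factor $[G:S]^{-1}$ --- is the induced action, which is not a probability system when $[G:S]=\infty$; and your stated fallback for that case is the false F{\o}lner claim. As written, the direction ``vdC in $G$ $\Rightarrow$ vdC in $S$'' is not proved.
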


\begin{proof}
If $H$ is not a vdC set in $G$, then it is not a vdC set in $S$, as any measure preserving action $(T_g)_{g\in G}$ on a measure space restricts to a measure preserving action $(T_g)_{g\in S}$ on the same measure space.

The fact that if $H$ is not a vdC set in $S$ then it is not a vdC set in $G$ can be deduced from \Cref{ThmvdC3} of \Cref{ThmvdC}: indeed, let $\varepsilon$ be as in \Cref{ThmvdC3} (applied to the group $S$) and consider any $A\subseteq G$ and $H_0\subseteq H$ finite and any $\delta>0$. 

We can express $A=A_1\cup\dots\cup A_m$, where the $A_i$ are pairwise disjoint and of the form $A_i=A\cap(Sg_i)$, for some $g_i\in G$. Thus, $Ag_i^{-1}\subseteq S$ for all $i$.

Now, for each $i$ we know that there exist $K_i\in\mathbb{N}$ and $G$-sequences $(z_{i,a,k})_{a\in S}$ in $\mathbb{D}$, for $k=1,\dots,K_i$, such that 
    \begin{equation}\label{gfhygfff3lemi}
    \left|
    \frac{1}{K_i|A_i|}\sum_{k=1}^{K_i}\sum_{a\in A_ig_i^{-1}}z_{i,ha,k}\overline{z_{i,a,k}}
    \right|<\delta\text{ for all }h\in H_0,
    \end{equation}
    but 
    \begin{equation}\label{gfhygfff4lemi}
    \left|
    \frac{1}{K_i|A_i|}\sum_{k=1}^{K_i}\sum_{a\in A_ig_i^{-1}}z_{i,a,k}
    \right|>\varepsilon.
    \end{equation}
Note that we can assume $K_1,K_2,\dots,K_m$ are all equal to some number $K$ (e.g. taking $K$ to be the least common multiple of all of them) and that $
    \frac{1}{K|A_i|}\sum_{k=1}^{K}\sum_{a\in A_ig_i^{-1}}z_{i,a,k}$ is a positive real number for all $i=1,\dots,m$ (multiplying the sequences $(z_{i,a,k})$ by some complex number of norm $1$ if needed).

    Finally, define for each $k=1,\dots,K$ a sequence $(z_{a,k})_{a\in G}$ by $z_{a,k}=z_{i,ag_i^{-1},k}$ for $a\in Sg_i$ and by $z_g=0$ elsewhere. 
    This sequence will satisfy \Cref{ThmvdC3} of \Cref{ThmvdC} (by taking averages of \Cref{gfhygfff3lemi,gfhygfff4lemi} for $k=1,\dots,K$), so we are done.
\end{proof}

The following is proved in \cite[Cor. 1.15.1]{BL} for vdC sets in $\mathbb{Z}^d$ using the spectral criterion; using \Cref{DefGvdC} instead we prove it for any countable group.
\begin{prop}\label{4refdstrgfd}
Let $\pi:G\to S$ be a group homomorphism, let $H$ be a vdC set in $G$. If $e_S\not\in\pi(H)$, then $\pi(H)$ is a vdC set in $S$.
\end{prop}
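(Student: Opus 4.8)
The plan is to pull back an arbitrary m.p.s. for $S$ witnessing that $\pi(H)$ is not vdC to a m.p.s. for $G$ witnessing that $H$ is not vdC, via the homomorphism $\pi$. Concretely, suppose $\pi(H)$ is not a vdC set in $S$. Then by \Cref{DefGvdC} there is a m.p.s. $(Y,\mathcal{C},\nu,(U_s)_{s\in S})$ and a function $g\in L^\infty(\nu)$ such that $\int_Y g(U_{s}y)\overline{g(y)}\,d\nu(y)=0$ for all $s\in\pi(H)$, but $\int_Y g\,d\nu\neq 0$. Define an action $(T_\gamma)_{\gamma\in G}$ of $G$ on $Y$ by $T_\gamma:=U_{\pi(\gamma)}$; since $\pi$ is a homomorphism, $T_{\gamma_1}T_{\gamma_2}=U_{\pi(\gamma_1)}U_{\pi(\gamma_2)}=U_{\pi(\gamma_1)\pi(\gamma_2)}=U_{\pi(\gamma_1\gamma_2)}=T_{\gamma_1\gamma_2}$, so this is indeed a measure preserving action of $G$ on $(Y,\mathcal{C},\nu)$.

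Now take $f:=g\in L^\infty(\nu)$. For every $h\in H$ we have $\pi(h)\in\pi(H)$, hence
\begin{equation*}
\int_Y f(T_h y)\overline{f(y)}\,d\nu(y)=\int_Y g(U_{\pi(h)}y)\overline{g(y)}\,d\nu(y)=0,
\end{equation*}
while $\int_Y f\,d\nu=\int_Y g\,d\nu\neq 0$. By \Cref{DefGvdC}, this shows $H$ is not a vdC set in $G$, contradicting the hypothesis. (The hypothesis $e_S\notin\pi(H)$ is what guarantees $\pi(H)\subseteq S\setminus\{e_S\}$, so that asking whether $\pi(H)$ is a vdC set in $S$ is a meaningful question in the sense of \Cref{DefGvdC}; it plays no further role in the argument.)

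I do not expect any real obstacle here: the proof is a one-line transport of structure along $\pi$, and the only thing to be careful about is that $f$ must be genuinely $\mathcal{C}$-measurable and bounded, which it is since it equals $g$. No amenability, no F{\o}lner sequence, and none of the heavier machinery (\Cref{ThmvdC}, \Cref{IFC}) is needed — this is why the statement holds for an arbitrary countable group $G$, in contrast to \Cref{y6thgft5rgfd}, whose proof genuinely used the finitistic characterization \ref{ThmvdC3}.
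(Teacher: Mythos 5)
Your proof is correct and is exactly the paper's argument: the paper proves the contrapositive by noting that any measure preserving action $(T_s)_{s\in S}$ induces an action of $G$ via $S_g = T_{\pi(g)}$, which is precisely your transport of structure along $\pi$. You have simply written out the details that the paper leaves implicit.
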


\begin{proof}
The contra-positive of the claim follows easily from the fact that any measure preserving action $(T_s)_{s\in S}$ on a probability space $(X,\mathcal{A},\mu)$ induces a measure preserving action $(S_g)_{g\in G}$ on $(X,\mathcal{A},\mu)$ by $S_g=T_{\pi(g)}$.
\end{proof}

\begin{remark}
In \Cref{4refdstrgfd}, $\pi(H)$ may be a vdC set even if $H$ is not. Indeed, the set $\{(n,1);n\in\mathbb{N}\}$ is not a set of recurrence in $\mathbb{Z}^2$ (this follows from \Cref{cor58} below), but its projection to the first coordinate is a vdC set.
\end{remark}

The following generalizes \cite[Corollary 1.16]{BL}.
\begin{cor}
\label{cor58}
Let $G$ be a countable group and let $S$ be a finite index subgroup of $G$. Then $G\setminus S$ is not a set of recurrence in $G$, so it is not a vdC set in $G$.
\end{cor}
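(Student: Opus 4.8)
The plan is to produce directly a measure preserving system together with a positive-measure set which witnesses the failure of recurrence for $G\setminus S$; no appeal to the inverse correspondence principle or the spectral criterion is needed. Since $[G:S]$ is finite, I would take $X$ to be the coset space $G/S$ of left cosets, equipped with $\mathcal{B}=\mathcal{P}(X)$ and the normalized counting measure $\mu$, so that each coset has mass $1/[G:S]$. Let $G$ act on $X$ by left translation, $T_g(xS):=(gx)S$. This is well defined on left cosets, it satisfies $T_g\circ T_h=T_{gh}$ in accordance with the paper's convention, and each $T_g$ merely permutes the finitely many cosets, hence preserves $\mu$; thus $(X,\mathcal{B},\mu,(T_g)_{g\in G})$ is a genuine m.p.s.

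Next I would set $B:=\{eS\}=\{S\}\in\mathcal{B}$, so $\mu(B)=1/[G:S]>0$. For any $h\in G$ we have $T_hB=\{hS\}$, and therefore $B\cap T_hB=\{S\}\cap\{hS\}$ is nonempty precisely when $hS=S$, i.e. precisely when $h\in S$; otherwise it is $\varnothing$. Hence $\mu(B\cap T_hB)=0$ for every $h\in G\setminus S$, while $\mu(B)>0$. By the definition of a set of recurrence recalled after \Cref{DefGvdC}, this shows that $G\setminus S$ is not a set of recurrence in $G$, and since every vdC set in $G$ is a set of recurrence, $G\setminus S$ is not a vdC set in $G$ either.

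This argument is soft and self-contained, and it works uniformly regardless of whether $S$ is normal in $G$. There is essentially no real obstacle; the only points deserving a line of care are checking that the left-multiplication action is well defined on left cosets and that the composition law matches the convention $T_g\circ T_h=T_{gh}$, and observing that $B\cap T_hB$ is genuinely empty (rather than equal to $B$) for $h\notin S$, which is immediate because distinct cosets are disjoint.
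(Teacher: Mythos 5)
Your proposal is correct and is essentially identical to the paper's proof: the paper also takes the action of $G$ on the finite coset space $G/S$ with the uniform measure and observes that $B=\{S\}$ has positive measure yet $T_hB\cap B=\varnothing$ for every $h\in G\setminus S$. You merely spell out the routine verifications (well-definedness, the composition convention, disjointness of distinct cosets) in more detail.
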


\begin{proof}
Consider the action of $G$ on the finite set $G/S$ of left-cosets of $S$, where  we give $G/S$ the uniform probability measure $\mu$. The set $\{S\}\subseteq G/S$ has positive measure, but for all $g\in G\setminus S$ we have $g\{S\}\cap\{S\}=\varnothing$. 
\end{proof}

We finally prove that difference sets are nice vdC sets (see e.g. \cite[Lemma 5.2.8]{Fa} for the case $G=\mathbb{Z}$). The proof of \Cref{y6rtfgd5trefd} is just the proof that any set of differences is a set of recurrence\footnote{A slight modification of this proof shows that $AA^{-1}$ is a set of nice recurrence (see \Cref{DefNiceRec})}, which is already found in \cite[Page 74]{Fu} for the case $G=\mathbb{Z}$.

\begin{prop}\label{y6rtfgd5trefd}
Let $G$ be a countable group and let $A\subseteq G$ be infinite. Then the difference set $AA^{-1}=\{ba^{-1};a,b\in A,a\neq b\}$ is a nice vdC set in $G$.
\end{prop}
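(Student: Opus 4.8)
The plan is to invoke the characterization of nice vdC sets in \Cref{CharactNiceVdC}, so that it suffices to show: for every m.p.s. $(X,\mathcal{B},\mu,(T_g)_{g\in G})$ and every $f\in L^\infty(\mu)$,
\[
\left|\int_Xf\,d\mu\right|^2\leq\limsup_{h\in AA^{-1}}\left|\int_Xf(T_hx)\overline{f(x)}\,d\mu(x)\right|.
\]
Set $c=\int_Xf\,d\mu$, $M=\|f\|_\infty$, and $\phi(h)=\int_Xf(T_hx)\overline{f(x)}\,d\mu(x)$; if $c=0$ there is nothing to prove, so assume $c\neq0$, whence $M>0$. For each $a\in A$ put $v_a:=f\circ T_a\in L^2(\mu)$. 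Since $T_a$ is a measure-preserving bijection we get $\|v_a\|_2\leq M$ and $\langle v_a,\mathbf 1\rangle=c$ (here $\mathbf 1$ denotes the constant function $1$, with $\|\mathbf 1\|_2=1$), and the substitution $y=T_bx$ gives $\langle v_a,v_b\rangle=\int_Xf(T_{ab^{-1}}y)\overline{f(y)}\,d\mu(y)=\phi(ab^{-1})$ for all $a,b\in A$. In particular $\langle v_a,v_b\rangle=\phi(ab^{-1})$ with $ab^{-1}\in AA^{-1}$ whenever $a\neq b$, and $\langle v_a,v_a\rangle=\phi(e)=\|f\|_2^2\leq M^2$.

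Now assume for contradiction that $\alpha:=\limsup_{h\in AA^{-1}}|\phi(h)|<|c|^2$ and fix $\beta$ with $\alpha<\beta<|c|^2$. By the definition of $\limsup_{h\in AA^{-1}}$ there is a finite $H_0\subseteq AA^{-1}$ with $|\phi(h)|<\beta$ for all $h\in AA^{-1}\setminus H_0$. Since $A$ is infinite, fix $n$ as large as we like and distinct $a_1,\dots,a_n\in A$, and expand
\[
\left\|\tfrac1n\sum_{i=1}^nv_{a_i}\right\|_2^2=\frac1{n^2}\sum_{i,j=1}^n\phi\!\left(a_ia_j^{-1}\right).
\]
The $n$ diagonal terms contribute at most $M^2/n$ in absolute value. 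For each $h\in H_0$ and each $j$ there is at most one $i$ with $a_ia_j^{-1}=h$, so at most $n|H_0|$ of the ordered pairs $(i,j)$ with $i\neq j$ have $a_ia_j^{-1}\in H_0$; these contribute at most $M^2|H_0|/n$, and every remaining off-diagonal term has absolute value $<\beta$. Hence $\big\|\tfrac1n\sum v_{a_i}\big\|_2^2\leq(1+|H_0|)M^2/n+\beta$. On the other hand Cauchy--Schwarz gives $\big\|\tfrac1n\sum v_{a_i}\big\|_2^2\geq\big|\big\langle\tfrac1n\sum v_{a_i},\mathbf 1\big\rangle\big|^2=|c|^2$. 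Letting $n\to\infty$ we obtain $|c|^2\leq\beta<|c|^2$, a contradiction; this proves the displayed inequality, and hence the proposition.

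This is simply Furstenberg's proof that a difference set is a set of recurrence, transposed from indicator functions to an arbitrary $f\in L^\infty(\mu)$ and read inside $L^2(\mu)$. I do not anticipate a real obstacle: the only two places needing a moment's attention are the identity $\langle v_a,v_b\rangle=\phi(ab^{-1})$ (which uses that $T_b$ is a measure-preserving bijection) and the elementary counting bound on the number of pairs $(i,j)$ with $a_ia_j^{-1}\in H_0$. Specializing to $f=\chi_B$, and using that $AA^{-1}$ is symmetric ($(AA^{-1})^{-1}=AA^{-1}$), the same computation yields the assertion in the footnote that $AA^{-1}$ is a set of nice recurrence.
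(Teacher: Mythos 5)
Your proof is correct and is essentially the paper's own argument: both reduce via \Cref{CharactNiceVdC} to the inequality $|\int_X f\,d\mu|^2\le\limsup_{h\in AA^{-1}}|\int_X f(T_hx)\overline{f(x)}\,d\mu|$ and then run the standard Hilbert-space averaging trick, comparing $\|\frac1n\sum_i f\circ T_{a_i}\|_2^2$ (expanded as a double sum of correlations) against $|\langle\frac1n\sum_i f\circ T_{a_i},\mathbf 1\rangle|^2=|\int_X f\,d\mu|^2$ via Cauchy--Schwarz. If anything, your accounting of the diagonal terms and of the at most $n|H_0|$ exceptional off-diagonal pairs is slightly more careful than the paper's, which folds them into a single $N\cdot|B|\cdot\|f\|_2^2$ bound.
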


\begin{proof}
Suppose that for some m.p.s. $(X,\mathcal{A},\mu,(T_g)_{g\in G})$ and some function $f\in L^\infty(\mu)$ we have
\begin{equation}\label{trgdftre4}
\left|\int_Xfd\mu\right|^2
>
\limsup_{h\in AA^{-1}}\left|\int_Xf(T_hx)\overline{f(x)}d\mu(x)\right|.
\end{equation}
So for some $\lambda<\left|\int_Xfd\mu\right|^2$ and some finite subset $B\subseteq AA^{-1}$ we have
\begin{equation*}
\left|
\int_{X}f( T_ax)\cdot\overline{f(T_bx)}d\mu(x)\right|<\lambda\text{ for all }a,b\in A\text{ with }ab^{-1}\not\in B.
\end{equation*}
Then for any $N\in\mathbb{N}$, letting $A_0$ be a subset of $A$ with $N$ elements and denoting $T_af=f\circ T_a$, we have
\begin{align*}
N^2\left|\int_Xfd\mu\right|^2=
\frac{\left|\left\langle\sum_{a\in A_0}T_af,1\right\rangle\right|^2}{\langle1,1\rangle}&\leq\left\langle\sum_{a\in A_0}T_af,\sum_{a\in A_0}T_af\right\rangle\\
&\leq N\cdot|B|\cdot\|f\|^2_{L^2(X,\mu)}+N^2\lambda.
\end{align*}
This is a contradiction for big enough $N$, because $\lambda<|\int_Xfd\mu|^2$.
\end{proof}

The corollary below was suggested by V. Bergelson. Before stating it, recall that a subset $H$ of a countable group $G$ is \textit{thick} when for any finite $A\subseteq G$, $H$ contains a right translate of $A$. In particular, a subset of a countable amenable group has upper Banach density $1$ iff it is thick. 
Also note that a set $H$ is a vdC set in $G$ if and only if $H\cup H^{-1}$ is a vdC set in $G$; 
this follows from the fact that for any m.p.s. $(X,\mathcal{B},\mu,(T_g)_{g\in G})$ and any $f\in L^\infty(X,\mu)$ we have $\int_Xf(T_hx)\overline{f(x)}d\mu=\overline{\int_Xf(T_{h^{-1}}x)\overline{f(x)}d\mu}$.

\begin{cor}
If $G$ is a countable group and a subset $H\subseteq G\setminus\{e\}$ is thick, then $H$ is a nice vdC set.
\end{cor}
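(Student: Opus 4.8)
The plan is to reduce the statement to \Cref{y6rtfgd5trefd} by showing that, after a harmless symmetrization, every thick set contains a difference set $AA^{-1}$ with $A$ infinite. First I would invoke the remark made just before the corollary that $H$ is a vdC set in $G$ iff $H\cup H^{-1}$ is: the very same argument, based on the identity $\int_Xf(T_hx)\overline{f(x)}\,d\mu=\overline{\int_Xf(T_{h^{-1}}x)\overline{f(x)}\,d\mu}$, shows that $h\mapsto\bigl|\int_Xf(T_hx)\overline{f(x)}\,d\mu\bigr|$ takes the same value at $h$ and at $h^{-1}$, whence $\limsup_{h\in H}\bigl|\int_Xf(T_hx)\overline{f(x)}\,d\mu\bigr|=\limsup_{h\in H\cup H^{-1}}\bigl|\int_Xf(T_hx)\overline{f(x)}\,d\mu\bigr|$ for every m.p.s.\ and every $f\in L^\infty(\mu)$. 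Hence $H$ is a nice vdC set iff $H\cup H^{-1}$ is; since $H\cup H^{-1}$ is again thick and avoids $e$, we may from now on assume $H=H^{-1}$.

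Second, I would note that a symmetric thick set is automatically \emph{left}-thick: if $H$ is thick, then for a finite $F\subseteq G$ picking $g$ with $F^{-1}g\subseteq H$ gives $g^{-1}F\subseteq H^{-1}=H$, so $H$ contains a left translate of every finite set. Then I would build an infinite set $A=\{a_1,a_2,\dots\}\subseteq G$ by recursion so that $a_ia_j^{-1}\in H$ whenever $i\neq j$: set $a_1=e$, and given $a_1,\dots,a_n$ with this property, use left-thickness to choose $a_{n+1}$ with $a_{n+1}\{a_1^{-1},\dots,a_n^{-1}\}\subseteq H$. Then $a_{n+1}a_j^{-1}\in H$ for all $j\le n$, and, $H$ being symmetric, $a_ja_{n+1}^{-1}=(a_{n+1}a_j^{-1})^{-1}\in H$ as well; since $e\notin H$, the new element $a_{n+1}$ differs from $a_1,\dots,a_n$, so the $a_i$ remain pairwise distinct. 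This produces an infinite $A$ with $AA^{-1}=\{a_ia_j^{-1}:i\neq j\}\subseteq H$.

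Finally, \Cref{y6rtfgd5trefd} gives that $AA^{-1}$ is a nice vdC set in $G$, and since the defining inequality $\bigl|\int_Xf\,d\mu\bigr|^2\le\limsup_{h}\bigl|\int_Xf(T_hx)\overline{f(x)}\,d\mu\bigr|$ only becomes weaker as the index set grows and $AA^{-1}\subseteq H$, the set $H$ is a nice vdC set. The one point that needs care — and the reason the passage to $H\cup H^{-1}$ is not merely cosmetic — is the handedness of the thickness hypothesis: "thick" in the paper refers to right translates, whereas the recursion naturally requires left translates, and replacing $H$ by the symmetric set $H\cup H^{-1}$, which is simultaneously thick and left-thick, is exactly what removes this friction.
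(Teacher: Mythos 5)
Your proof is correct and follows essentially the same route as the paper: extract an infinite $A$ with a difference set inside $H$ (up to symmetrization $H\leftrightarrow H\cup H^{-1}$) via a recursion powered by thickness, then invoke \Cref{y6rtfgd5trefd}. Your version is in fact slightly more careful than the paper's on two small points --- you explicitly upgrade the symmetrization remark from ``vdC'' to ``nice vdC'' using the equality $\bigl|\int_Xf(T_hx)\overline{f(x)}\,d\mu\bigr|=\bigl|\int_Xf(T_{h^{-1}}x)\overline{f(x)}\,d\mu\bigr|$, and you address the left/right handedness of thickness head-on --- but these are refinements of the same argument, not a different one.
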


\begin{proof}
If $H\subseteq G$ is thick, then there is an infinite set $A=\{a_1,a_2,\dots\}$ such that  
$H$ contains the set $\{a_na_m^{-1};m>n\}$. Indeed, we can define $(a_n)_{n\in\mathbb{N}}$ be recursion by letting $a_n\neq a_1,\dots,a_{n-1}$ be such that $A_na_n^{-1}\subseteq H$, where $A_n:=\{a_1,\dots,a_{n-1}\}$.

Thus $H\cup H^{-1}$ contains the set $AA^{-1}$, so $H\cup H^{-1}$ is a vdC set, so $H$ is a vdC set.
\end{proof}

\begin{comment}
\subsection{The spectral criterion for abelian groups}
Check that our definition of vdC sets in $G$ has the usual spectral criterion for all abelian groups, I guess? (In particular it coincides with the definition in \cite{BL}). \blue{Still need to sort out the details of this.}
\end{comment}

\section{Convexity and Cesaro averages}
\label{SecConv}

We prove in \Cref{564rtyfgdv} below that there is a close relationship between the correlation functions of sequences taking values in a compact set $D\subseteq\mathbb{C}$, and in the convex hull of $D$. We then explain several applications of this result, including an answer to a question by Kelly and Lê. The arguments involving the law of large numbers which we use to prove \Cref{564rtyfgdv} are based on \cite[Section 6]{Ru}.

\begin{prop}
\label{564rtyfgdv}
Let $G$ be a countably infinite amenable group with a F{\o}lner sequence $F=(F_N)$. Let $D\subseteq\mathbb{C}$ be compact and let $C\subseteq\mathbb{C}$ be the convex hull of $D$. Then for any sequence $(z_g)_{g\in G}$ of complex numbers in $C$ there is a sequence $(w_g)_{g\in G}$ in $D$ such that, for any $k\in\mathbb{N}$ and any pairwise distinct elements $h_1,\dots,h_k\in G$, we have
\begin{equation*}
\lim_{N\to\infty}
\frac{1}{|F_N|}
\sum_{g\in F_N}
w_{h_1g}\cdots w_{h_kg}
=
\lim_{N\to\infty}
\frac{1}{|F_N|}
\sum_{g\in F_N}
z_{h_1g}\cdots z_{h_kg},
\end{equation*}
whenever the right hand side is defined.
\end{prop}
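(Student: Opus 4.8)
The plan is to deduce \Cref{564rtyfgdv} from the integral form of the inverse correspondence principle, \Cref{ThmFinAvsGen}, by reducing it to the construction of a single measure preserving system, and then building that system by a randomization trick in the spirit of \cite[Section 6]{Ru}.

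First I would isolate the tuples we actually have to handle. Let $\mathcal{L}$ be the (countable) family of tuples $(h_1,\dots,h_k)$, $k\geq1$, of pairwise distinct elements of $G$ for which the limit $\gamma(h_1,\dots,h_k):=\lim_N\frac{1}{|F_N|}\sum_{g\in F_N}z_{h_1g}\cdots z_{h_kg}$ exists (if $\mathcal{L}=\varnothing$ the statement is vacuous). Enumerate $\mathcal{L}$ as $\bar h^{(l)}=(h^{(l)}_1,\dots,h^{(l)}_{k_l})$, $l\in\mathbb{N}$, repeating tuples if $\mathcal{L}$ is finite, and in \Cref{ThmFinAvsGen} take $j_l=k_l$, $h_{l,i}=h^{(l)}_i$, $p_l(w_1,\dots,w_{k_l})=w_1\cdots w_{k_l}$ (continuous) and $\gamma(l)=\gamma(\bar h^{(l)})$. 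By the equivalence \ref{ThmFinAvsGen2}$\iff$\ref{ThmFinAvsGen1} in \Cref{ThmFinAvsGen}, it then suffices to produce a m.p.s.\ $(X,\mathcal{B},\mu,(T_g)_{g\in G})$ and a measurable $f\colon X\to D$ with $\int_X f(T_{h^{(l)}_1}x)\cdots f(T_{h^{(l)}_{k_l}}x)\,d\mu=\gamma(l)$ for all $l$: indeed \ref{ThmFinAvsGen1} then yields a sequence $(w_g)_{g\in G}$ in $D$ with $\lim_N\frac{1}{|F_N|}\sum_{g\in F_N}w_{h^{(l)}_1g}\cdots w_{h^{(l)}_{k_l}g}=\gamma(l)$ for every $l$, and since every pairwise distinct tuple for which the right-hand side of the displayed identity in \Cref{564rtyfgdv} converges lies in $\mathcal{L}$, this is precisely the assertion.

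To build the system, for each $g$ I would use Carathéodory's theorem in $\mathbb{C}\cong\mathbb{R}^2$ to write $z_g=\sum_{i=1}^{3}\lambda_{g,i}d_{g,i}$ with $d_{g,i}\in D$, $\lambda_{g,i}\geq0$, $\sum_i\lambda_{g,i}=1$, so that $\mu_g:=\sum_i\lambda_{g,i}\delta_{d_{g,i}}$ is a probability measure on the compact set $D$ with barycenter $\int_D w\,d\mu_g(w)=z_g$. Let $P:=\bigotimes_{g\in G}\mu_g$ on the compact metrizable space $D^G$, so the coordinate maps $W_g\colon D^G\to D$ are independent under $P$ with mean $z_g$. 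With $R_g((w_a)_{a\in G})=(w_{ag})_{a\in G}$, set $\tilde\nu_N:=\frac{1}{|F_N|}\sum_{g\in F_N}(R_g)_*P$ and let $\tilde\nu$ be a weak subsequential limit along $(F_{N_m})_m$; by the same computation as in the proof of \ref{ThmFinAvsGen1}$\implies$\ref{ThmFinAvsGen2} (using $\lim_N\frac{|hF_N\Delta F_N|}{|F_N|}=0$ and that each $(R_g)_*P$ is a probability measure), $\tilde\nu$ is $R_h$-invariant for every $h$, so $(D^G,\mathcal{B}(D^G),\tilde\nu,(R_g)_{g\in G})$ is a m.p.s.; take $f((w_a)_{a\in G})=w_e$. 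For pairwise distinct $h_1,\dots,h_k$ one has $f(R_{h_i}x)=x_{h_i}$, so $x\mapsto x_{h_1}\cdots x_{h_k}$ is continuous and bounded on $D^G$ and weak convergence gives $\int f(R_{h_1}x)\cdots f(R_{h_k}x)\,d\tilde\nu=\lim_m\frac{1}{|F_{N_m}|}\sum_{g\in F_{N_m}}\int_{D^G}W_{h_1g}\cdots W_{h_kg}\,dP$; since $h_1g,\dots,h_kg$ are pairwise distinct for each fixed $g$ (cancellativity), independence makes each integral equal $\prod_i\int W_{h_ig}\,dP=z_{h_1g}\cdots z_{h_kg}$, whence the correlation equals $\lim_m\frac{1}{|F_{N_m}|}\sum_{g\in F_{N_m}}z_{h_1g}\cdots z_{h_kg}$, which is $\gamma(h_1,\dots,h_k)$ when the tuple lies in $\mathcal{L}$. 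Applying this with $(h_1,\dots,h_k)=\bar h^{(l)}$ verifies the required identity and finishes the proof.

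The point to get right is conceptual rather than computational: the passage from $C$ to $D$ is a \emph{randomization}, and independence forces $\int\prod_i W_{h_ig}\,dP=\prod_i\int W_{h_ig}\,dP$ exactly because the indices $h_ig$ are pairwise distinct — this is precisely where the hypothesis that $h_1,\dots,h_k$ be pairwise distinct is used, and the identity genuinely fails for repeated indices (e.g.\ $D=\{-1,1\}$, $z_g\equiv0$ forces $w_g^2\equiv1$). The secondary subtlety — that we need convergence along the \emph{whole} F{\o}lner sequence, whereas a direct second-moment/Borel--Cantelli argument only yields it along a rapidly growing subsequence for slowly growing $(F_N)$ — is sidestepped by routing the constructed system through \Cref{ThmFinAvsGen} rather than arguing probabilistically by hand, at the modest cost of the bookkeeping with $\mathcal{L}$.
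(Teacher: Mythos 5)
Your proof is correct, and while it rests on the same underlying idea as the paper's, it realizes that idea by a genuinely different route. The paper stays at the finitistic level: it passes to a F{\o}lner subsequence along which all product correlations of $(z_g)$ converge, extracts finite configurations in $C$ via \ref{ThmFinAvsGen1}$\implies$\ref{ThmFinAvsGen3} of \Cref{ThmFinAvsGen}, and then (in \Cref{54trfgdvtrgfd}) replaces each $z_g$ by independent samples of a $D$-valued random variable with barycenter $z_g$ (obtained from Choquet's theorem) and invokes the strong law of large numbers to approximate the relevant expectations by empirical averages over $K$ sampled sequences, before feeding the resulting $D$-valued configurations back into \ref{ThmFinAvsGen3}$\implies$\ref{ThmFinAvsGen1}. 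You instead build a single measure preserving system outright: the product measure $P=\bigotimes_{g}\mu_g$ on $D^G$ (with $\mu_g$ supplied by Carath\'eodory rather than Choquet), averaged shift-pushforwards, and a weak-$*$ subsequential limit, which computes the expectations \emph{exactly} and so dispenses with the law of large numbers entirely; you then exit through \ref{ThmFinAvsGen2}$\implies$\ref{ThmFinAvsGen1}. The common core is identical --- joint independence of the coordinates at the pairwise distinct indices $h_1g,\dots,h_kg$ forces $\int\prod_iW_{h_ig}\,dP=\prod_iz_{h_ig}$, which is exactly where the distinctness hypothesis enters, as you correctly flag --- and both arguments ultimately lean on the hard implication \ref{ThmFinAvsGen3}$\implies$\ref{ThmFinAvsGen1} to recover convergence along the full F{\o}lner sequence. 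Your version trades the paper's finitistic bookkeeping and probabilistic approximation for the routine construction of an invariant weak-$*$ limit (the same device the paper already uses to prove \ref{ThmFinAvsGen1}$\implies$\ref{ThmFinAvsGen2}), and your restriction to the family $\mathcal{L}$ of tuples where the right-hand side actually converges is harmless since those are the only tuples the statement constrains; if anything, your variant is slightly cleaner, since all expectations are computed exactly and full (not merely pairwise) independence of the coordinates under $P$ is automatic.
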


\begin{proof}
Consider a list $(h_{l,1},\dots,h_{l,j_l})$, $l\in\mathbb{N}$, of all the finite sequences of pairwise distinct elements of $G$, and let $p_l:\mathbb{C}^{j_l}\to\mathbb{C}$; $p_l(z_1,\dots,z_{j_l})=z_1z_2\cdots z_{j_l}$.

Consider a F{\o}lner subsequence $(F_{N_i})_i$ of $F$ such that for all $l\in\mathbb{N}$, the limit 
\begin{equation*}
    \gamma(l):=
    \lim_{i\to\infty}
    \frac{1}{|F_{N_i}|}\sum_{g\in F_{N_i}}p_l(z_{h_{l,1}g},\dots,z_{h_{l,j_l}g})
    \end{equation*}
is defined. Then by \Cref{ThmFinAvsGen}, for all $A\subseteq G$ finite and for all $L\in\mathbb{N},\delta>0$ there exist some $K\in\mathbb{N}$ and some sequences $(z_{g,k})_{g\in G}$ in $C$, for $k=1,\dots,K$, such that for all $l=1,\dots,L$ we have
\begin{equation}\label{ThmFinAvsGenEq3erfds}
\left|\gamma(l)-\frac{1}{K|A|}\sum_{k=1}^{K}\sum_{g\in A}z_{h_{l,1}g,k}\dots z_{h_{l,j_l}g,k}\right|<\delta.
\end{equation}
\begin{claim}
\label{54trfgdvtrgfd}
For every sequence $\mathbf{x}=(x_g)_{g\in G}$ taking values in $C$, any finite $A\subseteq G$ and any $L\in\mathbb{N},\delta>0$ there is some $K_{\mathbf{x}}\in\mathbb{N}$ and sequences $(x_{g,k})_{g\in G}$ in $D$, $k=1,\dots,K_{\mathbf{x}}$, such that for all $g\in A$ and all $l=1,\dots,L$ we have 
\begin{equation*}
\left|\frac{1}{|A|}\sum_{g\in A}x_{h_{l,1}g}\cdots x_{h_{l,j_l}g}-\frac{1}{K_{\mathbf{x}}|A|}
\sum_{k=1}^{K_{\mathbf{x}}}x_{h_{l,1}g,k}\cdots x_{h_{l,j_l}g,k}\right|<\delta.
\end{equation*}
\end{claim}
We prove \Cref{54trfgdvtrgfd} below. It follows from \Cref{54trfgdvtrgfd} and \Cref{ThmFinAvsGenEq3erfds} that for all $A\subseteq G$ finite, $L\in\mathbb{N}$ and $\delta>0$ there exist some $K\in\mathbb{N}$ and sequences $(z_{g,k})_{g\in G}$ in $D$, for $k=1,\dots,K$, such that for all $l=1,\dots,L$ we have
\begin{equation*}
\left|\gamma(l)-\frac{1}{K|A|}\sum_{k=1}^{K}\sum_{g\in A}z_{h_{l,1}g,k}\dots z_{h_{l,j_l}g,k}\right|<\delta.
\end{equation*}
Thus, by \Cref{ThmFinAvsGen} there exists a sequence $(w_g)_{g\in G}$ of elements of $D$ such that, for all $l\in\mathbb{N}$,
    \begin{equation*}
    \lim_{N\to\infty}
    \frac{1}{|F_N|}\sum_{g\in F_N}w_{h_{l,1}g}\cdots w_{h_{l,j_l}g}
    =
    \lim_{N\to\infty}
    \frac{1}{|F_N|}\sum_{g\in F_N}p_l(w_{h_{l,1}g},\dots,w_{h_{l,j_l}g})=\gamma(l),
    \end{equation*}
    so we are done. 

    It only remains to prove \Cref{54trfgdvtrgfd}, so let $\mathbf{x}=(x_g)_{g\in G}$ take values in $C$. Note that the set of extreme points of $C$ is contained in $D$, so by Choquet's theorem, for every $x\in C$ there is a probability measure $\mu$ supported in $D$ and with average $x$. So we can consider for each $g\in G$ a random variable $\xi_g$ supported in $D$ and satisfying $\mathbb{E}(\xi_g)=x_g$. Note that, if the variables $(\xi_g)_{g\in G}$ are pairwise independent and $h_1,\dots,h_k\in G$ are distinct, then we have
    \begin{equation*}
    \mathbb{E}(\xi_{h_1}\cdots\xi_{h_k})
    =
    \mathbb{E}(\xi_{h_1})\cdots\mathbb{E}(\xi_{h_k})=x_{h_1}\cdots x_{h_k}.
    \end{equation*}
    
    Now for each $k\in\mathbb{N}$ we will choose a sequence $(x_{g,k})_{g\in G}$, where the variables $x_{g,k}$ are chosen pairwise independently and with distribution $\xi_g$. Then, for all $A\subseteq G$ finite, $L\in\mathbb{N}$ and $\delta>0$, by the strong law of large numbers we will have with probability $1$ that, for big enough $K$,
    \begin{equation*}
\left|\frac{1}{|A|}\sum_{g\in A}x_{h_{l,1}g}\cdots x_{h_{l,j_l}g}-\frac{1}{K|A|}
\sum_{k=1}^Kx_{h_{l,1}g,k}\cdots x_{h_{l,j_l}g,k}\right|<\delta,
\end{equation*}  
    concluding the proof.\qedhere
\end{proof}

Applying \Cref{564rtyfgdv} to the set $D=\{0,1\}$ gives the following result.

\begin{prop}
\label{FunctionsToSetsGroup}
Let $G$ be a countably infinite amenable group with a F{\o}lner sequence $F=(F_N)$. Then for any sequence $(z_g)_{g\in G}$ of numbers in $[0,1]$ there is a set $B\subseteq G$ such that, for any $k\in\mathbb{N}$ and any pairwise distinct elements $h_1,\dots,h_k\in G$, we have
\begin{equation*}
d_F({h_1}^{-1}B\cap\dots\cap {h_k}^{-1}B)
=
\lim_{N\to\infty}
\frac{1}{|F_N|}
\sum_{g\in F_N}
z_{h_1g}\cdots z_{h_kg},
\end{equation*}
whenever the right hand side is defined.\qed
\end{prop}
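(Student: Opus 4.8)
The plan is to observe that this proposition is nothing more than the special case $D=\{0,1\}$ of \Cref{564rtyfgdv}, once the language of $\{0,1\}$-valued sequences is translated into the language of subsets of $G$. The interval $[0,1]$ is precisely the convex hull of $D=\{0,1\}$, so \Cref{564rtyfgdv} applies directly: given a sequence $(z_g)_{g\in G}$ with values in $[0,1]$, it produces a sequence $(w_g)_{g\in G}$ with values in $\{0,1\}$ such that, for all $k\in\mathbb{N}$ and all pairwise distinct $h_1,\dots,h_k\in G$,
\[
\lim_{N\to\infty}\frac{1}{|F_N|}\sum_{g\in F_N}w_{h_1g}\cdots w_{h_kg}
=
\lim_{N\to\infty}\frac{1}{|F_N|}\sum_{g\in F_N}z_{h_1g}\cdots z_{h_kg}
\]
whenever the right-hand side is defined.

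Next I would set $B:=\{g\in G;\ w_g=1\}$, so that $w_g=\chi_B(g)$ for all $g$. The only point requiring a little care is the bookkeeping of left translates: since $h_ig\in B$ if and only if $g\in h_i^{-1}B$, we have $w_{h_ig}=\chi_B(h_ig)=\chi_{h_i^{-1}B}(g)$, and therefore
\[
w_{h_1g}\cdots w_{h_kg}=\prod_{i=1}^k\chi_{h_i^{-1}B}(g)=\chi_{h_1^{-1}B\cap\cdots\cap h_k^{-1}B}(g).
\]
Summing over $g\in F_N$ and dividing by $|F_N|$ turns the left-hand Cesàro average above into $\tfrac{|F_N\cap h_1^{-1}B\cap\cdots\cap h_k^{-1}B|}{|F_N|}$, whose limit — which exists because it equals the limit on the right — is by definition $d_F(h_1^{-1}B\cap\cdots\cap h_k^{-1}B)$. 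This is exactly the claimed identity.

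In short, there is no real obstacle beyond invoking \Cref{564rtyfgdv} and matching conventions: the substantive content, namely producing a $\{0,1\}$-valued sequence with prescribed correlations out of a $[0,1]$-valued one, has already been carried out in \Cref{564rtyfgdv} via the Choquet/strong-law-of-large-numbers argument, and the passage from characteristic functions to sets is purely formal. The hypothesis that $h_1,\dots,h_k$ be pairwise distinct is inherited unchanged from \Cref{564rtyfgdv} (there it guarantees that a product of pairwise independent random variables has expectation equal to the product of the expectations).
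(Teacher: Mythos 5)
Your proposal is correct and is exactly the paper's argument: the paper proves this proposition simply by invoking \Cref{564rtyfgdv} with $D=\{0,1\}$ (whose convex hull is $[0,1]$) and identifying the resulting $\{0,1\}$-valued sequence with the characteristic function of a set $B$, so that $w_{h_1g}\cdots w_{h_kg}=\chi_{h_1^{-1}B\cap\cdots\cap h_k^{-1}B}(g)$ and the Ces\`aro averages become densities. Your translate bookkeeping and the remark about why pairwise distinctness is needed are both accurate.
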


It follows that, given a measurable function $f:X\to[0,1]$, we can obtain a function $g:X\to\{0,1\}$ (that is, a measurable set) with the same correlation functions: 

\begin{prop}[Turning functions into sets]
\label{FunctionsToSets}
Let $G$ be a countably infinite amenable group. For every m.p.s. $(X,\mathcal{B},\mu,(T_g)_{g\in G})$ and every measurable $f:X\to[0,1]$ there exists a m.p.s. $(Y,\mathcal{C},\nu,(S_g)_{g\in G})$ and $B\in\mathcal{B}$ such that for all $k\in\mathbb{N}$ and all distinct $h_1,\dots,h_k\in G$ we have
\begin{equation*}
\nu\left(T_{h_1^{-1}}B\cap\dots\cap T_{h_k^{-1}}B\right)
=
\int_Xf(T_{h_1}x)\cdots f(T_{h_k}x)d\mu.
\end{equation*}
\end{prop}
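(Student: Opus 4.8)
The plan is to obtain \Cref{FunctionsToSets} by chaining three results already established: the ``system $\to$ sequence'' half of \Cref{ThmFinAvs2.0}, then \Cref{FunctionsToSetsGroup}, and finally the Furstenberg correspondence principle \Cref{FCP}. Fix a F{\o}lner sequence $F=(F_N)$ in $G$.

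First I would apply \Cref{ThmFinAvs2.0} with $D=[0,1]$ to the given m.p.s. $(X,\mathcal{B},\mu,(T_g)_{g\in G})$ and to $f:X\to[0,1]$. Specializing the resulting identity \eqref{r4fdfrdedfdfsdsf} to the continuous monomials $p(z_1,\dots,z_k)=z_1\cdots z_k$, this produces a sequence $(z_g)_{g\in G}$ of numbers in $[0,1]$ with
\begin{equation*}
\lim_{N\to\infty}\frac{1}{|F_N|}\sum_{g\in F_N}z_{h_1g}\cdots z_{h_kg}=\int_Xf(T_{h_1}x)\cdots f(T_{h_k}x)\,d\mu
\end{equation*}
for every $k\in\mathbb{N}$ and all $h_1,\dots,h_k\in G$; in particular every such limit exists.

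Next I would feed $(z_g)_{g\in G}$ into \Cref{FunctionsToSetsGroup}. Since the above limits exist for all pairwise distinct $h_1,\dots,h_k$, this yields a set $B_0\subseteq G$ such that
\begin{equation*}
d_F\!\left(h_1^{-1}B_0\cap\dots\cap h_k^{-1}B_0\right)=\int_Xf(T_{h_1}x)\cdots f(T_{h_k}x)\,d\mu
\end{equation*}
for all $k\in\mathbb{N}$ and all pairwise distinct $h_1,\dots,h_k\in G$. Finally I would apply \Cref{FCP} to the set $B_0$ and the F{\o}lner sequence $F$, obtaining a m.p.s. $(Y,\mathcal{C},\nu,(S_g)_{g\in G})$ and a set $B\in\mathcal{C}$ with the property that $\nu(S_{g_1}B\cap\dots\cap S_{g_k}B)=d_F(g_1B_0\cap\dots\cap g_kB_0)$ whenever the latter density exists. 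Taking $g_i=h_i^{-1}$ for pairwise distinct $h_1,\dots,h_k$ (so that $h_1^{-1},\dots,h_k^{-1}$ are pairwise distinct and, by the previous step, the density on the right exists) gives
\begin{equation*}
\nu\!\left(S_{h_1^{-1}}B\cap\dots\cap S_{h_k^{-1}}B\right)=d_F\!\left(h_1^{-1}B_0\cap\dots\cap h_k^{-1}B_0\right)=\int_Xf(T_{h_1}x)\cdots f(T_{h_k}x)\,d\mu,
\end{equation*}
which is the assertion of \Cref{FunctionsToSets}.

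I do not expect a genuine obstacle here: the only delicate point is ensuring the hypothesis of \Cref{FCP} --- namely the existence of the correlation densities of $B_0$ --- and this is precisely why the first step passes through \Cref{ThmFinAvs2.0}, which produces a sequence all of whose correlation Cesaro averages are genuine limits rather than merely $\limsup$'s. An alternative, more self-contained route would be to repeat the Choquet / strong-law-of-large-numbers argument used to derive \Cref{564rtyfgdv} and \Cref{FunctionsToSetsGroup}, now realizing $\mathbb{E}(\xi_g)=f(x)$ fiberwise inside an abstract system and invoking the converse direction of \Cref{ThmFinAvs2.0} (or \Cref{76iuyjhre5rytghf}) at the end; but the chained proof above is considerably shorter.
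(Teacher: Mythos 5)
Your proof is correct and follows essentially the same route as the paper's: pass from $(X,\mu,f)$ to a $[0,1]$-valued sequence via the inverse correspondence principle (\Cref{ThmFinAvs2.0}), convert it to a genuine subset of $G$ via \Cref{FunctionsToSetsGroup}, and then apply \Cref{FCP}. If anything, your citation of \Cref{ThmFinAvs2.0} with $D=[0,1]$ for the first step is more precise than the paper's, which nominally invokes \Cref{IFC} even though $f$ is $[0,1]$-valued rather than a characteristic function.
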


\begin{proof}
Let $(X,\mathcal{B},\mu,(T_g)_{g\in G}), f$ be as in \Cref{FunctionsToSets}, and fix a F{\o}lner sequence $F=(F_N)$ in $G$. Thanks to \Cref{IFC} there is a $[0,1]$-valued sequence $(z_g)_{g\in G}$ such that, for all $k\in\mathbb{N}$ and $h_1,\dots,h_k\in G$,
\begin{equation*}
\lim_N\frac{1}{|F_N|}\sum_{g\in F_N}z_{h_1g},\dots,z_{h_jg}=\int_Xf(T_{h_1}x),\dots,f(T_{h_j}x)d\mu.
\end{equation*}
Thus, by \Cref{FunctionsToSetsGroup} there is a set $B\subseteq G$ such that, for all $k\in\mathbb{N}$ and $h_1,\dots,h_k\in G$,
\begin{equation*}
d_F({h_1}^{-1}B\cap\dots\cap {h_k}^{-1}B)=\int_Xf(T_{h_1}x),\dots,f(T_{h_j}x)d\mu.
\end{equation*}
Applying $\Cref{FCP}$ to the set $B$, we are done.
\end{proof}

\begin{remark}
The elements $h_1,\dots,h_k$ need to be distinct in \Cref{FunctionsToSets}; if we applied the result to $h_1=h_2=e$, we would obtain $\nu(B)=\int_Xfd\mu=\int_Xf^2d\mu$, which only happens if $f$ is essentially a characteristic function.
\end{remark}

Note that the statement of \ref{FunctionsToSets} makes sense for non-amenable groups. It seems plausible to us that \Cref{FunctionsToSets} could be proved for any group $G$ using some purely measure-theoretic construction:

\begin{question}
Is \Cref{FunctionsToSets} true for any countable group $G$?
\end{question}

The last result in this section, which we prove after \Cref{564rtyfgdv}, is about sequences of norm $1$ which are `very well distributed' in $\mathbb{S}^1$:

\begin{prop}[White noise]
\label{WhiteNoise}
Let $G$ be a countable amenable group with a F{\o}lner sequence $(F_N)$. There is a sequence $(z_g)_{g\in G}$ of complex numbers in $\mathbb{S}^1$ such that, for all $k\in\mathbb{N},n_1,\dots,n_k\in\mathbb{Z}\setminus\{0\}$ and distinct elements $h_1,\dots,h_k\in G$ we have
\begin{equation}
\label{EqWhiteNoise}
\lim_{N\to\infty}
\frac{1}{|F_N|}
\sum_{g\in F_N}
z_{h_1g}^{n_1}\cdots z_{h_kg}^{n_k}
=0.
\end{equation}
\end{prop}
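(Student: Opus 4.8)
The plan is to deduce \Cref{WhiteNoise} from the forward direction of \Cref{ThmFinAvs2.0}, applied to the Bernoulli system over the circle. Concretely, I would take $D=\mathbb{S}^1$ and let $X=(\mathbb{S}^1)^G$ carry the product $\mu$ of the uniform probability measures on $\mathbb{S}^1$, together with its Borel $\sigma$-algebra and the shift action $(R_g)_{g\in G}$ defined by $R_g((x_a)_{a\in G})=(x_{ag})_{a\in G}$. Since $G$ is countable, $X$ is compact metrizable and $\mu$ is invariant under the coordinate permutation $a\mapsto ag$, so $(X,\mu,(R_g)_{g\in G})$ is a legitimate m.p.s. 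Let $f\colon X\to\mathbb{S}^1$ be evaluation at the identity, $f((x_a)_{a\in G})=x_e$, so that $f(R_hx)=x_h$ for every $h\in G$.

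The one substantive point is that this system is ``white noise'': for pairwise distinct $h_1,\dots,h_k\in G$ and $n_1,\dots,n_k\in\mathbb{Z}\setminus\{0\}$ one has
\[
\int_X f(R_{h_1}x)^{n_1}\cdots f(R_{h_k}x)^{n_k}\,d\mu(x)
=\int_X x_{h_1}^{n_1}\cdots x_{h_k}^{n_k}\,d\mu(x)
=\prod_{i=1}^k\int_{\mathbb{S}^1}z^{n_i}\,dm(z)=0,
\]
where $m$ is the uniform probability measure on $\mathbb{S}^1$: the middle equality is independence of the distinct coordinates $x_{h_1},\dots,x_{h_k}$, and each factor vanishes because $n_i\neq 0$. (Here $z^{n_i}$ for $n_i<0$ means $\overline{z}^{\,|n_i|}$ on $\mathbb{S}^1$.)

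Applying \Cref{ThmFinAvs2.0} with $D=\mathbb{S}^1$ to the m.p.s.\ above and the function $f$ then yields a sequence $(z_g)_{g\in G}$ in $\mathbb{S}^1$ such that, for every $k$, all $h_1,\dots,h_k\in G$ and every continuous $p\colon(\mathbb{S}^1)^k\to\mathbb{C}$,
\[
\lim_{N\to\infty}\frac{1}{|F_N|}\sum_{g\in F_N}p(z_{h_1g},\dots,z_{h_kg})
=\int_X p(f(R_{h_1}x),\dots,f(R_{h_k}x))\,d\mu.
\]
Specializing to $p(w_1,\dots,w_k)=w_1^{n_1}\cdots w_k^{n_k}$, which is continuous on $(\mathbb{S}^1)^k$ (using $w^{n}=\overline{w}^{\,|n|}$ on $\mathbb{S}^1$ when $n<0$), and to pairwise distinct $h_i$ and nonzero $n_i$, the right-hand side is $0$ by the computation above, which is exactly \Cref{EqWhiteNoise}. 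There is essentially no obstacle: the whole content is feeding the obvious model system into \Cref{ThmFinAvs2.0}, and the only steps needing a line of care are the compactness of $\mathbb{S}^1$ (so that \Cref{ThmFinAvs2.0} applies verbatim) and the observation that negative exponents still yield a continuous $p$ on the circle. If one prefers to argue within the spirit of this section, the same sequence can be produced directly from \Cref{ThmFinAvsGen}, whose item \ref{ThmFinAvsGen2} is precisely the white-noise identity displayed above and whose item \ref{ThmFinAvsGen1} then gives $(z_g)_{g\in G}$; alternatively, a direct strong-law-of-large-numbers construction as in the proof of \Cref{564rtyfgdv} also works.
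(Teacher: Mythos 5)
Your proof is correct, but it enters the machinery of \Cref{SecRrg} through a different door than the paper does. The paper verifies the finitistic criterion, item \ref{ThmFinAvsGen3} of \Cref{ThmFinAvsGen}: it takes independent random variables $(\xi_g)_{g\in G}$ uniformly distributed on $\mathbb{S}^1$, notes that $\mathbb{E}(\xi_{h_1}^{n_1}\cdots\xi_{h_k}^{n_k})=0$ for distinct $h_i$ and nonzero $n_i$, and uses the strong law of large numbers to produce, for each finite $A$, $L$ and $\delta$, finitely many sequences whose averaged correlations are within $\delta$ of zero; then \ref{ThmFinAvsGen3}$\implies$\ref{ThmFinAvsGen1} yields $(z_g)$. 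You instead exhibit an explicit measure-preserving system realizing the white-noise correlations --- the Bernoulli shift $((\mathbb{S}^1)^G,\mu^{\otimes G},(R_g))$ with $f$ the evaluation at $e$ --- compute the integrals by independence of distinct coordinates, and invoke the forward direction of \Cref{ThmFinAvs2.0} (equivalently \ref{ThmFinAvsGen2}$\implies$\ref{ThmFinAvsGen1}). Your route is arguably cleaner: it needs no probabilistic limit theorem, only the observation that $\int_{\mathbb{S}^1}z^{n}\,dm=0$ for $n\neq0$ and that $w\mapsto w^{n}$ (read as $\overline{w}^{|n|}$ for $n<0$) is continuous on $\mathbb{S}^1$, which you correctly flag. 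What the paper's route buys is uniformity of method with the neighbouring arguments (the proofs of \Cref{564rtyfgdv} and of \ref{ThmvdC1}$\implies$\ref{ThmvdC3} in \Cref{ThmvdC} use the same law-of-large-numbers device), and it stays entirely on the finitistic side of \Cref{ThmFinAvsGen} without ever constructing a dynamical model. Both arguments require $G$ countably infinite, as do \Cref{ThmFinAvs2.0} and \Cref{ThmFinAvsGen}, so neither proof covers finite $G$ despite the statement saying only ``countable''; that discrepancy is inherited from the paper and is not a defect of your argument.
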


\begin{proof}
By \Cref{ThmFinAvsGen}, \ref{ThmFinAvsGen3}$\implies$\ref{ThmFinAvsGen1}, it is enough to prove that for all $A\subseteq G$ finite, $h_1,\dots,h_j\in G$  and for all $L\in\mathbb{N},\delta>0$ there is some $K\in\mathbb{N}$ and sequences $(z_{g,k})_{g\in G}$ in $D$, for $k=1,\dots,K$, such that we have
    \begin{equation}
    \label{34rwefsd9o90okk}
    \left|\frac{1}{K|A|}\sum_{k=1}^{K}\sum_{g\in A}z_{h_{1}g,k}^{l_1},\dots,z_{h_{j}g,k}^{l_j}\right|<\delta\textup{  for all }l_1,\dots,l_j\in\mathbb{Z}\setminus\{0\},|l_i|\leq L.
    \end{equation}

Consider an independent sequence of random variables $(\xi_g)_{g\in G}$, such that $\xi_g$ is uniformly distributed in $\mathbb{S}^1$ for all $g\in G$. Clearly, for all $k\in\mathbb{N},n_1,\dots,n_k\in\mathbb{Z}\setminus\{0\}$ and distinct $h_1,\dots,h_k\in G$ we have 
\begin{equation*}
\mathbb{E}(\xi_{h_1}^{n_1}\cdots\xi_{h_k}^{n_k})=0.
\end{equation*}

So, as in the proof of \Cref{564rtyfgdv}, for each $k\in\mathbb{N}$ we choose a sequence $(x_{g,k})_{g\in G}$, where the variables $x_{g,k}$ are chosen pairwise independently and with distribution $\xi_g$. The strong law of large numbers then implies that for big enough $K$, \Cref{34rwefsd9o90okk} will be satisfied with high probability, concluding the proof.
\end{proof}

\begin{remark}
If the sequence $(F_N)$ from \Cref{WhiteNoise} does not grow very slowly (e.g. if we have $\sum_N\alpha^{|F_N|}<\infty$ for all $\alpha<1$), then we can choose each of the elements $z_g$ from the sequence $(z_g)_{g\in G}$ independently and uniformly from $\mathbb{S}^1$. The sequence we obtain will satisfy \Cref{WhiteNoise} with probability $1$.
\end{remark}

\begin{remark}
Let $G=\mathbb{Z}$. If we consider only finitely many values of $k$, then we can construct sequences $(z_n)_{n\in\mathbb{Z}}$ which satisfy \Cref{EqWhiteNoise} for all F{\o}lner sequences simultaneously. For example, letting $z_n=e^{n^8\alpha}$, where $\alpha\in\mathbb{R}\setminus\mathbb{Q}$, one can check that \Cref{EqWhiteNoise} is satisfied for all $k\leq8$, for all $n_i,h_i$ as above and, most importantly, for all F{\o}lner sequences $(F_N)$. 
However, there is no sequence $(z_n)_{n\in\mathbb{Z}}$ which satisfies \Cref{EqWhiteNoise} for all $k\in\mathbb{N}$ and for all F{\o}lner sequences $F$ and $(n_i)_{i=1}^k,(h_i)_{i=1}^k$ as above. 
Indeed, suppose that such a sequence $(z_n)_{n\in\mathbb{Z}}$ exists. Then for all $N\in\mathbb{N}$, the sequence $n\mapsto(z_{n+1},\dots,z_{n+N})$ has to be u.d. in $(\mathbb{S}^1)^N$ (see \cite[Chapter 1, Theorem 6.2]{KN}). 
But that implies that for some $n_N\in\mathbb{N}$, the numbers $z_{n_N+1},\dots,z_{n_N+N}$ all have positive real part. Thus, letting $F_N=\{n_N+1,\dots,n_N+N\}$, the sequence $(z_n)$ is not $(F_N)$-u.d., a contradiction.
\end{remark}

As was pointed out to us by S. Farhangi, we can use \Cref{564rtyfgdv} to answer a question of Kelly and Lê. We first need some definitions.

\begin{definition}
We say a sequence $(x_n)_{n\in\mathbb{N}}$ of elements of a compact topological group $G$, with Haar measure $\mu_G$, is u.d. in $G$ if
\begin{equation*}
\lim_N\frac{1}{N}|\{n\in\{1,\dots,N\};x_n\in C\}|=\mu_G(C)
\end{equation*}
for all open sets $C\subseteq G$ with boundary of measure $0$.
\end{definition}

\begin{definition}
Let $G$ be a compact topological group. We say a set $H\subseteq\mathbb{N}$ is $G$-u.d.vdC\footnote{We use the notation `$G$-u.d.vdC' to avoid confusion with `vdC in $G$', as in \Cref{DefGvdC}.} if for any sequence $(x_n)_{n=1}^\infty$ in $G$ such that $(x_{n+h}x_n^{-1})_{n\in\mathbb{N}}$ is u.d. in $G$ for all $h\in H$, the sequence $(x_n)_{n\in\mathbb{N}}$ is also u.d. in $G$.
\end{definition}

In \cite[Page 2]{KL} the authors mention that it is an interesting (and perhaps difficult) problem to determine whether all $\mathbb{Z}_2$-u.d.vdC sets are vdC. 

\begin{prop}
\label{4rewfdso90oplk}
A set $H\subseteq\mathbb{N}$ is vdC iff it is $\mathbb{Z}_2$\textup{-u.d.vdC.}
\end{prop}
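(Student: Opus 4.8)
The plan is to restate the property ``$\mathbb{Z}_2$-u.d.vdC'' as a condition about $\{1,-1\}$-valued sequences and then run it through the machinery of \Cref{SecRrg,SecConv}. Identify $\mathbb{Z}_2$ with the multiplicative group $\{1,-1\}$. A sequence $(x_n)_{n\in\mathbb N}$ in $\mathbb Z_2$ is u.d.\ in $\mathbb Z_2$ (i.e.\ $\tfrac1N|\{n\le N:x_n=1\}|\to\tfrac12$) exactly when $\tfrac1N\sum_{n=1}^N x_n\to 0$, and, since $x_n^{-1}=x_n$, the sequence $(x_{n+h}x_n^{-1})_n=(x_{n+h}x_n)_n$ is u.d.\ in $\mathbb Z_2$ exactly when $\tfrac1N\sum_{n=1}^N x_{n+h}\overline{x_n}\to 0$. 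Hence $H$ is $\mathbb Z_2$-u.d.vdC precisely when: every $\{1,-1\}$-valued sequence with $\tfrac1N\sum_{n=1}^N x_{n+h}\overline{x_n}\to 0$ for all $h\in H$ has $\tfrac1N\sum_{n=1}^N x_n\to 0$. In this form the forward implication is immediate from Ruzsa's criterion (\Cref{defvcDZ}) applied with $z_n=x_n\in\mathbb D$: if $H$ is vdC then any such $(x_n)$ has $\tfrac1N\sum x_n\to0$, so it is u.d.\ in $\mathbb Z_2$.

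For the converse I would argue by contraposition, turning a general (complex) obstruction to vdC-ness into a $\{1,-1\}$-valued one. If $H$ is not vdC, \Cref{defvcDZ} gives a sequence $(z_n)_{n\in\mathbb N}$ in $\mathbb D$ with $\tfrac1N\sum_{n=1}^N z_{n+h}\overline{z_n}\to0$ for all $h\in H$ and $\tfrac1N\sum_{n=1}^N z_n\to\lambda$ for some $\lambda\neq0$ (passing to a subsequence of $(\{1,\dots,N\})_N$ if this average does not already converge, and multiplying $(z_n)$ by a unimodular constant so that $\lambda>0$). The crux is that a $\{1,-1\}$-valued (hence \emph{real}) test sequence cannot tell $\overline{x_n}$ from $x_n$, so to use it we need the \emph{unconjugated} correlations $\tfrac1N\sum x_{n+h}x_n$ — not just $\tfrac1N\sum x_{n+h}\overline{x_n}$ — to tend to $0$ along $H$, and these are not controlled for $(z_n)$. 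I would fix this with a two-term averaging device (a finitistic form of tensoring with a random phase $\psi$ having $\mathbb E\psi^2=0$, $\mathbb E\psi\neq0$): consider the two $[-1,1]$-valued sequences $u^{(1)}_n:=\operatorname{Re}z_n$ and $u^{(2)}_n:=\operatorname{Re}(iz_n)$. Using $\operatorname{Re}(a)\operatorname{Re}(b)+\operatorname{Im}(a)\operatorname{Im}(b)=\operatorname{Re}(a\overline b)$ one gets, for $h\in H$, $\tfrac12\big(u^{(1)}_{n+h}u^{(1)}_n+u^{(2)}_{n+h}u^{(2)}_n\big)=\tfrac12\operatorname{Re}(z_{n+h}\overline{z_n})$, whose Cesàro average tends to $0$; and $\tfrac12\big(u^{(1)}_n+u^{(2)}_n\big)=\tfrac12\operatorname{Re}((1+i)z_n)$, whose Cesàro average tends to $\tfrac\lambda2\operatorname{Re}(1+i)=\tfrac\lambda2\neq0$ (here $\lambda>0$ is used).

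Thus the two-element array $\{(u^{(1)}_n)_n,(u^{(2)}_n)_n\}$ (with the sequences extended to $\mathbb Z$ by $0$) verifies condition \ref{ThmFinAvsGen3} of \Cref{ThmFinAvsGen} with $D=[-1,1]$ for the target values $\gamma=\tfrac\lambda2$ (with $p(t)=t$, shift $0$) and $\gamma=0$ (with $p(s,t)=st$, distinct shifts $h$ and $0$, for each $h\in H$): for a long enough block $A=\{1,\dots,M\}$ the corresponding two-term averages are within any prescribed $\delta$ of these targets, by the displayed limits above. Hence the implication \ref{ThmFinAvsGen3}$\Rightarrow$\ref{ThmFinAvsGen1} of \Cref{ThmFinAvsGen} produces a single $[-1,1]$-valued sequence $(v_n)_{n\in\mathbb Z}$ with $\tfrac1N\sum_{n=1}^N v_n\to\tfrac\lambda2\neq0$ and $\tfrac1N\sum_{n=1}^N v_{n+h}v_n\to0$ for all $h\in H$. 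Applying \Cref{564rtyfgdv} with $D=\{1,-1\}$ (so $C=[-1,1]$) to $(v_n)$ yields a $\{1,-1\}$-valued sequence $(x_n)_{n\in\mathbb Z}$ with the same averages of products over pairwise distinct shifts; taking one shift $0$ and two distinct shifts $h,0$ for $h\in H$ gives $\tfrac1N\sum_{n=1}^N x_n\to\tfrac\lambda2\neq0$ while $\tfrac1N\sum_{n=1}^N x_{n+h}x_n\to0$ for all $h\in H$. Restricting $(x_n)$ to $n\in\mathbb N$, this is exactly a sequence showing $H$ is not $\mathbb Z_2$-u.d.vdC, which proves the contrapositive.

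I expect the only genuine obstacle to be the one isolated above: a $\mathbb Z_2$-valued (real) witness forces \emph{both} two-point correlations $\tfrac1N\sum x_{n+h}x_n$ and $\tfrac1N\sum x_{n+h}\overline{x_n}$ to vanish on $H$, whereas non-vdC-ness a priori only supplies the conjugated one; the ``$z_n$ and $iz_n$'' two-term device (equivalently, tensoring with a suitable random phase) is what kills the unconjugated correlation while keeping the mean nonzero, after which \Cref{ThmFinAvsGen} and \Cref{564rtyfgdv} do purely routine work. An alternative route to a real witness would be to invoke the spectral criterion \Cref{SpeCrit}, symmetrize the resulting measure on $\widehat{\mathbb Z}=\mathbb T$ so that its Fourier coefficients become real, and realize it; but the device above avoids any measure-realization step and stays with bounded sequences throughout.
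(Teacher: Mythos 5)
Your proof is correct and follows essentially the same route as the paper's: the paper also reduces to the statement that non-vdC sets admit $\{-1,1\}$-valued witnesses (\Cref{rewf9ds0ol}, via the intermediate $[-1,1]$-valued version \Cref{rewf9ds0ol01}), using the same $\mathrm{Re}(a)\mathrm{Re}(b)+\mathrm{Im}(a)\mathrm{Im}(b)=\mathrm{Re}(a\overline b)$ two-point doubling (there realized as $Y=X\times\{0,1\}$ with $f_0,f_1$ and a second application to $if$, where you more economically use the single pair $\mathrm{Re}(z),\mathrm{Re}(iz)$ after normalizing $\lambda>0$) followed by \Cref{564rtyfgdv} and \Cref{ThmFinAvsGen}. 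The only step to tighten is your verification of condition \ref{ThmFinAvsGen3} of \Cref{ThmFinAvsGen}: it quantifies over \emph{all} finite $A\subseteq\mathbb{Z}$, not just long initial blocks, so instead of $K=2$ with $A=\{1,\dots,M\}$ you should take $K=2|F'_m|$ and the translated sequences $(u^{(i)}_{\cdot+c})_{c\in F'_m}$ for a long F{\o}lner block $F'_m$ of the subsequence, exactly as in the remark following \Cref{ThmFinAvsGen}; with that adjustment the argument goes through.
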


\Cref{4rewfdso90oplk} follows from \Cref{rewf9ds0ol} and the observation that a sequence $(x_n)_{n\in\mathbb{N}}$ is u.d. in $\mathbb{Z}_2$
 iff $\lim_{N}\frac{1}{N}\sum_{n=1}^N(-1)^{x_n}=0$.

\begin{prop}
\label{rewf9ds0ol01}
Let $H\subseteq\mathbb{N}$. $H$ is vdC iff for every sequence $(x_n)_{n\in\mathbb{Z}}$ with $x_n\in[-1,1]$ for all $n$ we have
\begin{equation}
\label{4r9efdsoixckloi}
\lim_{N\to\infty}\frac{1}{N}
    \sum_{n=1}^Nx_{n+h}x_n=0\text{ for all }h\in H\textup{ implies }
    \lim_{N\to\infty}\frac{1}{N}\sum_{n=1}^Nx_n=0.
\end{equation}
\end{prop}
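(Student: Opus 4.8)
The plan is to derive \Cref{rewf9ds0ol01} from the characterization theorem \Cref{ThmvdC} together with the spectral criterion \Cref{SpeCrit}, treating the two implications separately. The forward implication is immediate: if $H$ is vdC then, by \Cref{ThmvdC} applied with $G=\mathbb{Z}$ and $F_N=\{1,\dots,N\}$, condition \Cref{ThmvdC2} holds; a sequence $(x_n)_{n\in\mathbb{Z}}$ valued in $[-1,1]$ is in particular $\mathbb{D}$-valued and satisfies $x_{n+h}\overline{x_n}=x_{n+h}x_n$, so \Cref{ThmvdC2} says exactly that the hypothesis of \Cref{4r9efdsoixckloi} forces $\lim_N\frac1N\sum_{n=1}^N x_n=0$. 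All the work is in the converse.

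For the converse I argue by contraposition. Assume $H$ is not vdC. By \Cref{SpeCrit} (with $G=\mathbb{Z}$, so $\widehat G=\mathbb{T}$) there is a Borel probability measure on $\mathbb{T}$ annihilating every $h\in H$ and charging the trivial character $0$; it may be taken symmetric under $t\mapsto -t$ (replace it by the average with its reflection), call it $\mu$, so that $\widehat\mu$ is real and even, $\widehat\mu(h)=0$ for all $h\in H$, and $a:=\mu(\{0\})>0$. Write $\mu=a\delta_0+\sigma$ with $\sigma$ symmetric, positive, of mass $1-a$, and $\sigma(\{0\})=0$, so $\widehat\sigma(h)=-a$ for $h\in H$ (if $H=\varnothing$ the constant sequence $\tfrac12$ already violates \Cref{4r9efdsoixckloi}, so assume $H\neq\varnothing$, whence $a<1$). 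Fix a small $\alpha\in(0,1)$ and set $\lambda:=\tfrac{\alpha^2}{a}\sigma$, a symmetric positive measure with $\lambda(\{0\})=0$, total mass $m:=\tfrac{\alpha^2(1-a)}{a}$, and $\widehat\lambda(h)=-\alpha^2$ for every $h\in H$. The heart of the argument is the claim that \emph{there is a real sequence $(y_n)_{n\in\mathbb{Z}}$ valued in $[-(1-\alpha),1-\alpha]$ with $\lim_N\frac1N\sum_{n\le N}y_n=0$ and $\lim_N\frac1N\sum_{n\le N}y_{n+h}y_n=\widehat\lambda(h)$ for all $h$}. Granting this, put $x_n:=\alpha+y_n$, which lies in $[2\alpha-1,1]\subseteq[-1,1]$; then $\frac1N\sum_{n\le N}x_n\to\alpha\neq0$, while for $h\in H$, expanding the product and using that $\frac1N\sum_{n\le N}y_{n+h}$ also tends to $0$, we get $\frac1N\sum_{n\le N}x_{n+h}x_n\to\alpha^2+\widehat\lambda(h)=0$. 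So $(x_n)$ violates \Cref{4r9efdsoixckloi}, which is what contraposition requires.

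It remains to prove the claim, and this is the main obstacle. By \Cref{ThmFinAvsGen} (with $G=\mathbb{Z}$, $F_N=\{1,\dots,N\}$, $D=[-(1-\alpha),1-\alpha]$, the polynomials $p(z_1,z_2)=z_1z_2$ for each shift $h$ and $p(z)=z$, and target values $\widehat\lambda(h)$ and $0$ respectively) it suffices to verify the finitistic criterion \Cref{ThmFinAvsGen3}: for each finite $A\subseteq\mathbb{Z}$, $L\in\mathbb{N}$, $\delta>0$, produce $K$ and sequences $(y_{g,k})_{g}$ in $[-(1-\alpha),1-\alpha]$, $k=1,\dots,K$, with averaged mean within $\delta$ of $0$ and averaged $h$-correlation within $\delta$ of $\widehat\lambda(h)$ for $|h|\le L$. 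Set $c:=\sqrt{2m}$; since $m\to0$ as $\alpha\to0$, we have $c\le 1-\alpha$ once $\alpha$ is small. I would take the $(y_{g,k})_g$ to be single randomly phased cosines, $y_{g,k}=c\cos(2\pi T_k g+\Phi_k)$, with $(T_k)$ i.i.d.\ of law $\lambda/m$ (a symmetric probability measure on $\mathbb{T}$) and $(\Phi_k)$ i.i.d.\ uniform, independent of the $T_k$. A direct computation shows that for \emph{every} finite $A$ (the exceptional frequencies $T_k\in\{0,\tfrac12\}$ cause no trouble) one has $\tfrac1{|A|}\sum_{g\in A}y_{g+h,k}y_{g,k}=\tfrac{c^2}{2}\cos(2\pi T_k h)+E_{h,k}$ and $\tfrac1{|A|}\sum_{g\in A}y_{g,k}=E_k$, where $E_{h,k}$ and $E_k$ (the $2\Phi_k$-frequency terms) have conditional expectation $0$ given $T_k$; hence by the strong law of large numbers, almost surely the averaged $h$-correlation converges to $\tfrac{c^2}{2}\int\cos(2\pi th)\,d(\lambda/m)(t)=\tfrac{c^2}{2m}\widehat\lambda(h)=\widehat\lambda(h)$ and the averaged mean to $0$, while $c=\sqrt{2m}\le1-\alpha$ keeps the sequences in $D$. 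Fixing a good realization and $K$ large completes the verification, the claim, and the proof; the only delicate points are these normalization constants and the harmless exceptional frequencies, and conceptually the construction is the ``law of large numbers over random trigonometric sequences'' trick of \cite[Section~6]{Ru} already used in the proof of \Cref{ThmvdC}. (Alternatively, since validity of \Cref{4r9efdsoixckloi} for all $[-1,1]$-valued sequences implies its validity for all $\{-1,1\}$-valued ones, the converse is also immediate from \Cref{rewf9ds0ol}.)
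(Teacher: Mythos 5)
Your proof is correct, but it takes a genuinely different route from the paper's. The paper proves the converse by a soft, purely measure-theoretic trick: after reducing to the dynamical formulation via \Cref{ThmFinAvs2.0}, it takes $f:X\to\mathbb{D}$ with vanishing correlations along $H$, splits $f=f_0+if_1$ into real and imaginary parts, and forms the single real-valued function $F(x,i)=f_i(x)$ on the doubled space $X\times\{0,1\}$; the identity $\int_Y F(S_hy)F(y)\,d\nu=\tfrac12\mathrm{Re}\int_X f(T_hx)\overline{f(x)}\,d\mu=0$ lets one apply the real-valued hypothesis to $F$ and again to the analogous function built from $if$, giving $\int(f_0+f_1)\,d\mu=\int(f_0-f_1)\,d\mu=0$ and hence $\int f\,d\mu=0$. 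You instead argue by contraposition through the spectral criterion \Cref{SpeCrit}: a non-vdC $H$ yields a symmetric measure charging $0$ with vanishing Fourier coefficients on $H$, and you realize a rescaling of its continuous part as the correlation function of a real sequence in $[-(1-\alpha),1-\alpha]$ via the randomly-phased-cosine construction, verified through the finitistic criterion of \Cref{ThmFinAvsGen}; adding the constant $\alpha$ then produces a $[-1,1]$-valued counterexample. Your computations check out: the trigonometric identity, the conditional centering of the $2\Phi_k$ terms, the normalization $c=\sqrt{2m}\le 1-\alpha$ for small $\alpha$, and the degenerate cases $H=\varnothing$ (forcing $a<1$) and $T_k\in\{0,\tfrac12\}$ are all handled correctly. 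What the paper's route buys is brevity and independence from the spectral criterion --- the doubling trick works verbatim over any countable group, whereas \Cref{SpeCrit} is available only for abelian $G$ (harmless here since $H\subseteq\mathbb{N}$, but a genuine restriction of your method). What your route buys is a concrete witness: it exhibits the failing $[-1,1]$-valued sequence rather than deducing its existence by a formal manipulation.

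One caveat: your closing parenthetical --- that the converse ``is also immediate from \Cref{rewf9ds0ol}'' --- is circular in the paper's logical order, since the paper proves \Cref{rewf9ds0ol} by citing \Cref{rewf9ds0ol01}. It is fine as an observation about how the statements imply one another, but it cannot substitute for a proof here.
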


\begin{proof}
The following argument was provided by S. Farhangi. $\implies$ follows from \Cref{ThmvdC2} in \Cref{ThmvdC}. So let us prove that, if every sequence $(x_n)_{n\in\mathbb{Z}}$ in $[-1,1]$ satisfies \Cref{4r9efdsoixckloi}, then every sequence $(x_n)_{n\in\mathbb{Z}}$ in $\mathbb{D}$ satisfies \Cref{4r9efdsoixckloi}, with $x_{n+h}\overline{x_n}$ instead of $x_{n+h}x_n$ (so $H$ is vdC by \Cref{ThmvdC}). Thanks to \Cref{ThmFinAvs2.0}, it will be enough to prove the measure theoretic analog of our result: 
we will assume that for all m.p.s. $(X,\mathcal{B},\mu,(T_g)_{g\in G})$ and measurable $f:X\to[-1,1]$ we have
\begin{equation}
\label{refsd9opl99oplk}
\int_{X}f(T_hx)\cdot\overline{f(x)}d\mu(x)=0\text{ for all }h\in H\text{ implies }\int_{X}fd\mu=0,
\end{equation} 
and we will prove that \Cref{refsd9opl99oplk} holds true for all m.p.s. $(X,\mathcal{B},\mu,(T_g)_{g\in G})$ and measurable $f:X\to\mathbb{D}$.

So, fix $(X,\mathcal{B},\mu,(T_g)_{g\in G})$ and $f:X\to\mathbb{D}$ such that $\int_{X}f(T_hx)\cdot\overline{f(x)}d\mu(x)=0$ for all $h\in H$. We let $f_0,f_1:X\to[-1,1]$ be the real and imaginary parts of $f$ respectively.
Consider the set $Y=X\times\{0,1\}$ with the product $\sigma$-algebra $\mathcal{C}=\mathcal{B}\times\mathcal{P}(\{0,1\})$, measure $\nu(A_0\times\{0\}\cup A_1\times\{1\})=\frac{\mu(A_0)+\mu(A_1)}{2}$ for $A_0,A_1\in\mathcal{B}$, and action $(S_g)_{g\in G}$ given by $S_g(x,i)=(T_gx,i)$ for $x\in X,i\in\{0,1\}$.

Finally, let $F:Y\to[-1,1];F(x,i)=f_i(x)$. Then for all $h\in H$,
\begin{align*}
\int_YF(S_hy)F(y)d\nu(y)&=\frac{1}{2}\int_Xf_0(T_hx)f_0(x)+f_1(T_hx)f_1(x)d\mu(x)\\
&=\frac{1}{2}\textup{Re}\left(\int_Xf(T_hx)\overline{f(x)}d\mu(x)\right)=0.
\end{align*}
\Cref{refsd9opl99oplk} then implies $\int_YFd\mu=0$, so
$
\int_Xf_0+f_1
d\mu=0$. Similarly, we can conclude $
\int_X-f_1+f_0d\mu=0$ by applying the same reasoning to the function $g=if=-f_1+if_0$ instead of $f$.
Thus, $\int_Xf_0d\mu=\int_Xf_1d\mu=0$, so $\int_Xfd\mu=0$.
\end{proof}

\begin{prop}
\label{rewf9ds0ol}
A set $H\subseteq\mathbb{N}$ is vdC iff for every sequence $(x_n)_{n\in\mathbb{Z}}$ with $x_n\in\{-1,1\}$ for all $n$ we have
\begin{equation*}
\lim_{N\to\infty}\frac{1}{N}
    \sum_{n=1}^Nx_{n+h}x_n=0\text{ for all }h\in H\textup{ implies }
    \lim_{N\to\infty}\frac{1}{N}\sum_{n=1}^Nx_n=0.
\end{equation*}
\end{prop}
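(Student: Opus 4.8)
The forward implication is immediate. A $\{-1,1\}$-valued sequence is in particular $[-1,1]$-valued, so if $H$ is a vdC set then every $\{-1,1\}$-valued sequence satisfies the required implication by the ($\Longrightarrow$) direction of \Cref{rewf9ds0ol01} (equivalently, by \Cref{ThmvdC2} of \Cref{ThmvdC}, noting that $\overline{x_n}=x_n$ when $x_n$ is real).

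For the converse I would argue by contraposition. Suppose $H$ is not a vdC set. By \Cref{rewf9ds0ol01} there is a sequence $(x_n)_{n\in\mathbb Z}$ with values in $[-1,1]$ such that $\frac1N\sum_{n=1}^{N}x_{n+h}x_n\to 0$ for every $h\in H$ while $\frac1N\sum_{n=1}^{N}x_n\not\to 0$. Pick an increasing sequence $(N_k)$ with $\frac1{N_k}\sum_{n=1}^{N_k}x_n\to\lambda$ for some $\lambda\neq 0$; then $F_k:=\{1,\dots,N_k\}$ defines a F{\o}lner sequence in $\mathbb Z$ along which each of the averages $\frac1{|F_k|}\sum_{n\in F_k}x_{n+h}x_n$ ($h\in H$) converges to $0$ and $\frac1{|F_k|}\sum_{n\in F_k}x_n$ converges to $\lambda$. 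Applying the converse half of \Cref{ThmFinAvs2.0} with $D=[-1,1]$ along this F{\o}lner sequence (to the continuous functions $p(a)=a$ and $p(a,b)=ab$), I obtain a m.p.s. $(X,\mathcal B,\mu,(T_g)_{g\in\mathbb Z})$ and a measurable $f\colon X\to[-1,1]$ with $\int_X f\,d\mu=\lambda$ and $\int_X f(T_hx)f(x)\,d\mu=0$ for all $h\in H$.

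Next I would replace $f$ by a two-valued function. Set $\tilde f=\tfrac{1+f}{2}\colon X\to[0,1]$ and apply \Cref{FunctionsToSets} to obtain a m.p.s. $(Y,\mathcal C,\nu,(S_g)_{g\in\mathbb Z})$ and $B\in\mathcal C$ such that $\int_Y\chi_B(S_{h_1}y)\cdots\chi_B(S_{h_k}y)\,d\nu=\int_X\tilde f(T_{h_1}x)\cdots\tilde f(T_{h_k}x)\,d\mu$ for every $k$ and all pairwise distinct $h_1,\dots,h_k\in\mathbb Z$. Taking $k=1$, $h_1=e$ gives $\nu(B)=\int_X\tilde f\,d\mu=\tfrac{1+\lambda}{2}$, and taking $k=2$, $(h_1,h_2)=(h,e)$ with $h\in H$ gives $\int_Y\chi_B(S_hy)\chi_B(y)\,d\nu=\int_X\tilde f(T_hx)\tilde f(x)\,d\mu=\tfrac{1+2\lambda}{4}$. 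A routine expansion of the products then shows that the $\{-1,1\}$-valued function $g:=2\chi_B-1$ satisfies $\int_Y g\,d\nu=\lambda$ and $\int_Y g(S_hy)g(y)\,d\nu=0$ for every $h\in H$. Finally, applying the forward half of \Cref{ThmFinAvs2.0} with $D=\{-1,1\}$ and the standard F{\o}lner sequence $\{1,\dots,N\}$ to the system $(Y,\mathcal C,\nu,(S_g),g)$ produces a sequence $(w_n)_{n\in\mathbb Z}$ in $\{-1,1\}$ with $\frac1N\sum_{n=1}^{N}w_n\to\int_Y g\,d\nu=\lambda\neq 0$ and $\frac1N\sum_{n=1}^{N}w_{n+h}w_n\to\int_Y g(S_hy)g(y)\,d\nu=0$ for all $h\in H$. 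This $(w_n)$ violates the implication in the statement, which is exactly the contrapositive we wanted.

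The delicate point is the last step: returning to \emph{genuine} Ces\`aro limits along $\{1,\dots,N\}$. A direct attack, applying the convexity result \Cref{564rtyfgdv} to $(x_n)$, would only produce a $\{-1,1\}$-valued sequence whose correlations match those of $(x_n)$ along the particular subsequence $(\{1,\dots,N_k\})$ on which they happen to converge, and that is not enough, since the statement quantifies over the full F{\o}lner sequence. Passing through a measure-preserving system removes this obstruction, because integrals retain no trace of a F{\o}lner sequence, so \Cref{ThmFinAvs2.0} can re-export the data as honest limits along $\{1,\dots,N\}$. The only other thing to watch is that \Cref{FunctionsToSets} (and \Cref{564rtyfgdv}) preserve only correlations $\int\prod_i f(T_{h_i}x)\,d\mu$ over pairwise \emph{distinct} $h_i$; but the vdC condition refers only to $\int f\,d\mu$ and to $\int f(T_hx)f(x)\,d\mu$ with $h\neq e$, so this restriction does no harm.
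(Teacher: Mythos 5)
Your argument is correct, and the forward direction and the opening of the converse (contraposition via \Cref{rewf9ds0ol01}, passing to a F{\o}lner subsequence $\{1,\dots,N_k\}$ along which the relevant averages converge) coincide with the paper's. Where you diverge is in how you convert the $[-1,1]$-valued data into $\{-1,1\}$-valued data. The paper stays entirely on the sequence side: it applies \Cref{564rtyfgdv} with $D=\{-1,1\}$ (whose convex hull is $[-1,1]$) to produce a $\{-1,1\}$-valued sequence with the same correlations along the subsequence, and then invokes \Cref{ThmFinAvsGen} to transfer back to the standard F{\o}lner sequence. You instead cross to the ergodic side via the converse half of \Cref{ThmFinAvs2.0}, replace $f$ by a set using the affine bijection $t\mapsto(1+t)/2$ together with \Cref{FunctionsToSets}, verify by the expansion $g=2\chi_B-1$ that the two correlations you need are preserved, and re-export with the forward half of \Cref{ThmFinAvs2.0} along $\{1,\dots,N\}$. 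Your computations ($\nu(B)=\tfrac{1+\lambda}{2}$, $\int\chi_B(S_hy)\chi_B(y)\,d\nu=\tfrac{1+2\lambda}{4}$, hence $\int g\,d\nu=\lambda$ and $\int g(S_hy)g(y)\,d\nu=0$) check out, and your observation that the distinctness restriction in \Cref{FunctionsToSets} is harmless here (only $k=1$ and $k=2$ with $h\neq e$ are used) is the right thing to verify. The trade-off: your route is longer and leans on \Cref{FunctionsToSets} (itself a consequence of \Cref{564rtyfgdv} with $D=\{0,1\}$) plus an affine change of variables, where the paper gets the same effect in one step by choosing $D=\{-1,1\}$ directly; on the other hand, your detour through a measure-preserving system makes completely explicit why the transfer back to the full F{\o}lner sequence $\{1,\dots,N\}$ is legitimate --- the point you rightly flag as the delicate one --- whereas the paper compresses this into a parenthetical appeal to the F{\o}lner-independence built into \Cref{ThmFinAvsGen}.
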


\begin{proof}
$\implies$ follows from \Cref{ThmvdC2} in \Cref{ThmvdC}. So suppose $H$ is not vdC. Then by \Cref{rewf9ds0ol01} there is a sequence $(x_n)_{n\in\mathbb{Z}}$ in $[-1,1]$, some $\lambda\neq0$ and some F{\o}lner subsequence $(F_N)$ of $(\{1,\dots,N\})$, such that 
\begin{equation*}
\lim_{N\to\infty}\frac{1}{|F_N|}
    \sum_{n\in F_N}x_{n+h}x_n=0\text{ for all }h\in H\textup{ but }
    \lim_{N\to\infty}\frac{1}{|F_N|}\sum_{g\in F_N}x_{g}=\lambda.
\end{equation*}
Thus, by \Cref{564rtyfgdv} there is a sequence $(w_n)_{n\in\mathbb{Z}}$ in $\{-1,1\}$ such that 
\begin{equation*}
\lim_{N\to\infty}\frac{1}{|F_N|}
    \sum_{n\in F_N}w_{n+h}w_n=0\text{ for all }h\in H\textup{ but }
    \lim_{N\to\infty}\frac{1}{|F_N|}\sum_{n\in F_N}w_{n}=\lambda.
\end{equation*}
So by \Cref{ThmFinAvsGen} (which implies that the set of correlation functions of sequences in a given compact set is independent of the F{\o}lner sequence), there is a sequence $(u_n)_{n\in\mathbb{Z}}$ in $\{-1,1\}$ such that 
\begin{equation*}
\lim_{N\to\infty}\frac{1}{N}
    \sum_{n=1}^Nu_{n+h}u_n=0\text{ for all }h\in H\textup{ but }
    \lim_{N\to\infty}\frac{1}{N}\sum_{n=1}^Nu_{n}=\lambda.\qedhere
\end{equation*}
\end{proof}

%\appendix
%\include{Semigroups}

\bibliographystyle{alpha}
\bibliography{Bibliography}

\begin{thebibliography}{BBD25b}

\bibitem[BBD25a]{BBD1}
Raimundo Brice{\~n}o, {\'A}lvaro Bustos\phantom{-}Gajardo, and Miguel Donoso\phantom{-}Echenique.
\newblock Extensibility and denseness of periodic semigroup actions.
\newblock {\em arXiv preprint arXiv:2502.00312}, 2025.

\bibitem[BBD25b]{BBD2}
Raimundo Brice{\~n}o, {\'A}lvaro Bustos\phantom{-}Gajardo, and Miguel Donoso\phantom{-}Echenique.
\newblock Natural extensions of embeddable semigroup actions.
\newblock {\em arXiv preprint arXiv:2501.05536}, 2025.

\bibitem[BBF10]{BBF}
Mathias Beiglb{\"o}ck, Vitaly Bergelson, and Alexander Fish.
\newblock Sumset phenomenon in countable amenable groups.
\newblock {\em Advances in Mathematics}, 223(2):416--432, 2010.

\bibitem[Ber86]{Be2}
Vitaly Bergelson.
\newblock A density statement generalizing {S}chur's theorem.
\newblock {\em Journal of Combinatorial Theory, Series A}, 43(2):338--343, 1986.

\bibitem[Ber96]{Be5}
Vitaly Bergelson.
\newblock Ergodic {R}amsey theory—an update.
\newblock {\em Ergodic theory of $\mathbb{Z}^d$ actions}, 228:1--61, 1996.

\bibitem[BF21a]{BF}
Vitaly Bergelson and Andreu Ferr{\'e}\phantom{-}Moragues.
\newblock An ergodic correspondence principle, invariant means and applications.
\newblock {\em Israel Journal of Mathematics}, pages 1--42, 2021.

\bibitem[BF21b]{BF2}
Vitaly Bergelson and Andreu Ferr{\'e}\phantom{-}Moragues.
\newblock Uniqueness of a {F}urstenberg system.
\newblock {\em Proceedings of the American Mathematical Society}, 149(7):2983--2997, 2021.

\bibitem[BL08]{BL}
Vitaly Bergelson and Emmanuel Lesigne.
\newblock Van der {C}orput sets in $\mathbb{Z}^d$.
\newblock {\em Colloquium Mathematicum}, 110:1--49, 2008.

\bibitem[Bou87]{Bo}
Jean Bourgain.
\newblock Ruzsa’s problem on sets of recurrence.
\newblock {\em Israel Journal of Mathematics}, 59(2):150--166, 1987.

\bibitem[CKM77]{CKM}
Jean Coquet, Teturo Kamae, and Michel Mend{\`e}s\phantom{-}France.
\newblock Sur la mesure spectrale de certaines suites arithm{\'e}tiques.
\newblock {\em Bulletin de la Soci{\'e}t{\'e} Math{\'e}matique de France}, 105:369--384, 1977.

\bibitem[CP61]{Cli}
Alfred~H Clifford and Gordon~B Preston.
\newblock The algebraic theory of semigroups, vol. 1.
\newblock {\em American Mathematical Society, Providence}, 2(7), 1961.

\bibitem[DHZ19]{DHZ}
Tomasz Downarowicz, Dawid Huczek, and Guohua Zhang.
\newblock Tilings of amenable groups.
\newblock {\em Journal f{\"u}r die reine und angewandte Mathematik}, 2019(747):277--298, 2019.

\bibitem[Don24]{Ec}
Miguel Donoso\phantom{-}Echenique.
\newblock Natural extensions and periodic approximations of semigroup actions.
\newblock {\em Master Dissertation, Pontificia Universidad Católica de Chile}, 2024.

\bibitem[Far22]{Fa}
Sohail Farhangi.
\newblock {\em Topics in ergodic theory and {R}amsey theory}.
\newblock Doctoral dissertation, The Ohio State University, 2022.

\bibitem[FJM24]{FJB}
Sohail Farhangi, Steve Jackson, and Bill Mance.
\newblock Undecidability in the ramsey theory of polynomial equations and hilbert's tenth problem.
\newblock {\em arXiv preprint arXiv:2412.14917}, 2024.

\bibitem[For90]{Fo}
Alan~Hunter Forrest.
\newblock {\em Recurrence in dynamical systems: a combinatorial approach}.
\newblock Doctoral dissertation, The Ohio State University, 1990.

\bibitem[FS24]{FS}
Alexander Fish and Sean Skinner.
\newblock An inverse of {F}urstenberg's correspondence principle and applications to nice recurrence.
\newblock {\em arXiv preprint arXiv:2407.19444}, 2024.

\bibitem[FT24]{FT}
Sohail Farhangi and Robin Tucker\phantom{-}Drob.
\newblock Asymptotic dynamics on amenable groups and van der {C}orput sets.
\newblock {\em arXiv preprint arXiv:2409.00806}, 2024.

\bibitem[Fur77]{Fu77}
Harry Furstenberg.
\newblock Ergodic behavior of diagonal measures and a theorem of {S}zemer{\'e}di on arithmetic progressions.
\newblock {\em Journal d’Analyse Math{\'e}matique}, 31(1):204--256, 1977.

\bibitem[Fur81]{Fu}
Harry Furstenberg.
\newblock {\em Recurrence in Ergodic Theory and Combinatorial Number Theory}.
\newblock Princeton University Press, 1981.

\bibitem[KL18]{KL}
Michael Kelly and Th{\'a}i~Hoang L{\^e}.
\newblock Van der {C}orput sets with respect to compact groups.
\newblock {\em Archiv der Mathematik}, 110(4), 2018.

\bibitem[KM78]{KM}
Teturo Kamae and Michel Mend{\`e}s\phantom{-}France.
\newblock Van der {C}orput’s difference theorem.
\newblock {\em Israel Journal of Mathematics}, 31:335--342, 1978.

\bibitem[KN74]{KN}
Lauwerens Kuipers and Harald Niederreiter.
\newblock {\em Uniform Distribution of Sequences}.
\newblock John Wiley, 1974.

\bibitem[Mor]{Mor}
Joel Moreira.
\newblock Sets of nice recurrence.
\newblock I Can’t Believe It’s Not Random! \url{https://joelmoreira.wordpress.com/2013/03/04/323/}.

\bibitem[Ore31]{Ore}
Oystein Ore.
\newblock Linear equations in non-commutative fields.
\newblock {\em Annals of Mathematics}, 32(3):463--477, 1931.

\bibitem[Par05]{Par}
Kalyanapuram~Rangachari Parthasarathy.
\newblock {\em Probability measures on metric spaces}, volume 352.
\newblock American Mathematical Soc., 2005.

\bibitem[Rud62]{RuFA}
Walter Rudin.
\newblock Fourier analysis on groups.
\newblock {\em Interscience}, 1962.

\bibitem[Rud87]{RuRC}
Walter Rudin.
\newblock {\em Real and complex analysis}.
\newblock McGraw-Hill international editions: Mathematics series, 1987.

\bibitem[Ruz81]{Ru}
Imre Ruzsa.
\newblock Connections between the uniform distribution of a sequence and its differences.
\newblock {\em Colloquia Mathematica Societatis J{\'a}nos Bolyai}, 34:1419--1443, 1981.

\bibitem[Sze75]{Sze}
Endre Szemer{\'e}di.
\newblock On sets of integers containing no k elements in arithmetic progression.
\newblock {\em Acta Arithmetica}, 27(199-245):2, 1975.

\bibitem[Wey16]{We}
Hermann Weyl.
\newblock {\"U}ber die {G}leichverteilung von {Z}ahlen mod. {E}ins.
\newblock {\em Mathematische Annalen}, 77(3):313--352, 1916.

\end{thebibliography}
\end{document}